\documentclass[a4,12pt]{amsart}


\usepackage{amssymb}
\usepackage{amstext}
\usepackage{amsmath}
\usepackage{amscd}
\usepackage{amsthm}
\usepackage{amsfonts}
\usepackage{enumerate}
\usepackage{graphicx}
\usepackage{latexsym}
\usepackage[all]{xy}
\usepackage[usenames]{color}
\usepackage{comment}
\usepackage[total={6.5in,8.75in},
top=1.2in, right=1.0in, left=1.0in, bottom=1.0in,includefoot]{geometry} %

\newtheorem{maintheorem}{Main Theorem}
\newtheorem*{corollary*}{Corollary}
\newtheorem*{question*}{Question}
\newtheorem{theorem}{Theorem}[section]
\newtheorem{corollary}[theorem]{Corollary}
\newtheorem{lemma}[theorem]{Lemma}
\newtheorem{proposition}[theorem]{Proposition}
\newtheorem*{proposition*}{Proposition}
\newtheorem{question}[theorem]{Question}

\newtheorem{claim}{Claim}
\newtheorem*{claim*}{Claim}
\newtheorem{assumption}[theorem]{Assumption}
\theoremstyle{definition}
\newtheorem{definition}[theorem]{Definition}
\newtheorem{remark}[theorem]{Remark}
\newtheorem{condition}{Condition}
\newtheorem{example}[theorem]{Example}

\newtheorem{definitiontheorem}[theorem]{Definition-Theorem}

\theoremstyle{remark}

\numberwithin{equation}{theorem}

\newenvironment{pfclaim}{

\begin{proof}
}
{
\end{proof}

}

\renewcommand{\mod}{\operatorname{mod}}
\newcommand{\proj}{\operatorname{proj}}

\newcommand{\End}{\operatorname{End}}
\newcommand{\Hom}{\operatorname{Hom}}
\newcommand{\add}{\operatorname{\mathsf{add}}}


\newcommand{\Kb}{\mathsf{K}^{\rm b}}
\newcommand{\m}{\mathsf{m}}
\newcommand{\T}{\mathcal{T}}
\newcommand{\U}{\mathcal{U}}
\newcommand{\Tree}{\mathbb{T}}

\renewcommand{\P}{\mathbb{P}}
\newcommand{\B}{\mathbb{B}}

\newcommand{\co}{\mathsf{co}}

\renewcommand{\H}{\mathcal{H}}

\newcommand{\tilt}{\operatorname{\mathsf{tilt}}}

\newcommand{\tsilt}{\operatorname{\mathsf{2silt}}}
\newcommand{\tpsilt}{\operatorname{\mathsf{2psilt}}}
\newcommand{\stilt}{\operatorname{\mathsf{s\text{-}tilt}}}
\newcommand{\ttilt}{\operatorname{\mathsf{\tau -tilt}}}
\newcommand{\sttilt}{\operatorname{\mathsf{s\tau -tilt}}}

\newcommand{\trigid}{\operatorname{\mathsf{\tau\text{-}rigid}}}
\newcommand{\trigidp}{\operatorname{\mathsf{\tau\text{-}rigidp}}}
\newcommand{\supp}{\operatorname{Supp}}
\newcommand{\stmin}{\operatorname{\mathsf{min}}}
\newcommand{\stmax}{\operatorname{\mathsf{max}}}
\newcommand{\source}{\operatorname{\mathsf{source}}}
\newcommand{\sink}{\operatorname{\mathsf{sink}}}
\newcommand{\Z}{{\mathbb{Z}}}
\newcommand{\N}{{\mathbb{N}}}

\newcommand{\calP}{{\mathcal{P}}}
\newcommand{\Tr}{\operatorname{Tr}}
\newcommand{\Aut}{\operatorname{Aut}}

\newcommand{\rad}{\operatorname{rad}}

\renewcommand{\top}{\operatorname{top}}

\newcommand{\soc}{\operatorname{soc}}
\renewcommand{\Im}{\operatorname{Im}}

\newcommand{\fac}{\operatorname{\mathsf{Fac}}}
\newcommand{\Sym}{\mathfrak S}
\newcommand{\ind}{\operatorname{ind}}

\newcommand{\dip}{\operatorname{dp}}
\newcommand{\dis}{\operatorname{ds}}

\newcommand{\surj}{\twoheadrightarrow}
\newcommand{\inj}{\hookrightarrow}



\setcounter{tocdepth}{1}
\begin{document}
\title[From support $\tau$-tilting posets to algebras] 
{From support $\tau$-tilting posets to algebras}

\author{ Ryoichi Kase}
\address{Faculty of Informatics, Okayama University of Science,
1-1 Ridaicho, Kita-ku, Okayama-shi 700-0005, Japan}
\email{r-kase@mis.ous.ac.jp}
\urladdr{}
\thanks{2010 {\em Mathematics Subject Classification.} Primary 16G20; Secondary  06A06, 16D80.}
\keywords{representation of quivers, support $\tau$-tilting module, support $\tau$-tilting poset, silting complex.}
\thanks{The author was supported by JSPS Grant-in-Aid for Young Scientists (B) 17K14169.}

\begin{abstract}
The aim of this paper is to study a poset isomorphism between two support $\tau$-tilting posets.
We take several algebraic information from combinatorial properties of support $\tau$-tilting posets.
As an application, we treat a certain class of basic algebras which contains preprojective algebras of type $A$, Nakayama algebras,
and generalized Brauer tree algebras. We provide a necessary condition for that an algebra $\Lambda$ share the same support $\tau$-tilting poset
with a given algebra $\Gamma$ in this class. Furthermore, we see that this necessary condition is also a sufficient condition if  
$\Gamma$ is either a preprojective algebra of type $A$, a Nakayama algebra,
or a generalized Brauer tree algebra.
\end{abstract}


\maketitle
\section{Introduction}\label{sec:intro}
Adachi-Iyama-Reiten introduced the notion of support $\tau$-tilting modules as a 
   generalization of tilting modules \cite{AIR}. 
   They give a mutation of support $\tau$-tilting modules and complemented
   that of tilting modules. i.e., the support $\tau$-tilting mutation has following nice properties:
   \begin{itemize}  
\item Support $\tau$-tilting mutation is always possible.
\item There is a partial order on the set of (isomorphism classes of) basic support $\tau$-tilting modules
such that its Hasse quiver realizes the support $\tau$-tilting mutation. (An analogue of Happel-Unger's result \cite{HU1} for tilting modules.)
\end{itemize}
 Moreover, they showed deep connections between $\tau$-tilting theory, silting theory, torsion theory and cluster tilting theory.
Further developments of these connections was given in \cite{Asa,KY}.    
Theory of ($\tau$-)tilting mutation also gives us interesting connections between representation theory of finite dimensional algebras
and combinatorics, for example \cite{IRRT, MRZ, M}.   
 
  \subsection*{Notation} Throughout this paper, let $\Lambda=KQ/I$ be a basic finite dimensional algebra over an algebraically
closed field $K$, where $Q$ is a finite quiver and $I$ an admissible ideal of $KQ$.

We denote by $Q_0$ the set of 
vertices  of $Q$ and $Q_1$ the set of arrows of $Q$. We set $Q^{\circ}$ the quiver obtained from $Q$ by 
deleting all loops.

\begin{enumerate}[1.]
\item For arrows $\alpha:a_0\to a_1$ and $\beta: b_0\to b_1$ of $Q$,
we mean by $\alpha\beta$ the path $a_0\xrightarrow{\alpha}a_1\xrightarrow{\beta}b_1$ if $a_1=b_0$, otherwise 0 in $KQ$.

\item We denote by $\mod \Lambda\ (\proj\Lambda)$ the category of finitely generated (projective) right $\Lambda$-modules.

\item By a module, we always mean a finitely generated right module.
\item The Auslander-Reiten translation is denoted by $\tau$. 
(Refer to \cite{ASS, ARS} for definition and properties.)

\item Let $\mathbb{P}=(\mathbb{P},\leq)$ be a poset. 
We denote by $\H(\mathbb{P})$ the Hasse quiver
of $\mathbb{P}$ and  set  $[a,b]:=\{x\in \mathbb{P}
\mid a\leq x\leq b\}$ for $a,b\in \mathbb{P}$. 
We denote by $\dip(a)$ the set of direct predecessors of $a$ in $\H(\mathbb{P})$
and by $\dis(a)$ the set of direct successors of $a$ in $\H(\mathbb{P})$.
We say that $\mathbb{P}$ is $n$-{\bf regular} provided $\#\dip(a)+\#\dis(a)=n$ holds
 for each element $a\in \mathbb{P}$. 
Let  $\mathbb{P'}$ be a subset of $\mathbb{P}$ and $\leq'$ the partial order on $\mathbb{P'}$ given by $\leq$. Then we call
$\mathbb{P'}=(\mathbb{P'},\leq')$ a {\bf full subposet}. Throughout this paper every subposets are full. We call
a full subposet $\mathbb{P'}$ 
a {\bf strongly full subposet} if the inclusion $\mathbb{P'}\subset \mathbb{P}$ induces
 a quiver inclusion from $\H(\mathbb{P'})$ to $\H(\mathbb{P})$. 
By definition if $\mathbb{P'}$ is a strongly full subposet of $\mathbb{P}$, then $\H(\mathbb{P'})$
is a full subquiver of $\H(\mathbb{P})$. 
\end{enumerate}
\subsection*{Aim of this paper}
In \cite{HU2}, Happel and Unger showed the following fascinating result.
\begin{theorem}[{\cite[Theorem\;6.4]{HU2}}]
	We can reconstruct a quiver $Q$ up to multiple arrows from the tilting poset of $KQ$.  
\end{theorem}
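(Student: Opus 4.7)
The plan is to reconstruct $Q^{\circ}$ from $\tilt(KQ)$ in two phases: first identifying the vertex set with indecomposable summands of the unique maximum element, then detecting arrows via a commutativity-of-mutation criterion in the Hasse quiver. Since $KQ$ is finite-dimensional, $Q$ is acyclic and so $Q^{\circ}=Q$.

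First, I would locate the unique maximum element $T_{\max}=\Lambda=P_1\oplus\cdots\oplus P_n$ of $\tilt(KQ)$ and identify the vertex set $Q_0$ with the set of isomorphism classes of indecomposable summands of $T_{\max}$. The integer $n$ is available as the common rank of every tilting module and is detectable combinatorially via the length of saturated chains descending from $T_{\max}$, since each Hasse arrow corresponds to mutating exactly one summand. The direct predecessors of $T_{\max}$ in $\H(\tilt(KQ))$ are the left mutations $\mu_i^-(T_{\max})$, one for each non-injective summand $P_i$ (i.e.\ for each non-sink $i$ of $Q$).

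Second, for arrows: the criterion I would establish is that there is no arrow between distinct $i,j\in Q_0$ in $Q^{\circ}$ if and only if $\mu_i^-$ and $\mu_j^-$ commute at $T_{\max}$, producing a 4-element Boolean interval $[\mu_i^-(T_{\max})\wedge\mu_j^-(T_{\max}),\,T_{\max}]$ in $\tilt(KQ)$; when an arrow is present the interval is strictly larger (a polygon whose length reflects the arrow multiplicity, but even a single arrow already breaks the square). Direction of each detected edge is recovered through a source/sink analysis: sinks correspond to summands of $T_{\max}$ admitting no downward mutation, and sources are recoverable dually through the isomorphism $\tilt(KQ^{\mathrm{op}})\cong\tilt(KQ)^{\mathrm{op}}$ given by the standard duality. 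Iterating this local analysis at tilting modules further down in the poset lets us propagate orientation and pin down the simple directed quiver.

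The main obstacle will be establishing the commutation criterion. The forward direction requires showing that when $i$ and $j$ are not joined by an arrow in $Q^{\circ}$, the exchange triangles at $P_i$ and $P_j$ decouple sufficiently that the two left mutations commute, which reduces to a direct analysis of $\add$-complements of the Bongartz completion of $T_{\max}/(P_i\oplus P_j)$. The reverse direction requires showing that any arrow between $i$ and $j$ forces a genuine interaction between these exchange triangles, so that $\mu_i^-(T_{\max})$ and $\mu_j^-(T_{\max})$ fail to have a common direct predecessor; here the hereditary hypothesis is essential, since it lets us compute $\Hom$ and $\Ext^1$ between the exchange summands explicitly. Once this is in hand, reconstruction of $Q$ up to multiple arrows follows by reading off adjacencies near $T_{\max}$ and orientation from the source/sink bookkeeping.
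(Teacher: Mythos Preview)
Your proposal has a genuine gap. In the tilting poset of a hereditary algebra, the maximum $\Lambda$ does \emph{not} have one direct successor for each non-sink: rather, the almost complete tilting module $\bigoplus_{j\neq i}P_j$ has a second complement if and only if it is sincere, i.e.\ if and only if $i$ is \emph{not a source}. (You have the sink/source roles reversed.) Thus for a source $i$ there is no $\mu_i^-(\Lambda)$ in $\tilt KQ$, and your commutation test cannot be run. Duality does not rescue this in general: take $Q=1\to 2\to 3$. Vertex $1$ is the unique source and $3$ the unique sink; the pair $(1,3)$ carries no arrow, yet at $\Lambda$ there is no $\mu_1^-$ and at $D\Lambda$ there is no $\mu_3^+$, so the square-versus-polygon criterion is unavailable at both extremes. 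Your sketch of ``iterating this local analysis at tilting modules further down in the poset'' would need a genuinely new idea to recover such pairs, and none is supplied. A second, smaller gap: you identify $Q_0$ with ``indecomposable summands of $T_{\max}$'', but the poset does not hand you those summands; you only see neighbours of $T_{\max}$, and as just observed there are too few of them.

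The paper takes a different route that sidesteps exactly this difficulty. It works in the \emph{support} $\tau$-tilting poset (which for $KQ$ is the support tilting poset) and looks near the \emph{minimum} $0$ rather than near $\Lambda$. There $0$ always has precisely $n$ direct predecessors, the simples $X_i$, so every vertex is visible. Arrows are then read off via Proposition~\ref{determiningarrows}: for each pair $i\neq j$ one looks at the element $\Lambda/(1-e_i-e_j)=\stmax((e_i+e_j)\Lambda^-)$ and checks whether it lies in $\dip(X_i)$, in $\dip(X_j)$, in both, or in neither; this classifies the four possibilities (no arrow, $i\to j$, $j\to i$, a $2$-cycle) by the shape of the interval $[0,\Lambda/(1-e_i-e_j)]$. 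Multiplicity is detected by whether this interval is finite. Note, finally, that the paper itself only proves the statement for $\stilt KQ$ and explicitly remarks that Happel--Unger's original theorem about $\tilt KQ$ is strictly stronger; so even the paper does not give a full proof of the cited statement, but its method is at least complete for the support version, whereas yours is not.
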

This theorem states that the tilting poset of a hereditary algebra $\Lambda$ contains  lots of information for $\Lambda$. 
Therefore, it is interesting to extent Happel-Unger's reconstruction theorem to arbitrary finite dimensional algebras, i.e., we consider the following question. 
\begin{question*}
To what extent can we reconstruct an algebra
 from their support $\tau$-tilting poset? 
\end{question*}

For a $\tau$-tilting finite algebra $\Lambda$, it was shown in \cite{IRRT}
 that there are bijections between isomorphism classes
of indecomposable $\tau$-rigid modules of $\Lambda$,
 join-irreducible elements in $\sttilt \Lambda$ and meet-irreducible
elements in $\sttilt \Lambda$. 
We summarize these bijections and realize a basic $\tau$-rigid pair
 of $\Lambda$ as a full subquiver of $\sttilt \Lambda$ in two ways. By using
these realizations, we show the following result. 
\begin{maintheorem}
\label{maintheorem1} Let $\rho$ be a poset isomorphism $\sttilt \Lambda\stackrel{\sim}{\to} \sttilt \Gamma$.
 \begin{itemize}
 \item $\rho$ preserves supports of basic support $\tau$-tilting modules. 
 In particular, $\rho$ sends  basic $\tau$-tilting modules 
 of $\Lambda$ to basic $\tau$-tilting modules of $\Gamma$.
\item If $\sttilt \Lambda$ is a lattice, then $\rho$ induces a natural bijection
between isomorphism classes of basic $\tau$-rigid pair of $\Lambda$ and that of $\Gamma$.
\end{itemize}
\end{maintheorem}
We note that above result is a generalization of \cite[Theorem\;1.1]{K1}. In fact, if $\Lambda$
is hereditary, then  (support) $\tau$-tilting modules are (support) tilting modules.

It is well-known that each basic finite dimensional algebra is given by (a unique) quiver and 
relations (admissible ideal). 
\begin{maintheorem}
\label{mt3}
The support $\tau$-tilting poset of $\Lambda$ determines
 the quiver of $\Lambda$ up to multiple arrows and loops.
Furthermore, if $\Lambda=KQ/I$ is a $\tau$-tilting finite algebra, then 
$Q$ has no multiple arrows and the group of poset automorphisms of
 support $\tau$-tilting poset of $\Lambda$
is realized as a subgroup of the group of quiver automorphisms of
$Q\setminus\{\mathrm{loops}\}$.
\end{maintheorem}
 By using this result, we can recover Happel-Unger's reconstruction theorem.
 
Let $\Lambda$ and $\Gamma$ be two basic finite dimensional algebras.
If the posets of support $\tau$-tilting modules of $\Lambda$ and that of $\Gamma$ are isomorphic, then 
we denote $\Lambda\stackrel{\ttilt}{\sim} \Gamma$ and set 
\[\T(\Gamma):=\{\Lambda\mid \Lambda\stackrel{\ttilt}{\sim}\Gamma\}.\]
In \cite{EJR}, Eisele, Janssens and Raedschelders give us 
a sufficient condition for that two finite dimensional algebras share the same support $\tau$-tilting poset.
 By this result, we can see that there are infinitely many (non-isomorphic) basic finite dimensional 
algebras in $\T(\Gamma)$ for any $\Gamma$.
Therefore, it seems difficult to characterize algebras which are in $\T(\Gamma)$ for a given algebra $\Gamma$.
Successful examples are tree quiver algebras and the preprojective algebras of type $A$. 
\begin{theorem}[{\cite{AK,K2}}]
\label{refe}
Assume that $\Gamma=KQ'/I$ is either a tree quiver algebra or a preprojective algebra of type $A$.
Then $\Lambda\in \T(\Gamma)$ if and only if $\Lambda$ satisfies the following conditions.
\begin{enumerate}[{\rm (a)}]
\item There is a quiver isomorphism $\sigma:Q\setminus\{\mathrm{loops}\}\to Q'$ satisfying
$\supp e_{\sigma(i)} \Gamma=\sigma(\supp e_i \Lambda )$ for any $i\in Q_0$.
\item Each arrow $\alpha:i\to j$ $(i\neq j)$ satisfies $\alpha \Lambda e_j=e_i\Lambda e_j=e_i\Lambda \alpha$.
\end{enumerate}
\end{theorem}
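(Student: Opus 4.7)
The plan is to prove the two directions separately. For the \emph{if} direction, conditions (a) and (b) together say that $\Lambda$ differs from $\Gamma$ only by the addition of loops and by independent rescalings of the non-loop arrows: the first half of (b) says each product $\alpha\cdot(\text{path from }j)$ is proportional to the corresponding path in $e_i\Lambda e_j$, and the second half says the same reading from the right. Neither loops nor rescalings of non-loop arrows affect the support $\tau$-tilting poset, so I would verify the hypotheses of the Eisele-Janssens-Raedschelders sufficient condition cited in the introduction: explicitly, build from $\sigma$ a scalar deformation of $\Gamma$ and check that $\Lambda$ lies in that family up to addition of loops, which (b) ensures act as elements of the radical that kill every $\tau$-rigid module they appear in.

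For the \emph{only if} direction, fix a poset isomorphism $\rho:\sttilt\Lambda\xrightarrow{\sim}\sttilt\Gamma$. Since tree quiver algebras and preprojective algebras of type $A$ are $\tau$-tilting finite, Main Theorem~\ref{mt3} applies in its strong form: $Q$ has no multiple arrows and $\rho$ induces a quiver isomorphism $\sigma:Q\setminus\{\mathrm{loops}\}\to Q'$. Main Theorem~\ref{maintheorem1} then yields $\supp e_{\sigma(i)}\Gamma=\sigma(\supp e_i\Lambda)$ for every $i\in Q_0$, which is exactly (a). Up to this point the argument is completely general and uses only the two Main Theorems.

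Condition (b) is where the specific structure of $\Gamma$ has to enter, and I expect this to be the core technical obstacle. For each arrow $\alpha:i\to j$ with $i\neq j$, the plan is to single out, via the \cite{IRRT} bijection between indecomposable $\tau$-rigid pairs and join/meet-irreducibles recalled in the introduction, a distinguished pair of elements in $\sttilt\Lambda$ that encodes the submodules $e_i\Lambda\alpha$, $\alpha\Lambda e_j$, and $e_i\Lambda e_j$. Transporting this pair along $\rho$ and comparing with the corresponding modules over $\Gamma$, whose internal structure is known explicitly (tree quiver algebras have hereditary module categories determined by Auslander-Reiten theory, and preprojective algebras of type $A$ are governed by the weak order on the symmetric group through \cite{IRRT,MRZ}), one reads off the required dimension identities. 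The hard part is carrying this out rigorously: the poset isomorphism records only incidence data, so one must show that the full Hasse-quiver neighbourhood of each join/meet-irreducible, together with the support-preservation of Main Theorem~\ref{maintheorem1}, forces the module-theoretic equalities $\alpha\Lambda e_j=e_i\Lambda e_j=e_i\Lambda\alpha$.

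In practice I would treat the tree quiver and preprojective-type-$A$ cases separately, since they rest on different combinatorial models; in each case I would induct on the length of the relevant interval in $\sttilt\Gamma$, using the strongly full subposet structure highlighted in the Notation to cut the problem down to arrows incident to a fixed vertex. A residual subtlety is that $\Lambda$ may carry loops that $\Gamma$ does not, so at every stage of the argument I would have to confirm that modifying by a loop at a vertex does not produce a new join/meet-irreducible that could spoil the transport; here the support-preservation property of Main Theorem~\ref{maintheorem1} is what rules it out.
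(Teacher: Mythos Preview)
Your reading of condition (b) is the first genuine problem. The equalities $\alpha\Lambda e_j=e_i\Lambda e_j=e_i\Lambda\alpha$ do \emph{not} say that $\Lambda$ differs from $\Gamma$ only by loops and rescalings; they say that left (resp.\ right) multiplication by $\alpha$ surjects $e_j\Lambda e_j$ (resp.\ $e_i\Lambda e_i$) onto $e_i\Lambda e_j$. Many non-isomorphic algebras satisfy (a) and (b) for the same $\Gamma$, so the Eisele--Janssens--Raedschelders reduction cannot be invoked as you describe: $\Lambda$ is in general not a factor of $\Gamma$ by a central-in-radical ideal, nor vice versa. Your ``if'' plan therefore does not get off the ground.

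The paper's route to the ``if'' direction is quite different. One first passes from $\Lambda$ to the canonical factor $\overline{\Lambda}=\Lambda/\Lambda(\sum_i e_i\rad\Lambda e_i)\Lambda$, checks that conditions (a) and (b) force $\overline{\Lambda}\cong\overline{\Gamma}$, and then proves that $-\otimes_\Lambda\overline{\Lambda}$ induces a poset isomorphism $\sttilt\Lambda\simeq\sttilt\overline{\Lambda}$. The last step needs a structural fact about $\overline{\Gamma}$: every indecomposable two-term presilting complex has the shape $[U'\to U]$ with $Q^f$ a tree. For tree quiver algebras and preprojective algebras of type $A$ this is verified directly from the known description of their $\tau$-rigid modules.

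For the ``only if'' direction you correctly extract (a) from the Main Theorems, but your plan for (b) misses the decisive lemma. The paper shows (Lemma~\ref{keylemma}) that $\alpha\Lambda e_j=e_i\Lambda e_j=e_i\Lambda\alpha$ holds \emph{if and only if} $P_i\oplus e_i\Lambda/e_i\Lambda e_j\Lambda$ is $\tau$-rigid. This turns (b) into a statement detectable in the poset: it is equivalent to $\sttilt_{e_i(\Lambda/(e_j))}\Lambda\cap\sttilt_{e_i\Lambda}\Lambda\neq\emptyset$, and Proposition~\ref{keyproposition} shows that this intersection is preserved by $\rho$. No induction on interval length, no separate treatment of the two families, and no analysis of join/meet-irreducible neighbourhoods is needed; the whole argument for (b) is uniform once the $\tau$-rigidity reformulation is in hand.
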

To generalize above result, we consider a poset isomorphism between two support 
$\tau$-tilting posets and introduce a class $\Theta$ of basic algebras
containing tree quiver algebras, preprojective algebras of type $A$,
 Nakayama algebras and generalized Brauer tree algebras etc.
\begin{maintheorem}
\label{maintheorem2}
For a given algebra $\Gamma\in \Theta$, we get a necessary condition for that an algebra $\Lambda$ is in $\mathcal{T}(\Gamma)$.
Furthermore, this necessary condition is also a sufficient condition if $\T(\Gamma)$ contains either a tree quiver
algebra, a preprojective algebra of type $A$, a Nakayama algebra or a generalized
 Brauer tree algebra.
\end{maintheorem}
As an application, we can recover the following statements.
\begin{itemize} 
\item\cite[Theorem\;3.11]{A}
Let $\Lambda$ be a Nakayama algebra.
Assume that $\ell\ell(P_i)\geq n$ holds for each $i\in Q_0$. Then we have a poset isomorphism
\[\sttilt \Lambda\simeq \sttilt KC/R^n,\]
where $C$ is a cyclic quiver with $C_0:=\{1,\dots,n\}$ and $R=R_n:=\rad KC$.
\item\cite[Proposition\;4.7]{AAC} Let $\Lambda$ be a generalized Brauer tree algebra. Then $\sttilt \Lambda$ does not
depend on the multiplicity of the corresponding generalized Brauer tree.
\end{itemize}


\section{Fundamentals of support $\tau$-tilting posets}\label{sec:prelim}
In this section, we recall the definitions and their basic properties of 
support $\tau$-tilting posets.
For a module $M$, we denote by $|M|$ the number of non-isomorphic indecomposable direct summands of $M$
and by $\supp(M):=\{i\in Q_0\mid Me_i\neq 0\}$ the support of $M$, where $e_i$ is a primitive idempotent corresponding 
to a vertex $i\in Q_0$. We put $e_M:=\sum_{i\in \supp(M)} e_i$.

A module $M\in\mod \Lambda$ is said to be {\bf $\tau$-rigid} if it satisfies
$\Hom_{\Lambda}(M, \tau M)=0$.  
If $\tau$-rigid module $T$ satisfies 
$|T|=\#\supp(T)$ (resp. $|T|=n$), then we call $T$ a {\bf support $\tau$-tilting module}
 (resp. {\bf $\tau$-tilting module}).
We denote by $\sttilt \Lambda$ (resp. $\ttilt \Lambda$, $\trigid \Lambda$)
 the set of (isomorphism classes of) basic support $\tau$-tilting modules 
 (resp. $\tau$-tilting modules, $\tau$-rigid modules)
of $\Lambda$. 

We call a pair $(M,P)\in \mod \Lambda\times \proj \Lambda$ a {\bf $\tau$-rigid pair} (resp. {\bf $\tau$-tilting pair})
if $M$ is $\tau$-rigid (resp. support $\tau$-tilting) and $\add P\subset \add (1-e_M) \Lambda$ (resp. $\add P=\add (1-e_M)\Lambda$).

Let $(M,P)$ be a $\tau$-rigid pair.
We say that $(M,P)$ is {\bf basic} if so are $M$ and $P$.
A direct summand $(N, R)$ of $(M,P)$ is a pair of a module $N$ and a projective module $R$
 which are direct summands of $M$ and $P$, respectively.
 From now on, we put 
 \[M\oplus P^-:=(M,P) \text{ and } |M\oplus P^-|:=|M|+|P|.\]
\begin{remark}
If $M$ is $\tau$-rigid, then we have $|M|\leq \# \supp(M)$ (see \cite[Proposition 1.3]{AIR}).
In particular, a $\tau$-rigid pair $M\oplus P^-$ is $\tau$-tilting if and only if $|M\oplus P^-|=|\Lambda|$.
\end{remark}
We denote by $\trigidp \Lambda$ the set of (isomorphism classes of) basic $\tau$-rigid pairs of $\Lambda$.

\subsection{Basic properties}
In this subsection, we collect important properties of support $\tau$-tilting modules.
The following proposition gives us a connection between $\tau$-rigid modules of $\Lambda$
and that of a factor algebra of $\Lambda$.
\begin{proposition}[{\cite[Lemma 2.1]{AIR}}]
\label{basicfact} 
 Let $J$ be a two-sided ideal of $\Lambda$. Let $M$ and $N$ be $(\Lambda/J)$-modules.
If $\Hom_{\Lambda}(M,\tau N)=0$, then $\Hom_{\Lambda/J}(M,\tau_{\Lambda/J} N)=0$.
Moreover, if $J=(e)$ is an two-sided ideal generated by an idempotent $e$, then the converse holds.
\end{proposition}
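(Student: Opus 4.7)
The plan is to reduce both implications to the standard characterization of $\tau$-rigidity via projective presentations due to Auslander: for a minimal projective presentation $P_1\xrightarrow{d}P_0\to N\to 0$ in $\mod\Lambda$ and any $M\in\mod\Lambda$, one has $\Hom_\Lambda(M,\tau N)=0$ if and only if the induced map $\Hom_\Lambda(P_0,M)\to\Hom_\Lambda(P_1,M)$ is surjective. Each direction then becomes a comparison of such surjectivity criteria for the minimal projective presentations of $N$ over $\Lambda$ and over $\Lambda/J$, linked by the canonical identification $\Hom_\Lambda(P,M)=\Hom_{\Lambda/J}(\bar P,M)$ valid for any projective $\Lambda$-module $P$ and any $\Lambda/J$-module $M$ (since $\Lambda$-linear maps into $M$ automatically annihilate $PJ$).

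For the forward implication, I would take a minimal $\Lambda$-projective presentation $P_1\xrightarrow{d}P_0\to N\to 0$ and reduce it modulo $J$. Because $N$ is a $\Lambda/J$-module, tensoring with $\Lambda/J$ gives an exact sequence $\bar P_1\xrightarrow{\bar d}\bar P_0\to N\to 0$ of projective $\Lambda/J$-modules (not necessarily minimal). The Hom identification then turns surjectivity of $\Hom_\Lambda(d,M)$ into surjectivity of $\Hom_{\Lambda/J}(\bar d,M)$, and since the minimal $\Lambda/J$-presentation is obtained from $\bar d$ by splitting off redundant projective summands of the source, the criterion over $\Lambda/J$ yields $\Hom_{\Lambda/J}(M,\tau_{\Lambda/J}N)=0$. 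No hypothesis on $J$ is needed in this half.

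For the converse under $J=(e)$, the direction is trickier: from surjectivity for the \emph{minimal} $\Lambda/J$-presentation one must deduce surjectivity for the (possibly larger) reduction of the minimal $\Lambda$-presentation. Idempotency intervenes decisively here, through the observation that for any primitive $e_i$ which is a summand of $e$, one has $e_i\in e_i\Lambda e\Lambda = e_i J$, so $e_i\Lambda/e_i J=0$; consequently every $e_i\Lambda$-summand present in the minimal $\Lambda$-projective cover of $\ker(P_0\to N)$ simply vanishes under $-\otimes_\Lambda\Lambda/J$. This pins down the tensored presentation and shows it agrees with the minimal $\Lambda/J$-presentation up to the kind of projective-summand splitting that preserves surjectivity of $\Hom(-,M)$. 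Transferring back through the Hom identification then gives $\Hom_\Lambda(M,\tau N)=0$. The main obstacle is precisely this bookkeeping about which projective summands survive the reduction, and the idempotent hypothesis is genuinely necessary: the example $\Lambda=K[x]/(x^2)$ with $J=\rad\Lambda$ satisfies $\Hom_{\Lambda/J}(K,\tau_{\Lambda/J}K)=0$ trivially while $\Hom_\Lambda(K,\tau K)\neq 0$, so any argument for the converse must exploit something beyond mere two-sidedness of $J$.
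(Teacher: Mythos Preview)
The paper does not prove this proposition: it is quoted verbatim from \cite[Lemma~2.1]{AIR} and used as a black box. So there is no in-paper argument to compare against, and your task reduces to whether your sketch actually works.

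Your overall strategy---the Hom-surjectivity criterion on a minimal projective presentation, combined with the adjunction $\Hom_\Lambda(P,M)\cong\Hom_{\Lambda/J}(P\otimes_\Lambda\Lambda/J,M)$ for $\Lambda/J$-modules $M$---is exactly the standard one, and your forward implication is fine: surjectivity of $\Hom(\bar d,M)$ for the (possibly non-minimal) reduced presentation certainly implies surjectivity for its minimal summand.

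The converse is where your write-up is too casual. You correctly observe that the summands $e_i\Lambda$ of $P_1$ with $e_i$ a summand of $e$ die under $-\otimes_\Lambda\Lambda/J$, but you then assert that what remains ``agrees with the minimal $\Lambda/J$-presentation up to the kind of projective-summand splitting that preserves surjectivity of $\Hom(-,M)$''. That is the whole point, and it needs proof: if $\bar P_1\cong Q_1\oplus Q'$ with $\bar d|_{Q'}=0$ and $Q'\neq 0$, then surjectivity for the minimal presentation $Q_1\to Q_0$ does \emph{not} transfer back, because you would also need $\Hom_{\Lambda/J}(Q',M)=0$. What actually happens is that $Q'=0$, i.e.\ the reduction of the minimal $\Lambda$-presentation is already minimal over $\Lambda/J$. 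One clean way to see this: if $x'\in P_1'\setminus\rad P_1'$ had $d(x')\in P_0J=P_0e\Lambda=\operatorname{tr}_{e\Lambda}(P_0)$, then lifting each generator of $d(x')$ through the projective cover $d:P_1\twoheadrightarrow\Omega_\Lambda N$ via maps $e\Lambda\to P_1$ produces $y\in P_1e\Lambda=P_1J$ with $d(y)=d(x')$, forcing $x'\in\ker d+P_1J\subseteq\rad P_1+P_1J$, a contradiction. With this in hand, your argument goes through. Your counterexample for non-idempotent $J$ is apt and shows you understand where the hypothesis enters.
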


Denote by $\fac M$ the category of factor modules of finite direct sums of copies of $M$.
Then the notion of support $\tau$-tilting posets is given by the following result.
\begin{definitiontheorem}[{\cite[Lemma 2.25]{AIR}}]
For support $\tau$-tilting modules $M$ and $M'$, we write $M\geq M' $ if $\fac M\supseteq \fac M'$.
 Then the following are equivalent.
\begin{enumerate}
\item $M\geq M'$.
\item  $\Hom_{\Lambda}(M',\tau M)=0$ and $\supp(M)\supseteq \supp(M')$.
\end{enumerate}
Moreover, $\geq$ gives a partial order on $\sttilt\Lambda$.
\end{definitiontheorem} 

Next we consider a relationship between
the support $\tau$-tilting poset of $\Lambda$ and that of $\Lambda^{\mathrm{op}}$.
\begin{proposition}[{\cite[Theorem\;2.14,\;Proposition\;2.27]{AIR}}]
\label{revers}
Let $M\oplus P^-=M_{\mathrm{np}}\oplus M_{\mathrm{pr}}\oplus P^-$ be a $\tau$-tilting pair with
$M_{\mathrm{pr}}$ being a maximal projective direct summand of $M$. 
We put $(M\oplus P^-)^{\dagger}:=\Tr M_{\mathrm{np}}\oplus P^*\oplus (M_{\mathrm{pr}}^*)^-$,
where $(-)^*=\Hom_{\Lambda}(-,\Lambda):\proj \Lambda\to \proj \Lambda^{\mathrm{op}}$.
Then $(M\oplus P^-)^{\dagger}$ is a $\tau$-tilting pair. Moreover, $(-)^{\dagger}$
gives a poset anti-isomorphism from $\sttilt \Lambda$ to $\sttilt \Lambda^{\mathrm{op}}$.
\end{proposition}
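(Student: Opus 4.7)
The plan is to verify three things in turn: (a) $(-)^{\dagger}$ sends $\tau$-tilting pairs of $\Lambda$ to $\tau$-tilting pairs of $\Lambda^{\mathrm{op}}$; (b) $(-)^{\dagger}$ is involutive once the analogous construction is applied on $\Lambda^{\mathrm{op}}$; and (c) $(-)^{\dagger}$ reverses the partial order. The main tools I would use are the transpose $\Tr\colon \smod\Lambda\to\smod\Lambda^{\mathrm{op}}$ (a duality on stable module categories inducing a bijection on non-projective indecomposables with $\Tr\Tr\cong\mathrm{id}$), the duality $(-)^{*}=\Hom_{\Lambda}(-,\Lambda)$ exchanging indecomposable projectives of $\Lambda$ and $\Lambda^{\mathrm{op}}$, and the Auslander--Reiten formula $\tau M\cong D\Tr M$ (with $D=\Hom_{K}(-,K)$), which gives
\[\Hom_{\Lambda}(N,\tau M)\cong D(N\otimes_{\Lambda}\Tr M)\]
whenever $M$ has no projective summand.

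For (a), write $(M\oplus P^{-})^{\dagger}=\Tr M_{\mathrm{np}}\oplus P^{*}\oplus (M_{\mathrm{pr}}^{*})^{-}$. The count $|(M\oplus P^{-})^{\dagger}|=|M_{\mathrm{np}}|+|P|+|M_{\mathrm{pr}}|=|\Lambda|$ is immediate, since $\Tr$ preserves the number of indecomposable non-projective summands and $(-)^{*}$ preserves the number of indecomposable projective summands. The $\tau$-rigidity of $\Tr M_{\mathrm{np}}\oplus P^{*}$ splits into three checks: the projective $P^{*}$ contributes nothing via $\tau$; the self-piece $\Hom_{\Lambda^{\mathrm{op}}}(\Tr M_{\mathrm{np}},\tau \Tr M_{\mathrm{np}})=0$ follows from a symmetric variant of the AR-formula applied to $\Hom_{\Lambda}(M_{\mathrm{np}},\tau M_{\mathrm{np}})=0$; and the cross term $\Hom_{\Lambda^{\mathrm{op}}}(P^{*},\tau\Tr M_{\mathrm{np}})=0$ translates, after identifying $\top(\tau\Tr M_{\mathrm{np}})$ with the appropriate part of a minimal projective presentation, into the hypothesis $\add P\subset\add(1-e_{M})\Lambda$. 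The projective compatibility $\add M_{\mathrm{pr}}^{*}=\add(1-e_{\Tr M_{\mathrm{np}}\oplus P^{*}})\Lambda^{\mathrm{op}}$ then reduces to computing $\supp(\Tr M_{\mathrm{np}})$ via the top of that minimal projective presentation and combining with $\supp(P)$.

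Involutivity (b) follows from $\Tr\Tr M_{\mathrm{np}}\cong M_{\mathrm{np}}$, $(P^{*})^{*}\cong P$, and $(M_{\mathrm{pr}}^{*})^{*}\cong M_{\mathrm{pr}}$, plus the fact that $\Tr M_{\mathrm{np}}$ has no projective summand (from its definition via a minimal projective presentation), so the decomposition of $(M\oplus P^{-})^{\dagger}$ into non-projective and maximal projective parts is correctly preserved under a second application of the construction. For the order-reversing property (c), I would start from the criterion in the preceding Definition--Theorem: $M\geq M'$ is equivalent to $\Hom_{\Lambda}(M',\tau M)=0$ together with $\supp M\supseteq \supp M'$. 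The Hom-vanishing (restricted to non-projective parts) translates via the AR-formula into $\Hom_{\Lambda^{\mathrm{op}}}(\Tr M_{\mathrm{np}},\tau\Tr M'_{\mathrm{np}})=0$, while the support containment dualizes into the reverse containment of the supports of the projective (negative) parts after $(-)^{*}$, which is exactly the criterion for $(M')^{\dagger}\geq M^{\dagger}$ in $\sttilt\Lambda^{\mathrm{op}}$. The main obstacle I expect is the bookkeeping of projective summands under $\Tr$ and the correct handling of cross terms between projective and non-projective parts; in particular, ensuring that no projective summand accidentally appears in $\Tr M_{\mathrm{np}}$, and that the support conditions translate cleanly so that the split $M=M_{\mathrm{np}}\oplus M_{\mathrm{pr}}$ plays well with the formula $\tau M\cong D\Tr M$, which requires restricting $\Tr$ to the genuinely non-projective part.
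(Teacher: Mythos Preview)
The paper does not give its own proof of this proposition; it is quoted verbatim as \cite[Theorem~2.14, Proposition~2.27]{AIR} and used as a black box, so there is nothing in the paper to compare your proposal against.

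Your sketch is a reasonable outline of the direct module-theoretic argument. One remark: in \cite{AIR} the order-reversal (your step (c)) is not carried out entirely by translating the Hom-vanishing and support conditions through the AR-formula as you suggest, but rather by passing through the bijection with two-term silting complexes (Theorem~\ref{bijection} here), under which $(-)^{\dagger}$ becomes the duality $R\Hom_{\Lambda}(-,\Lambda)\colon \Kb(\proj\Lambda)\to \Kb(\proj\Lambda^{\mathrm{op}})$; the anti-isomorphism of silting posets is then immediate from $\Hom(T,T'[1])=0\Leftrightarrow \Hom(T'^{*},T^{*}[1])=0$. Your direct approach is workable, but the cross-term and support bookkeeping you flag as obstacles is exactly where it gets delicate, and the silting reformulation bypasses that.
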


A $\tau$-rigid pair $X$ is said to be 
{\bf almost complete $\tau$-tilting} provided it satisfies $|X|=|\Lambda|-1$.
Then the mutation of support $\tau$-tilting modules is formulated by the following theorem.
\begin{theorem}\label{basicprop}
\begin{enumerate}[{\rm (1)}]
\item \cite[Theorem\;2.18]{AIR} 
Let $X$ be a basic almost complete $\tau$-tilting pair. Then there are exactly two basic support $\tau$-tilting modules
$T$ and $T'$ such that $X$ is a direct summand of $T\oplus (1-e_T) \Lambda^-$ and $T'\oplus (1-e_{T'})\Lambda^-$. 
\item \cite[Corollary\;2.34]{AIR} Let $T$ and $T'$ be basic support $\tau$-tilting modules.
Then $T$ and $T'$ are connected by an arrow of $\H(\sttilt\Lambda)$ if and only if $T\oplus (1-e_T)\Lambda^-$ and $T'\oplus (1-e_{T'})\Lambda^-$
 have a common basic almost complete $\tau$-tilting pair as a direct summand.
In particular, $\sttilt\Lambda$ is $|\Lambda|$-regular. 
\item\cite[Theorem\;2.35]{AIR} Let $T,T'\in\sttilt \Lambda$. If $T<T'$, 
then there is a direct predecessor $U$ of $T$ (resp. a direct successor $U'$ of $T'$) 
such that $U\leq T'$ (resp. $T\leq U'$). 
\item \cite[Corollary\;2.38]{AIR} If $\H(\sttilt\Lambda)$ has a finite connected component $\mathcal{C}$, then
$\mathcal{C}=\H(\sttilt\Lambda)$. 
\end{enumerate}   
\end{theorem}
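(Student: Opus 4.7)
The plan is to deduce all four parts from a single mutation framework built on the basic input from Proposition \ref{basicfact} and the correspondence between $\tau$-rigid modules and torsion classes. The central tool will be a Bongartz-type completion: given any basic $\tau$-rigid pair $(M,P)$, one can enlarge $M$ to a support $\tau$-tilting module by adjoining a carefully chosen module built from $\fac M$, and this produces a canonical ``upper'' completion relative to the partial order $\geq$.

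For \textup{(1)}, I would fix a basic almost complete $\tau$-tilting pair $X$ and split into cases depending on whether the missing summand is required to be a module or a shifted projective. In either case, the ``upper'' completion is obtained as the Bongartz completion relative to $\fac M$. For the second completion, I would take the indecomposable direct summand $Y$ absent from the first completion and replace it by the cokernel of a minimal left $\fac M$-approximation of $Y$ (or, dually, by the kernel of a minimal right approximation on the projective side); verifying that this replacement remains $\tau$-rigid requires the Hom-$\tau$ vanishing to be checked against the new summand. The essential uniqueness claim — precisely two completions — would follow by showing that any completion is determined by its image in the poset of functorially finite torsion classes and that the almost complete datum cuts out exactly two such torsion classes.

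For \textup{(2)}, the ``if'' direction is immediate from \textup{(1)}: the two completions of a shared almost complete summand must occupy adjacent positions in $\H(\sttilt\Lambda)$, since no support $\tau$-tilting module lies strictly between them. For the ``only if'' direction, an arrow $T\to T'$ forces $T$ and $T'$ to differ in exactly one indecomposable summand — otherwise one could interpolate a common refinement by applying \textup{(1)} to any intermediate almost complete pair — and that common part is the desired almost complete $\tau$-tilting direct summand. Regularity then follows because each of the $|\Lambda|$ indecomposable summands of $T\oplus(1-e_T)\Lambda^-$ can be removed to form a distinct almost complete pair, each giving exactly one neighbour of $T$.

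For \textup{(3)}, I would argue by analyzing the set of direct predecessors of $T$. Using \textup{(1)}, each direct predecessor corresponds to mutating out a single summand of $T\oplus(1-e_T)\Lambda^-$; the combinatorial content is that at least one such mutation must produce a module still bounded above by $T'$, and this is where a $\Hom(T',\tau-)$ calculation enters, forcing one mutation direction to be compatible with $T'$. The dual statement for successors of $T'$ is symmetric via Proposition \ref{revers}. Part \textup{(4)} then follows formally: if $\mathcal C$ is a finite connected component and $\mathcal C\neq \H(\sttilt\Lambda)$, pick $T\in\mathcal C$ and any $T''\notin\mathcal C$ comparable to something in $\mathcal C$; iterating \textup{(3)} produces a path inside $\mathcal C$ whose endpoint acquires a neighbour outside $\mathcal C$, contradicting the ``component'' assumption. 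The main obstacle I expect is \textup{(1)}: constructing the second completion as a legitimate support $\tau$-tilting module, rather than merely a $\tau$-rigid pair, and ruling out extraneous completions coming from non-functorially finite torsion classes.
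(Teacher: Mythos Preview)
The paper does not prove this theorem: the statement is a compendium of results quoted from \cite{AIR}, each part carrying its own citation, and the paper simply invokes them as background. There is no ``paper's own proof'' to compare against.

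That said, your outline is a fair summary of the strategy actually used in \cite{AIR}. A few remarks on accuracy. For (1), the second completion is indeed obtained by a left-approximation replacement (mutation), but the uniqueness argument in \cite{AIR} does not go through torsion classes; it is a direct count using the fact that the $g$-vectors of the indecomposable summands of a $\tau$-tilting pair form a basis of $K_0(\proj\Lambda)$ and a sign-coherence argument. Your torsion-class route would work as well, but it is not what is done there. For (2), your ``only if'' sketch is slightly circular: you cannot yet appeal to (1) to interpolate, because you have not established that Hasse-adjacent implies differing in one summand. The actual argument shows that $T>T'$ is a cover relation in $\sttilt\Lambda$ if and only if $\fac T\supsetneq\fac T'$ with no functorially finite torsion class strictly between, and then identifies this with a single mutation. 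For (3), the precise mechanism is: if $T<T'$ and $T\neq T'$, there is an indecomposable summand $X$ of the pair for $T$ with $\Hom_\Lambda(T',\tau X)\neq 0$ or $\Hom_\Lambda(X,T')\neq 0$ (projective case), and mutating at that summand moves upward while remaining $\leq T'$. Your phrasing ``at least one mutation direction compatible with $T'$'' is the right intuition. Part (4) is exactly as you describe.
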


For a basic $\tau$-rigid pair $N\oplus R^-$, we define
\[\sttilt_{N\oplus R^-}\Lambda:=\{T\in \sttilt\Lambda\mid N\in \add T,\;\Hom_{\Lambda}(R, T)=0\},\]
equivalently, which consists of all support $\tau$-tilting modules $T$ such that $T\oplus (1-e_T)\Lambda^-$ has $N\oplus R^-$ as a direct summand. 
For simplicity, we omit 0 if $N=0$ or $R=0$.
\begin{definitiontheorem}[{\cite[Theorem\;2.10]{AIR}}]
Let $X$ be a $\tau$-rigid pair. Then there is the maximum element of $\sttilt_{X}\Lambda$.
We call this maximum element the {\bf Bongartz completion} of $X$.
\end{definitiontheorem}
Given an idempotent $e=e_{i_1}+\cdots+e_{i_\ell}$ of $\Lambda$ so that $R=e\Lambda$,
we see that $M$ belongs to $\sttilt_{R^-}\Lambda$ if and only if it is a basic support $\tau$-tilting module
with $\supp(M)\subset Q_0\setminus\{i_1,\dots,i_\ell\}$ (or equivalently, $M$ is a $\Lambda/(e)$-module).
Hence, Proposition\;\ref{basicfact} leads to an equality $\sttilt_{R^-}\Lambda=\sttilt\Lambda/(e)$.
More generally, we have the following reduction theorem.
\begin{theorem}[{\cite{J}}]
\label{reduction theorem}
Let $X=N\oplus R^-$ be a basic  $\tau$-rigid pair and let $T$ be the Bongartz completion of $X$.
 If we set $\Gamma=\Gamma_X:=\End_{\Lambda}(T)/( e )$, then we have
  $|\Gamma|=|\Lambda|-|X|$ and 
  \[\sttilt_X \Lambda\simeq \sttilt\Gamma,\]
   where $e$ is the idempotent corresponding to the projective $\End_{\Lambda}(T)$-module $\Hom_{\Lambda}(T,N)$.
  \end{theorem}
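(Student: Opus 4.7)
The plan is to follow Jasso's reduction \cite{J} via torsion-theoretic methods, in three stages.

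\textbf{Counting.} Since $T$ is the Bongartz completion of the basic $\tau$-rigid pair $X = N \oplus R^-$, it is a $\tau$-tilting pair of $\Lambda$ whose module part $T^{\circ}$ contains $N$ as a summand and whose removed projective part is $R$. By definition of a $\tau$-tilting pair, $|T^{\circ}| + |R| = |\Lambda|$. Writing $B := \End_\Lambda(T^{\circ})$, there is one simple $B$-module for each indecomposable summand of $T^{\circ}$, and the idempotent $e$ corresponding to $\Hom_\Lambda(T^{\circ}, N)$ has rank $|N|$. Hence
\[
|\Gamma| = |B| - |e| = (|\Lambda| - |R|) - |N| = |\Lambda| - |X|.
\]

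\textbf{The bijection.} I would consider the torsion pair $(\fac T^{\circ},\, (T^{\circ})^{\perp})$ and the ``reduction subcategory'' $\mathcal{W} := \fac T^{\circ} \cap {}^{\perp}\bigl((T^{\circ})^{\perp}\bigr)$. The functor $\Hom_\Lambda(T^{\circ}, -)$ restricts to an equivalence $\mathcal{W} \xrightarrow{\sim} \mod B$ sending $N$ to the projective $B$-module $eB$, and therefore descends to an equivalence $\mathcal{W}/\add N \simeq \mod \Gamma$. Any $U \in \sttilt_X \Lambda$ satisfies $U \leq T$, so $U \in \fac T^{\circ}$; combined with $N \in \add U$ and the support $\tau$-tilting condition, this forces $U \in \mathcal{W}$. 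The map $\Phi : \sttilt_X \Lambda \to \sttilt \Gamma$ sends $U = N \oplus U'$ to the image of $U'$ in $\mod \Gamma$. The inverse $\Psi$ takes a support $\tau$-tilting $\Gamma$-module, lifts it to a $\tau$-rigid module in $\mathcal{W}$ through the equivalence, and adjoins $N$.

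\textbf{Main obstacle.} The technical heart is verifying that $\Phi$ and $\Psi$ preserve the support $\tau$-tilting property and are mutually inverse. This amounts to comparing the Auslander-Reiten translates $\tau_\Lambda$ and $\tau_\Gamma$ on $\mathcal{W}$: one must show that $\tau$-rigidity transfers across the equivalence, using Proposition \ref{basicfact} to pass between $\Lambda$ and $\Gamma$ and Auslander-Reiten duality relative to the torsion pair $(\fac T^{\circ}, (T^{\circ})^{\perp})$. Granted this compatibility, the counting identity from the first step guarantees that the support $\tau$-tilting condition $|M| = \#\supp(M)$ is preserved on both sides. Order preservation is then immediate, since $\Phi$ is induced by a functor compatible with $\fac$ on the torsion class $\fac T^{\circ}$.
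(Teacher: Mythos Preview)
The paper does not prove this theorem; it is quoted from Jasso \cite{J} and used as a black box throughout. There is therefore no proof in the paper to compare your proposal against.

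That said, your sketch follows the broad shape of Jasso's argument, and the counting step is correct. However, your definition of the reduction subcategory is flawed: since $(\fac T^{\circ},\,(T^{\circ})^{\perp})$ is a torsion pair, one has ${}^{\perp}\bigl((T^{\circ})^{\perp}\bigr)=\fac T^{\circ}$, so your $\mathcal{W}$ collapses to $\fac T^{\circ}$ itself. The functor $\Hom_\Lambda(T^{\circ},-)$ is \emph{not} an equivalence from $\fac T^{\circ}$ to all of $\mod B$; by Brenner--Butler tilting it lands only in a torsion-free class of $\mod B$. Jasso's actual reduction uses a genuinely smaller subcategory---the $\tau$-perpendicular category of $N$ inside $R^{\perp}$, which one may also describe as $\fac T\cap T_{\mathrm{min}}^{\perp}$ where $T_{\mathrm{min}}$ is the co-Bongartz completion---and the bijection is most cleanly phrased at the level of torsion classes: functorially finite torsion classes of $\Lambda$ lying in the interval $[\fac T_{\mathrm{min}},\fac T]$ correspond bijectively and order-preservingly to those of $\Gamma$. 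Your ``main obstacle'' paragraph is honest about where the real work lies, but the map $\Phi$ as you have set it up cannot be defined until the domain $\mathcal{W}$ is corrected.
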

 Theorem\;\ref{reduction theorem} implies that for an idempotent $e\in \Lambda$, we have a poset isomorphism
 \[\sttilt_{e\Lambda} \Lambda\simeq \sttilt \Lambda/(e).\] 
 In fact, the Bongarts completion of $e\Lambda$ is $\Lambda$ and $\Gamma_{e\Lambda}\cong \Lambda/( e )$.
 \subsection{$\tau$-tilting finite algebras}
An algebra $\Lambda$ is said to be  {\bf $\tau$-tilting finite} if
one of the following equivalent conditions holds:
\begin{itemize}
\item $\# \sttilt \Lambda<\infty$.
\item $\# \ttilt \Lambda<\infty$.
\item $\#\trigid \Lambda<\infty$.
\end{itemize}

In \cite{DIJ}, $\tau$-tilting finite algebras are characterized via the torsion theory.   
A full subcategory $\mathcal{T}$ of $\mod \Lambda$ which is closed under factor modules and extensions
is called a {\bf torsion class} in $\mod \Lambda$. $\mathcal{T}$ is said to be {\bf functorally finite}
if for any $M\in \mod \Lambda$, there are $f\in \Hom_{\Lambda}(X,M)$ and $g\in \Hom_{\Lambda}(M,Y)$ with 
$X,Y\in \mathcal{T}$ such that $\Hom_{\Lambda}(N,f):\Hom_{\Lambda}(N,X)\to \Hom_{\Lambda}(N,M)$
and $\Hom_{\Lambda}(g,N):\Hom_{\Lambda}(Y,N)\to \Hom_{\Lambda}(M,N)$ are surjective for all $N\in \mathcal{T}$.

\begin{proposition}[{\cite[Proposition\;4.6]{AS}}]
\label{faceqfuncfinite}
An additive subcategory $\mathcal{T}$ of $\mod \Lambda$ is functorially finite if and only if there exists $M\in \mod \Lambda$ such that
$\mathcal{T}=\fac M$
\end{proposition}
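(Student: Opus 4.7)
The plan is to establish the equivalence in two directions, relying on the torsion-class structure of $\fac M$ throughout.

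For the \emph{if} direction, suppose $\mathcal{T}=\fac M$ for some $M\in\mod\Lambda$. First I would verify $\fac M$ is a torsion class: closure under quotients is immediate, while closure under extensions follows by splicing surjections $M^a\twoheadrightarrow A$ and $M^c\twoheadrightarrow C$ into a surjection $M^{a+c}\twoheadrightarrow B$ for any extension $0\to A\to B\to C\to 0$ of modules in $\fac M$. Contravariant finiteness is then witnessed by the trace construction: since $\Lambda$ is finite dimensional, $\Hom_\Lambda(M,N)$ is finite dimensional for every $N$, so choosing a basis $f_1,\dots,f_r$ produces a map $f=(f_1,\dots,f_r):M^r\to N$ whose image $\mathrm{tr}_M(N)$ lies in $\fac M$. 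Any morphism $h:X\to N$ with $X\in\fac M$ has $\Im(h)\subseteq\mathrm{tr}_M(N)$, so $h$ factors through the inclusion $\mathrm{tr}_M(N)\hookrightarrow N$, which is therefore a right $\fac M$-approximation. Covariant finiteness is more delicate; the strategy is to set up the torsion pair $(\fac M,\mathcal{F})$ with $\mathcal{F}=\{Y\in\mod\Lambda:\Hom_\Lambda(M,Y)=0\}$, verify ${}^{\perp}\mathcal{F}=\fac M$ by applying the trace construction to any $X\in{}^{\perp}\mathcal{F}$, and then construct left approximations of an arbitrary $N$ via a pushout along the canonical torsion short exact sequence, with finite-dimensionality of $\Lambda$ ensuring termination.

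For the \emph{only if} direction, assume $\mathcal{T}$ is functorially finite. The idea is to assemble $M$ from left $\mathcal{T}$-approximations of indecomposable projectives. Let $P_1,\dots,P_n$ be a complete set of indecomposable projective $\Lambda$-modules and fix left $\mathcal{T}$-approximations $g_i:P_i\to Y_i$ with $Y_i\in\mathcal{T}$; set $M:=Y_1\oplus\cdots\oplus Y_n$. Since $M\in\mathcal{T}$ and $\mathcal{T}$ is closed under quotients, $\fac M\subseteq\mathcal{T}$. For the reverse inclusion, given $X\in\mathcal{T}$ pick a projective surjection $\pi:P\twoheadrightarrow X$ with $P=\bigoplus_i P_i^{m_i}$; the left-approximation property forces a factorization $\pi=\phi\circ\bigoplus_i g_i^{m_i}$ through $\bigoplus_i Y_i^{m_i}$, which is a direct summand of $M^{\max_i m_i}$, and surjectivity of $\pi$ makes $\phi$ surjective, so $X\in\fac M$.

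The main obstacle will be covariant finiteness of $\fac M$ in the \emph{if} direction: the trace construction yields right approximations essentially for free, whereas producing left approximations genuinely requires the torsion-pair machinery together with finite-dimensionality, unlike the \emph{only if} direction which reduces cleanly to applying the approximation property to projective covers.
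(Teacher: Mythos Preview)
The paper does not supply its own proof of this proposition; it is quoted with a citation to Auslander--Smal\o, so there is no paper argument to compare yours against.

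That said, there is a genuine gap, and it originates in the statement itself. As literally written (``an additive subcategory''), the claim is false: if $P$ is an indecomposable projective with $\rad P\neq 0$, then $\add P$ is a functorially finite additive subcategory of $\mod\Lambda$ but is not closed under quotients, so it cannot equal $\fac M$ for any $M$. The intended hypothesis, clear from the paper's surrounding paragraph and from the cited source, is that $\mathcal{T}$ is a \emph{torsion class}. Your ``only if'' argument smuggles this in without comment: you write ``since $M\in\mathcal{T}$ and $\mathcal{T}$ is closed under quotients, $\fac M\subseteq\mathcal{T}$'', but closure under quotients is precisely what fails for a general functorially finite additive subcategory.

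Once the torsion-class hypothesis is in place, your outline follows the standard Auslander--Smal\o\ route and is fine in the ``only if'' direction. For the ``if'' direction you correctly identify covariant finiteness of $\fac M$ as the crux, but your sketch (``pushout along the torsion short exact sequence, with finite-dimensionality ensuring termination'') is too vague to count as a proof; the actual argument in Auslander--Smal\o\ is more delicate than a single pushout, and you should either spell it out or cite it.
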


\begin{theorem}[{\cite[Theorem\;3.8]{DIJ}}]
\label{tautiltfinitealgebra}
$\Lambda$ is a $\tau$-tilting finite algebra if and only if every torsion classes in $\mod \Lambda$
are functorially finite.
\end{theorem}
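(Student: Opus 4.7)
The plan is to exploit the Adachi--Iyama--Reiten correspondence $M\mapsto \fac M$ between $\sttilt\Lambda$ and the set of functorially finite torsion classes of $\mod\Lambda$. Together with Proposition \ref{faceqfuncfinite}, this reduces the theorem to showing that $\sttilt\Lambda$ is finite if and only if every torsion class in $\mod\Lambda$ has the form $\fac M$.

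For the ``if'' direction I would argue by contrapositive: suppose $\sttilt\Lambda$ is infinite. By Theorem \ref{basicprop}(2) the Hasse quiver $\H(\sttilt\Lambda)$ is $|\Lambda|$-regular, hence locally finite, and by Theorem \ref{basicprop}(4) the connected component containing $\Lambda$ must itself be infinite. K\"onig's lemma then produces an infinite undirected path starting at $\Lambda$; passing to a subsequence with edges of a common Hasse orientation and using Theorem \ref{basicprop}(3) to fill in comparabilities, one extracts an infinite chain $T_1<T_2<\cdots$ in $\sttilt\Lambda$. Setting
\[
\mathcal{T}_\infty := \bigcup_{i\geq 1}\fac T_i,
\]
this filtered union of torsion classes is again closed under factor modules and extensions, so is a torsion class. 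If it were functorially finite, Proposition \ref{faceqfuncfinite} would give $\mathcal{T}_\infty = \fac M$ for some $M$; then $M\in\fac T_{i_0}$ for some $i_0$, whence $\mathcal{T}_\infty=\fac M\subset\fac T_{i_0}\subsetneq\fac T_{i_0+1}\subset\mathcal{T}_\infty$, a contradiction.

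For the ``only if'' direction, suppose $\sttilt\Lambda$ is finite and let $\mathcal{T}$ be any torsion class. Among the (finitely many) functorially finite torsion classes contained in $\mathcal{T}$, pick a maximal one $\fac M^*$. The key step is to prove $\fac M^*=\mathcal{T}$. If not, choose $X\in\mathcal{T}\setminus\fac M^*$ and use the mutation theory of Theorem \ref{basicprop} together with the reduction of Theorem \ref{reduction theorem} (passing to the interval algebra $\End_{\Lambda}(T)/(e)$ attached to a suitable $\tau$-rigid pair built from $M^*$ and $X$) to produce a left mutation $\mu(M^*)\in \sttilt\Lambda$ with $\fac M^*\subsetneq \fac \mu(M^*)\subset \mathcal{T}$, contradicting maximality of $\fac M^*$.

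The main obstacle lies in this last step: extracting from the witness $X$ a genuine new $\tau$-rigid summand whose addition keeps the enlarged torsion class inside $\mathcal{T}$. This is essentially the brick/$\tau$-rigid labeling of Hasse arrows (developed by Demonet--Iyama--Jasso themselves), where one shows that arrows in $\H(\sttilt\Lambda)$ from $M^*$ are labelled by bricks lying in appropriately ``small'' torsion classes, and a witness $X\in\mathcal{T}\setminus \fac M^*$ forces at least one of these bricks to still lie in $\mathcal{T}$; this is where the representation-theoretic depth of the result concentrates.
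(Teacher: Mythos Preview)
The paper does not supply a proof of this statement: Theorem~\ref{tautiltfinitealgebra} is quoted from \cite[Theorem~3.8]{DIJ} and then used as a black box (for instance in the proof of Lemma~\ref{tautiltingfiniteness}) without any argument. There is therefore nothing in the paper to compare your attempt against.

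On its own merits, your sketch is broadly in the spirit of the DIJ argument, but the ``if'' direction has a soft spot. K\"onig's lemma only yields an infinite \emph{undirected} ray in $\H(\sttilt\Lambda)$, and ``passing to a subsequence with edges of a common Hasse orientation'' does not by itself produce an infinite monotone chain: a ray can zig-zag up and down indefinitely, and Theorem~\ref{basicprop}(3) does not convert such a ray into a chain. A cleaner extraction uses Theorem~\ref{basicprop}(3) directly: since $(0,\Lambda]=\bigcup_i[X_i,\Lambda]$, some $[X_{i_1},\Lambda]$ is infinite; iterating from $X_{i_1}$ (which has only finitely many direct predecessors by $n$-regularity) gives an infinite strictly ascending chain $0<T_1<T_2<\cdots$, after which your filtered-union argument with $\mathcal T_\infty=\bigcup_i\fac T_i$ goes through verbatim. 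For the ``only if'' direction you have correctly located the real difficulty: producing from a witness $X\in\mathcal T\setminus\fac M^*$ a Hasse arrow out of $M^*$ whose target still lies below $\mathcal T$ is precisely the brick-labelling of mutation edges developed in \cite{DIJ}, so your concession that this is where the depth concentrates is accurate rather than evasive.
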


The following lemma is a direct consequence of Proposition\;\ref{faceqfuncfinite} and Theorem\;\ref{tautiltfinitealgebra}. 
\begin{lemma}\label{tautiltingfiniteness}
Let $\Lambda=KQ/I$ be a $\tau$-tilting finite algebra and $\Gamma$ a factor algebra of $\Lambda$.
Then $\Gamma$ is also $\tau$-tilting finite. In particular, there are no multiple arrows in $Q\setminus\{\mathrm{loops}\}$. 
\end{lemma}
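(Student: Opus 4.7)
The plan is to split the statement into its two assertions and handle them in turn. The first assertion, that $\tau$-tilting finiteness is inherited by factor algebras, I would deduce from Theorem~\ref{tautiltfinitealgebra} by lifting torsion classes in $\mod\Gamma$ to torsion classes in $\mod\Lambda$, and then pushing functorial finiteness back down via Proposition~\ref{faceqfuncfinite}. The second assertion will then follow by a Kronecker contradiction that uses the first.

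For the first assertion, write $\Gamma=\Lambda/J$. Given a torsion class $\mathcal{T}'$ in $\mod\Gamma$, I define
\[
\mathcal{T}:=\{\,X\in\mod\Lambda\mid X/XJ\in\mathcal{T}'\,\}.
\]
I would verify that $\mathcal{T}$ is a torsion class in $\mod\Lambda$. Closure under $\Lambda$-quotients is immediate, since any surjection $X\twoheadrightarrow Y$ induces a surjection $X/XJ\twoheadrightarrow Y/YJ$. For closure under extensions, a short exact sequence $0\to M_1\to M\to M_2\to 0$ in $\mod\Lambda$ with $M_1,M_2\in\mathcal{T}$ produces a short exact sequence
\[
0\to M_1/(M_1\cap MJ)\to M/MJ\to M_2/M_2J\to 0
\]
in $\mod\Gamma$ whose outer terms are $\Gamma$-quotients of $M_1/M_1J$ and $M_2/M_2J$ respectively, hence lie in $\mathcal{T}'$; extension-closure of $\mathcal{T}'$ in $\mod\Gamma$ then places $M/MJ$ in $\mathcal{T}'$. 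Since $\Lambda$ is $\tau$-tilting finite, Theorem~\ref{tautiltfinitealgebra} and Proposition~\ref{faceqfuncfinite} yield $\mathcal{T}=\fac M$ for some $M\in\mod\Lambda$, and
\[
\mathcal{T}'=\mathcal{T}\cap\mod\Gamma=\fac_\Lambda M\cap\mod\Gamma=\fac_\Gamma(M/MJ),
\]
using $XJ=0$ for $X\in\mod\Gamma$. Thus $\mathcal{T}'$ is functorially finite in $\mod\Gamma$ by Proposition~\ref{faceqfuncfinite}, and Theorem~\ref{tautiltfinitealgebra} applied to $\Gamma$ delivers $\tau$-tilting finiteness.

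For the second assertion I argue by contradiction. Suppose $Q$ admits two distinct arrows $\alpha,\beta\colon i\to j$ with $i\neq j$. Let $J'$ be the two-sided ideal of $\Lambda$ generated by $\{e_k\mid k\in Q_0,\ k\neq i,j\}$ together with the images of all arrows in $Q_1$ other than $\alpha$ and $\beta$, and set $\Gamma:=\Lambda/J'$. Any would-be path of length $\geq 2$ in $\Gamma$ must compose $\alpha$ or $\beta$ with an arrow out of $j$, which has been killed; admissibility of $I$ further ensures that $I\subset\rad^2 KQ$ maps to $0$ in the resulting algebra. Hence $\Gamma$ is the Kronecker path algebra, a hereditary algebra of affine type $\widetilde{A}_1$, which is well known to be $\tau$-tilting infinite. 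This contradicts the first assertion applied to the factor algebra $\Gamma$.

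The main technical obstacle is the lift from $\mathcal{T}'$ to $\mathcal{T}$, since a torsion class in $\mod\Gamma$ need not be extension-closed in $\mod\Lambda$: an extension of two $\Gamma$-modules is in general only annihilated by $J^2$. The prescription $\mathcal{T}=\{X\mid X/XJ\in\mathcal{T}'\}$ circumvents this, after which the Kronecker reduction and the identification $\fac_\Lambda M\cap\mod\Gamma=\fac_\Gamma(M/MJ)$ are essentially routine bookkeeping.
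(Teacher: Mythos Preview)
Your argument is correct and is essentially the paper's own proof: the paper lifts a torsion class $\mathcal{T}$ in $\mod\Gamma$ to $\widehat{\mathcal{T}}=\{X\in\mod\Lambda\mid X\otimes_\Lambda\Gamma\in\mathcal{T}\}$, which is your $\{X\mid X/XJ\in\mathcal{T}'\}$ written in tensor notation, and then descends $\fac M$ to $\fac(M\otimes_\Lambda\Gamma)$ exactly as you do. Your Kronecker contradiction for the ``in particular'' clause is the intended reading; the paper simply leaves that step implicit.
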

\begin{proof}
Let $\mathcal{T}$ be a torsion class in $\mod \Gamma$ and
$\widehat{\mathcal{T}}:=\{X\in \mod \Lambda\mid X\otimes_{\Lambda} \Gamma\in \mathcal{T}\}$.
It is easy to check that $\widehat{\mathcal{T}}$ is a torsion class in $\mod \Lambda$.
Since $\Lambda$ is $\tau$-tilting finite, $\widehat{\mathcal{T}}$ is functorially finite by Theorem\;\ref{tautiltfinitealgebra}.
Then Proposition\;\ref{faceqfuncfinite} says that there exists $M\in \widehat{\mathcal{T}}$ such that $\fac M=\widehat{\mathcal{T}}$.
This implies $\fac(M\otimes_{\Lambda} \Gamma )=\mathcal{T}$. In fact, for any $X\in \mathcal{T}\subset \widehat{\mathcal{T}}$,
we have an exact sequence
\[\fac M\ni N\to X\to 0.\]
Thus we have an exact sequence
\[\fac (M\otimes_{\Lambda} \Gamma)\ni N\otimes_{\Lambda} \Gamma \to X\otimes_{\Lambda} \Gamma(=X)\to 0.\]
Hence the assertion follows from Proposition\;\ref{faceqfuncfinite}.
\end{proof}
\subsection{Lattice structure}
Let $\P$ be a poset and $x,y\in \P$. If $\{z\in \P\mid z\geq x,y\}$ (resp. $\{z\in \P\mid z\leq x,y\}$)
 admits a minimum element (resp. a maximum element), then we denote it by $x\vee y$ (resp. $x\wedge y$) and call the
  {\bf join} (resp. the {\bf meet}) of $x,y$. 
$\P$ is said to be a {\bf lattice} if for any $x,y\in\P$, there are both the join and the meet of $x,y$. 

The following result is useful to study finite support $\tau$-tilting posets and we use it everywhere in this paper.      
\begin{theorem}[{\cite[Theorem\;1.2]{IRTT}}]
Support $\tau$-tilting posets of $\tau$-tilting finite algebras have a lattice structure.
\end{theorem}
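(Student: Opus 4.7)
The plan is to exploit the bijection between basic support $\tau$-tilting modules and functorially finite torsion classes (originally due to Adachi--Iyama--Reiten), together with Theorem \ref{tautiltfinitealgebra}, to transport a known lattice structure on torsion classes over to $\sttilt \Lambda$.

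First I would set up the correspondence. For any finite dimensional algebra $\Lambda$, the assignment $M \mapsto \fac M$ gives a bijection between $\sttilt \Lambda$ and the set of functorially finite torsion classes in $\mod \Lambda$. By the very definition of the partial order recalled in the Definition-Theorem ($M \geq M' \iff \fac M \supseteq \fac M'$) and by Proposition \ref{faceqfuncfinite}, this bijection is an isomorphism of posets, where functorially finite torsion classes are ordered by inclusion. When $\Lambda$ is $\tau$-tilting finite, Theorem \ref{tautiltfinitealgebra} tells us that every torsion class is functorially finite, so $\sttilt \Lambda$ is in fact poset-isomorphic to the poset $\mathsf{tors}\, \Lambda$ of \emph{all} torsion classes in $\mod \Lambda$, ordered by inclusion.

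Second, I would show that $\mathsf{tors}\, \Lambda$ is a complete lattice for any $\Lambda$. The meet of a family $\{\T_i\}$ of torsion classes is simply the intersection $\bigcap_i \T_i$: it is clearly closed under factor modules and extensions, hence again a torsion class, and it is the largest torsion class contained in every $\T_i$. For the join, one defines $\bigvee_i \T_i$ to be the intersection of all torsion classes that contain every $\T_i$ (the whole of $\mod \Lambda$ being one such); by the meet argument this intersection is itself a torsion class, hence is the smallest torsion class containing $\bigcup_i \T_i$.

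Combining these two steps yields the theorem: the poset isomorphism $\sttilt \Lambda \simeq \mathsf{tors}\, \Lambda$ transports the lattice structure on $\mathsf{tors}\, \Lambda$ back to $\sttilt \Lambda$. Concretely, for $M, M' \in \sttilt \Lambda$, the meet $M \wedge M'$ is the unique support $\tau$-tilting module with $\fac(M \wedge M') = \fac M \cap \fac M'$, and the join $M \vee M'$ is the unique support $\tau$-tilting module whose torsion class is the smallest torsion class containing $\fac M \cup \fac M'$.

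The only real obstacle is justifying that the AIR assignment $M \mapsto \fac M$ is a bijection onto functorially finite torsion classes; this is the nontrivial input taken from \cite{AIR} but is standard in the theory and compatible with what has already been recalled in this section. Everything else is formal lattice theory on torsion classes.
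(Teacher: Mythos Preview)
The paper does not prove this theorem; it merely cites \cite[Theorem~1.2]{IRTT} and moves on. Your argument is correct and is in fact the standard one behind that citation: identify $\sttilt\Lambda$ with functorially finite torsion classes via $M\mapsto\fac M$, invoke Theorem~\ref{tautiltfinitealgebra} so that in the $\tau$-tilting finite case this is all of $\mathsf{tors}\,\Lambda$, and then observe that torsion classes are always closed under arbitrary intersections, hence form a complete lattice. The only external input you need beyond what the paper states explicitly is the bijection $\sttilt\Lambda\stackrel{\sim}{\to}\mathsf{f\text{-}tors}\,\Lambda$ from \cite{AIR}, which you correctly flag.
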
   

\subsection{A connection between two-term silting complexes}

We denote by $\Kb(\proj\Lambda)$ the bounded homotopy category of $\proj\Lambda$.
A complex $T=[\cdots\rightarrow T^i\rightarrow T^{i+1}\rightarrow\cdots]$ in $\Kb(\proj\Lambda)$ 
is said to be {\bf two-term} provided $T^i=0$ unless $i=0,-1$.
We recall the definition of silting complexes.

\begin{definition} Let $T$ be a complex in $\Kb(\proj \Lambda)$.
\begin{enumerate}
\item We say that $T$ is {\bf presilting} if $\Hom_{\Kb(\proj \Lambda)}(T,T[i])=0$ for any positive integer $i$.
\item A {\bf silting complex} is defined to be presilting and generate $\Kb(\proj \Lambda)$ by taking direct summands, mapping cones and shifts.   
\end{enumerate}
We denote by $\tsilt\Lambda$ (resp. $\tpsilt \Lambda)$ the set of isomorphism classes of basic two-term silting (resp. basic two-term presilting) complexes in $\Kb(\proj \Lambda)$.
\end{definition}

The set $\tsilt\Lambda$ also has poset structure as follows.

\begin{definitiontheorem}[{\cite[Theorem 2.11]{AI}}]
For two-term silting complexes $T$ and $T'$ of $\Kb(\proj\Lambda)$, we write $T\geq T'$
 if $\Hom_{\Kb(\proj\Lambda)}(T, T'[1])=0$.
Then the relation $\geq$ gives a partial order on $\tsilt\Lambda$.
\end{definitiontheorem}

The following result connects silting theory with $\tau$-tilting theory.

\begin{theorem}[{\cite[Corollary\;3.9]{AIR}}]\label{bijection}

We consider an assignment 
\[\begin{array}{ccccr}
&  &\;\;(-1\mathrm{th})& & (0\mathrm{th})\; \\
\mathbf{S}:(M,P)&\mapsto &[\;P_1\oplus P&\stackrel{(p_M,0)}{\longrightarrow}&P_0\;\;]\\
\end{array} \vspace{5pt}
\]
 where $p_M:P_1\to P_0$ is a minimal projective presentation of $M$.
\begin{enumerate}[{\rm (1)}]
\item\cite[Lemma\;3.4]{AIR} For modules $M,N$, the following are equivalent:
\begin{enumerate}[{\rm (a)}]
\item $\Hom_{\Lambda}(M,\tau N)=0$.
\item $\Hom_{\Kb(\proj \Lambda)}(\mathbf{S}(N),\mathbf{S}(M)[1])=0$.
\end{enumerate}
\item\cite[Lemma\;3.5]{AIR} For any projective module $P$ and any module $M$, the following are equivalent:
\begin{enumerate}[{\rm (a)}]
\item $\Hom_{\Lambda}(P,M)=0$.
 \item $\Hom_{\Kb(\proj \Lambda)}(\mathbf{S}(0,P),\mathbf{S}(M)[1])=0.$
\end{enumerate} 
\end{enumerate}
Moreover, the assignment $\mathbf{S}$ gives rise to a poset isomorphism 
$\sttilt\Lambda\xrightarrow{\sim}\tsilt\Lambda$.
\end{theorem}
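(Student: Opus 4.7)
The plan is to unpack every relevant Hom-space in $\Kb(\proj\Lambda)$ explicitly and match it with its module-theoretic counterpart. For part (1), I would compute $\Hom_{\Kb(\proj\Lambda)}(\mathbf{S}(N),\mathbf{S}(M)[1])$ directly from the definition of maps of complexes modulo homotopy. Since $\mathbf{S}(N)$ lives in degrees $-1,0$ and $\mathbf{S}(M)[1]$ in degrees $-2,-1$, a chain map reduces to a single $\Lambda$-linear map $\varphi\colon P_1^N\to P_0^M$, the commutativity constraints against the differentials being vacuous on both ends, and null-homotopies take the form $\varphi=h_0\circ p_N+p_M\circ h_{-1}$ for $h_0\colon P_0^N\to P_0^M$ and $h_{-1}\colon P_1^N\to P_1^M$. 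Using projectivity of $P_1^N$ together with the right exactness of $P_1^M\to P_0^M\to M\to 0$, I would identify
\[
\Hom_{\Kb(\proj\Lambda)}(\mathbf{S}(N),\mathbf{S}(M)[1])\;\cong\;\operatorname{coker}\!\Bigl(\Hom_{\Lambda}(P_0^N,M)\xrightarrow{\,-\circ p_N\,}\Hom_{\Lambda}(P_1^N,M)\Bigr).
\]
A standard Auslander--Reiten argument (apply $\nu=D\Hom_{\Lambda}(-,\Lambda)$ to the minimal presentation of $N$ to realize $\tau N$ as the kernel of $\nu p_N$, and then use Hom-tensor adjunction) identifies this cokernel with $\Hom_{\Lambda}(M,\tau N)$, which gives the equivalence in (1).

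For part (2), with $\mathbf{S}(0,P)=P[1]$ a stalk complex concentrated in degree $-1$, the same chain-level analysis collapses to
\[
\Hom_{\Kb(\proj\Lambda)}(P[1],\mathbf{S}(M)[1])\;\cong\;\Hom_{\Lambda}(P,P_0^M)\big/\bigl(p_M\circ\Hom_{\Lambda}(P,P_1^M)\bigr)\;\cong\;\Hom_{\Lambda}(P,M)
\]
by projectivity of $P$, so the stated equivalence is immediate.

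For the final isomorphism $\sttilt\Lambda\xrightarrow{\sim}\tsilt\Lambda$, I would combine (1) and (2) with the trivial vanishing $\Hom_{\Kb(\proj\Lambda)}(\mathbf{S}(M,P),\mathbf{S}(M,P)[i])=0$ for $i\geq 2$ (which holds since both complexes are two-term) to show that $\mathbf{S}$ sends a basic $\tau$-rigid pair to a basic presilting complex of the same total rank. The $\tau$-tilting condition $|M|+|P|=|\Lambda|$ then forces silting, since a two-term presilting complex with $|\Lambda|$ indecomposable summands is enough to generate $\Kb(\proj\Lambda)$; this last point follows by building each stalk $e_i\Lambda$ (and hence all shifts) inductively from cones of approximation triangles against summands of $\mathbf{S}(M,P)$. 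An inverse $\mathbf{S}^{-1}$ is built by taking a two-term silting complex $T=[T^{-1}\xrightarrow{f}T^0]$, splitting off the maximal direct summand $P\subseteq T^{-1}$ on which $f$ vanishes, and setting $M=\operatorname{coker} f$; basicness of $T$ forces the restricted differential to be a minimal projective presentation of $M$. The order matches because, by the characterization of $\geq$ on $\sttilt\Lambda$ proved in the definition-theorem above, $M\geq M'$ amounts to $\Hom_{\Lambda}(M',\tau M)=0$ together with $\supp(M)\supseteq\supp(M')$, and parts (1)--(2) translate these two conditions exactly into the combined vanishing $\Hom_{\Kb(\proj\Lambda)}(\mathbf{S}(M,P),\mathbf{S}(M',P')[1])=0$ that defines the order on $\tsilt\Lambda$.

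The main obstacle will be the Auslander--Reiten identification $\operatorname{coker}(\Hom_{\Lambda}(P_0^N,M)\to\Hom_{\Lambda}(P_1^N,M))\cong \Hom_{\Lambda}(M,\tau N)$, together with the verification that $\mathbf{S}^{-1}$ actually lands in the correct domain (i.e.\ that the projective $P$ split off from $T^{-1}$ really lies in $\add(1-e_M)\Lambda$); everything else is careful bookkeeping of chain maps, homotopies, and summand decompositions in the two-term setting.
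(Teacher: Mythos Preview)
The paper does not give its own proof of this statement: it is quoted from \cite{AIR} (specifically Corollary~3.9, Lemma~3.4 and Lemma~3.5 there) and appears in the preliminaries section without argument. Your outline is essentially the standard proof from \cite{AIR}, and the chain-level computations, the Auslander--Reiten identification, and the counting argument for silting versus presilting are all correct ingredients; there is nothing to compare against in the present paper.
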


\begin{lemma}[{\cite[Lemma\;2.25]{AI}}]\label{factformpp}
Let $M$ be a $\tau$-rigid module and $P_1\stackrel{d}{\to} P_0\to M\to 0$ a minimal
projective presentation of $M$. Then $\add P_1\cap \add P_0=\{0\}.$
In particular, for a two-term silting complex $[P_1\stackrel{d}{\to} P_0]$,
 we may assume that $\add P_1\cap \add P_0=\{0\}$.
\end{lemma}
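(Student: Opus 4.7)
The plan is to translate into two-term silting theory via Theorem \ref{bijection} and derive a radical-theoretic contradiction. Setting $T := \mathbf{S}(M,0) = [P_1 \xrightarrow{d} P_0]$ concentrated in degrees $-1$ and $0$, Theorem \ref{bijection}(1) (applied with $N=M$) turns the $\tau$-rigidity of $M$ into the vanishing $\Hom_{\Kb(\proj\Lambda)}(T, T[1]) = 0$. Aiming for a contradiction, I would assume some indecomposable projective $P$ lies in both $\add P_0$ and $\add P_1$, decompose $P_1 = P \oplus \bar P_1$ and $P_0 = P \oplus \bar P_0$, and introduce the chain map $f: T \to T[1]$ whose only nonzero component is $f^{-1} := \iota_P \pi_P : P_1 \twoheadrightarrow P \hookrightarrow P_0$; the cochain-commutativity condition is automatic because $T$ has only two nonzero terms.

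The main obstacle is showing that $f$ represents a nonzero class, i.e., that it is not null-homotopic. Suppose otherwise, so that $f^{-1} = h^0 d - d h^{-1}$ for some $h^{-1} \in \End_\Lambda(P_1)$ and $h^0 \in \End_\Lambda(P_0)$. Restricting to the $(P,P)$-block and writing $d$ in matrix form, $\operatorname{id}_P$ appears as a sum of four terms, each of which involves at least one of the corner components $d_{00}: P \to P$, $d_{01}: \bar P_1 \to P$, or $d_{10}: P \to \bar P_0$. Minimality of the projective presentation forces $\Im d \subseteq \rad P_0$, so each of these three components has image inside the radical of its codomain. Since $\Lambda$-linear maps preserve the radical, every summand lies in $\rad \End_\Lambda(P)$. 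But $P$ is indecomposable projective, hence $\End_\Lambda(P)$ is local and $\operatorname{id}_P \notin \rad \End_\Lambda(P)$---the desired contradiction.

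For the in-particular clause, I would deduce it from the first part via the bijection $\mathbf{S}: \sttilt \Lambda \xrightarrow{\sim} \tsilt \Lambda$ of Theorem \ref{bijection}. Any two-term silting complex is isomorphic in $\Kb(\proj \Lambda)$ to $\mathbf{S}(M,P) = [P_1 \oplus P \xrightarrow{(p_M,0)} P_0]$ for some basic support $\tau$-tilting pair $(M,P)$, where $p_M: P_1 \to P_0$ is a minimal projective presentation of $M$. The first part of the lemma, applied to the $\tau$-rigid module $M$, gives $\add P_1 \cap \add P_0 = \{0\}$; moreover $\add P \cap \add P_0 = \{0\}$ since $P \in \add (1-e_M)\Lambda$ whereas $P_0$, being the projective cover of $M$, lies in $\add e_M \Lambda$. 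Hence this representative satisfies $\add(P_1 \oplus P) \cap \add P_0 = \{0\}$.
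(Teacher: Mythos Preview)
The paper does not supply its own proof of this lemma; it is quoted verbatim from \cite[Lemma~2.25]{AI}. Your argument is correct and is essentially the standard one: translate $\tau$-rigidity into the presilting condition $\Hom_{\Kb(\proj\Lambda)}(T,T[1])=0$, build the obvious chain map using a common indecomposable summand, and use minimality $\Im d\subseteq\rad P_0$ to show this map cannot be null-homotopic. The block computation and the radical reasoning are fine, since for projectives $\rad P=P\cdot\rad\Lambda$ is preserved by any $\Lambda$-linear map, and a non-surjective endomorphism of an indecomposable projective lies in the Jacobson radical of its (local) endomorphism ring. Your treatment of the ``in particular'' clause via Theorem~\ref{bijection} is also correct: the projective cover $P_0$ of $M$ lies in $\add e_M\Lambda$ while $P\in\add(1-e_M)\Lambda$, so $\add(P_1\oplus P)\cap\add P_0=\{0\}$ follows.
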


We will close this section by recalling the definition and an important property of $g$-vectors of complexes of $\Kb(\proj \Lambda)$.

Let $K_0(\proj\Lambda)$ be the Grothendieck group of $\proj\Lambda$ and $[P]$ denote the element in $K_0(\proj\Lambda)$ corresponding to a projective module $P$.
As is well-known, the set $\{[e_i\Lambda]\ |\ i\in Q_0 \}$ forms a basis of $K_0(\proj\Lambda)$.

\begin{definition}
Let $X=[P'\to P]$ be a two-term complex of $\Kb(\proj \Lambda)$ and write $[P]-[P']=\sum_{i\in Q_0}g_i^X [e_i \Lambda]$ in $K_0(\proj \Lambda)$ for some $g_i^X\in\Z$.
Then we call the vector $g^X:=(g_i^X)_{i\in Q_0}\in \Z^{Q_0}$ the {\bf $g$-vector} of $X$.
\end{definition}

\begin{theorem}
\label{gvector}
\cite[Theorem 5.5]{AIR}
The map $T\mapsto g^T$ gives an injection from the set of isomorphism classes of two-term presilting complexes
to $K_0(\proj \Lambda)$.
\end{theorem}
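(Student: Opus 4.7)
The plan is to combine the normalization of Lemma~\ref{factformpp} with the extended bijection $\mathbf{S}$ of Theorem~\ref{bijection} (between basic $\tau$-rigid pairs and basic two-term presilting complexes), reading off the projective data of a presilting complex from its $g$-vector.

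First I would take two basic two-term presilting complexes $T=[P^{-1}\xrightarrow{d}P^0]$ and $T'=[Q^{-1}\xrightarrow{d'}Q^0]$ with $g^T=g^{T'}$, and use Lemma~\ref{factformpp} to normalize so that $\add P^{-1}\cap\add P^0=\{0\}$ (and similarly for $T'$). Because $K_0(\proj\Lambda)$ is the free abelian group on $\{[e_i\Lambda]\mid i\in Q_0\}$, the disjointness tells me that the positive coordinates of $g^T$ record the multiplicities of indecomposable summands of $P^0$ and the negative coordinates those of $P^{-1}$. Krull--Schmidt then yields $P^0\cong Q^0$ and $P^{-1}\cong Q^{-1}$, so $T$ and $T'$ share the same terms.

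Next I would pass to the associated $\tau$-rigid pairs. Writing $T\cong\mathbf{S}(M,P)$ and $T'\cong\mathbf{S}(M',P')$, I extract $(M,P)$ from $T$ by splitting off the maximal summand of the form $[P\to 0]$, equivalently the maximal projective summand of $P^{-1}$ on which $d$ vanishes; the remaining differential is then a minimal projective presentation $P_1\to P^0$ of $M:=\Coker(d|_{P_1})$. The presilting condition, through Theorem~\ref{bijection}(1)--(2), guarantees that $(M,P)$ is a $\tau$-rigid pair; in particular $\add P\subset\add((1-e_M)\Lambda)$. After extracting $(M',P')$ analogously, the matching of terms combined with the rigidity constraints should force $(M,P)\cong(M',P')$, and hence $T\cong T'$.

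The main obstacle is the transition from ``$T$ and $T'$ have the same terms'' to ``$T\cong T'$.'' Two nonisomorphic differentials could a priori yield distinct presilting complexes with identical terms and identical $g$-vector, so the decisive use of the presilting hypothesis is needed here. The cleanest route is via a silting completion: complete both $T$ and $T'$ to two-term silting complexes, and invoke the basis theorem that the $g$-vectors of indecomposable summands of a silting complex form a $\Z$-basis of $K_0(\proj\Lambda)$. This forces the $g$-vectors of indecomposable summands of any presilting complex to be $\Z$-linearly independent and, together with the extraction of $(M,P)$ above, pins down the $\tau$-rigid pair uniquely from $g^T$.
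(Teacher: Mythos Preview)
The paper does not prove this statement; it simply quotes \cite[Theorem~5.5]{AIR}. So there is no proof here to compare against, and the issue is whether your outline is complete.

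The first half is fine: after normalizing via Lemma~\ref{factformpp}, the positive and negative parts of $g^T$ recover $P^0$ and $P^{-1}$ separately, so $g^T=g^{T'}$ forces $P^0\cong Q^0$ and $P^{-1}\cong Q^{-1}$. You are also right that the crux is the passage from ``same terms'' to ``isomorphic complexes,'' and that this is where presilting must be used. But the fix you propose does not close the gap. Completing $T$ and $T'$ to silting objects and invoking the basis property tells you only that the $g$-vectors of the indecomposable summands of $T$ are linearly independent, and likewise for $T'$; it says nothing about how these two linearly independent families---sitting inside possibly different bases---relate to each other, so it cannot force the summands (or the modules $M,M'$ they present) to coincide. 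Worse, in \cite{AIR} the basis property for silting $g$-vectors is developed alongside the injectivity result itself, so the appeal is at best circular.

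What the proof in \cite{AIR} actually uses is a pairing formula. Applying $\Hom_\Lambda(-,X)$ to the differential $P^{-1}\to P^0$ yields a four-term exact sequence whose Euler characteristic gives, for $T\simeq\mathbf{S}(M,P)$ and any module $X$,
\[
\langle g^T,\dimvec\,X\rangle \;=\; \dim\Hom_\Lambda(M,X)-\dim\Hom_\Lambda(X,\tau M)-\dim\Hom_\Lambda(P,X).
\]
The $\tau$-rigidity of $(M,P)$ and $(M',P')$ kills several of these terms when one takes $X=M$ and $X=M'$, and the resulting equalities are what force $M\cong M'$; uniqueness of minimal presentations then gives $P_1\cong P_1'$, hence $P\cong P'$. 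This substantive use of the $\tau$-rigid condition through the Auslander--Reiten translate is the ingredient your sketch is missing.
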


\section{Remarks on poset isomorphism between two support $\tau$-tilting posets}

 In this section, we give some general results on poset isomorphism between two support $\tau$-tilting posets.
We assume that $|\Lambda|=n$ and $Q_0=\{1,2,\dots,n\}$.

 We first consider the direct predecessors of $0$ and the direct successors of $\Lambda$.
  We let
 \[X_i=X^{\Lambda}_i:=e_i\Lambda/e_i\Lambda(1-e_i)\Lambda\simeq \Lambda/(1-e_i).\] 
Then $X_i$ is in $\sttilt \Lambda$ with $\supp(X_i)=\{i\}$. Hence we have 
\[\dip(0)=\{X_i\mid i\in Q_0\}.\]
Since $\Lambda=P_1\oplus P_2\oplus \cdots\oplus P_n\in \sttilt \Lambda$, 
there exists a unique direct successor of $\Lambda$ in $\H(\sttilt \Lambda)$ which does not contain $P_i$
as a direct summand, for each $i\in Q_0$. We denote it by $Z_i\in \sttilt \Lambda$.
Thus we have
\[\dis(\Lambda)=\{Z_i\mid i\in Q_0\}.\]
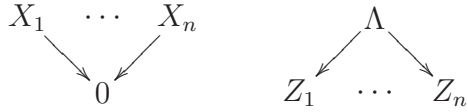
\begin{figure}[h]
\[\begin{xy}
(0,-5)*[o]+{0}="A", (-10,5)*[o]+{X_1}="B", (0,5)*[o]+{\cdots}="C", (10,5)*[o]+{X_n}="D",
\ar "B";"A"
\ar "D";"A" 
\end{xy}\hspace{25pt}
\begin{xy}
(0,5)*[o]+{\Lambda}="A", (-10,-5)*[o]+{Z_1}="B", (0,-5)*[o]+{\cdots}="C", (10,-5)*[o]+{Z_n}="D",
\ar "A";"B"
\ar "A";"D" 
\end{xy}
\]
\caption{Neighbors of $0$ and  $\Lambda$}
\label{fig:one}
\end{figure}

 \subsection{$\tau$-rigid pairs in the support $\tau$-tilting poset}
 \label{subsec:tau-rigid in poset}
Let  $\U^+_{\ell}$ (resp. $\U^-_{\ell}$) be the set of all connected fullsubquivers of $\H(\sttilt\Lambda)$ having
$\ell+1$ vertices with $\ell$ sources (resp. sinks). We set
\[\U^+=\U^+_{\Lambda}:=\bigsqcup_{\ell}\U^+_{\ell}\text{ and } \U^-=\U^-_{\Lambda}:=\bigsqcup_{\ell}\U^-_{\ell}.\]
Let $u\in \U^+_{\ell}$, $T=T_0$ the unique sink of $u$ and $T_1,\dots,T_{\ell}$ are 
sources of $u$.
 We denote by $\widetilde{T}_i:=T_i\oplus U_i^-$
the corresponding $\tau$-tilting pair of $T_i$. Then there exists
a unique basic $\tau$-rigid pair $X_u$ such that $\add X_u=\bigcap \add \widetilde{T}_i$.
 It is easy to check that $|X_u|=|\Lambda|-\ell$. Then we denote this assignment ($u\mapsto X_u$) by
 $\kappa^+=\kappa^+_{\Lambda}:\U^+\to \trigidp \Lambda$. Similarly, we define $\kappa^-=\kappa^-_{\Lambda}:\U^-\to \trigidp \Lambda$.
 
 Conversely, let $X\in \trigidp \Lambda$ with $|X|=|\Lambda|-\ell$. By Jasso's reduction theorem (Theorem\;\ref{reduction theorem}),
 there are the minimum element $\stmin (X)$ and the maximum element
 $\stmax (X)$ of $\sttilt_X \Lambda$. We note that $\stmin (X)$ (resp. $\stmax (X)$)
 has $\ell$ direct predecessors (resp. successors) in $\sttilt_X \Lambda$.  
 Let $T_1,\dots,T_{\ell}$ (resp. $T'_1,\dots,T'_{\ell}$) be direct predecessors of $\stmin (X)$ (resp. direct successors of $\stmax (X)$)
  in $\sttilt_X \Lambda$.
  Then we define $\upsilon^+:\trigidp \Lambda\to \U^+$ and $\upsilon^-:\trigidp \Lambda\to \U^-$ as follows:
 \[\begin{array}{lll}
 \upsilon^+(X)=\upsilon^+_{\Lambda}(X)&:=&\text{the full subquiver of $\H(\sttilt \Lambda)$ consists of $\stmin(X)$ and $T_1,\dots,T_{\ell}$},\\
 \upsilon^-(X)=\upsilon^-_{\Lambda}(X)&:=&\text{the full subquiver of $\H(\sttilt \Lambda)$ consists of $\stmax(X)$ and $T'_1,\dots,T'_{\ell}$}.\\
 \end{array} 
 \]
  Similarly, we define $\upsilon^-=\upsilon^-_{\Lambda}:\trigidp \Lambda\to \U^-$.
 \[\upsilon^+(X)=\begin{xy}
 (0,-5)*[o]+{\stmin(X)}="A", (-10,5)*[o]+{T_1}="B", (0,5)*[o]+{\cdots}="C", (10,5)*[o]+{T_{\ell}}="D",
 \ar "B";"A"
 \ar "D";"A" 
 \end{xy}\hspace{25pt}
 \upsilon^-(X)=\begin{xy}
 (0,5)*[o]+{\stmax(X)}="A", (-10,-5)*[o]+{T'_1}="B", (0,-5)*[o]+{\cdots}="C", (10,-5)*[o]+{T'_{\ell}}="D",
 \ar "A";"B"
 \ar "A";"D" 
 \end{xy}
 \]

 By constructions, one sees that
 \[\upsilon^{\pm}\circ\kappa^{\pm}=\mathrm{id}_{\U^{\pm}}\text{ and } 
 \kappa^{\pm}\circ\upsilon^{\pm}=\mathrm{id}_{\trigidp \Lambda}\ (\text{double-sign corresponds}).\] 
 
 \[\begin{xy}
(0,0) *[o]+{\trigidp \Lambda}="A", (0,28) *[o]+{\U^-}="B", (0,-28) *[o]+{\U^+}="C", 
(28,14) *[o]+{\sttilt \Lambda}="D", (28,-14) *[o]+{\sttilt \Lambda}="E",
\ar @<6pt> "A";"B"^{\upsilon^-}
\ar @<-3pt> "B";"A"^{\kappa^-}
\ar @<-3pt> "A";"C"^{\upsilon^+}
\ar @<6pt> "C";"A"^{\kappa^+}
\ar "B";"D"_{\source}
\ar "C";"E"^{\sink}
\ar "A";"D"^{\stmax}
\ar "A";"E"_{\stmin}
\end{xy}\]
\begin{remark}
If $X$ is indecomposable, then $\stmin (X)$ has a unique direct successor and $\stmax (X)$ has a unique direct predecessor.
Hence $\stmin (X)$ is a join-irreducible element and $\stmax (X)$ is a meet-irreducible element.
 For more details, please refer to
\cite{IRRT}.
\end{remark}
The following lemma is useful in this section.
  \begin{lemma}
\label{updown}
 \begin{enumerate}[{\rm (1)}]
\item If $T\leq Z_i$ for any $i\in Q_0$, then $T=0$.
\item If $T\geq X_i$ for any $i\in Q_0$, then $T=\Lambda$.
\end{enumerate}
\end{lemma}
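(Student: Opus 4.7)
The plan is to prove (2) directly, and then deduce (1) by applying (2) to $\Lambda^{\mathrm{op}}$. Under the anti-isomorphism $(-)^{\dagger}\colon\sttilt\Lambda\to\sttilt\Lambda^{\mathrm{op}}$ of Proposition \ref{revers}, the minimum $0_{\Lambda}$ is sent to $\Lambda^{\mathrm{op}}$, and the direct predecessors $\{X_i^{\Lambda}\}_i$ of $0_{\Lambda}$ correspond to the direct successors $\{Z_i^{\Lambda^{\mathrm{op}}}\}_i$ of $\Lambda^{\mathrm{op}}$; hence the statement of (1) for $\Lambda$ is precisely that of (2) for $\Lambda^{\mathrm{op}}$.

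For (2), assume $T\geq X_i$ for every $i\in Q_0$ and, toward a contradiction, that $T\neq\Lambda$. Since $\Lambda$ is the maximum of $\sttilt\Lambda$, we have $T<\Lambda$, so Theorem \ref{basicprop}(3) supplies a direct successor $Z_j$ of $\Lambda$ with $T\leq Z_j$. Combined with $T\geq X_j$, this forces $X_j\leq Z_j$, equivalently $\fac X_j\subseteq\fac Z_j$. Now $X_j=\Lambda/(1-e_j)$ is a local algebra whose simple top, viewed as a $\Lambda$-module, is $S_j$; so $S_j\in\fac X_j\subseteq\fac Z_j$, equivalently $\Hom_{\Lambda}(Z_j,S_j)\neq 0$. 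The argument closes by verifying $\Hom_{\Lambda}(Z_j,S_j)=0$ directly.

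Translating the mutation of Theorem \ref{basicprop}(1) into silting language via Theorem \ref{bijection}, $\mathbf{S}(Z_j)$ is obtained from $\mathbf{S}(\Lambda)=\bigoplus_{k}P_k[0]$ by replacing the summand $P_j[0]$ with a 2-term complex $[P_j\xrightarrow{\mu}P']$, where $P'\in\add((1-e_j)\Lambda)$; the degenerate case $P'=0$, in which $P_j$ simply moves into the projective pair of $Z_j$, is even more immediate. Translating back, $Z_j=N\oplus\bigoplus_{k\neq j}P_k$ with $N=\Coker\mu$, and Lemma \ref{factformpp} identifies $P_j\xrightarrow{\mu}P'\to N\to 0$ as the minimal projective presentation of $N$. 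Hence $\top N$ is a quotient of $\top P'$, which has no $S_j$ summand; combined with $\Hom_{\Lambda}(P_k,S_j)=0$ for each $k\neq j$, this gives $\Hom_{\Lambda}(Z_j,S_j)=0$, the desired contradiction.

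The main obstacle is controlling the top of the non-projective summand $N$ of $Z_j$ finely enough to exclude $S_j$. The essential structural input is the silting description of the mutation at $P_j$ together with Lemma \ref{factformpp}, which jointly ensure that the codomain $P'$ of the approximation map $\mu$ avoids $P_j$, so that no path from $N$ (or from any remaining projective summand of $Z_j$) hits the simple $S_j$.
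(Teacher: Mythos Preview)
Your argument is correct. Both proofs hinge on the same fact---$\fac Z_j\subseteq\fac\bigoplus_{k\neq j}P_k$, equivalently $\Hom_\Lambda(Z_j,S_j)=0$---and both extract it from Lemma~\ref{factformpp} applied to the minimal projective presentation of the non-projective summand (if any) of $Z_j$. The organization differs: the paper proves (1) first and deduces (2) via Proposition~\ref{revers}, while you reverse the order; and once the key fact is in hand, the paper finishes (1) in one line by intersecting,
\[
\fac T\ \subseteq\ \bigcap_i\fac Z_i\ \subseteq\ \bigcap_i\fac\!\bigoplus_{k\neq i}P_k\ =\ \{0\},
\]
whereas you argue (2) by contradiction, invoking Theorem~\ref{basicprop}(3) to select a single index $j$ with $T\leq Z_j$ and then excluding $X_j\leq Z_j$. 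The paper's route thus avoids Theorem~\ref{basicprop}(3); yours only needs the key fact at one index.

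One remark: the shape $[P_j\xrightarrow{\mu}P']$ with $P'\in\add((1-e_j)\Lambda)$ for the new silting summand is correct but is not an immediate consequence of Theorem~\ref{basicprop}(1) and Theorem~\ref{bijection} alone. You are implicitly using the left-approximation description of silting mutation from \cite{AI}; alternatively, the paper's direct module-theoretic argument (apply Lemma~\ref{factformpp} to the $\tau$-rigid module $N\oplus P_k$ for each $k\neq j$) gives $P_k\notin\add P^{(1)}$, hence $P^{(1)}\in\add P_j$ and then $P^{(0)}\in\add((1-e_j)\Lambda)$, which is all your computation of $\top N$ requires. With the latter justification the $(-1)$-term is a priori $P_j^{\oplus m}$ rather than a single copy, but this does not affect the conclusion.
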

\begin{proof}
We show the assertion (1).  
We claim that 
\[Z_i\in  \fac \oplus_{k\neq i} P_k.\]
If $Z_i=\oplus_{k\neq i} P_k$, then we have nothing to show. Thus we may assume that
there exists a non projective indecomposable direct summand $M_i$ of $Z_i$. We consider a minimal projective presentation
\[Q^{(i)}\to P^{(i)}\to M_i\to 0\]
of $M_i$. Since $M_i\oplus P_k$ is $\tau$-rigid for any $k\neq i$,
Lemma\;\ref{factformpp} implies that $\add P^{(i)}\cap \add Q^{(i)}=\{0\}$ and 
$P_k\not\in\add Q^{(i)}$ for any $k\neq i$. Note that $M_i$ is not projective. Thus
we obtain $P_i\in \add Q^{(i)}$ and  $Z_i\in \fac \oplus_{k\neq i} P_k$. 

We assume that
$T\leq Z_i$ for any $i\in Q_0$. Then we have
 \[\fac T\subset \cap_{i\in Q_0} \fac Z_i\subset\cap_{i\in Q_0}\fac \oplus_{k\neq i} P_k=\{0 \}.\]  
 Hence $T=0$. The assertion (2) follows from (1) and Proposition\;\ref{revers}. 
\end{proof}

\begin{proposition}
\label{determiningsupport}
Let $T\in \sttilt \Lambda$ and $V\subset Q_0$. We put $e_V=\sum_{i\not\in V} e_i$.
\begin{enumerate}[{\rm (1)}] 
\item  $T=\Lambda/(e_V)=\stmax (e_V \Lambda^-)$
if and only if the following conditions hold.
\begin{enumerate}[{\rm (i)}]
\item $T\geq X_i$ for any $i\in V$.
\item The number of direct successors of $T$ is equal to that of $V$.
\item If $Y\leq T'$ holds for any $T'\in \dis(T)$, then $Y=0$.
\end{enumerate}
\item $T=\stmin (e_V \Lambda)$ if and only if
the following hold.
\begin{enumerate}[{\rm (i)}]
\item $T\leq Z_i$ for any $i\in V$.
\item The number of direct predecessors of $T$ is equal to that of $V$.
\item If $Y\geq T'$ holds for any $T'\in \dip(T)$, then $Y=\Lambda$.
\end{enumerate}
\end{enumerate}
\end{proposition}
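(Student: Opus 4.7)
My plan is to prove (1) directly and then derive (2) from it via the anti-isomorphism $(-)^{\dagger}\colon\sttilt\Lambda\to\sttilt\Lambda^{\mathrm{op}}$ of Proposition~\ref{revers}. For the forward direction of (1), assume $T=\Lambda/(e_V)=\stmax(e_V\Lambda^-)$; its $\tau$-tilting pair is $(\Lambda/(e_V),\,e_V\Lambda^-)$, with $|V|$ module summands and $n-|V|$ projective minus summands, and Jasso's theorem gives $\sttilt_{e_V\Lambda^-}\Lambda=[0,T]\simeq\sttilt\Lambda/(e_V)$. Each $X_i$ with $i\in V$ lies in $[0,T]$ because $\supp X_i=\{i\}\subset V$, giving (i). Direct inspection of mutation shows that every mutation at a module summand stays inside $\sttilt_{e_V\Lambda^-}\Lambda$ and hence produces a direct successor of the maximum~$T$, while every mutation at a minus summand $e_j\Lambda$ attaches a new module with $j$ in its support, producing a direct predecessor with support $V\cup\{j\}$; this is~(ii). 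For (iii), if $Y\leq T'$ for all $T'\in\dis(T)$ then $Y\leq T$, so $Y\in[0,T]\simeq\sttilt\Lambda/(e_V)$ and the $T'$ correspond to the $Z^{\Lambda/(e_V)}_i$ for $i\in V$; Lemma~\ref{updown}(1) applied inside $\Lambda/(e_V)$ then forces $Y=0$.

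For the converse of (1), assume (i)--(iii). By (ii) the $|V|$ direct successors $T_1,\dots,T_{|V|}$ of $T$ together with $T$ form a subquiver $u\in\U^-_{|V|}$; set $Y:=\kappa^-(u)=(N,R)$. The identity $\upsilon^-\circ\kappa^-=\mathrm{id}$ gives $T=\stmax(Y)$ with $|Y|=n-|V|$, and by construction $N\in\add T_i$ and $R\in\add(1-e_{T_i})\Lambda$ for every~$i$. The crucial step is $N=0$: otherwise $\stmin((N,0))$ is a nonzero support $\tau$-tilting module (it has $N$ as a summand) satisfying $\stmin((N,0))\leq T_i$ for all $i$ (since each $T_i\in\sttilt_{(N,0)}\Lambda$ and the interval $\sttilt_{(N,0)}\Lambda$ is strongly full in $\sttilt\Lambda$), contradicting (iii). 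Hence $Y=(0,R)$ with $R=\bigoplus_{i\in W}e_i\Lambda$ and $|W|=n-|V|$; because $R$ is a summand of the minus part of $T$'s $\tau$-tilting pair we have $W\subseteq Q_0\setminus\supp T$, forcing $|\supp T|\leq|V|$. Together with $\supp T\supseteq V$ from (i) this yields $\supp T=V$, $W=Q_0\setminus V$, and $T=\stmax(e_V\Lambda^-)=\Lambda/(e_V)$.

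Part (2) is the $(-)^{\dagger}$-dual of (1): a direct computation shows $(e_V\Lambda,0)^{\dagger}=(0,e_V\Lambda^{\mathrm{op}})$, so $T=\stmin(e_V\Lambda)$ in $\sttilt\Lambda$ corresponds to $T^{\dagger}=\Lambda^{\mathrm{op}}/(e_V)$ in $\sttilt\Lambda^{\mathrm{op}}$; the anti-isomorphism swaps $\leq\leftrightarrow\geq$, $\dip\leftrightarrow\dis$, $X_i\leftrightarrow Z_i$, and $0\leftrightarrow\Lambda^{\mathrm{op}}$, so conditions (i)--(iii) of~(2) for $\Lambda$ become exactly (i)--(iii) of~(1) for $\Lambda^{\mathrm{op}}$. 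The main obstacle is the converse of (1), and specifically the step $N=0$: condition (iii) is an a priori weak global statement about $T$'s direct successors that must be turned into concrete information on the rigid pair $Y=\kappa^-(u)$ extracted from the local Hasse picture at $T$, and testing~(iii) against $\stmin((N,0))$ is what bridges these two viewpoints.
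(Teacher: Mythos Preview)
Your proof is correct and follows essentially the same route as the paper: both establish the forward implication of~(1) via Jasso's reduction and Lemma~\ref{updown}, and for the converse extract $\kappa^-(u)=(M,P)$ from the local Hasse picture at $T$ and use condition~(iii) to force $M=0$, after which a support count gives $T=\Lambda/(e_V)$; part~(2) is then deduced from~(1) via the duality $(-)^\dagger$. The only cosmetic difference is that the paper tests~(iii) directly on $\stmin(M\oplus P^-)$ to conclude it equals $0$ (whence $M=0$), whereas you argue by contradiction using $\stmin((N,0))$.
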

\begin{proof} We show the assertion (1).
Assume that $T=T_0$ satisfies the conditions (i), (ii) and (iii). 
We denote by $\ell$ the number of vertices in $V$ and $T_1,\dots,T_{\ell}$
 the direct successors of $T$.
Then we denote by $u_V$ the full subquiver of $\H(\sttilt \Lambda)$ consists of $T_0,T_1,\dots, T_{\ell}$
and put $\kappa^-(u_V):=M\oplus P^-$.
\[\U^-\ni \upsilon_V=\begin{xy}
 (0,5)*[o]+{T_0}="A", (-10,-5)*[o]+{T_1}="B", (0,-5)*[o]+{\cdots}="C", (10,-5)*[o]+{T_{\ell}}="D",
 \ar "A";"B"
 \ar "A";"D" 
 \end{xy} \]

 Now let $Y:=\stmin(M\oplus P^-).$
Since $T_k\in \sttilt_{M\oplus P^-} \Lambda$, we have that $Y\leq T_k$ for any $k$. By (iii), we obtain that
$Y=0$. In particular,  we have $M=0$ and $P=e_{V'}\Lambda$ for some $V'\subset Q_0$.
Then $T\in \sttilt_{P^-} \Lambda$ and (i) imply that $V\subset \supp (T)\subset V'$. 
Hence $V=V'=\supp(T)$ follows from the following equations.
\[\#V=\ell=n-(n-\ell)=n-|P|=n-(n-\#V')=\#V'.\] 
Since $T=\max(P^-)$, we have $T=\Lambda/(e_V)$.

Next we assume that $T=\Lambda/(e_V)=\stmax (e_V \Lambda^-)$. Since $\sttilt_{e_V\Lambda^-} \Lambda=\{T'\in \sttilt \Lambda\mid T'\leq T\}$, 
(i), (ii) and (iii) follow from Theorem\;\ref{reduction theorem} and Lemma\;\ref{updown}.

We remark that poset anti-isomorphism $(-)^{\dagger}:\sttilt \Lambda \to \sttilt \Lambda^{{\rm op}}$ in Proposition\;\ref{revers} sends
 $\sttilt_{e_V \Lambda}\Lambda$ to  $\sttilt_{(e_V\Lambda^{{\rm op}})^-} \Lambda^{\mathrm{op}}$.
Also we have $(Z_i)^{\dagger}=X_i^{\Lambda^{\mathrm{op}}}$ and $\Lambda^{\dagger}=0$. Hence the assertion (2) follows from (1).  
   \end{proof}
 Now we state main result of this subsection.   
\begin{corollary}
\label{preservingsupport}
Let $\Lambda=KQ/I$ and $\Gamma=KQ'/I'$. Assume that there is a poset isomorphism
$\rho: \sttilt \Lambda\stackrel{\sim}{\to } \sttilt \Gamma$ and define $\sigma:Q_0\to Q'_0$ by
$\rho (X_i^{\Lambda})=X_{\sigma(i)}^{\Gamma}$. 
\begin{enumerate}[{\rm (1)}]
\item Let $V\subset Q_0$, $V'=\sigma(V)$, $e=\sum_{i \in V} e_i\in \Lambda$ and $e'=\sum_{i'\in V'} e_{i'}\in \Gamma$.
Then $\rho$ induces poset isomorphisms
\[\sttilt_{e\Lambda^-} \Lambda\simeq \sttilt_{e'\Gamma^-} \Gamma\text{ and } \sttilt_{e\Lambda} \Lambda\simeq \sttilt_{e'\Gamma} \Gamma.\]
\item We have
\[\supp(\rho(T))=\sigma(\supp(T)).\]
In particular, $\rho$ induces a poset isomorphism
\[\rho|_{\ttilt \Lambda}:\ttilt \Lambda\stackrel{\sim}{\to} \ttilt \Gamma.\]

\item If $\sttilt \Lambda$ is a lattice, then we have
\[\begin{array}{lllll}
\source\circ\upsilon^-\circ\kappa^+(u^+)&=&\stmax\circ  \kappa^+(u^+)&=&\bigvee u^+\\
\sink\circ\upsilon^+\circ\kappa^-(u^-)&=&\stmin\circ\kappa^-(u^-)&=&\bigwedge u^-\\
\end{array}
\]
 for any $u^+\in \U^+$ and $u^-\in \U^-$.
\item Define bijections $\widetilde{\rho}^{\pm}:\trigidp \Lambda\to \trigidp \Gamma$ by
\[\widetilde{\rho}^{\pm}:=\kappa_{\Gamma}^{\pm}\circ \rho\circ \upsilon_{\Lambda}^{\pm}\ (\text{double-sign corresponds}).\]
If $\sttilt \Lambda$ is a lattice, then we have $\widetilde{\rho}^+=\widetilde{\rho}^-(=:\widetilde{\rho})$.
Moreover, for each basic $\tau$-rigid pair $X$ of $\Lambda$, $\rho$ induces a poset isomorphism
\[\rho\mid_{\sttilt_X \Lambda}:\sttilt_{X} \Lambda\stackrel{\sim}{\to}\sttilt_{\widetilde{\rho}(X)}  \Gamma. \]
\end{enumerate}
\end{corollary}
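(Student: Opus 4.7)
The plan is to prove the four parts in the order (1)$^-$, (2), (1)$_+$, (3), (4), using Propositions \ref{determiningsupport} and \ref{revers} as the main tools. The first isomorphism of (1) and part (2) follow directly from the poset characterizations in Proposition \ref{determiningsupport}; the second isomorphism of (1) is the main obstacle; parts (3) and (4) then follow cleanly under the lattice hypothesis.

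For the first isomorphism of (1), I would use that the maximum of $\sttilt_{e\Lambda^-}\Lambda$ is $\Lambda/(e)$, uniquely characterized by Proposition \ref{determiningsupport}(1) via poset-theoretic conditions involving the $X_i^\Lambda$ for $i\notin V$ together with direct-successor data. Since $\rho$ preserves the Hasse quiver and $\rho(X_i^\Lambda)=X_{\sigma(i)}^\Gamma$, the image $\rho(\Lambda/(e))$ must satisfy the analogous conditions in $\sttilt\Gamma$ with $\sigma(V)$ in place of $V$, forcing $\rho(\Lambda/(e))=\Gamma/(e')$; combined with $\rho(0)=0$, the restriction to $[0,\Lambda/(e)]$ gives the isomorphism. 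For (2), take $V=Q_0\setminus\supp(T)$: the first isomorphism of (1) yields $\supp(\rho(T))\subset\sigma(\supp(T))$, and the reverse inclusion follows by applying the same argument to $\rho^{-1}$. The statement on $\tau$-tilting modules is immediate since $T$ is $\tau$-tilting iff $\supp(T)=Q_0$.

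The second isomorphism of (1) I would attack by proving $\rho(\stmin(e\Lambda))=\stmin(e'\Gamma)$ via Proposition \ref{determiningsupport}(2), whose characterization involves the $Z_i^\Lambda$. The key identification $\rho(Z_i^\Lambda)=Z_{\sigma(i)}^\Gamma$ splits into two cases. When $\supp(Z_i^\Lambda)=Q_0\setminus\{i\}$, part (2) gives $\supp(\rho(Z_i^\Lambda))=Q'_0\setminus\{\sigma(i)\}$, which among elements of $\dis(\Gamma)$ uniquely picks out $Z_{\sigma(i)}^\Gamma$. The hard case, which I expect to be the main obstacle, is when $Z_i^\Lambda$ is $\tau$-tilting and support alone is insufficient; here I would invoke the anti-isomorphism $(-)^\dagger$ of Proposition \ref{revers}, which by the remark at the end of Proposition \ref{determiningsupport}'s proof satisfies $(Z_i^\Lambda)^\dagger=X_i^{\Lambda^{\mathrm{op}}}$ and exchanges $\sttilt_{e\Lambda}\Lambda$ with $\sttilt_{(e\Lambda^{\mathrm{op}})^-}\Lambda^{\mathrm{op}}$. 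Conjugating $\rho$ to a poset isomorphism $\rho^{\mathrm{op}}\colon\sttilt\Lambda^{\mathrm{op}}\to\sttilt\Gamma^{\mathrm{op}}$, applying the already established first isomorphism of (1) and part (2) to $\rho^{\mathrm{op}}$, and transferring back through $\dagger$ would then yield the desired second isomorphism for $\rho$.

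For (3), under the lattice hypothesis: given $u^+\in\U^+$ with sink $T_0$ and sources $T_1,\ldots,T_\ell$, set $X=\kappa^+(u^+)$. Theorem \ref{reduction theorem} (Jasso) identifies $\sttilt_X\Lambda$ with $\sttilt\Gamma_X$, sending $T_0\leftrightarrow 0$, the $T_i$'s to the elements of $\dip(0)$ in $\sttilt\Gamma_X$, and $\stmax(X)\leftrightarrow\Gamma_X$. Lemma \ref{updown}(2) applied to $\Gamma_X$ gives $\bigvee_k X_k^{\Gamma_X}=\Gamma_X$, so $\bigvee T_i=\stmax(X)$ in $\sttilt_X\Lambda$; this equals the join computed in $\sttilt\Lambda$ because the interval $\sttilt_X\Lambda$ is convex in the ambient lattice. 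The identity $\source\circ\upsilon^-\circ\kappa^+(u^+)=\stmax(\kappa^+(u^+))$ is immediate from definitions, and the dual argument yields the meet formula. For (4), apply (3) to both $\upsilon^{\pm}(X)$ and use that $\rho$ preserves joins and meets as a lattice isomorphism to obtain $\stmax(\widetilde{\rho}^{\pm}(X))=\rho(\stmax(X))$ and $\stmin(\widetilde{\rho}^{\pm}(X))=\rho(\stmin(X))$ for both signs; hence $\widetilde{\rho}^{+}(X)$ and $\widetilde{\rho}^{-}(X)$ correspond to the same interval $\rho(\sttilt_X\Lambda)$, forcing $\widetilde{\rho}^{+}=\widetilde{\rho}^{-}$ since a $\tau$-rigid pair is determined by its interval via $\kappa^{\pm}$. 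The induced isomorphism $\sttilt_X\Lambda\simeq\sttilt_{\widetilde{\rho}(X)}\Gamma$ is then $\rho$ restricted to this interval.
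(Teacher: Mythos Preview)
Your treatment of the first isomorphism in (1), of (2), and of (3) and (4) is essentially the paper's argument. The gap is in the second isomorphism of (1), specifically your handling of the identification $\rho(Z_i^{\Lambda})=Z_{\sigma(i)}^{\Gamma}$ in what you call the ``hard case''.

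Your proposed $\dagger$-conjugation is circular. Conjugating $\rho$ by $(-)^{\dagger}$ indeed produces a poset isomorphism $\rho^{\mathrm{op}}\colon\sttilt\Lambda^{\mathrm{op}}\to\sttilt\Gamma^{\mathrm{op}}$, but to apply the already-established first isomorphism of (1) to $\rho^{\mathrm{op}}$ you need to know the bijection $\sigma^{\mathrm{op}}$ determined by $\rho^{\mathrm{op}}(X_i^{\Lambda^{\mathrm{op}}})=X_{\sigma^{\mathrm{op}}(i)}^{\Gamma^{\mathrm{op}}}$. Since $(Z_i^{\Lambda})^{\dagger}=X_i^{\Lambda^{\mathrm{op}}}$, this amounts to knowing $\rho(Z_i^{\Lambda})=Z_{\sigma^{\mathrm{op}}(i)}^{\Gamma}$, and your argument gives the second isomorphism of (1) only with $e''=\sum_{i\in V}e_{\sigma^{\mathrm{op}}(i)}$ in place of $e'$. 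Showing $\sigma^{\mathrm{op}}=\sigma$ is exactly the identification $\rho(Z_i^{\Lambda})=Z_{\sigma(i)}^{\Gamma}$ you were trying to establish.

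The fix is much simpler than your case split and eliminates the need for $\dagger$ altogether. From the proof of Lemma~\ref{updown} one has $\fac Z_i=\fac\bigoplus_{k\neq i}P_k$, and since each $X_k$ is the simple top of $P_k$ this gives the purely poset-theoretic characterization: $Z_i$ is the unique element of $\dis(\Lambda)$ satisfying $Z_i\geq X_k$ for all $k\neq i$ (the paper states this explicitly as a Claim inside the proof of Lemma~\ref{projsupport}). Since $\rho$ is a poset isomorphism with $\rho(X_k^{\Lambda})=X_{\sigma(k)}^{\Gamma}$, it follows at once that $\rho(Z_i^{\Lambda})=Z_{\sigma(i)}^{\Gamma}$ for every $i$, without any case distinction. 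Proposition~\ref{determiningsupport}(2) then yields $\rho(\stmin(e\Lambda))=\stmin(e'\Gamma)$ directly, and the second isomorphism follows from $\sttilt_{e\Lambda}\Lambda=[\stmin(e\Lambda),\Lambda]$.
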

\begin{proof}
The assertions (1) follows from Proposition\;\ref{determiningsupport} and the assertion (2) is a direct consequence of the assertion (1).
  
We prove (3). By definition, $\stmax (\kappa^+(u^+))\geq T$ for any $T\in u^+$.
Hence, we have 
\[\stmax\circ  \kappa^+(u^+)\geq \bigvee u^+.\]
This implies that $\bigvee u^+$ is in 
\[[\sink u^+,\stmax(\kappa^+(u^+))]
=[\stmin(\kappa^+(u^+)),\stmax(\kappa^+(u^+))]=\sttilt_{\kappa^+(u^+)} \Lambda.\]
Therefore, $\stmax\circ \kappa^+(u^+)=\bigvee u^+$ follows from Lemma\;\ref{updown}.
A similar argument implies 
\[\stmin\circ\kappa^-(u^-)=\bigwedge u^-.\]

We show (4). From (3), we have equalities
\[\begin{array}{lll}
\sttilt_{\widetilde{\rho}^+(X)} \Gamma &=&[\stmin(\widetilde{\rho}^+(X)),\stmax(\widetilde{\rho}^+(X))]\\\\
                                   &=&[\stmin(\kappa^+\circ \rho\circ \upsilon^+(X)),\stmax(\kappa^+\circ\rho\circ\upsilon^+(X))]\\\\
                                   &=&[\sink(\rho(\upsilon^+(X))),\bigvee \rho(\upsilon^+(X))]\\\\
                                   &=&[\rho(\sink( \upsilon^+(X))),\rho(\bigvee (\upsilon^+(X)))]\\\\
                                   &=&\rho([\sink( \upsilon^+(X)),\stmax(\kappa^+\circ \upsilon^+(X))])\\\\
                                   &=&\rho([\stmin(X),\stmax(X)])\\\\
                                   &=&\rho(\sttilt_X \Lambda).\\
\end{array}
\]
Similarly, one can check that
\[\sttilt_{\widetilde{\rho}^-(X)} \Gamma=\rho(\sttilt_X \Lambda).\]
This finishes a proof.
\end{proof}
\subsection{From support $\tau$-tilting posets to quivers}
The aim of this subsection is to reconstruct the Gabriel quiver of $\Lambda$ (up to multiple arrows and loops) from their support 
$\tau$-tilting poset.

We define a new quiver $Q^*$ from $Q$ as follows$:$
\begin{enumerate}[{\rm (i)}]
\item $Q^*_0:=Q_0$.
\item We draw one arrow from $i$ to $j$ if there is an arrow from $i$ to $j$ on $Q^{\circ}$.
\end{enumerate}
\begin{example}
Let $Q$ be the following quiver.
\[\begin{xy}
(0,0)*[o]+{1}="A", (10,0)*[o]+{2}="B", (20,0)*[o]+{3}="C",
\ar @(ul,dl) "A";"A"
\ar @<4pt> "A";"B"
\ar "A";"B"
\ar @<4pt> "B";"A"
\ar "B";"C"
\end{xy}\]
Then $Q^*$ is given by the following quiver.
\[\begin{xy}
(0,0)*[o]+{1}="A", (10,0)*[o]+{2}="B", (20,0)*[o]+{3}="C",
\ar @<2pt>"A";"B"
\ar @<2pt> "B";"A"
\ar "B";"C"
\end{xy}
\]
\end{example}
\begin{proposition}
\label{determiningarrows}
Let $i\neq j\in Q_0$.
\begin{enumerate}[{\rm (1)}]
\item There is no arrow between $i$ and $j$ if and only if
 $\Lambda/(1-e_i-e_j)\in \dip(X_i)\cap \dip(X_j)$.
\item There is an arrow from $i$ to $j$ and  no arrow from $j$ to $i$
if and only if $\Lambda/(1-e_i-e_j)\in \dip(X_j)\setminus\dip(X_i)$.
\item There is an arrow from $i$ to $j$ and an arrow from $j$ to $i$
 if and only if $\Lambda/(1-e_i-e_j)\not\in \dip(X_i)\cup \dip(X_j).$
\item Let $Q'$ be a finite quiver, $I'$ an admissible ideal of $KQ'$ and
$\Gamma=KQ'/I'$. Assume that there is a poset  
isomorphism $\rho:\sttilt \Lambda\stackrel{\sim}{\to} \sttilt \Gamma$ and
put $\sigma:Q_0\to Q'_0$ as in Corollary\;\ref{preservingsupport}. 
Then $\sigma$ induces a quiver isomorphism
\[Q^*\stackrel{\sim}{\to} (Q')^*.\]
\end{enumerate}
\end{proposition}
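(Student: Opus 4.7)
The plan is to reduce to the two-vertex algebra $\bar\Lambda:=\Lambda/(1-e_i-e_j)$ and analyze the top of $\sttilt\bar\Lambda$. The three elements $\bar\Lambda$, $X_i^\Lambda$, $X_j^\Lambda$ all lie in $\sttilt_{(1-e_i-e_j)\Lambda^-}\Lambda$, which (as explained after Theorem~\ref{reduction theorem} using Proposition~\ref{basicfact}) coincides with $\sttilt\bar\Lambda$. Moreover this subposet is downward closed in $\sttilt\Lambda$, since $T'\leq T$ implies $\supp(T')\subseteq\supp(T)$; and any full, downward-closed subposet is automatically strongly full, so cover relations among the relevant elements agree in $\sttilt\Lambda$ and in $\sttilt\bar\Lambda$. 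Thus it suffices to decide when $\bar\Lambda\in\dip(X_i)$ inside $\sttilt\bar\Lambda$, and similarly for $X_j$.

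Next I would work inside $\sttilt\bar\Lambda$. Since $|\bar\Lambda|=2$ and $\bar\Lambda=\bar P_i\oplus\bar P_j$ (with $\bar P_k:=e_k\bar\Lambda$) is the maximum of this poset, Theorem~\ref{basicprop}(2) gives $\dis(\bar\Lambda)=\{\bar Z_i,\bar Z_j\}$, where $\bar Z_k$ denotes the mutation of $\bar\Lambda$ at $\bar P_k$. Hence $\bar\Lambda\in\dip(X_i)$ if and only if $X_i\in\{\bar Z_i,\bar Z_j\}$. I would first rule out $X_i=\bar Z_i$: the mutation at $\bar P_i$ retains $\bar P_j$ as a summand, so $j\in\supp(\bar Z_i)$, whereas $\supp(X_i)=\{i\}$. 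Consequently the question collapses to the key equivalence
\[
\bar Z_j=X_i\quad\Longleftrightarrow\quad\text{there is no arrow }i\to j\text{ in }Q.
\]
For $(\Leftarrow)$: if no arrow $i\to j$ exists, then $\bar P_ie_j=0$, so $\supp(\bar P_i)=\{i\}$ and $X_i=\bar P_i/\bar P_ie_j\bar\Lambda=\bar P_i$; then $(\bar P_i,\bar P_j)$ is a genuine $\tau$-tilting pair (using $(1-e_i)\bar\Lambda=\bar P_j$), and it is precisely the completion of the almost complete pair $(\bar P_i,0)$ other than $\bar\Lambda$, forcing $\bar Z_j=X_i$. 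For $(\Rightarrow)$: if $\bar Z_j=X_i$, then by the mutation description $\bar P_i$ is a direct summand of the indecomposable module $X_i$ (indecomposability holds because $\bar\Lambda/(e_j)$ is a local algebra), so $X_i\cong\bar P_i$, which forces $\bar P_ie_j=0$ and hence no arrow $i\to j$.

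The analogous equivalence with the roles of $i$ and $j$ interchanged gives $\bar\Lambda\in\dip(X_j)$ iff no arrow $j\to i$. Combining these two dichotomies immediately delivers (1), (2), and (3). For (4), I would apply Corollary~\ref{preservingsupport}(1) with $V=\{i,j\}$ to obtain a poset isomorphism
\[
\sttilt_{(1-e_i-e_j)\Lambda^-}\Lambda\simeq\sttilt_{(1-e_{\sigma(i)}-e_{\sigma(j)})\Gamma^-}\Gamma;
\]
these subposets have unique maxima $\bar\Lambda$ and $\bar\Gamma:=\Gamma/(1-e_{\sigma(i)}-e_{\sigma(j)})$, hence $\rho(\bar\Lambda)=\bar\Gamma$. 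Combined with $\rho(X_k^\Lambda)=X_{\sigma(k)}^\Gamma$ and the fact that $\rho$ is a Hasse-quiver isomorphism, applying (1)--(3) on both sides shows that the presence/direction of arrows between $i,j$ in $Q$ matches that between $\sigma(i),\sigma(j)$ in $Q'$, whence $\sigma$ induces the quiver isomorphism $Q^*\xrightarrow{\sim}(Q')^*$.

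The main obstacle is the $(\Rightarrow)$ half of the key equivalence, where I must combine the indecomposability of $X_i=\bar\Lambda/(e_j)$ (from locality of the two-sided factor) with the explicit identification $X_i=\bar P_i/\bar P_ie_j\bar\Lambda$ to conclude $\bar P_ie_j=0$. A minor but genuine point is spelling out that $\sttilt_{(1-e_i-e_j)\Lambda^-}\Lambda$ is a strongly full subposet of $\sttilt\Lambda$, which I address via downward-closedness of supports.
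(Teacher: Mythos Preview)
Your proposal is correct and follows essentially the same strategy as the paper: reduce to the two-vertex factor $\bar\Lambda=\Lambda/(1-e_i-e_j)$, and show that $\bar\Lambda\in\dip(X_i)$ precisely when $X_i\cong e_i\bar\Lambda$, i.e.\ when there is no arrow $i\to j$. The paper organizes this slightly differently---it characterizes case~(1) via the four-element diamond shape of $\sttilt\bar\Lambda$ and then handles~(2) by a direct computation showing $X_j\in\add\bar\Lambda$ when there is no arrow $j\to i$---but the underlying argument (comparing $X_k$ with the indecomposable projective $e_k\bar\Lambda$) is identical to your key equivalence. You are also more explicit than the paper about why cover relations in $\sttilt\bar\Lambda$ coincide with those in $\sttilt\Lambda$: the paper simply identifies $\sttilt\bar\Lambda$ with the interval $[0,\bar\Lambda]$ and tacitly uses that intervals are strongly full, whereas you spell this out via downward-closedness of supports.
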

\begin{proof} Let $\Lambda':=\Lambda/(1-e_i-e_j)$.
Note that $\Lambda'\in \dip(X_i)\cap \dip(X_j)$ if and only if 
\[
 \sttilt \Lambda'=\sttilt_{\oplus_{k\neq i,j}P_k^-} \Lambda=
 \begin{xy}
 (0,-10)*[o]+{0}="A", (-10,0)*[o]+{X_i}="B", (10,0)*[o]+{X_j}="C", (0,10)*[o]+{\Lambda'}="D"
 \ar "B";"A"
 \ar "C";"A" 
 \ar "D";"B"
 \ar "D";"C" 
\end{xy}
\]
This is equivalent to that $\Lambda'\simeq X_i\oplus X_j$ and thus
$X_i$ and $X_j$ are projective as $\Lambda'$-modules.
Since the quiver of $\Lambda/(1-e_i-e_j)$ is the full subquiver of $Q$ with 
two vertices $i$ and $j$, we obtain the assertion (1).

We show the assertion (2). First we assume that there is an arrow from $i$ to $j$ and no arrow from $j$ to $i$ on $Q^{\circ}$.
Then $e_j\Lambda' e_i =0$ and $e_i \Lambda' e_j\neq 0$. In particular, we have
\[X_j=X_j^{\Lambda'}=e_j \Lambda'/e_j\Lambda' e_i \Lambda'=e_j\Lambda' \in \add\Lambda'\]
 and $X_i\not\in \add \Lambda'$. 
This implies that  $\Lambda'\in \dip(X_j)\setminus\dip(X_i)$.
Next we assume that $\Lambda'\in \dip(X_j)\setminus\dip(X_i)$.
In this case, $X_j\in \add \Lambda'$. Hence, there is no arrow from $j$ to $i$.
Existence of an arrow from $i$ to $j$ follows from (1).

Then the assertion (3) follows from (1) and (2), and the assertion (4) follows from Corollary\;\ref{preservingsupport} (1),
 (1), (2) and (3).
\end{proof}
Now we can recover Happel-Unger's result in \cite{HU2}. For a finite quiver $Q$, we define a decorated quiver $Q_{\mathrm{dec}}$ of $Q$
as follows: (i) The vertices of $Q_{\mathrm{dec}}$ is that of $Q$; (ii) If there is a unique arrow from $i$ to $j$ in
$Q$, then we draw a one arrow $i\to j$ in $Q_{\mathrm{dec}}$; (iii) If there are at least two arrows from $i$ to $j$ in $Q$, then we draw
 a decorated arrow $i\stackrel{*}{\to} j$ in $Q_{\mathrm{dec}}$. 
\begin{corollary}[{\cite[Theorem\;6.4]{HU2}}]
Let $Q$ and $Q'$ be two finite acyclic quivers. Then $\stilt KQ\simeq \stilt KQ'$ only if
$Q_{\mathrm{dec}}\simeq Q'_{\mathrm{dec}}$.
\end{corollary}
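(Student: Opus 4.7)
The plan is to build on Proposition~\ref{determiningarrows}(4), which already recovers the underlying quiver up to multiple arrows, and upgrade it to the decorated quiver $Q_{\mathrm{dec}}$ by detecting whether an arrow has multiplicity one or at least two through a local cardinality test on the support $\tau$-tilting poset.

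First, I would observe that since $KQ$ and $KQ'$ are hereditary, $\stilt KQ = \sttilt KQ$ and $\stilt KQ' = \sttilt KQ'$; hence the given isomorphism $\stilt KQ \simeq \stilt KQ'$ may be regarded as a poset isomorphism $\rho\colon \sttilt KQ \xrightarrow{\sim} \sttilt KQ'$. Proposition~\ref{determiningarrows}(4) then provides a bijection $\sigma\colon Q_0 \to Q'_0$ inducing a quiver isomorphism $Q^* \xrightarrow{\sim} (Q')^*$. Because $Q$ and $Q'$ are acyclic (hence loop-free and without bi-directed edges), $Q^\circ = Q$ and $(Q')^\circ = Q'$, so $\sigma$ preserves both the existence and the direction of each arrow.

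Next, for each $i \neq j \in Q_0$, set $e := 1 - e_i - e_j \in KQ$ and $e' := 1 - e_{\sigma(i)} - e_{\sigma(j)} \in KQ'$. Corollary~\ref{preservingsupport}(1), combined with the identification $\sttilt_{e \cdot KQ} KQ \simeq \sttilt (KQ)/(e)$ noted right after Theorem~\ref{reduction theorem}, yields a poset isomorphism $\sttilt KQ'' \xrightarrow{\sim} \sttilt KQ'''$, where $Q''$ (resp.\ $Q'''$) is the full subquiver of $Q$ (resp.\ $Q'$) on $\{i, j\}$ (resp.\ $\{\sigma(i), \sigma(j)\}$). Since $Q$ is acyclic, $Q''$ carries arrows in at most one direction; the key dichotomy is that if $Q''$ has exactly one arrow then $KQ'' \simeq KA_2$ with $\#\sttilt KQ'' = 5$, while if $Q''$ has $n \geq 2$ arrows then $KQ''$ is an $n$-Kronecker path algebra, representation infinite with infinitely many support tilting modules, so $\#\sttilt KQ'' = \infty$.

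Since the restriction of $\rho$ preserves cardinality, $(i, j)$ has a single arrow in $Q$ iff $(\sigma(i), \sigma(j))$ has a single arrow in $Q'$. Combined with the direction-preserving property of $\sigma$ from the first step, this gives the decorated-quiver isomorphism $Q_{\mathrm{dec}} \simeq Q'_{\mathrm{dec}}$. The only non-routine ingredient is the finite-versus-infinite dichotomy for $KQ''$, which reduces to the classical fact that $n$-Kronecker path algebras ($n \geq 2$) are representation infinite; this will be the main, but easily handled, obstacle.
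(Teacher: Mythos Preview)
Your proposal is correct and follows essentially the same route as the paper: reduce to the two-vertex quotient via Corollary~\ref{preservingsupport}(1), then distinguish single from multiple arrows by the finite-versus-infinite dichotomy for $\sttilt$ of the $A_2$ versus $n$-Kronecker algebra. The only cosmetic difference is that the paper works with the interval $[0,\Lambda/(1-e_i-e_j)]=\sttilt_{(1-e_i-e_j)\Lambda^-}\Lambda$ rather than $\sttilt_{(1-e_i-e_j)\Lambda}\Lambda$, but both are identified with $\sttilt\Lambda/(1-e_i-e_j)$ and the argument is the same.
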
 
\begin{proof} We put $\Lambda=KQ$ and $\Gamma=KQ'$.
Let $\rho:\stilt \Lambda\stackrel{\sim}{\to} \stilt \Gamma$ and $\sigma:Q_0\rightarrow Q'_0$ as in Corollary\;\ref{preservingsupport}.
By Corollary\;\ref{preservingsupport} (1), we have $\rho(\Lambda/(1-e_i-e_j))=\Gamma/(1-e_{i'}-e_{j'})$,
where $i'=\sigma(i)$ and $j'=\sigma(j)$. Hence we obtain
\[\stilt(\Lambda/(1-e_i-e_j))=[0,\Lambda/(1-e_i-e_j)]\simeq
[0,\Gamma/(1-e_{i'}-e_{j'})]=\stilt(\Gamma/(1-e_{i'}-e_{j'})).
\] 
Assume that there are at least two arrows from $i$ to $j$
in $Q$. By Proposition\;\ref{determiningarrows}, there is an arrow from $i'$ to $j'$ in $Q'$.
Since $\stilt \Gamma/(1-e_{i'}-e_{j'})\simeq \stilt \Lambda/(1-e_i-e_j)$ has infinitely many
elements, we have that there are at least two arrows from $i'$ to $j'$ in $Q'$.
Thus the assertion follows from Proposition\;\ref{determiningarrows}.
\end{proof}
\begin{remark}
{\cite[Theorem\;6.4]{HU2}} says more strongly result than above corollary, i.e., a poset isomorphism $\tilt KQ \simeq \tilt KQ'$
implies $Q_{\mathrm{dec}}\simeq Q'_{\mathrm{dec}}$. Then it is interesting  whether a poset isomorphism
 $\ttilt \Lambda\simeq \ttilt \Gamma$ gives us a poset isomorphism $\sttilt \Lambda\simeq \sttilt \Gamma$.
\end{remark}
\begin{corollary}
\label{posetauttoquiveraut}
Assume that $\Lambda=KQ/I$ is $\tau$-tilting finite. 
\begin{enumerate}[{\rm (1)}]
\item In the setting of Proposition\;\ref{determiningarrows} (4),
 $\sigma$ induces an quiver isomorphism
\[Q\setminus\{\mathrm{loops}\}\stackrel{\sim}{\to} Q'\setminus\{\mathrm{loops}\}.\]
\item  Let $\rho,\rho'$ be a poset isomorphism from $\sttilt \Lambda$ to $\sttilt \Gamma$.
If $\rho(X_i^{\Lambda})=\rho'(X_i^{\Lambda})$ holds for any $i\in Q_0$, then we have
$\rho=\rho'$. In particular, there is a group monomorphism
\[\Aut_{\mathsf{poset}} (\sttilt \Lambda)\inj \Aut_{\mathsf{quiver}}(Q^{\circ}).\]
\end{enumerate}
\end{corollary}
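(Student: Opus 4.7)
For Part (1), the plan is to upgrade the quiver isomorphism $Q^*\simeq (Q')^*$ from Proposition \ref{determiningarrows}(4) to the loop-deleted quivers. Since $\sttilt\Gamma\simeq\sttilt\Lambda$ is a finite poset, $\Gamma$ is also $\tau$-tilting finite; Lemma \ref{tautiltingfiniteness} then forces both $Q\setminus\{\mathrm{loops}\}$ and $Q'\setminus\{\mathrm{loops}\}$ to have no multiple arrows, so they coincide with $Q^*$ and $(Q')^*$ respectively, and Proposition \ref{determiningarrows}(4) concludes.

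For Part (2), I would set $\phi:=\rho'^{-1}\circ\rho\in\Aut_\mathsf{poset}(\sttilt\Lambda)$ and restate the hypothesis as $\phi(X_i^\Lambda)=X_i^\Lambda$ for every $i\in Q_0$; the goal becomes $\phi=\mathrm{id}$. The associated bijection $\sigma_\phi$ on $Q_0$ is the identity, so Corollary \ref{preservingsupport}(2) gives that $\phi$ preserves the support of every basic support $\tau$-tilting module. The key structural facts are that $\sttilt\Lambda$ is a finite lattice (by \cite{IRTT}, as $\Lambda$ is $\tau$-tilting finite) and is $n$-regular (Theorem \ref{basicprop}(2), with $n=|\Lambda|$).

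The core argument combines two parallel inductions. Let $h^*(T)$ (resp.\ $h(T)$) be the length of a longest chain from $T$ to $\Lambda$ (resp.\ from $0$ to $T$). A top-down induction on $h^*$ shows $\phi(T)=T$ whenever $|\dip(T)|\geq 2$: the standard covering argument in a finite lattice yields $T=\bigwedge\dip(T)$ in this case, and all elements of $\dip(T)$ have strictly smaller $h^*$, hence are already fixed; the base case $\phi(\Lambda)=\Lambda$ is automatic since poset isomorphisms preserve the maximum. A dual bottom-up induction on $h$ shows $\phi(T)=T$ whenever $|\dis(T)|\geq 2$, via $T=\bigvee\dis(T)$, with base cases $\phi(0)=0$ (automatic) and $\phi(X_i)=X_i$ (hypothesis). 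When $n\geq 3$, every $T\neq 0,\Lambda$ satisfies $|\dip(T)|+|\dis(T)|=n\geq 3$, so at least one of these numbers is $\geq 2$, and the two inductions together cover all of $\sttilt\Lambda$. The small ranks are handled directly: $n=1$ is trivial, and for $n=2$ the Hasse quiver $\H(\sttilt\Lambda)$ consists of two directed paths from $\Lambda$ to $0$ meeting only at the endpoints; the hypothesis $\phi(X_i)=X_i$ forbids swapping these paths (as $X_1,X_2$ lie on different ones), and an order automorphism of a finite chain fixing its endpoints is the identity.

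For the monomorphism $\Aut_\mathsf{poset}(\sttilt\Lambda)\hookrightarrow\Aut_\mathsf{quiver}(Q^\circ)$, the map $\rho\mapsto\sigma_\rho$ is well-defined by Part (1) with $\Gamma=\Lambda$, is a group homomorphism because $\sigma_{\rho\circ\rho'}=\sigma_\rho\circ\sigma_{\rho'}$ (evaluated on each $X_i^\Lambda$), and is injective by Part (2) applied with $\rho'=\mathrm{id}$. The main obstacle in (2) is identifying the right inductive framework; once one recognizes that the top-down and bottom-up inductions together cover every element of the $n$-regular lattice when $n\geq 3$, the remaining verifications are routine.
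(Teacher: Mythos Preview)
Part~(1) is correct and identical to the paper's argument. The deduction of the group monomorphism at the end is also fine.

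Your argument for Part~(2) has a genuine gap when $n\geq 3$: the two inductions are circular, not independent. In the top-down step for $T$ with $|\dip(T)|\geq 2$, you need $\phi(U)=U$ for every $U\in\dip(T)$. But such a $U$ can have $|\dip(U)|=1$, and then your top-down inductive hypothesis (which only asserts $\phi=\mathrm{id}$ on elements of smaller $h^*$ \emph{with} $|\dip|\geq 2$) says nothing about $U$. You would like to fall back on the bottom-up induction for $U$, since then $|\dis(U)|=n-1\geq 2$; but $T\in\dis(U)$, so the bottom-up step for $U$ requires $\phi(T)=T$, which is exactly what you are proving. Concretely, in the poset $\P$ of Section~3.4 one has $\dip(x_{12})=\{x_{16},x_{22}\}$ with $|\dip(x_{22})|=1$ and $\dis(x_{22})=\{x_{12},x_{19}\}$; neither induction can break the loop between $x_{12}$ and $x_{22}$. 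Your formula $T=\bigwedge\dip(T)$ for $|\dip(T)|\geq 2$ is correct in any finite lattice, but it does not by itself yield the inductive scheme you describe.

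The paper avoids this by inducting on $|\Lambda|$ rather than on height. Starting from $\P_0=\sttilt_{P_i^-}\Lambda=\sttilt\Lambda/(e_i)$, whose atoms $X_j$ ($j\neq i$) are fixed by hypothesis, Jasso's reduction (Theorem~\ref{reduction theorem}) identifies $\P_0$ with $\sttilt$ of an algebra of rank $n-1$, so the inductive hypothesis gives $\rho=\rho'$ on $\P_0$. One then propagates along a chain of intervals $\P_r=\sttilt_{X_r}\Lambda$ with $|X_r|=1$: each $\P_r$ is again a rank-$(n-1)$ support $\tau$-tilting poset, and one checks that $\rho,\rho'$ agree on its atoms because all but one of them already lie in $\P_{r-1}$. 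Finally, $\tau$-tilting finiteness is used to show that every $T\in\sttilt\Lambda$ lies in some such $\P_r$. This reduction of rank, via Theorem~\ref{reduction theorem} and Corollary~\ref{preservingsupport}, is the missing idea.
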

\begin{proof}
The assertion (1) follows from Lemma\;\ref{tautiltingfiniteness} and Proposition\;\ref{determiningarrows} (4).

We prove the assertion (2) by using an induction on $|\Lambda|$.
It is obvious that the assertion holds for $|\Lambda|=1,2$.
Thus we assume that the assertion holds for the case that $|\Lambda|<n$ ($n>2$) and consider the case 
$|\Lambda|=n.$ 
\begin{claim}
\label{claimposetaut}
Let $(T_0< T_1< \cdots< T_{\ell})$
be a sequence of vertices in $\H(\sttilt \Lambda)$ satisfying the following conditions:
\begin{enumerate}[{\rm (a)}]
\item $T_0=0$.
\item $\# \dip (T_r)=n-1$ holds for any $r>0$.
\item $T_1\in [0, \bigvee_{j\neq i} X_j]$ for some $i\in Q_0$ and $T_r\in [T_{r-1},\bigvee_{Y\in \dip(T_{r-1})} Y]$ $(r>1)$.
\end{enumerate}
We set \[\P_r:=\begin{cases}
[0,\bigvee_{j\neq i} X_j] & r=0 \\ 
[T_r,\bigvee_{Y\in \dip(T_r)} Y] & r>0.\\
\end{cases}\] 
If $T\in \bigcup_{r=0}^{\ell}[T_r,\bigvee_{Y\in \dip(T_r)} Y]$, then we have $\rho (T)=\rho'(T)$.
\end{claim}
\begin{pfclaim}
Let $u_0$ be the element of $\U^+_{n-1}$ given by $0$ and $\dip(0)\setminus\{X_i\}$ and
$u_r$ the element of $\U^+_{n-1}$ given by $T_r$ and $\dip(T_r)$.
By Corollary\;\ref{preservingsupport} (3), we have 
\[\P_r=\sttilt_{\kappa^+(u_r)} \Lambda.\]
By Theorem\;\ref{reduction theorem}, there is an finite dimensional algebra $\Lambda_r$ with $|\Lambda_r|=n-1$ such that
\[\P_r\simeq \sttilt \Lambda_r.\]
We have $\kappa^+(u_0)=P_i^-$ and
\[\P_0=\sttilt_{P_i^-} \Lambda=\sttilt \Lambda/(e_i).\]

Since $\rho(X_j)=\rho'(X_j)$ holds for any $j\in Q_0$, we have
that $\rho(\P_0)=\rho'(\P_0)$.
Then by using hypothesis of induction, we obtain that
\[\rho(T)=\rho'(T)\]   
 for any $T\in \P_0$.
Now we consider $\dip(T_1)$. Since $\P_0$ is $(n-1)$-regular and $T_1\neq T_0=\stmin(\kappa^+(u_0))$, there is a unique
direct predecessor $Y_1$ of $T_1$ which is not contained in $\P_0$.
We let $\{Y_2,\dots, Y_{n-1}\}=\dip (T_0)\setminus\{Y_1\}$. Then $\rho(Y_k)=\rho'(Y_k)$
holds for any $k\geq 2$. Therefore $\rho(Y_1)=\rho'(Y_1)$ also holds. 
This gives that $\rho(\P_1)=\rho'(\P_1)$. Hence the hypothesis of induction implies that
\[\rho(T)=\rho'(T)\]
holds for any $T\in \P_1$. A similar argument gives the assertion.  
\end{pfclaim}
Let $\mathcal{P}$ be a subset of $\sttilt \Lambda$ consists of those element $T$
such that $T\in \P_\ell$ for some $(T_0<\cdots<T_{\ell})$ satisfying (a), (b) and (c).
Suppose that $\mathcal{P}\neq \sttilt \Lambda$.
Since $\Lambda$ is $\tau$-tilting finite, we can take a minimal element $T$ of $\sttilt \Lambda\setminus \mathcal{P}$.
 We note that $0\in \mathcal{P}$. Hence $T\neq 0$ and there is a direct successor $T'$ of $T$.
 If $T'=0$, then it is obvious that $T\in \mathcal{P}$.
 Thus, we may assume that $T'\neq 0$.
 In this case, there is an indecomposable $\tau$-rigid module $M$
 such that $T,T'\in \sttilt_M \Lambda$. Let $T''$ be the minimum element of $\sttilt_M \Lambda$.
 By minimality of $T$, we get  $0\neq T''\in \mathcal{P}$. Thus there is a sequence $(T_0<\cdots<T_{\ell})$
 satisfying (a), (b), (c) and $T''\in \P_{\ell}$. Indecomposability of $M$ implies that
 $\sttilt_M \Lambda$ is $(n-1)$-regular. Thus $T''$ has $n-1$ direct predecessors. Therefore, $(T_0<T_1<\cdots <T_{\ell}<T'')$
 satisfies (a), (b) and (c). We also have  $T\in \sttilt_M \Lambda=[T'',\bigvee_{Y\in \dip(T'')} Y]$.
 This contradicts to $T\not\in \mathcal{P}$. Hence we obtain $\mathcal{P}=\sttilt \Lambda$.
 Then the assertion follows from Claim\;\ref{claimposetaut}. 
\end{proof}
\subsection{Other remarks}
In this subsection, we show some results used in the next section.
\begin{lemma}
\label{projsupport}
Let $\Lambda=KQ/I$ and $\Gamma=KQ'/I'$ be two basic algebras.
Assume that there is a poset isomorphism $\rho:\sttilt \Lambda\stackrel{\sim}{\to} \sttilt \Gamma$.
We define a quiver isomorphism $\sigma:Q^*\stackrel{\sim}{\to} (Q')^*$ as in Proposition\;\ref{determiningarrows}.
For any subset $V$ of $Q_0$ and $i\not\in V$, we have the following equality.
\[\supp(e_{\sigma(i)} \Gamma/(\sum_{v\in V} e_{\sigma(v)}))=\sigma(\supp(e_i \Lambda/(\sum_{v\in V} e_v))).\]
\end{lemma}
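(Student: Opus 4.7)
Set $e := \sum_{v \in V} e_v \in \Lambda$ and $e' := \sum_{v \in V} e_{\sigma(v)} \in \Gamma$. The plan is first to reduce to the case $V = \emptyset$ via Corollary~\ref{preservingsupport}(1), and then to prove that case by identifying the support of an indecomposable projective with that of a carefully chosen element of $\sttilt$.

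For the reduction, Corollary~\ref{preservingsupport}(1) combined with the identification $\sttilt_{e\Lambda^-}\Lambda \simeq \sttilt \Lambda/(e)$ (and analogously for $\Gamma$) yields a poset isomorphism $\rho_V : \sttilt \Lambda/(e) \stackrel{\sim}{\to} \sttilt \Gamma/(e')$. Since $(e) \subseteq (1-e_j)$ as two-sided ideals of $\Lambda$ for every $j \notin V$, one has $X_j^{\Lambda/(e)} = \Lambda/(1-e_j) = X_j^\Lambda$, and similarly over $\Gamma$; hence the vertex bijection induced by $\rho_V$ agrees with $\sigma|_{Q_0 \setminus V}$. The lemma therefore reduces to the case $V = \emptyset$: whenever $\tilde\rho : \sttilt A \stackrel{\sim}{\to} \sttilt B$ is a poset isomorphism of basic algebras inducing a vertex bijection $\tilde\sigma$, one has $\supp(e_{\tilde\sigma(i)} B) = \tilde\sigma(\supp(e_i A))$ for every vertex $i$ of $A$.

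The key step is the identity $\supp \stmin_A(e_i A) = \supp(e_i A)$ (and its analogue over $B$). The inclusion $\supseteq$ is clear, since every element of $\sttilt_{e_i A} A$ has $e_i A$ as a direct summand. For $\subseteq$, let $W$ be the complement of $\supp(e_i A)$ in the vertex set of $A$ and put $\bar e := \sum_{w \in W} e_w \in A$. Since any subpath of a nonzero path of $A$ is again nonzero, every nonzero path of $A$ starting at $i$ stays within $\supp(e_i A)$ and in particular avoids $W$, giving $e_i A \cap A \bar e A = 0$. Consequently the $i$-th direct summand of the $A$-module $A/(\bar e) = \stmax(\bar e A^-)$ equals $e_i A$, so $A/(\bar e) \in \sttilt_{e_i A} A$. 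Hence $\stmin_A(e_i A) \leq A/(\bar e)$, which forces $\supp \stmin_A(e_i A) \subseteq \supp A/(\bar e) = \supp(e_i A)$.

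Finally, applying Corollary~\ref{preservingsupport}(1) with the singleton $\{i\}$ to $\tilde\rho$ yields $\tilde\rho(\stmin_A(e_i A)) = \stmin_B(e_{\tilde\sigma(i)} B)$. Combining this with the identity above and the support-preservation statement Corollary~\ref{preservingsupport}(2) gives
\[\supp(e_{\tilde\sigma(i)} B) = \supp \stmin_B(e_{\tilde\sigma(i)} B) = \supp \tilde\rho(\stmin_A(e_i A)) = \tilde\sigma(\supp \stmin_A(e_i A)) = \tilde\sigma(\supp(e_i A)),\]
which completes the proof. The main obstacle is the identity $\supp \stmin_A(e_i A) = \supp(e_i A)$; the $\subseteq$ direction is delicate because it requires exhibiting a support $\tau$-tilting module in $\sttilt_{e_i A} A$ whose support is already as small as $\supp(e_i A)$, and the explicit quotient $A/(\bar e)$ provides exactly such a module.
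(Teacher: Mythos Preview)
Your proof is correct and follows essentially the same approach as the paper: reduce to $V=\emptyset$ via Corollary~\ref{preservingsupport}(1), establish $\supp(\stmin(e_i\Lambda))=\supp(e_i\Lambda)$ by producing an element of $\sttilt_{e_i\Lambda}\Lambda$ with support exactly $\supp(e_i\Lambda)$, and conclude using support preservation. The only minor difference is that the paper re-derives $\rho(Z_i^\Lambda)=Z_{\sigma(i)}^\Gamma$ explicitly (via a short Claim characterizing $Z_i$ through the $X_k$'s) before invoking Proposition~\ref{determiningsupport}(2), whereas you cite Corollary~\ref{preservingsupport}(1) directly for $\rho(\stmin(e_i\Lambda))=\stmin(e_{\sigma(i)}\Gamma)$; and for the containment $\supp T_i\subseteq\supp(e_i\Lambda)$ the paper uses the $\tau$-rigid pair $e_i\Lambda\oplus \bar e\Lambda^-$ while you compute $e_iA\bar e A=0$ explicitly---these are the same observation in different clothing.
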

\begin{proof}
 We put $e=\sum_{v\in V} e_v$ and $e'=\sum_{v \in V} e_{\sigma(v)}$.
By Corollary\;\ref{preservingsupport}, $\rho$ induces an isomorphism
\[\rho_V:\sttilt \Lambda/(e)\simeq \sttilt \Gamma/(e').\]
Since $\rho_V$ sends $X^{\Lambda/(e)}_j=X^{\Lambda}_j$ to $X^{\Gamma/(e')}_{\sigma(j)}=X^{\Gamma}_{\sigma(j)}$ for any $j\not\in V$,
it is sufficient to show the case $V=\emptyset$.
\begin{claim*}
Let $Z\in\dis(\Lambda)$. Then $Z=Z_i$ if and only if $Z\geq X_k$ for any $k\neq i$.
\end{claim*}
\begin{pfclaim}
The assertion follows from the fact that $\fac Z_j=\fac \bigoplus_{k\neq j}P_k$. (See the proof of Lemma\;\ref{updown}.)
\end{pfclaim} 
Let $T_i=T_i^{\Lambda}:=\stmin(e_i\Lambda)$ (i.e., the minimum element of $\sttilt_{e_i\Lambda} \Lambda$).
By the above claim, we have that $\rho(Z_i^{\Lambda})=Z_{\sigma(i)}^{\Gamma}$.
Therefore, Proposition\;\ref{determiningsupport} implies an equality
\[\rho (T_i^{\Lambda})=T_{\sigma(i)}^{\Gamma}.\]

We show that $\supp(e_i\Lambda)=\supp(T_i^{\Lambda})$ and $\supp(e_{\sigma(i)}\Gamma)=\supp(T_{\sigma(i)}^{\Gamma})$.
We put
\[e:=1-\sum_{k\in \supp(e_i \Lambda)} e_k.\] Then $X:=e_i\Lambda\oplus e\Lambda^-\in \trigidp \Lambda$.
By definition, we have $\stmin(X)\geq T_i^{\Lambda}$. This shows that $\supp(T_i^{\Lambda})$ is contained in $\supp(e_i \Lambda)$.
On the other hand, $\supp(e_i \Lambda)\subset \supp (T_i^{\Lambda})$ follows from $e_i \Lambda\in \add T_i^{\Lambda}$.
Thus we have $\supp(e_i\Lambda)=\supp(T_i^{\Lambda})$ and $\supp(e_{\sigma(i)}\Gamma)=\supp(T_{\sigma(i)}^{\Gamma})$. 
Then $\supp(e_{\sigma(i)} \Gamma)=\sigma(\supp(e_i \Lambda))$ follows from Corollary\;\ref{preservingsupport}.
\end{proof}
\begin{lemma}
 \label{keylemma}
Assume that there is an arrow $\alpha$ from $i$ to $j$ in $Q^{\circ}$.
Then $\alpha\Lambda e_j=e_i\Lambda e_j=e_i\Lambda \alpha$ if and only if $P_i\oplus e_i\Lambda/e_i\Lambda e_j \Lambda$ is $\tau$-rigid.
\end{lemma}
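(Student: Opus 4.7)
The plan is to apply the standard two-term silting criterion from Theorem~\ref{bijection}(1), which reduces the $\tau$-rigidity of $P_i \oplus M$ (where $M := e_i\Lambda/e_i\Lambda e_j\Lambda$) to a surjectivity condition read off from the minimal projective presentation of $M$. Since $\tau P_i = 0$, the only nontrivial conditions are that $M$ be $\tau$-rigid and $\Hom_\Lambda(P_i, \tau M) = 0$.

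First I identify the minimal projective presentation of $M$. Since $i\neq j$, we have $e_i\Lambda e_j\Lambda \subseteq e_i\rad\Lambda$, so $P_0 = e_i\Lambda$. A direct computation shows that the top of the syzygy $e_i\Lambda e_j\Lambda$ is concentrated at $S_j$: for $k\neq j$, any path $i\to k$ factoring through $j$ must continue nontrivially after $j$ and so lies in $e_i\Lambda e_j \cdot \rad\Lambda$; at $k=j$ the top is $e_i\Lambda e_j/e_i\Lambda e_j \cdot \rad(e_j\Lambda e_j)$. Let $\ell$ denote this dimension and pick $x_1,\ldots,x_\ell \in e_i\Lambda e_j$ lifting a basis of the top; since $\alpha\notin\rad^2\Lambda$ (as $I$ is admissible), we may arrange $x_1=\alpha$. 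Thus the minimal projective presentation of $M$ reads
\[
(e_j\Lambda)^{\ell} \xrightarrow{\,f\,} e_i\Lambda \to M \to 0,\qquad f(e_j^{(s)}) = x_s.
\]
Since $e_i\Lambda e_j \subseteq e_i\Lambda e_j\Lambda$, one has $Me_j=0$; hence $\Hom_\Lambda(P_1,M)=0$ and the criterion for $\Hom_\Lambda(M,\tau M)=0$ is vacuous, so $M$ is automatically $\tau$-rigid. Via $\Hom_\Lambda(e_k\Lambda,e_i\Lambda)=e_i\Lambda e_k$, the remaining condition $\Hom_\Lambda(P_i,\tau M)=0$ unfolds to the surjectivity of
\[
\Phi\colon e_i\Lambda e_i \longrightarrow (e_i\Lambda e_j)^{\ell}, \qquad y \longmapsto (yx_1,\ldots,yx_\ell).
\]

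For sufficiency, assume $\alpha\Lambda e_j = e_i\Lambda e_j = e_i\Lambda\alpha$. Since $e_j\Lambda e_j\cong \End_\Lambda(e_j\Lambda)$ is local with nilpotent radical $\rad(e_j\Lambda e_j)$, the first equality gives $e_i\Lambda e_j=\alpha\cdot e_j\Lambda e_j = K\alpha + \alpha\cdot\rad(e_j\Lambda e_j)$, so the top is one-dimensional, $\ell=1$, and $x_1=\alpha$. Then $\Phi(y)=y\alpha$ has image $e_i\Lambda\alpha$, which equals $e_i\Lambda e_j$ by the second assumption, so $\Phi$ is surjective.

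For necessity, assume $\Phi$ is surjective. I claim $\ell=1$. Otherwise, using surjectivity to lift $(\alpha,0,0,\ldots,0)$ yields some $y=ce_i+y'\in e_i\Lambda e_i$ (with $c\in K$ and $y'\in \rad(e_i\Lambda e_i)=e_i\rad\Lambda e_i$) satisfying $y\alpha=\alpha$ and $yx_2=0$. The first equation gives $(c-1)\alpha=-y'\alpha$; since $y'\alpha\in\rad^2\Lambda$ while $\alpha\notin\rad^2\Lambda$, this forces $c=1$ and $y'\alpha=0$. The second equation then reads $x_2=-y'x_2$, which iterated yields $x_2=(-y')^nx_2=0$ once $n$ is large enough that $(y')^n=0$ (using that $y'$ is nilpotent in the local algebra $e_i\Lambda e_i$), contradicting the fact that $[x_2]$ is nonzero in the top. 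Hence $\ell=1$ and $x_1=\alpha$. This means $\alpha$ generates $e_i\Lambda e_j\Lambda$ as a right $\Lambda$-module, so $\alpha\Lambda e_j = e_i\Lambda e_j\Lambda\cdot e_j = e_i\Lambda e_j$; and the surjectivity of $\Phi(y)=y\alpha$ reads $e_i\Lambda\alpha=e_i\Lambda e_j$.

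The main obstacle is the necessity direction, specifically ruling out $\ell\geq 2$: the nilpotency of $\rad(e_i\Lambda e_i)$ combined with $\alpha\notin\rad^2\Lambda$ (which itself uses the admissibility of $I$) is what makes the iterative vanishing argument go through.
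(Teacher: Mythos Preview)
Your proof is correct and follows essentially the same approach as the paper's: both reduce the $\tau$-rigidity of $P_i\oplus M$ to the vanishing of $\Hom_{\Kb(\proj\Lambda)}(P_M,P_i[1])$, then use that $\alpha\notin\rad^2\Lambda$ to force the rank $\ell$ (the paper's $r$) of the presentation to be $1$, and read off the two equalities from there. Your exposition is a bit more explicit (computing the top of $e_i\Lambda e_j\Lambda$, fixing $x_1=\alpha$ up front, and spelling out the nilpotency argument), while the paper works with an arbitrary generator $x=f(e_j)$ and deduces $\alpha\Lambda=x\Lambda$ afterwards; otherwise the arguments coincide.
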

\begin{proof} We put $M=e_i\Lambda/e_i\Lambda e_j \Lambda$.

We first assume that $P_i\oplus M$ is $\tau$-rigid.
Let $P_{j}^{\oplus r}\stackrel{f}{\to} P_i\to M \to 0$ be a minimal projective presentation of $M$ and
$P_{M}:=[P_{j}^{\oplus r}\stackrel{f}{\to} P_i]$ the corresponding two-term presilting complex in $\Kb(\proj \Lambda)$.
Since $P_i\oplus M$ is $\tau$-rigid, we have an equality
\[\Hom_{\Kb(\proj \Lambda)}(P_{M},P_{i}[1])=0.\]
 We put $P_j^{\oplus r}=\underset{1\leq t \leq r}{\bigoplus}P_j^{(t)}$, $f=(f^{(t)}:P_{j}^{(t)}\to P_i)$ and consider $\varphi\in\Hom_{\Kb(\proj \Lambda)}(P_{M}, P_i[1] ) $
 given by $\varphi^{(t)}:P_j^{(t)}\to P_i$, where
 $\varphi^{(t)}=\left\{
 \begin{array}{cl}
 \alpha & t=1 \\
 0& t\neq 1\\
 \end{array}\right.
 $.
  Then there exists $h\in \End_{\Lambda}(P_i)$ such that
 \[ h\circ f^{(t)}=\varphi^{(t)}\]
 for any $t$. Since $\alpha\in \rad \Lambda\setminus \rad^2 \Lambda$,  $h$ has to be an isomorphism and $r=1$.
 Let $x=f(e_j)$ and $y=h(e_i)\in e_i \Lambda e_i\setminus e_i \rad \Lambda e_i$. Then $x\Lambda=\Im f= e_i\Lambda e_j \Lambda$
 and $yx=\alpha$.
 Since $x\Lambda=e_i\Lambda e_j \Lambda$, there exists $y'\in e_j\Lambda e_j\setminus \rad(e_j \Lambda e_j)$
 such that $xy'=\alpha$. Hence we obtain
 \[\alpha\Lambda=xy' \Lambda =xe_j\Lambda=x\Lambda=e_i \Lambda e_j \Lambda.\]
   $\Hom_{\Kb(\proj \Lambda)}(P_M,P_{i}[1])=0$ implies that
  for any morphism  $g:P_j\rightarrow P_i$, there exists $h'\in \End_{\Lambda}(P_i)$ such that $g=h'\circ f$. This says that
  $e_i \Lambda e_j=e_i \Lambda x$.
 Therefore, we see that
 \[e_i\Lambda \alpha=e_i \Lambda yx=e_i \Lambda e_i x=e_i \Lambda  x= e_i \Lambda e_j.\] 

Next we assume that $\alpha\Lambda e_j=e_i\Lambda e_j=e_i\Lambda \alpha$. Then it is easy to check that
\[P_{j} \stackrel{\alpha\cdot-}{\to} P_i\to M\to 0\]
is a minimal projective presentation of $M$ and $\Hom_{\Kb}(P_M,P_{i}[1])=0$.
Since $M \simeq e_i(\Lambda/(e_j))$ is $\tau$-rigid, we obtain that
 $P_i\oplus M$ is also $\tau$-rigid.
\end{proof}
\begin{proposition}
\label{keyproposition}
Let $\Gamma=KQ'/I'$.  
Assume that $\sttilt \Lambda$ is a lattice and there exists a poset isomorphism
$\sttilt \Lambda\stackrel{\sim}{\to}\sttilt \Gamma$. We define a quiver isomorphism
$\sigma:Q^*\to (Q')^*$ as in 
Proposition\;\ref{determiningarrows}.
\begin{enumerate}[{\rm (1)}]
\item Let $i\neq j\in Q_0$. Then the restriction of $\rho$ on $\sttilt_{e_i(\Lambda/( e_j))} \Lambda$ gives a 
poset isomorphism
\[\sttilt_{e_i(\Lambda/( e_j))} \Lambda
\stackrel{\sim}{\to}\sttilt_{e_{i'}(\Gamma/( e_{j'}))} \Gamma,\]
where $i'=\sigma(i)$ and $j'=\sigma(j)$.
\item Assume that there is an arrow from $i$ to $j$ on $Q^{\circ}$. Then the following conditions are equivalent.
\begin{itemize}
\item $\alpha\Lambda e_j=e_i\Lambda e_j=e_i\Lambda \alpha $ holds for some $\alpha:i\to j $.
\item $ \alpha' \Gamma e_{\sigma(j)}=e_{\sigma(i)}\Gamma e_{\sigma(j)}=e_{\sigma(i)}\Gamma \alpha'$ 
holds for some $\alpha':\sigma(i)\to \sigma(j)$.
 \end{itemize}  
\end{enumerate}
\end{proposition}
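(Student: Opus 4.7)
The plan is to prove (1) by identifying $\widetilde\rho(M,0)=(M',0)$, where $M=e_i(\Lambda/(e_j))$ and $M'=e_{i'}(\Gamma/(e_{j'}))$, and then derive (2) from Lemma \ref{keylemma}.

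Since $\sttilt\Lambda$ is a lattice, Corollary \ref{preservingsupport}(4) supplies a bijection $\widetilde\rho\colon\trigidp\Lambda\to\trigidp\Gamma$ with $\rho(\sttilt_X\Lambda)=\sttilt_{\widetilde\rho(X)}\Gamma$. First I will verify that $\widetilde\rho$ respects direct summands of pairs: if $Y$ is a summand of $X$, then $\sttilt_X\Lambda\subseteq\sttilt_Y\Lambda$, hence by $\rho$ also $\sttilt_{\widetilde\rho(X)}\Gamma\subseteq\sttilt_{\widetilde\rho(Y)}\Gamma$, so $\widetilde\rho(Y)$ is a summand of $\widetilde\rho(X)$. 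Since $|\widetilde\rho(X)|=|X|$, counting indecomposable summands and using injectivity of $\widetilde\rho$ yields the additivity $\widetilde\rho(Y_1\oplus\cdots\oplus Y_m)=\widetilde\rho(Y_1)\oplus\cdots\oplus\widetilde\rho(Y_m)$ for any basic decomposition into indecomposables.

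For (1) it then suffices to show $\widetilde\rho(e_k(\Lambda/(e_j)),0)=(e_{\sigma(k)}(\Gamma/(e_{j'})),0)$ for every $k\in Q_0\setminus\{j\}$ and specialize to $k=i$. Corollary \ref{preservingsupport}(1) gives $\rho(\sttilt_{e_j\Lambda^-}\Lambda)=\sttilt_{e_{j'}\Gamma^-}\Gamma$, so $\widetilde\rho(e_j\Lambda^-)=e_{j'}\Gamma^-$, and taking maxima one obtains $\widetilde\rho(\Lambda/(e_j),e_j\Lambda^-)=(\Gamma/(e_{j'}),e_{j'}\Gamma^-)$. For $k\neq j$ let $X_k=(\bigoplus_{l\neq j,k}e_l(\Lambda/(e_j)),e_j\Lambda^-)$ be the almost complete $\tau$-tilting pair obtained by removing the summand $(e_k(\Lambda/(e_j)),0)$. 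Every completion of $X_k$ inherits $e_j\Lambda^-$ in its projective part, so it lies in $\sttilt_{e_j\Lambda^-}\Lambda=\sttilt\Lambda/(e_j)$; inside this interval $e_k(\Lambda/(e_j))=P_k^{\Lambda/(e_j)}$ is projective, so left mutation gives $\sttilt_{X_k}\Lambda=\{\Lambda/(e_j),\Lambda/(e_j+e_k)\}$. Applying Corollary \ref{preservingsupport}(1) with $V=\{j,k\}$ we have $\rho(\Lambda/(e_j+e_k))=\Gamma/(e_{j'}+e_{\sigma(k)})$, so the $\rho$-image of $\sttilt_{X_k}\Lambda$ coincides with the $\sttilt$-set of the analogous almost complete pair $X_{\sigma(k)}^{\Gamma}$ in $\Gamma$; uniqueness of the $\tau$-rigid pair determined by its $\sttilt$-set forces $\widetilde\rho(X_k)=X_{\sigma(k)}^{\Gamma}$. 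Combined with the decomposition $(\Lambda/(e_j),e_j\Lambda^-)=X_k\oplus(e_k(\Lambda/(e_j)),0)$, additivity yields $\widetilde\rho(e_k(\Lambda/(e_j)),0)=(e_{\sigma(k)}(\Gamma/(e_{j'})),0)$; setting $k=i$ completes (1).

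For (2), Lemma \ref{keylemma} reduces each side of the equivalence to the assertion that $P_i\oplus M$ (respectively $P_{i'}\oplus M'$) is $\tau$-rigid, equivalently, via Bongartz completion, that $\sttilt_{(P_i,0)}\Lambda\cap\sttilt_{(M,0)}\Lambda$ (respectively $\sttilt_{(P_{i'},0)}\Gamma\cap\sttilt_{(M',0)}\Gamma$) is nonempty. Running the same mutation argument on the decomposition $(\Lambda,0)=\bigoplus_k(P_k,0)$, using $\rho(Z_k)=Z_{\sigma(k)}$ from the proof of Lemma \ref{projsupport}, gives $\widetilde\rho(P_i,0)=(P_{i'},0)$. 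Combined with (1) and additivity, $\rho$ bijects the two intersections, so one is nonempty iff the other is, proving (2). The main obstacle is the mutation identification in (1): the crucial input is Corollary \ref{preservingsupport}(1), which ensures $\rho(\Lambda/(e_V))=\Gamma/(e_{\sigma(V)})$ for every $V\subseteq Q_0$ and thereby transports the Hasse-quiver neighborhood of $\Lambda/(e_j)$ inside $\sttilt_{e_j\Lambda^-}\Lambda$ verbatim onto that of $\Gamma/(e_{j'})$ inside $\sttilt_{e_{j'}\Gamma^-}\Gamma$, pinning down the summand-level correspondence $e_k(\Lambda/(e_j))\leftrightarrow e_{\sigma(k)}(\Gamma/(e_{j'}))$.
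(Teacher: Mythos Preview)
Your overall strategy—establishing additivity of $\widetilde\rho$ on indecomposable summands and then peeling off $e_k(\Lambda/(e_j))$ from the full pair $(\Lambda/(e_j),e_j\Lambda^-)$—is sound, and part (2) is handled essentially as in the paper. But there is a genuine error in the mutation step of (1).

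You assert that $\sttilt_{X_k}\Lambda=\{\Lambda/(e_j),\,\Lambda/(e_j+e_k)\}$, justifying this by ``$e_k(\Lambda/(e_j))=P_k^{\Lambda/(e_j)}$ is projective, so left mutation gives $\Lambda/(e_j+e_k)$''. This is false in general. Inside $\sttilt\Lambda/(e_j)$ the mutation of the maximum at the projective summand $P_k^{\Lambda/(e_j)}$ is the direct successor $Z_k^{\Lambda/(e_j)}$, which typically has a nonprojective indecomposable summand and is \emph{not} equal to $\Lambda/(e_j+e_k)$. (Already for $\Lambda=K[1\to 2]$ one has $Z_2=P_1\oplus S_1\neq S_1=\Lambda/(e_2)$.) Consequently your appeal to Corollary~\ref{preservingsupport}(1) with $V=\{j,k\}$ does not identify $\rho(\sttilt_{X_k}\Lambda)$, and the rest of the argument for (1) does not go through as written.

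The fix is easy and stays within your framework: replace $\Lambda/(e_j+e_k)$ by $Z_k^{\Lambda/(e_j)}$ and use the poset-theoretic characterization you already invoke in (2), namely that $Z_k^{\Lambda/(e_j)}$ is the unique direct successor of $\Lambda/(e_j)$ in $[0,\Lambda/(e_j)]$ satisfying $Z_k^{\Lambda/(e_j)}\geq X_l$ for all $l\neq j,k$ (Claim in the proof of Lemma~\ref{projsupport}, applied to $\Lambda/(e_j)$). Since $\rho(X_l)=X_{\sigma(l)}$ and $\rho(\Lambda/(e_j))=\Gamma/(e_{j'})$, this yields $\rho(Z_k^{\Lambda/(e_j)})=Z_{\sigma(k)}^{\Gamma/(e_{j'})}$, whence $\widetilde\rho(X_k)=X_{\sigma(k)}^{\Gamma}$ and your additivity argument concludes. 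By contrast, the paper argues (1) more directly at the \emph{minimum}: it observes that $\stmin(e_i\Lambda/(e_j))=\stmin(e_i\Lambda/(e_j)\oplus e_j\Lambda^-)$ lies in $\sttilt\Lambda/(e_j)$, so Corollary~\ref{preservingsupport}(1) together with the $\upsilon^+$-description at that minimum immediately gives $\widetilde\rho(e_i\Lambda/(e_j))=e_{i'}\Gamma/(e_{j'})$, avoiding any summand-by-summand bookkeeping.
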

\begin{proof}
We may assume that $Q_0=Q'_0$ and $\rho(X_i^{\Lambda})=X_i^{\Gamma}.$
By Corollary\;\ref{preservingsupport}, we have that $\rho(\Lambda/(e_j))=\Gamma/(e_j)$.
We let $i\neq j\in Q_0$ and 
 $T=\stmin(e_i \Lambda/(e_j))$. Since $e_i\Lambda\oplus e_j \Lambda^-\in \trigidp \Lambda$, $T$
	is the minimum element $\stmin(e_i \Lambda/(e_j)\oplus e_j \Lambda^-)$ of 
	$\sttilt_{e_i\Lambda/(e_j)\oplus e_j \Lambda^-} \Lambda=\sttilt_{e_i\Lambda/(e_j)} \Lambda/(e_j)$.
Then Corollary\;\ref{preservingsupport} (1) implies 
 $\rho(T)=\stmin(e_i\Gamma/(e_j)\oplus e_j \Gamma^-)=\stmin(e_i\Gamma/(e_j))$.	

Since $0\neq T=\stmin(e_i \Lambda/(e_j))$,
there are exactly $n-1$ direct predecessors of $T$ and each of them is in $\sttilt_{e_i \Lambda/(e_j)} \Lambda$.
In particular, we obtain
\[\add e_i \Lambda/(e_j)=\add T\cap (\underset{T'\in \dip(T)}\cap \add T') .\]
Similarly, we have 
\[\add e_i \Gamma/(e_j)=\add \rho(T)\cap (\underset{T'\in \dip(T)}\cap \add \rho(T') ).\]
Then we have an equality
\[\widetilde{\rho}(e_i \Lambda/(e_j))=e_i \Gamma/(e_j).\]
Then the assertion (1) follows from Corollary\;\ref{preservingsupport} (4).

By Corollary\;\ref{preservingsupport} (1) and the assertion (1), we obtain the following statement.
\[(\ast)\hspace{5pt} \sttilt_{e_i(\Lambda/(e_j))}\Lambda\cap \sttilt_{e_i\Lambda} \Lambda\neq \emptyset
\text{ if and only if } \sttilt_{e_i(\Gamma/(e_j))}\Gamma\cap \sttilt_{e_i\Gamma} \Gamma\neq \emptyset.\]

Since there is an arrow from $\sigma(i)=i$ to $\sigma(j)=j$, 
the assertion (2) follows from Lemma\;\ref{keylemma} and ($\ast$). 
\end{proof}
\begin{remark}
\label{nomultiarrowrem}
If $\alpha\Lambda e_j=e_i\Lambda e_j=e_i\Lambda \alpha $ holds for an arrow $\alpha:i\to j $ on $Q^{\circ}$, then
$\alpha$ is a unique arrow from $i$ to $j$. 
\end{remark}
\subsection{An example}
In this subsection, we consider the following finite, connected, 3-regular lattice:
\[
\begin{xy}
(-55,-15)*{\P:},
 (0,-47.5)*[o]+{x_0}="A",
 (-15,-42.5)*[o]+{x_1}="B" ,(0,-32.5)*[o]+{x_2}="C", (15,-42.5)*[o]+{x_3}="D",
 (-30,-37.5)*[o]+{x_4}="E", (-15,-32.5)*[o]+{x_5}="F",
 (0,-27.5)*[o]+{x_6}="G",	(-30,-27.5)*[o]+{x_7}="H",
 (-15,-22.5)*[o]+{x_8}="I",	(15,-22.5)*[o]+{x_9}="J",
 (0,-17.5)*[o]+{x_{11}}="K",(-30,-17.5)*[o]+{x_{12}}="L",(30,-22.5)*[o]+{x_{10}}="M", 
 (0,-12.5)*[o]+{x_{13}}="O",(-30,-12.5)*[o]+{x_{14}}="P",(30,-12.5)*[o]+{x_{15}}="Q", 
 (-15,-7.5)*[o]+{x_{16}}="R",	(15,-7.5)*[o]+{x_{17}}="S",
 (0,-2.5)*[o]+{x_{18}}="T",	(-30,-2.5)*[o]+{x_{19}}="U",
 (0,2.5)*[o]+{x_{20}}="V",	(-15,2.5)*[o]+{x_{21}}="W",
 (-30,7.5)*[o]+{x_{22}}="X",
  (-15,12.5)*[o]+{x_{23}}="Y",(15,12.5)*[o]+{x_{24}}="Z",
  (0,17.5)*[o]+{x_{25}}="1",
 
\ar "B";"A"
\ar "C";"A"
\ar "D";"A"	
\ar "E";"B"
\ar "F";"B"
\ar "G";"F"
\ar @(r,u)"G";"D"
\ar "H";"E"
\ar "H";"F"
\ar "I";"C"
\ar "J";"C"
\ar "K";"G"
\ar "L";"H"
\ar "M";"D"
\ar @(l,l) "P";"E"
\ar @(r,lu) "P";"I"
\ar @(l,ru) "O";"I"
\ar @(r,lu)"O";"J"
\ar "Q";"J"
\ar "Q";"M"
\ar "R";"L"
\ar "R";"K"
\ar "S";"M"
\ar "S";"K"
\ar "T";"O"
\ar "U";"P"
\ar @(l,ru) "V";"R"
\ar @(r,lu)"V";"S"
\ar "W";"T"
\ar "W";"U"
\ar @(l,l) "X";"L"
\ar "X";"U"
\ar "Y";"X"
\ar "Y";"W"
\ar "Z";"Q"
\ar @(d,r)"Z";"T"
\ar "1";"V"
\ar "1";"Y"
\ar "1";"Z"
\end{xy}
\vspace{10pt}\]
We assume that $\P$ is isomorphic to the support $\tau$-tilting poset of $\Lambda=KQ/I$ and reconstruct
$\Lambda$ from $\P$ by using results in this section.

Since $x_0$ is the minimum element of $\P$, we may assume that $Q_0=\{1,2,3\}$ with 
\[x_0=0=\Lambda^-,\ x_1=X_1=X_1\oplus P_2^- \oplus P_3^-,\ x_2=X_2=X_2\oplus P_1^- \oplus P_3^-,\ 
 x_3=X_3=X_3\oplus P_1^- \oplus P_2^-.\]
It follows from $x_1\vee x_2=x_{14}$ and Corollary\;\ref{preservingsupport}\;(3) 
 that $\sttilt_{P_3^-} \Lambda=\{x_0,x_1,x_2,x_4,x_8,x_{14}\}$.
Similarly, we obtain $\sttilt_{P_2^-} \Lambda=\{x_0,x_1,x_3,x_5,x_4,x_6\}$ and 
$\sttilt_{P_1^-} \Lambda=\{x_0,x_2,x_3,x_9,x_{10},x_{15}\}$.
Then  Lemma\;\ref{tautiltingfiniteness} and Proposition\;\ref{determiningarrows} give us 
\[\begin{xy}
(0,0)*[o]+{1}="A",(13,0)*[o]+{2}="B",(26,0)*[o]+{3}="C",
(-7,0)*[0]={Q^{\circ}=}="E"
\ar @<2pt> "A";"B"^{\alpha}
\ar @<2pt> "B";"A"^{\alpha^*}
\ar @<2pt> "B";"C"^{\beta}
\ar @<2pt> "C";"B"^{\beta^*}
\ar @(ru,lu) @<1pt> "A";"C"^{\gamma^*}
\end{xy}
\]
Since $x_{14}=\stmax(P_3^-)=e_1(\Lambda/(e_3))\oplus e_2(\Lambda/(e_3))$, 
$x_6=\stmax(P_2^-)=e_1(\Lambda/(e_2))\oplus e_3(\Lambda/(e_2))$
 and $x_{15}=\stmax(P_1^-)=e_2(\Lambda/(e_1))\oplus e_3(\Lambda/(e_1))$, we have
\[x_4=X_1\oplus e_1(\Lambda/(e_3)),\ x_8=X_2\oplus e_2(\Lambda/(e_3)),\  x_{14}=\Lambda/(e_3), \] 
 \[x_5=X_1\oplus e_1(\Lambda/(e_2)),\ x_6=\Lambda/(e_2), \]
\[x_9=X_2\oplus e_2(\Lambda/(e_1)),\ x_{10}=X_3\oplus e_3(\Lambda/(e_1)),\  x_{15}=\Lambda/(e_1). \]
In particular, we obtain $x_8=x_2\oplus e_2 (\Lambda/(e_3))$ and $x_9=x_2\oplus e_2(\Lambda/(e_1))$.
Therefore, \[x_{13}=X_2\oplus e_2 (\Lambda/(e_1))\oplus e_2 (\Lambda/(e_1)).\] 
Note that $x_4,x_5\in \dis(x_7)$, we obtain
\[x_7=X_1\oplus e_1(\Lambda/(e_3))\oplus e_1(\Lambda/(e_2)).\]
Then it follows from $x_{16}=x_6 \vee x_7$ and Corollary\;\ref{preservingsupport}\;(3) that
 \[\sttilt_{e_1(\Lambda/(e_2))} \Lambda=\{x_5,x_6,x_7,x_{11},x_{12},x_{16}\}.\] 
Now assume that $x_{20}=Z_i,\ x_{23}=Z_j,\ x_{24}=Z_k$. 
 Corollary\;\ref{preservingsupport}\;(3) implies   
\[\sttilt_{P_i^-} \Lambda\simeq \sttilt_{P_i} \Lambda=\{x_{18},x_{21},x_{23},x_{24},x_{25}\}\]
\[\sttilt_{P_j} \Lambda=\{x_{10},x_{15},x_{17},x_{20},x_{24},x_{25}\}\]
\[\sttilt_{P_k} \Lambda=\{x_{12},x_{16},x_{20},x_{22},x_{23},x_{25}\}.\]
Since there exists an arrow from $3$ to $2$, $e_1(\Lambda/(e_2))\oplus e_3 \Lambda$ should not be $\tau$-rigid. 
In particular, we obtain $i=2$, $j=3$ and $k=1$.
Corollary\;\ref{preservingsupport}\;(3) induces that $[x_4,x_7\vee x_{14}]=\sttilt_{e_1(\Lambda/(e_3))} \Lambda$.
This implies that $x_{12}\in \sttilt_{ e_1(\Lambda/(e_3))} \Lambda\cap \sttilt_{P_1} \Lambda$. Hence, the following equalities
hold by Lemma\;\ref{keylemma}:
\[\gamma^* \Lambda e_3=e_1 \Lambda e_3 =e_1 \Lambda \gamma^*.\]
Similar arguments give us 
\[x_{10}\in \sttilt_{ e_3(\Lambda/(e_1))} \Lambda\cap \sttilt_{P_3} \Lambda\cap \sttilt_{e_3(\Lambda/(e_2))} \Lambda,\]
 \[x_{18}\in \sttilt_{ e_2(\Lambda/(e_1))} \Lambda\cap \sttilt_{P_2} \Lambda\cap \sttilt_{ e_2(\Lambda/(e_3))} \Lambda,\] 
\[x_{12}\in \sttilt_{ e_1(\Lambda/(e_2))} \Lambda\cap \sttilt_{P_1} \Lambda,\]
and it follows from Lemma\;\ref{keylemma} that equalities
\[x \Lambda e_{t(x)}=e_{s(x)} \Lambda e_{t(x)}=e_{s(x)}\Lambda x\]
holds for each $x\in Q_1^{\circ}$. Moreover, we can uniquely determine $x_{\ell}$ ($1\leq \ell\leq 25$). 

Conversely, if
\[ \begin{xy}
	(0,0)*[o]+{1}="A",(13,0)*[o]+{2}="B",(26,0)*[o]+{3}="C",
	(-7,0)*[0]={Q^{\circ}=}="E"
	\ar @<2pt> "A";"B"^{\alpha}
	\ar @<2pt> "B";"A"^{\alpha^*}
	\ar @<2pt> "B";"C"^{\beta}
	\ar @<2pt> "C";"B"^{\beta^*}
	\ar @(ru,lu) @<1pt> "A";"C"^{\gamma^*}
\end{xy}
\] and the equalities
\[x \Lambda e_{t(x)}=e_{s(x)} \Lambda e_{t(x)}=e_{s(x)}\Lambda x\]
hold for each $x\in Q_1^{\circ}$,
then we see that the support $\tau$-tilting poset of $\Lambda=KQ/I$ is isomorphic to $\P$ (see Section\;\ref{subsect4.4}).

We end this section with giving a remark.
$\tau$-tilting finiteness of $\Lambda$ implies that $\sttilt \Lambda$ is finite, 
connected and $|\Lambda|$-regular. 
The converse is not true. In fact, for each $3\leq |\Lambda|=n$, we can construct a finite connected $n$-regular poset $\P$ which is 
not isomorphic to each support $\tau$-tilting poset.  However every finite, 
connected and $2$-regular lattice is realizes as a support $\tau$-tilting poset (see Section\;\ref{subsect4.1}).
   
\section{A question from previous section}
In this section, we introduce a class $\Theta$ of basic algebras satisfying Condition\;\ref{cd} (sect\;\ref{subsec:condition1}) and
 Condition\;\ref{cd2} (sect\;\ref{subsec:condition2}). Then we give a question from results in previous section (sect\;\ref{subsec:question}).
\subsection{First condition}
\label{subsec:condition1} 
For a bound quiver $(Q,I)$ (i.e., $Q$ is a finite quiver and $I$ is an admissible ideal of $KQ$), we set 
\[\begin{array}{llll}
W_j^i(Q,I)&:=&\{w:\text{path from $i$ to $j$ in $Q$ which does not contain a cycle as a subpath}\}\\\\
G_j^i(Q,I)&:=&\{w\in W_j^i\mid w \Lambda e_j=e_i\Lambda w=e_i\Lambda e_j\neq 0\}.\\\\
G(Q,I)&:=&\underset{(i,j)\in Q_0\times Q_0}{\bigsqcup} G_j^i(Q,I)\\
\end{array}
\]
If $\Lambda=KQ/I$, then we denote $W_j^i(\Lambda):=W_j^i(Q,I)$, $G_j^i(\Lambda):=G_j^i(Q,I)$ and $G(\Lambda):=G(Q,I)$.
We consider the following condition for a bound quiver $(Q,I)$.
\begin{condition}
\label{cd}  
 If $e_i (KQ/I) e_j\neq 0$, then $G_j^i(Q,I)\neq \emptyset$. 
\end{condition}
\begin{lemma}
\label{radtorad}
 Let $0 \neq \lambda\in e_i \Lambda e_j$. 
If $l\in e_i \Lambda e_i$ and $l'\in e_j\Lambda e_j$ satisfy $l\lambda=\lambda l'$,
then $l\in e_i\rad \Lambda e_i$ if and only if $l'\in e_j \rad \Lambda e_j.$
\end{lemma}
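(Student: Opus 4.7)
The plan is to exploit the local structure of the corner rings $e_i\Lambda e_i$ and $e_j\Lambda e_j$ together with a simple iteration of the relation $l\lambda=\lambda l'$.

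First, I would record the key structural fact. Since $K$ is algebraically closed and $e_i,e_j$ are primitive idempotents in the basic algebra $\Lambda$, each of $e_i\Lambda e_i$ and $e_j\Lambda e_j$ is a local (finite-dimensional) $K$-algebra with residue field $K$, whose unique maximal ideal is $e_i\rad\Lambda e_i$ (resp.\ $e_j\rad\Lambda e_j$). Consequently, $l\in e_i\rad\Lambda e_i$ if and only if $l$ is not invertible in $e_i\Lambda e_i$, and similarly for $l'$ and $e_j\Lambda e_j$. Moreover, because $\Lambda$ is finite-dimensional, $\rad\Lambda$ is nilpotent, so every element of $e_i\rad\Lambda e_i$ (resp.\ $e_j\rad\Lambda e_j$) is nilpotent.

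The key step is iteration. From $l\lambda=\lambda l'$ one gets inductively
\[
l^n\lambda=\lambda(l')^n \qquad \text{for every } n\geq 1.
\]
Now suppose $l\in e_i\rad\Lambda e_i$, and assume for contradiction that $l'\notin e_j\rad\Lambda e_j$. Pick $N$ with $l^N=0$; then $\lambda(l')^N=0$. Since $l'$ is invertible in the local ring $e_j\Lambda e_j$, so is $(l')^N$, with some inverse $u\in e_j\Lambda e_j$. Multiplying on the right by $u$ and using $\lambda=\lambda e_j$ gives $\lambda=\lambda(l')^N u=0$, contradicting $\lambda\neq 0$. Thus $l\in e_i\rad\Lambda e_i$ forces $l'\in e_j\rad\Lambda e_j$.

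The converse direction is completely symmetric: if $l'\in e_j\rad\Lambda e_j$ is nilpotent, say $(l')^N=0$, and $l\notin e_i\rad\Lambda e_i$ is therefore invertible in $e_i\Lambda e_i$, then $l^N$ is invertible there, and $l^N\lambda=\lambda(l')^N=0$ together with $\lambda=e_i\lambda$ forces $\lambda=0$, again a contradiction. I do not foresee a serious obstacle; the only thing one has to be careful about is recognizing the local structure of the corner rings (which is where both algebraic closedness of $K$ and primitivity of the idempotents are used) and remembering that $\lambda=e_i\lambda=\lambda e_j$ so that one-sided invertibility on the correct side suffices to cancel $(l')^N$ or $l^N$.
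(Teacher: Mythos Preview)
Your proof is correct and follows essentially the same approach as the paper: both arguments iterate the relation to obtain $l^{n}\lambda=\lambda(l')^{n}$ (the paper writes this as $l^{m}\lambda\epsilon^{m}=\lambda$ after first multiplying by an inverse $\epsilon$ of $l'$), then use nilpotency of the radical together with invertibility of a non-radical element in the local corner ring to force $\lambda=0$. One minor remark: the local structure of $e_i\Lambda e_i$ with maximal ideal $e_i\rad\Lambda e_i$ only requires primitivity of $e_i$ and finite-dimensionality of $\Lambda$, so your appeal to algebraic closedness of $K$ is not actually needed.
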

\begin{proof}
Suppose that $l\in e_i\rad \Lambda e_i$ and $l'\not\in e_j\rad \Lambda e_j$.
Then $l'$ is invertible in $e_j\Lambda e_j$. Thus there exists $\epsilon\in e_j \Lambda e_j$
such that
\[l\lambda \epsilon=\lambda.\]
Hence we have $l^m \lambda \epsilon^m=\lambda$ for all $m\in \N$. Since $l\in \rad \Lambda$, we obtain
$\lambda=0$ which leads to a contradiction. Therefore $l\in e_i\rad \Lambda e_i$ implies $l'\in e_j\rad \Lambda e_j$.
Similarly, we see that $l'\in e_j\rad \Lambda e_j$ implies $l\in e_i\rad \Lambda e_i$.
\end{proof}
\begin{lemma}
	\label{lemmaforeqoncd1} Let $e$ and $f$ be two primitive idempotents of $\Lambda$ satisfying $e\Lambda f\neq 0$. If
	$G^e_f:=\{w\in e\Lambda f\mid w\Lambda f=e\Lambda f=e\Lambda w\}\neq \emptyset$, then we have
\[G^e_f=e\Lambda f\setminus \rad^{\ell+1} \Lambda,\]
where $\ell$ denotes the maximum integer in $\{m\in \Z_{\geq 0}\mid e\Lambda f\subset \rad^m \Lambda \}$.
\end{lemma}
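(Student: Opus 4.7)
The plan is to prove the two inclusions by exploiting the fact that, because $e$ and $f$ are primitive idempotents of a basic algebra over an algebraically closed field, the rings $e\Lambda e$ and $f\Lambda f$ are local. Concretely, I fix a witness $w_0 \in G^e_f$ and use it to parametrize the whole set: since $w_0\Lambda f = e\Lambda f = e\Lambda w_0$ and $w_0 = ew_0f$, standard manipulation gives the identifications
\[
e\Lambda f \;=\; w_0\cdot(f\Lambda f) \;=\; (e\Lambda e)\cdot w_0.
\]
So every element of $e\Lambda f$ is both of the form $w_0\mu$ with $\mu\in f\Lambda f$ and of the form $\nu w_0$ with $\nu\in e\Lambda e$.

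For the inclusion $G^e_f\subset e\Lambda f\setminus \rad^{\ell+1}\Lambda$, I take $w\in G^e_f$ and assume for contradiction that $w\in \rad^{\ell+1}\Lambda$. Then $e\Lambda f = w\Lambda f \subset \rad^{\ell+1}\Lambda$, directly contradicting the maximality of $\ell$. This direction is essentially immediate.

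For the reverse inclusion, I take $w\in e\Lambda f\setminus \rad^{\ell+1}\Lambda$ and write $w=w_0\mu$ with $\mu\in f\Lambda f$. If $\mu$ lay in $\rad(f\Lambda f)=f\,\rad\Lambda\,f$, then $w=w_0\mu\in \rad^\ell\Lambda\cdot\rad\Lambda\subseteq \rad^{\ell+1}\Lambda$, contradicting the choice of $w$. Since $f\Lambda f$ is local, $\mu\notin\rad(f\Lambda f)$ forces $\mu$ to be a unit in $f\Lambda f$; hence $w_0 = w\mu^{-1}\in w\Lambda f$, giving $e\Lambda f = w_0\Lambda f\subset w\Lambda f\subset e\Lambda f$. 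Running the symmetric argument with $w=\nu w_0$ and the local ring $e\Lambda e$ yields $e\Lambda w = e\Lambda f$. Thus $w\in G^e_f$.

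The only subtle point — and the place where one could potentially slip — is justifying the passage from ``$\mu\notin\rad\Lambda$'' to ``$\mu$ is invertible inside $f\Lambda f$''; this rests on the local-ness of the corner rings at primitive idempotents, which is guaranteed by the standing assumption that $\Lambda$ is basic over an algebraically closed field. (Lemma~\ref{radtorad} is the dual observation that radicality transfers across a nonzero element; I do not expect to need it directly, since the one-sided factorizations above already suffice.) Everything else is bookkeeping with the filtration by powers of $\rad\Lambda$.
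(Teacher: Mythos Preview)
Your argument is correct and follows essentially the same approach as the paper: fix a witness $w_0\in G^e_f$, use $e\Lambda f=w_0(f\Lambda f)=(e\Lambda e)w_0$ to factorize an arbitrary $w\in e\Lambda f\setminus\rad^{\ell+1}\Lambda$, and invoke locality of the corner rings to force the cofactors to be units. The paper packages both inclusions into a single paragraph (deducing $w_0\notin\rad^{\ell+1}\Lambda$ from the factorization rather than directly from $w_0\Lambda f=e\Lambda f$ as you do), but the substance is identical.
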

\begin{proof}
Assume that $e\Lambda f\neq 0$ and $G^e_f\neq \emptyset$.

We let $w\in G^e_f$ and $w'\in e\Lambda f\setminus \rad^{\ell+1} \Lambda$.
Since $w\Lambda f=e\Lambda f=e \Lambda w$, there are $\lambda\in f\Lambda f$ and $\lambda'\in e\Lambda e$ such that $w'=w\lambda =\lambda' w$. 
Thus $w\in e\Lambda f\setminus \rad^{\ell+1} \Lambda$ and $\lambda$ (resp. $\lambda'$) is invertible in 
$f \Lambda f$ (resp. $e\Lambda e$). This shows that $w'$ is in $G^e_f$.
\end{proof}
\begin{lemma}
\label{takingsubpath}
Assume that $(Q,I)$ satisfies Condition\;\ref{cd}.
 Then $G(Q,I)$ is closed under taking a subpath.
\end{lemma}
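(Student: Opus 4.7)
The plan is to show that if $w$ is a nontrivial path in $G_j^i(Q,I)$ with factorization $w = w_1 w_2$, where $w_1$ is a path from $i$ to an intermediate vertex $k$ and $w_2$ is a path from $k$ to $j$, then $w_1 \in G_k^i(Q,I)$ and $w_2 \in G_j^k(Q,I)$. The general statement then follows by iterating this one-step splitting. The case where $w_1 = e_i$ or $w_2 = e_j$ is trivial, so we may assume both factors are nontrivial.

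First I would observe that $w_1 \in W_k^i(Q,I)$ is automatic (a subpath of a cycle-free path is cycle-free), and that $w_1 \neq 0$ in $\Lambda = KQ/I$, for otherwise $w = w_1 w_2 = 0$ would contradict $w \in G_j^i$. In particular $e_i \Lambda e_k \neq 0$, so by Condition\;\ref{cd} there exists $v_1 \in G_k^i$, that is, a two-sided generator of $e_i \Lambda e_k$ as an $(e_i\Lambda e_i,\, e_k\Lambda e_k)$-bimodule.

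The key step is to show $w_1$ itself is such a two-sided generator. For left-generation, I would consider the left $(e_i\Lambda e_i)$-linear map
\[
\phi \colon e_i\Lambda e_k \longrightarrow e_i\Lambda e_j, \qquad \phi(x) = xw_2,
\]
which satisfies $\phi(w_1) = w$. Since $w \in G_j^i$ gives $(e_i\Lambda e_i) w = e_i\Lambda w = e_i\Lambda e_j$, the element $w$ is a generator of $e_i\Lambda e_j$ as a left $(e_i\Lambda e_i)$-module. By Nakayama applied to the local ring $e_i\Lambda e_i$, if $w_1$ were in $\rad(e_i\Lambda e_i)\cdot e_i\Lambda e_k$ (the non-generators), then $w = \phi(w_1) \in \rad(e_i\Lambda e_i)\cdot e_i\Lambda e_j$ would also fail to be a left generator, contradicting the above. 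Hence $w_1$ is a left generator of $e_i\Lambda e_k$, i.e.\ $e_i\Lambda w_1 = e_i\Lambda e_k$.

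For right-generation, I would exploit $v_1$: from $e_i\Lambda v_1 = e_i\Lambda e_k$ we may write $w_1 = \mu v_1$ for some $\mu \in e_i\Lambda e_i$. By the Nakayama argument of the previous step, $w_1$ being a left generator forces $\mu \notin \rad(e_i\Lambda e_i)$, so $\mu$ is a unit in the local ring $e_i\Lambda e_i$. Consequently $w_1 \Lambda e_k = \mu \cdot v_1\Lambda e_k = \mu \cdot e_i\Lambda e_k = e_i\Lambda e_k$, since multiplication by the unit $\mu$ is a bijection on $e_i\Lambda e_k$. This proves $w_1 \in G^{e_i}_{e_k}$ in the sense of Lemma\;\ref{lemmaforeqoncd1}; combined with $w_1 \in W_k^i$, we conclude $w_1 \in G_k^i$. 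The argument for $w_2 \in G_j^k$ is symmetric: one instead uses the right $(e_j\Lambda e_j)$-linear map $\psi(y) = w_1 y$, right-generation of $w$ as a right $(e_j\Lambda e_j)$-module, and the existence of $v_2 \in G_j^k$ (guaranteed again by Condition\;\ref{cd}).

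The main obstacle is the passage from \emph{one-sided} to \emph{two-sided} generator for $w_1$; this is precisely the step where Condition\;\ref{cd} is essential, since it supplies the auxiliary two-sided generator $v_1$ against which $w_1$ can be compared inside the cyclic module $e_i\Lambda e_k$. Without this hypothesis, the bimodule structure of $e_i\Lambda e_k$ need not be generated by a single element and left- and right-generators can genuinely differ.
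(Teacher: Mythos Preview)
Your proof is correct and follows essentially the same strategy as the paper: invoke Condition~\ref{cd} to obtain an auxiliary two-sided generator (your $v_1$, the paper's $g$) of the relevant $e_i\Lambda e_k$, express the subpath as a one-sided multiple of it, and show the coefficient is a unit in the local endomorphism ring. The differences are cosmetic: the paper peels off a single arrow at a time and uses Lemma~\ref{radtorad} to pass from left- to right-invertibility of that coefficient, while you split at an arbitrary intermediate vertex and phrase the unit argument via Nakayama; your right-generation step (multiplying $v_1\Lambda e_k = e_i\Lambda e_k$ by the unit $\mu$) is marginally more direct than the paper's appeal to Lemma~\ref{radtorad}, but the underlying content is the same.
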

\begin{proof} We let $G=G(Q,I)$ and $G^i_j(Q,I)=G^i_j$.
	
It is sufficient to show that for  $G\ni w=\alpha_1\alpha_2\cdots\alpha_{\ell}$ with $\alpha_1,\dots,\alpha_{\ell}\in Q_1$ and $\ell\geq 2$,
both $\alpha_1\cdots\alpha_{\ell-1}$ and $\alpha_2\cdots\alpha_{\ell}$ are in $G$.
Let $w'=\alpha_1\cdots\alpha_{\ell-1}$ and $(s(w'),t(w'))=(i,j)$ (i.e., $w'$ is a path from $i$ to $j$).
Since $w\neq 0$ in $\Lambda=KQ/I$, $w'$ is also non-zero in $\Lambda$. By hypothesis, there exists $g\in G^i_j$.
Thus there are $l\in e_i\Lambda e_i$ and $l'\in e_j\Lambda e_j$ such that $w'=lg=gl'$.
This implies $w=w'\alpha_{\ell}=lg\alpha_{\ell}$. Since $w\in G$, there exists $l''\in e_i \Lambda e_i$
such that $g\alpha_{\ell}=l''w$. Hence we obtain
\[g\alpha_{\ell}=l''w=l''lg\alpha_{\ell}.\]
This shows that $l''l\in e_i\Lambda e_i\setminus e_i \rad \Lambda e_i$ (otherwise $g\alpha_{\ell}=0$ which leads us to a contradiction). 
In particular, $l$ is invertible in $e_i \Lambda e_i$. 
By using Lemma\;\ref{radtorad}, we also have that $l'$  is invertible in $e_j \Lambda e_j$.
Therefore we obtain $w'\in G$. Similarly, we can check that $\alpha_2\cdots\alpha_{\ell}$ 
is in $G$.
\end{proof}
The following lemma gives equivalent conditions for Condition\;\ref{cd} and then it
is naturally viewed as a condition for arbitrary finite dimensional basic algebras.
\begin{lemma}
\label{eqcd} Let $\Lambda\simeq KQ/I$ and $\Lambda_e:=e \Lambda e$ for any idempotent $e$ of $\Lambda$.

\textup{{\rm (1)}} Following statements are equivalent.
\begin{enumerate}[{\rm (i)}]
\item $(Q,I)$ satisfies Condition\;\ref{cd}.
\item For any pair of projective modules $(P,P')$ of $\Lambda$ with $P\not\simeq P'$, 
there exists $f\in \Hom_{\Lambda}(P,P')$ which generates $\Hom_{\Lambda}(P,P')$
 both as a right $\End_{\Lambda}(P)$-module and as a left $\End_{\Lambda}(P')$-module. 
 \item For any pair of primitive idempotents $(e,f)$ with $ef=fe=0$, there is $w\in e\Lambda f$ such that 
$w\Lambda f=e \Lambda f =e\Lambda w$.
\item If $e, f$ be two primitive idempotents with $ef=0=fe$, then $\Lambda_{e+f}$ satisfies the condition {\rm (iii)} above.
\item If $e, f$ be two primitive idempotents with $ef=0=fe$, then $\sttilt \Lambda_{e+f}$ has one of the following forms.
\[\mathrm{( a)}\hspace{5pt}\begin{xy}
 (0,10)*[o]+{\circ}="A", (-10,0)*[o]+{\circ}="B", (10,0)*[o]+{\circ}="C", (0,-10)*[o]+{\circ}="D",
\ar "A";"B"
\ar "A";"C"
\ar "B";"D"
\ar "C";"D"
\end{xy}\hspace{20pt}
\mathrm{( b)}\hspace{5pt}\begin{xy}
(0,10)*[o]+{\circ}="A", (-10,4)*[o]+{\circ}="B", (-10,-4)*[o]+{\circ}="E", (10,0)*[o]+{\circ}="C", (0,-10)*[o]+{\circ}="D",
\ar "A";"B"
\ar "B";"E"
\ar "A";"C"
\ar "E";"D"
\ar "C";"D"
\end{xy}
\hspace{20pt}
\mathrm{( c)}\hspace{5pt}\begin{xy}
(0,10)*[o]+{\circ}="A", (-10,4)*[o]+{\circ}="B", (-10,-4)*[o]+{\circ}="E", (10,4)*[o]+{\circ}="C", (10,-4)*[o]+{\circ}="F", (0,-10)*[o]+{\circ}="D",
\ar "A";"B"
\ar "B";"E"
\ar "A";"C"
\ar "C";"F"
\ar "E";"D"
\ar "F";"D"
\end{xy}
\]
\end{enumerate}
In particular, Condition\;\ref{cd} is closed under isomorphism.

\textup{{\rm (2)}} Under the condition {\rm (iii)}, we have $Q^{\circ}_1\subset G(Q,I)$.
In particular, $Q^{\circ}$ has no multiple arrow.
 \end{lemma}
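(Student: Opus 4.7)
The plan is to establish the equivalences in the order (ii) $\Leftrightarrow$ (iii), (i) $\Leftrightarrow$ (iii), (iii) $\Leftrightarrow$ (iv), and (iv) $\Leftrightarrow$ (v), and then deduce part (2) directly from Lemma \ref{lemmaforeqoncd1}. The closure of Condition \ref{cd} under isomorphism follows once (i) has been shown equivalent to the presentation-free conditions (ii)--(v).

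For (ii) $\Leftrightarrow$ (iii), I would use the standard bimodule identification $\Hom_\Lambda(e_j\Lambda,e_i\Lambda)\cong e_i\Lambda e_j$ via $\varphi\mapsto\varphi(e_j)$, together with $\End_\Lambda(e_i\Lambda)\cong e_i\Lambda e_i$. A morphism corresponding to $w$ generates the Hom-space as a right (resp.\ left) module over $\End_\Lambda(e_j\Lambda)$ (resp.\ $\End_\Lambda(e_i\Lambda)$) precisely when $w\Lambda e_j = e_i\Lambda e_j$ (resp.\ $e_i\Lambda w = e_i\Lambda e_j$), which is exactly (iii). The direction (i) $\Rightarrow$ (iii) is immediate from $G^i_j(Q,I)\subset e_i\Lambda e_j$. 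For (iii) $\Rightarrow$ (i), I would invoke Lemma \ref{lemmaforeqoncd1} to identify the set of valid witnesses with $e_i\Lambda e_j\setminus\rad^{\ell+1}\Lambda$, expand such a $w$ as a $K$-linear combination of paths in $KQ$, and pick a path $p$ whose image lies outside $\rad^{\ell+1}\Lambda$; Lemma \ref{lemmaforeqoncd1} then places $p$ itself among the witnesses, and a minimality-of-length argument allows one to choose $p$ without a cyclic subpath, since excising any such subpath would yield a strictly shorter path that either vanishes in $\Lambda$ or again provides a witness, iteratively producing a cycle-free representative in $G^i_j(Q,I)$.

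For (iii) $\Leftrightarrow$ (iv), the identities $e\Lambda_{e+f}f=e\Lambda f$, $e\Lambda_{e+f}w=e\Lambda w$, and $w\Lambda_{e+f}f=w\Lambda f$ (which follow from $(e+f)w=w=wf$ for any $w\in e\Lambda f$) show that condition (iii) for the pair $(e,f)\in\Lambda$ is literally the same as condition (iii) for the two-vertex algebra $\Lambda_{e+f}$, and quantifying over all pairs gives the equivalence. For (iv) $\Leftrightarrow$ (v), I would perform a case analysis on the vanishing of $e\Lambda f$ and $f\Lambda e$: when both vanish, $\Lambda_{e+f}$ is a product of two local algebras and $\sttilt\Lambda_{e+f}$ takes shape (a); when exactly one is nonzero, condition (iii) forces the nontrivial direction to be cyclic as a bimodule over the two endomorphism rings, and matching the resulting Hasse quiver against shape (b) uses the reduction theorem together with the descriptions of $\dip(0)$ and $\dis(\Lambda_{e+f})$ from Section 3; when both are nonzero, the symmetric analysis yields shape (c). Conversely, any failure of (iii) (for instance, in the Kronecker case) produces additional $\tau$-rigid modules that inflate $\sttilt\Lambda_{e+f}$ beyond the three pictured shapes, sometimes making it infinite.

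For part (2), Lemma \ref{lemmaforeqoncd1} yields $G^i_j(Q,I)=e_i\Lambda e_j\setminus\rad^{\ell+1}\Lambda$ under (iii), and admissibility $I\subset\rad^2 KQ$ places every arrow $\alpha\in Q_1^{\circ}$ in $\rad\Lambda\setminus\rad^2\Lambda$. Thus for $(i,j)=(s(\alpha),t(\alpha))$ we get $\ell=1$, so $\alpha\in G^i_j(Q,I)$ and $Q_1^{\circ}\subset G(Q,I)$. A second arrow $\beta:i\to j$ would lie in $\alpha\cdot e_j\Lambda e_j$; writing $\beta=\alpha\mu$ with $\mu\in e_j\Lambda e_j$, the requirement $\beta\notin\rad^2\Lambda$ forces $\mu$ to be a unit in the local algebra $e_j\Lambda e_j$, making $\alpha$ and $\beta$ proportional modulo $\rad^2\Lambda$ and contradicting their independence in $\rad\Lambda/\rad^2\Lambda$; hence $Q^{\circ}$ has no multiple arrow. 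The main obstacle I anticipate is the case analysis for (iv) $\Leftrightarrow$ (v), where one must match arbitrary two-vertex algebras satisfying (iii), including those with loops or nontrivial radical structure on the individual vertices, against the three Hasse pictures without missing any intermediate support $\tau$-tilting modules; this step relies crucially on the reduction theorem together with a careful check that $\sttilt\Lambda_{e+f}$ is $2$-regular of the predicted size.
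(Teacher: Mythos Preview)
Your plan for (ii)$\Leftrightarrow$(iii), (iii)$\Leftrightarrow$(iv), and for part~(2) is essentially the paper's, and your use of Lemma~\ref{lemmaforeqoncd1} to locate a witness \emph{path} in (iii)$\Rightarrow$(i) is in fact a bit slicker than the paper's argument with the auxiliary elements $l_p,l'_p$. The gap is in the cycle-removal step of (iii)$\Rightarrow$(i). When your minimal-length witness path $p$ contains a cycle and you excise it to obtain $p'$, you allow for two outcomes: $p'=0$, or $p'$ is again a witness. But there is a third possibility you have not excluded: $p'\neq 0$ yet $p'\in\rad^{\ell+1}\Lambda$, so that Lemma~\ref{lemmaforeqoncd1} does \emph{not} place $p'$ among the witnesses. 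And even in the case $p'=0$ you have produced no shorter candidate, so minimality of $p$ is not contradicted and the induction stalls. Nothing in Lemma~\ref{lemmaforeqoncd1} says that every nonzero path from $i$ to $j$ avoids $\rad^{\ell+1}\Lambda$.

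The paper's remedy is to reverse the logical order: it proves part~(2) first (directly, not via Lemma~\ref{lemmaforeqoncd1}) and then feeds it into (iii)$\Rightarrow$(i). Once every arrow of $Q^{\circ}$ is known to lie in $G$, one can use Lemma~\ref{radtorad} to commute local radical elements past arrows, and a witness path $x_1$ containing a cycle can be rewritten as $x_1=\epsilon w'$ with $\epsilon\in e_i\,\rad\Lambda\,e_i$ and $w'\in W^i_j$; the witness property $e_i\Lambda e_j=e_i\Lambda x_1$ then forces $w'\in e_i\Lambda x_1$, whence $(e_i-\epsilon'\epsilon)w'=0$ for some $\epsilon'$, giving $w'=0$ and a contradiction. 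Since your proof of~(2) is independent of~(i), you can simply import it into this step instead of relying on excision. For (iv)$\Leftrightarrow$(v) the paper does not carry out your case analysis but quotes \cite[Proposition~3.2]{AK}; your sketch is reasonable, but the direction (v)$\Rightarrow$(iv) deserves care, as you must rule out that a two-vertex algebra with one of the Hasse shapes (a)--(c) could still fail~(iii).
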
 
 \begin{proof}
 First, we show (2). If $\Lambda$ satisfies (iii),
 then $KQ/I$ also satisfies (iii). Thus we may assume that $\Lambda=KQ/I$.
 Let $\alpha$ be an arrow from $i$ to $j$ with $i\neq j$. Since $e_i \Lambda e_j\neq 0$, there is $w\in e_i \Lambda e_j$
 such that $w \Lambda e_j=e_i \Lambda e_j =e_i \Lambda w$.
Hence there are $l\in e_i\Lambda e_i$ and $l'\in e_j \Lambda e_j$
such that
\[\alpha=lw=wl'.\]
If either $l$ or $l'$ is in $\rad \Lambda$, then $\alpha\in \rad^2 \Lambda$.
This is a contradiction. Thus $l$ (resp. $l'$) is invertible in $e_i \Lambda e_i$
(resp. $e_j \Lambda e_j$). In particular, we have the assertion (2). 
  
 We prove (1). We show that conditions (i), (ii) and (iii) are equivalent.
Since $\Lambda$ is basic, $ef=0=fe$ implies $e\Lambda\not\simeq f\Lambda$. Then the implications
\[\mathrm{(i)}\Rightarrow \mathrm{(ii)}\Rightarrow \mathrm{(iii)}\]
is clear.  
 We suppose that the condition (iii) holds.
Since $\Lambda\cong KQ/I$ satisfies (iii), we may assume that $\Lambda=KQ/I$. We consider two vertices $i\neq j\in Q_0$ 
 such that $e_i \Lambda e_j\neq 0$
 and show that $G_j^i:=G^i_j(\Lambda)\neq \emptyset$.
 Let $w\in e_i\Lambda e_j$ such that $w\Lambda e_j=e_i \Lambda e_j=e_i\Lambda w$.
 We write $w=a_1x_1+\cdots+a_tx_t$, where
 $a_p\in K\setminus\{0\}$ and $x_p$ is a path from $i$ to $j$ of $Q$ which is not in $I$.
 Then we can take $l_p\in e_i \Lambda e_i$ and $l'_p\in e_j \Lambda e_j$
 satisfying 
 \[l_p w=a_px_p=w l'_p.\] Let $l:=\sum l_p$ and $l':=\sum l'_p$.
 Then we have
 \[lw=w=wl'.\]
 Since $w\neq 0$, we see that $l\in e_i\Lambda e_i\setminus e_i\rad \Lambda e_i$. 
 Thus there is $p$ such that $l_p$ is invertible in $e_i \Lambda e_i$.
 Without loss of generality, we may assume that $l_1$ is invertible in $e_i \Lambda e_i$.
 Since  $l_1 w=a_1x_1=wl'_1$, Lemma\;\ref{radtorad} implies that $l'_1$ 
 is invertible in $e_j \Lambda e_j$. In particular, a path $x_1$ satisfies
 \[(\dagger)\ x_1 \Lambda e_j=e_i \Lambda e_j= e_i\Lambda x_1.\]
 Now suppose that the path $x_1$ contains a cycle.
 By the assertion (2), 
 \[\alpha \Lambda e_{t(\alpha)}=e_{s(\alpha)} \Lambda e_{t(\alpha)}=e_{s(\alpha)} \Lambda \alpha\]
 holds for any arrow $\alpha$ in $Q^{\circ}$. Hence Lemma\;\ref{radtorad} implies that
  there exists $\epsilon\in e_i\rad\Lambda e_i$ and
 $w'\in W^i_j(\Lambda)$ such that $x_1=\epsilon w'$. By $(\dagger)$, we have $\epsilon'\in e_i\Lambda e_i$
 such that
 \[\epsilon'\epsilon w'=w'.\]
 This gives $w'=0$ which leads to a contradiction. 
 Thus the path $x_1$ is in $G^i_j$. Therefore condition (i), (ii) and (iii) are equivalent.
 
 Let $e'$ and $f'$ be primitive idempotents of $\Lambda_{e+f}$ such that $e'f'=0=f'e'$.
 Since $e' \Lambda_{e+f} f'=e' \Lambda f'$ holds and $e'$ and $f'$ also are primitive idempotents of $\Lambda$,
  condition (iii) and (iv) are equivalent.
 
Finally, we show that (iv) and (v) are equivalent. 
Let $(e,f)$ be a pair of primitive idempotents with $ef=0=fe$. 
We take a quiver $Q(e,f)$ and an admissible ideal $I(e,f)$ of $KQ(e,f)$ such that
  $\Lambda_{e+f}\cong KQ(e,f)/I(e,f)$. 
 Since $\Lambda_{e+f}$
  satisfies condition (iii) if and only if
   $(Q(e,f),I(e,f))$ satisfies the condition (i).
 Therefore \cite[Proposition\;3.2]{AK} implies that the condition (iv) and (v) are equivalent.
 (We only note that $\sttilt \Lambda_{e,f}$ has the form (a) if and only if 
 $\Lambda_{e,f}=e\Lambda e\times f \Lambda f$ 
 or equivalently $e\Lambda f=0=f \Lambda e$.)  
   \end{proof}
From now on, we say that a basic algebra $\Lambda$ satisfies Condition\;\ref{cd} if 
Condition\;\ref{cd} holds for some (thus every) $(Q,I)$ satisfying $\Lambda\cong KQ/I$.
\subsection{Second condition}
\label{subsec:condition2}
For a quiver $Q$, we set
\[\begin{array}{cll} 
\mathsf{sub}(Q)&:=&\text{ the set of all connected full subquivers of }Q.\\  
\calP(Q)&:=&\{\mu=\{Q^{1},\cdots, Q^{\ell}\}
\mid \ell\in \Z_{\geq 1}, Q^a\in \mathsf{sub}(Q), Q_0=\sqcup Q^a_0\}.\\
\end{array}\]
Let $\mu=\{Q^{1},\cdots, Q^{\ell}\}\in \calP(Q)$. We define a quiver $\mathbf{Q}^{\mu}$ as follows:
\begin{itemize}
\item $\mathbf{Q}^{\mu}_0:=\mu$.
\item For each pair $(a\neq b)$ of $\{1,\dots,\ell\}$, we put 
$t_{(a,b)}:=\#\{\alpha\in Q_1\mid s(\alpha)\in Q^a_0, t(\alpha)\in Q^b_0 \}$ and
draw $t_{(a,b)}$ arrows from $Q^a$ to $Q^b$. 
 \end{itemize}
For $\Lambda\cong KQ/I$ and $\mu=\{Q^1,\dots, Q^{\ell}\}$,
 let $e_a^{\mu}$ be the idempotent of $\Lambda$ corresponding to $Q^a_0$ and
$\Lambda_{\mu}^{a,b}:=\Lambda/(1-e^{\mu}_a-e^{\mu}_b)$. 
We note that
if $\mathbf{Q}^{\mu}$ is a tree quiver and there exists an arrow $Q^a\to Q^b$ in $\mathbf{Q}^{\mu}$ , then 
\[\Lambda_{\mu}^{a,b}\cong \Lambda_{e^{\mu}_a+e^{\mu}_b}.\]

We are ready to state Condition\;2
\begin{condition}
\label{cd2} 
There exists $\mu=\{Q^1,\dots,Q^{\ell}\}\in \calP(Q)$ such that $\mathbf{Q}^{\mu}$ is a tree 
quiver and  $\sttilt \Lambda_{\mu}^{a,b}$ is a lattice for each $a\neq b\in \{1,\dots,\ell\}$. 
\end{condition}
\begin{remark} We give some remarks for Condition\;\ref{cd2}.
	\begin{enumerate}[{\rm (1)}]
		\item We recall the construction of the Gabriel quiver of a basic algebra $\Lambda$.
		Let $\mathbf{e}=\{e_1,e_2,\cdots,e_n\}$ be a complete set of primitive orthogonal idempotents of $\Lambda$.
		Then the Gabriel quiver $Q=Q_{\mathbf{e}}$ of $\Lambda$ is defined as follows:
		\begin{itemize}
			\item $Q_0=\{1,2,\dots,n\}$.
			\item Draw $t(i,j)$-th arrows from $i$ to $j$, where $t(i,j):=\dim_K e_i (\rad \Lambda/\rad^2 \Lambda) e_j$.
		\end{itemize}
		It is well-known that $Q$ does not depend on the choice of a complete set of primitive orthogonal idempotents of $\Lambda$.
		More precisely, if $\mathbf{f}=\{f_1,\dots,f_n\}$ is another complete set of primitive orthogonal idempotents of $\Lambda$ such that
		$e_i\Lambda\simeq f_i \Lambda$, then $Q_{\mathbf{e}}=Q_{\mathbf{f}}$ holds (see \cite[I\hspace{-.1em}I.3]{ASS} for example). 
		Furthermore, we have $\Lambda (\sum_{k\in V} e_k) \Lambda=\Lambda(\sum_{k\in V} f_k)\Lambda$ for any $V\subset \{1,2,\dots,n\}$.
		Hence Condition\;\ref{cd2} does not depend on the choice of a bound quiver $(Q,I)$ of $\Lambda$. 
		\item If $\Lambda=KQ$ is $\tau$-tilting finite, then $\Lambda$ satisfies Condition\;\ref{cd2} via $\mu=\{Q\}$.
		\item If $\Lambda=KQ$ is a tree quiver algebra, then $\Lambda$ satisfies Condition\;\ref{cd2} via $\mu=\{Q^{i}=\overset{i}{\circ} \mid i\in Q_0\}$. 
	\end{enumerate}
\end{remark}
\subsection{Examples of algebras in $\Theta$}
As we mentioned in the beginning of this section,  we define a class $\Theta$ of basic  algebras as follows:
\[\Theta:=\{\Lambda\mid \text{$\Lambda$ satisfies Condition\;\ref{cd} and Condition\;\ref{cd2}}\}.\]
Since the definition of $\Theta$ is a little complicated, we give some examples.
\begin{example} \textup{(1)} The following algebras are in $\Theta$.
\begin{enumerate}[{\rm (i)}]
\item Tree quiver algebras.
\item Preprojective algebras of type $A$.
\item Nakayama algebras.
\item Generalized Brauer tree algebras. 
\item $\tau$-tilting finite algebras with radical square zero.
\end{enumerate}

\textup{(2)} Let $\Lambda$ be the following bound quiver algebra $KQ/I$:
\[\begin{xy}
(0,0) *[o]+{1^{(1)}}="A", (14,0) *[o]+{2^{(1)}}="B", (28,7) *[o]+{1^{(2)}}="C", 
(42,7) *[o]+{2^{(2)}}="D", (28,-7) *[o]+{1^{(3)}}="E", (42,-7) *[o]+{2^{(3)}}="F",
(56,-7) *[o]+{1^{(4)}}="G", (70,-7) *[o]+{2^{(4)}}="H",
(0,-14) *[o]+{1^{(5)}}="I", (14,-14) *[o]+{2^{(5)}}="J",
(-14,-5) *[o]+{Q:}="K", (110,-5)*[o]+{I=\langle x_ay_a+y_ax_a\mid a\in\{1,2,3,4,5\} \rangle}
\ar @<3pt> "A";"B"^{x_1}
\ar @<3pt> "B";"A"^{y_1}
\ar @<3pt> "C";"D"^{x_2}
\ar @<3pt> "D";"C"^{y_2}
\ar @<3pt> "E";"F"^{x_3}
\ar @<3pt> "F";"E"^{y_3}
\ar @<3pt> "G";"H"^{x_4}
\ar @<3pt> "H";"G"^{y_4}
\ar @<3pt> "I";"J"^{x_5}
\ar @<3pt> "J";"I"^{y_5}
\ar  "B";"C"^{\alpha}
\ar  "B";"E"^{\beta}
\ar  "F";"G"^{\gamma}
\ar  "J";"E"^{\delta}
\end{xy}
\]
It is obvious that $\Lambda$ satisfies Condition\;\ref{cd}.
Let $\mu=\{Q^1,Q^2,Q^3,Q^4,Q^5\}\in \calP(Q)$ with $Q^a_0=\{1^{(a)},2^{(a)}\}$ for each $a\in \{1,2,3,4,5\}$.
Then $\mathbf{Q}^{\mu}$ is the following tree quiver.
\[\begin{xy}
(0,0) *[o]+{1}="A", (14,7) *[o]+{2}="B", (14,-7) *[o]+{3}="C", 
(28,-7) *[o]+{4}="D", (0,-14) *[o]+{5}="E", 
\ar  "A";"B"^{\alpha}
\ar  "A";"C"^{\beta}
\ar  "C";"D"^{\gamma}
\ar  "E";"C"^{\delta}
\end{xy}\]
Since $\Lambda_{\mu}^{a,b}$ is a factor algebra
of the preprojective algebra of type $A_4$ for each  pair $(a\neq b)$
of $ \{1,2,3,4,5\}$,
we have that $\Lambda_{\mu}^{a,b}$ is $\tau$-tilting finite. In particular, 
$\Lambda$ satisfies Condition\;\ref{cd2}.  
\end{example}
\begin{remark} If $\Lambda$ is either a tree quiver algebra or a preprojective algebra,
then Condition\;\ref{cd} is equivalent to the condition (b) in Theorem\;\ref{refe}.
\end{remark}
\subsection{A question for $\Theta$}
\label{subsec:question}
From now on, for an algebra $\Lambda=KQ/I$ and $i\neq j\in Q_0$, we set 
\[\Lambda_{i,j}:=\Lambda_{e_i+e_j}=(e_i+e_j)\Lambda(e_i+e_j).\] 
\begin{lemma}
\label{preservingnomultiarrow}
Let $\Lambda=KQ/I$ and $\Gamma=KQ'/I'$.
 Assume that $\sttilt \Lambda\simeq \sttilt \Gamma$ and $\Gamma$ satisfies Condition\;\ref{cd}. Then
 $Q$ has no multiple arrow. 
\end{lemma}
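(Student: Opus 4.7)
The plan is to argue by contradiction. Suppose that $Q$ contains two distinct arrows from some vertex $i$ to a vertex $j$ with $i\neq j$. Write $\rho:\sttilt\Lambda\xrightarrow{\sim}\sttilt\Gamma$ for the hypothesised poset isomorphism, let $\sigma:Q_0\to Q_0'$ be the bijection it induces in the sense of Corollary~\ref{preservingsupport}, and set $i':=\sigma(i)$, $j':=\sigma(j)$. First I would apply Corollary~\ref{preservingsupport}(1) with $V=Q_0\setminus\{i,j\}$ (so that $V'=Q_0'\setminus\{i',j'\}$), obtaining a poset isomorphism
\[
\sttilt\bigl(\Lambda/(1-e_i-e_j)\bigr)\;\simeq\;\sttilt\bigl(\Gamma/(1-e_{i'}-e_{j'})\bigr).
\]
Since the Gabriel quiver of $\Lambda/(1-e_i-e_j)$ is exactly the full subquiver of $Q$ on $\{i,j\}$, it still contains the two parallel arrows $i\to j$; so it will suffice to show that $\Lambda/(1-e_i-e_j)$ is $\tau$-tilting finite, because then Lemma~\ref{tautiltingfiniteness} will furnish the desired contradiction.

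Next I would verify that the two-vertex algebra $A:=\Gamma/(1-e_{i'}-e_{j'})$ inherits Condition~\ref{cd} from $\Gamma$. Choose $w\in e_{i'}\Gamma e_{j'}$ with $w\Gamma e_{j'}=e_{i'}\Gamma e_{j'}=e_{i'}\Gamma w$ (supplied by Condition~\ref{cd}); passing the equalities through the canonical surjection shows that the image $\bar w\in e_{i'}Ae_{j'}$ satisfies $\bar wAe_{j'}=e_{i'}Ae_{j'}=e_{i'}A\bar w$, and the symmetric statement for the pair $(j',i')$ is obtained in the same way. The only pair of primitive orthogonal idempotents of $A$ is $(e_{i'},e_{j'})$, and the corresponding corner algebra $A_{e_{i'}+e_{j'}}=(e_{i'}+e_{j'})A(e_{i'}+e_{j'})$ is $A$ itself, so applying the equivalence (i)$\Leftrightarrow$(v) of Lemma~\ref{eqcd}(1) to $A$ forces $\sttilt A$ to take one of the three finite shapes (a), (b), (c).

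Transporting this finiteness across the displayed poset isomorphism above shows that $\sttilt(\Lambda/(1-e_i-e_j))$ is finite, hence $\Lambda/(1-e_i-e_j)$ is $\tau$-tilting finite. Lemma~\ref{tautiltingfiniteness} then says that the Gabriel quiver of $\Lambda/(1-e_i-e_j)$ has no multiple arrows between distinct vertices, which contradicts our initial assumption on $i,j$ and completes the argument.

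I expect the one delicate point to be the descent of Condition~\ref{cd} to the quotient $\Gamma/(1-e_{i'}-e_{j'})$ in the middle paragraph: one has to identify the correct witness $\bar w$ in $A$ and check the two generating equalities there, and simultaneously recognise that for a two-vertex algebra the corner algebra on its own primitive idempotents coincides with the whole algebra, so that the shape conclusion of Lemma~\ref{eqcd}(1)(v) applies directly to $\sttilt A$. Beyond this, the remaining steps are straightforward invocations of Corollary~\ref{preservingsupport}(1), Lemma~\ref{eqcd}, and Lemma~\ref{tautiltingfiniteness}.
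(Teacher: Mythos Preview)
Your proposal is correct and follows essentially the same route as the paper: reduce to the two-vertex quotients via Corollary~\ref{preservingsupport}(1), observe that $\Gamma/(1-e_{i'}-e_{j'})$ inherits Condition~\ref{cd}, invoke Lemma~\ref{eqcd}(v) to conclude that its support $\tau$-tilting poset is one of the three finite shapes, and derive the contradiction. The only cosmetic difference is that you make the final step explicit through Lemma~\ref{tautiltingfiniteness}, whereas the paper simply says the shapes (a)--(c) are incompatible with two parallel arrows; both are valid.
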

\begin{proof}
Let $\rho:\sttilt \Lambda\simeq \sttilt \Gamma$ be a poset isomorphism and $\sigma:Q_0\to Q'_0$ a 
bijection considered in Corollary\;\ref{preservingsupport}.
We may assume that $Q_0=Q'_0$ and $\sigma$ is the identity.
Let $i\neq j$ be in $Q_0=Q'_0$. Suppose that there are two arrows from $i$ to $j$
in $Q$. 

By Corollary\;\ref{preservingsupport}, we have an isomorphism
\[\sttilt \Lambda/(1-e_i-e_j)\simeq \sttilt \Gamma/(1-e_i-e_j).\]
Since $\Gamma$ satisfies Condition\;\ref{cd}, $\Gamma/(1-e_i-e_j)$ also
satisfies Condition\;\ref{cd}. 
 Hence $\sttilt \Lambda/(1-e_i-e_j)\simeq \sttilt \Gamma/(1-e_i-e_j)$ has one of the forms in Lemma\;\ref{eqcd} (v).
This contradicts the fact that there are two arrows from $i$ to $j$ in $Q$. 
\end{proof}
We now sate a main result of this section.
\begin{corollary}
\label{maincor}
Let $\Gamma\cong KQ'/I'\in \Theta$. 
Then $\Lambda\cong KQ/I\in \T(\Gamma)$ only if there is a quiver isomorphism $\sigma:Q^{\circ}\to (Q')^{\circ}$
 satisfying the following conditions. 
  \begin{enumerate}[{\rm (a)}]
 \item $\supp(e_{\sigma(i)} \Gamma)=\sigma(\supp e_i\Lambda)$ for any $i\in Q_0$.
 \item $G(\Gamma)=\sigma (G(\Lambda))$.   
  \end{enumerate}
  
  Moreover, $\Lambda$ is also in $\Theta$.
  \end{corollary}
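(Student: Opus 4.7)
The plan is to extract the vertex bijection $\sigma\colon Q_0\to Q'_0$ from $\rho$ via $\rho(X_i^\Lambda)=X_{\sigma(i)}^\Gamma$, promote it to a quiver isomorphism $Q^\circ\xrightarrow{\sim}(Q')^\circ$, verify (a), transfer the partition witnessing Condition~\ref{cd2} from $\Gamma$ to $\Lambda$, and finally deduce (b) by applying Proposition~\ref{keyproposition}(2) on the lattice subposets produced by the transferred partition. Since $\Gamma\in\Theta$ satisfies Condition~\ref{cd}, Lemma~\ref{eqcd}(2) rules out multiple arrows in $(Q')^\circ$ and Lemma~\ref{preservingnomultiarrow} does the same for $Q^\circ$. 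Hence $Q^*=Q^\circ$ and $(Q')^*=(Q')^\circ$, so Proposition~\ref{determiningarrows}(4) promotes $\sigma$ to a quiver isomorphism $Q^\circ\xrightarrow{\sim}(Q')^\circ$; condition (a) is then exactly Lemma~\ref{projsupport}.

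Fix a partition $\mu'=\{Q'^{1},\ldots,Q'^{\ell}\}$ of $Q'$ witnessing Condition~\ref{cd2} for $\Gamma$, and let $\mu=\{Q^{1},\ldots,Q^{\ell}\}$ be its pullback along $\sigma$. Because loops lie inside a single part, the iso $Q^\circ\simeq (Q')^\circ$ yields $\mathbf{Q}^\mu\simeq\mathbf{Q}'^{\mu'}$, which is a tree; Corollary~\ref{preservingsupport}(1) then gives $\sttilt\Lambda_\mu^{a,b}\simeq\sttilt\Gamma_{\mu'}^{a,b}$, a lattice for every $a\neq b$. Thus $\Lambda$ satisfies Condition~\ref{cd2}.

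For (b), fix an arrow $\alpha\colon i\to j$ of $Q^\circ$ and parts $Q^a\ni i$, $Q^b\ni j$; if $a=b$ take any other part (the degenerate case $\ell=1$ forces $\sttilt\Lambda$ itself to be a lattice, whence Proposition~\ref{keyproposition} applies to $\Lambda$ directly). The decisive geometric point is that, because $\mathbf{Q}^\mu$ is a tree quiver, every path in $Q$ from $i$ to $j$ (and from $j$ to $j$) stays inside $Q^a_0\cup Q^b_0$: leaving the pair would force a directed walk in $\mathbf{Q}^\mu$ returning to the starting pair, impossible in a tree with no multiple arrows. Consequently $e_i\Lambda e_j=e_i\Lambda_\mu^{a,b}e_j$ and $e_j\Lambda e_j=e_j\Lambda_\mu^{a,b}e_j$, so the $G$-property for $\alpha$ in $\Lambda$ agrees with the one in $\Lambda_\mu^{a,b}$, and similarly on the $\Gamma$-side. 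Applying Proposition~\ref{keyproposition}(2) to the lattice isomorphism $\sttilt\Lambda_\mu^{a,b}\simeq\sttilt\Gamma_{\mu'}^{a,b}$ yields $\alpha\in G(\Lambda)\iff\sigma(\alpha)\in G(\Gamma)$. Since every arrow of $(Q')^\circ$ lies in $G(\Gamma)$ by Lemma~\ref{eqcd}(2), the same holds on the $\Lambda$-side, so $\Lambda$ satisfies Condition~\ref{cd} by Lemma~\ref{eqcd}(1); in particular $\Lambda\in\Theta$. Lemma~\ref{takingsubpath} now applies on both sides, and the arrow-level bijection extends to all paths via the radical-filtration description of $G$-elements furnished by Lemma~\ref{lemmaforeqoncd1}, giving (b) in full.

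The main obstacle is the tree-geometric identity $e_i\Lambda e_j=e_i\Lambda_\mu^{a,b}e_j$ for arrows $\alpha\colon i\to j$ straddling two parts of $\mu$: without the tree structure of $\mathbf{Q}^\mu$ the lattice iso supplied by Condition~\ref{cd2} would only transfer a localized $G$-property in the quotient $\Lambda_\mu^{a,b}$, strictly weaker than the $G$-property in $\Lambda$, and (b) would be out of reach. The edge case $\ell=1$, where no nontrivial restriction is available, is the other technical subtlety and must be handled by appealing directly to the lattice structure on $\sttilt\Lambda$.
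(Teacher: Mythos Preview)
Your setup is correct and matches the paper: the quiver isomorphism $\sigma$, condition (a), and the transfer of the partition $\mu'$ to $\mu$ (establishing Condition~\ref{cd2} for $\Lambda$) all go through as you describe. The arrow-level argument is also fine: using the tree shape of $\mathbf{Q}^\mu$ to identify $e_i\Lambda e_j$ with $e_i\Lambda_\mu^{a,b}e_j$ and then invoking Proposition~\ref{keyproposition}(2) on the lattice $\sttilt\Lambda_\mu^{a,b}$ correctly yields $\alpha\in G(\Lambda)\iff \sigma(\alpha)\in G(\Gamma)$ for every arrow $\alpha$ of $Q^\circ$.

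The genuine gap is the last paragraph. From the arrow-level bijection you cannot conclude $G(\Lambda)=\sigma^{-1}G(\Gamma)$ by appealing to Lemma~\ref{takingsubpath} and Lemma~\ref{lemmaforeqoncd1} alone. Those lemmas describe $G$ \emph{inside a single algebra}; they give no comparison between $\Lambda$ and $\Gamma$. Concretely, for a length-two path $w=\alpha\beta$ with $\alpha,\beta\in G(\Lambda)\cap G(\Gamma)$, knowing the arrows are generators tells you nothing about whether $e_x\Lambda e_z=e_x\Lambda e_y\Lambda e_z$ (equivalently whether $z\in\supp e_x(\Lambda/(e_y))$), which is exactly what decides $w\in G(\Lambda)$. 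Likewise, your inference ``$Q^\circ_1\subset G(\Lambda)$ hence $\Lambda$ satisfies Condition~\ref{cd}'' quotes Lemma~\ref{eqcd}(1), but that lemma gives equivalences (i)--(v), none of which is ``all arrows lie in $G$''; part (2) of the lemma is the \emph{converse} implication.

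What the paper does, and what you are missing, is an induction on path length that uses the support identity of Lemma~\ref{projsupport} in its full strength (for quotients $\Lambda/(e_y)$, not just for $\Lambda$). If $w=w'\alpha\in G(\Gamma)$ with $w'\in G(\Lambda)\cap G(\Gamma)$ by induction, then $w\in G(\Gamma)$ forces $e_x(\Gamma/(e_y))e_z=0$, hence $z\notin\supp e_x(\Gamma/(e_y))=\supp e_x(\Lambda/(e_y))$, hence $e_x\Lambda e_z=e_x\Lambda e_y\Lambda e_z$, and now $w'\in G(\Lambda)$, $\alpha\in G(\Lambda)$ combine to give $w\in G(\Lambda)$. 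This step is carried out first in the lattice case (the paper's Claim~\ref{claimformaincor}) and then transferred through the partition; your proposal bypasses it, and that is why (b) is not established. A minor additional issue: the edge case $\ell=1$ does \emph{not} force $\sttilt\Lambda$ to be a lattice (the condition is vacuous there), so that fallback is not available as stated.
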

\begin{proof} We may assume $\Lambda=KQ/I$, $\Gamma=KQ'/I'$ and there is a poset isomorphism
 $\rho:\sttilt \Lambda\stackrel{\sim}{\to} \sttilt \Gamma$. 
Let $\sigma:Q_0\to Q'_0$ be as in 
Corollary\;\ref{preservingsupport}. Then Proposition\;\ref{determiningarrows} and Lemma\;\ref{preservingnomultiarrow}
imply that $\sigma$ is extended as a quiver isomorphism
\[\sigma: Q^{\circ}\stackrel{\sim}{\to} (Q')^{\circ}.\]
We may assume that $Q^{\circ}=(Q')^{\circ}$ and $\sigma$
is the identity and put $G=G(\Lambda)$, $G'=G(\Gamma)$.
Then the condition (a) follows from Lemma\;\ref{projsupport}. 

We first consider the case that $\sttilt \Gamma$ is a lattice.
\begin{claim}
\label{claimformaincor} 
If $\sttilt\Gamma$ is a lattice, then we have $G=G'$.
\end{claim}
\begin{pfclaim} 
Let $\alpha$ be an arrow 
from $i$ to $j$ in $Q^{\circ}=(Q')^{\circ}$.
Then $G_j^i=(G_j^i)'=\{\alpha\}$
follows from Proposition\;\ref{keyproposition} (we remark that $I$ and $I'$ are admissible).

Suppose that $G'\not\subset G$. 
Take a path $w\in G'\setminus G$ whose length is minimum in $G'\setminus G$. Then the length of $w$ is at least $2$.
Let $w:x=x_0\to x_1\to\cdots\to x_{\ell}=y\stackrel{\alpha}{\to}z $ and $w':x=x_0\to x_1\to \cdots\to x_{\ell}=y$
(i.e. $w=w'\alpha$). Lemma\;\ref{takingsubpath} gives us that $w'\in G\cap G'$. 
Note that the following equality follows from Lemma\;\ref{projsupport}:
\[\supp(e_x(\Gamma/(e_y)))=\supp(e_x (\Lambda/(e_y))).\]
Since $w\in G'$, we have $e_x\Gamma e_y \Gamma e_z\subset e_x\Gamma e_z=w \Lambda e_z \subset e_x\Gamma e_y \Gamma e_z$
In particular, we obtain $e_x \Gamma e_z=e_x \Gamma e_y \Gamma e_z$ and  $e_x (\Gamma/(e_y)) e_z=0$.
This shows $z\not\in \supp(e_x(\Gamma/(e_y)))=\supp(e_x (\Lambda/(e_y)))$.
 In particular, we have  $e_x (\Lambda/(e_y)) e_z=0$ and 
\[e_x\Lambda e_z=e_x\Lambda e_y\Lambda e_z.\]
Since $e_x\Lambda e_z\neq 0$ (by Lemma\;\ref{projsupport}), $w=w'\alpha\in G$ follows from
 $\{w',\alpha\}\subset G$. In fact we have
 \[e_x \Lambda w'=e_x\Lambda e_y\Lambda e_z=w'\Lambda e_z.\]
This is a contradiction. Hence, we have $G'\subset G$. Since $e_i \Lambda e_j\neq 0$ if and only if $e_i \Gamma e_j\neq 0$,
 $\Lambda$ satisfies 
Condition\;\ref{cd}. Since $\sttilt \Lambda$ is a lattice, $\Lambda$ is in $\Theta$. 
 Therefore, we also have $G\subset G'$ by using the  above argument.   
\end{pfclaim}

We consider arbitrary $\Gamma\in \Theta$. Let $\mu'=\{(Q')^1,\dots, (Q')^{\ell}\}\in \calP(Q')$
such that $\mathbf{Q}^{\mu'}$ is a tree quiver and $\sttilt \Gamma_{\mu'}^{a,b}$ is a lattice
for any $a\neq b\in \{1,\dots,\ell\}$. We put $\mu=\{Q^1,\dots, Q^{\ell}\}\in \calP(Q)$
such that $Q^a_0=(Q')^a_0$ for any $a\in \{1,\dots,\ell\}$.
Then by Corollary\;\ref{preservingsupport} (1), $\Lambda$ satisfies Condition\;\ref{cd2} via $\mu$. 

Thus it is sufficient to show that $G=G'$.
\begin{claim}
\label{claimformaincor2}	
Assume that there is an arrow $Q^a\to Q^b$ in $Q^{\mu}$ and $w\in (G')^i_j$ with $i,j\in Q^a_0\cup Q^b_0$.
Then $w\in G^i_j$.	
\end{claim}
\begin{pfclaim}
By corollary\;\ref{preservingsupport} (1), $\rho$ induces a poset isomorphism  
\[\sttilt \Lambda_{\mu}^{a,b}\simeq \sttilt \Gamma_{\mu'}^{a,b}.\]
Since $\Gamma_{\mu'}^{a,b}$ is a lattice (thus $\Gamma_{\mu'}^{a,b}\in \Theta$) and 
\[e_i \Gamma e_j =e_i \Gamma_{\mu'}^{a,b} e_j,\ 
e_i \Lambda e_j =e_i \Lambda_{\mu}^{a,b} e_j,\]
it follows from Claim\;\ref{claimformaincor} that $w\in G^i_j$.
\end{pfclaim}

Let $i$ and $j$ be two vertices of $Q'_0=Q_0$ with $e_i \Lambda e_j\neq 0(\Leftrightarrow e_i \Gamma e_j\neq 0)$ and $w\in (G')_j^i$.
We claim that $w\in G_j^i$.
Since $\mathbf{Q}^{\mu}$ is a tree quiver, there exists a unique path
\[Q^{a}=Q^{a_0}\to Q^{a_1}\to \cdots \to Q^{a_{t}}=Q^b\]
in $\mathbf{Q}^{\mu}$ such that $i\in Q^{a}_0$ and $j\in Q^b_0$.
If $a=b$, then $w\in G^i_j$ follows from Claim\;\ref{claimformaincor2}.
Hence we may assume  $a\neq b$. Let $\alpha_s$ be the arrow in $Q^{\circ}=(Q')^{\circ}$  corresponding to
$Q^{a_s}\to Q^{a_{s+1}}$. We denote by $j_s$ starting point of $\alpha_s$ and
 by $i_{s+1}$ the target point of $\alpha_s$. We note that $j_s\in Q^{a_s}$ and $i_{s+1}\in Q^{a_{s+1}}$.
 We also note that \[\begin{array}{lll}
 e_i \Lambda e_j&=&e_{i}\Lambda e_{j_0} \alpha_0 e_{i_1}
 \Lambda e_{j_1}\cdots \alpha_{t-1} e_{i_t} \Lambda e_j\\\\
 e_i \Gamma e_j&=&e_i\Gamma e_{j_0} \alpha_0 e_{i_1}
 \Gamma e_{j_1}\cdots \alpha_{t-1} e_{i_t} \Gamma e_j.\\
 \end{array}
 \] 
Then there is a unique description $w_0\alpha_0 w_1\cdots \alpha_{t-1} w_t$ of $w$, where $w_s$ is a path 
 in $(Q^{a_s})^{\circ}=((Q')^{a_s})^{\circ}$. By Lemma\;\ref{takingsubpath}, we have that 
$w_s$, $w_s\alpha_s$ and $\alpha_s w_{s+1}$
are in $G'$. Then by Claim\;\ref{claimformaincor2}, $w_s$, $w_s\alpha_s$ and $\alpha_s w_{s+1}$  are  also in $G$. 
Therefore, we obtain 
\[\begin{array}{lll}
e_i \Lambda e_j&=&e_{i}\Lambda w_0 \alpha_0 \Lambda w_1\alpha_1 \cdots \Lambda w_{t-1}\alpha_{t-1} \Lambda w_t\\
&=&w_0\alpha_0 w_1\cdots \alpha_{t-1} w_t \Lambda e_j=w \Lambda e_j.\\
\end{array}\]
Similarly, we obtain
\[e_i \Lambda e_j=e_i \Lambda w.\]
Thus we have $G'\subset G$. Hence $\Lambda$  is also in $\Theta$.
In particular, we obtain $G\subset G'$ by using the same argument.  
 \end{proof} 
Let $(Q,I)$ and $(Q'I)$ be bound quivers.
Then we denote by $(Q,I)\sim(Q'I)$ if  there is a quiver isomorphism $\sigma:Q^{\circ}\to (Q')^{\circ}$ 
satisfying (a), (b) of Corollary\;\ref{maincor}.
\begin{lemma}
\label{lemmafort'}
Let $\Lambda=kQ/I$ and $\Gamma=kQ'/I'$. If $\Lambda\cong \Gamma$ and $\Lambda$ satisfies Condition\;\ref{cd},
then $(Q,I)\sim(Q',I')$.
\end{lemma}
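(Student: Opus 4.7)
The plan is to construct the required quiver isomorphism $\sigma$ directly from an algebra isomorphism $\phi : \Lambda \to \Gamma$, and then verify the two conditions (a) and (b). First, since $\Lambda$ and $\Gamma$ are basic, any two complete sets of primitive orthogonal idempotents are conjugate by a unit; so after composing $\phi$ with a suitable inner automorphism I may assume $\phi(e_i)=e_{\sigma(i)}$ for a bijection $\sigma:Q_0\to Q'_0$. Because $\phi$ preserves the radical filtration, comparing the dimensions of $e_i(\rad\Lambda/\rad^2\Lambda)e_j$ and its image shows that $\sigma$ lifts to an isomorphism of the Gabriel quivers. Condition \ref{cd} is an algebra invariant (Lemma \ref{eqcd}), so $\Gamma$ also satisfies it, and Lemma \ref{eqcd}\,(2) then rules out multiple arrows in both $Q^{\circ}$ and $(Q')^{\circ}$. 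Hence $\sigma$ restricts to a quiver isomorphism $\sigma:Q^{\circ}\xrightarrow{\sim}(Q')^{\circ}$. Condition (a) is immediate since $e_{\sigma(i)}\Gamma e_{\sigma(j)}=\phi(e_i\Lambda e_j)$.

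The substantive part is (b). I would prove $\sigma(G^i_j(Q,I))\subseteq G^{\sigma(i)}_{\sigma(j)}(Q',I')$ by induction on the length of $w$, with the reverse inclusion following by running the same argument for $\phi^{-1}$. The length zero and length one cases are immediate (Lemma \ref{eqcd}\,(2) places every arrow of $(Q')^{\circ}$ in $G(\Gamma)$). For the inductive step, decompose $w=w'\alpha$ with $\alpha:y\to j$ an arrow; Lemma \ref{takingsubpath} gives $w'\in G^i_y$ and $\alpha\in G^y_j$, so by induction $\sigma(w')\in G^{\sigma(i)}_{\sigma(y)}$ and $\sigma(\alpha)\in G^{\sigma(y)}_{\sigma(j)}$. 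Unpacking $w\Lambda e_j = e_i\Lambda e_j$ together with $w'\alpha \in e_i\Lambda e_y \cdot e_y\Lambda e_j$ yields the algebraic identity
\[
e_i\Lambda e_j \;=\; e_i\Lambda e_y\Lambda e_j,
\]
which $\phi$ transfers to $e_{\sigma(i)}\Gamma e_{\sigma(j)} = e_{\sigma(i)}\Gamma e_{\sigma(y)}\Gamma e_{\sigma(j)}$. A direct calculation, parallel to the reassembly step at the end of the proof of Claim \ref{claimformaincor}, then shows that $\sigma(w)=\sigma(w')\sigma(\alpha)$ generates $e_{\sigma(i)}\Gamma e_{\sigma(j)}$ both as a left and right module, i.e.\ $\sigma(w)\in G^{\sigma(i)}_{\sigma(j)}(Q',I')$. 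That $\sigma(w)$ is a cycle-free path in $(Q')^{\circ}$ is automatic since $\sigma$ is a quiver isomorphism and maps cycles to cycles.

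The main obstacle I expect is the presentation dependence of $G$: the sets $G^i_j$ are defined combinatorially as sets of paths in $Q$, whereas an algebra isomorphism $\phi$ does not send paths to paths in any canonical way. The induction circumvents this by transferring only the \emph{algebraic} identity $e_i\Lambda e_j = e_i\Lambda e_y\Lambda e_j$ through $\phi$, and then reassembling path-level membership in $G$ using the already established combinatorial bijection $\sigma$ between arrows of $Q^{\circ}$ and $(Q')^{\circ}$.
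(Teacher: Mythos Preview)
Your argument is correct but takes a different route from the paper's proof. The paper does \emph{not} normalise idempotents via an inner automorphism and does not induct on path length. Instead it invokes Lemma~\ref{lemmaforeqoncd1}, which identifies $G^e_f$ intrinsically as $e\Lambda f\setminus\rad^{\ell+1}\Lambda$ where $\ell$ is the least radical degree meeting $e\Lambda f$. Given $w\in G^i_j(\Lambda)$ with $w\in\rad^{\ell}\setminus\rad^{\ell+1}$, the paper computes $\varphi(w)$ directly: writing $\varphi(e_{i_t})=e_{i_t}+l_{i_t}$ with $l_{i_t}\in\rad\Lambda$, one expands $\varphi(w)=\varphi(e_{i_0})\varphi(\alpha_1)\cdots\varphi(\alpha_m)\varphi(e_{i_m})$ and uses Condition~\ref{cd} together with Lemma~\ref{radtorad} to see that every term involving some $l_{i_t}$ lands in $\rad^{\ell+1}$, leaving $\varphi(w)\equiv aw\pmod{\rad^{\ell+1}}$ with $a$ a unit in $e_i\Lambda e_i$. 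Hence $\varphi(w)\notin\rad^{\ell+1}$, and Lemma~\ref{lemmaforeqoncd1} gives $w\in G^i_j(\Gamma)$ at once.

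Your approach trades that single radical-filtration computation for an induction that recycles the factorisation identity $e_i\Lambda e_j=e_i\Lambda e_y\Lambda e_j$ and the reassembly step from Claim~\ref{claimformaincor}. This is perfectly valid and arguably more self-contained, since it avoids Lemma~\ref{lemmaforeqoncd1}; the paper's route is shorter and exhibits more clearly \emph{why} $G$ is presentation-independent, namely because membership in $G$ is detected purely by radical degree. One small point you leave implicit: you should note that $\sigma(w)\neq 0$ in $\Gamma$, which follows since $e_{\sigma(i)}\Gamma e_{\sigma(j)}=\phi(e_i\Lambda e_j)\neq 0$ and your reassembly gives $\sigma(w)\,\Gamma e_{\sigma(j)}=e_{\sigma(i)}\Gamma e_{\sigma(j)}$.
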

\begin{proof}
Let $\varphi:\Gamma\stackrel{\sim}{\to}\Lambda$.
We may assume that $Q=Q'$ and $\varphi(e_i\Gamma)\simeq e_i \Lambda$ for each $i\in Q_0$.
Let $\varphi(e_i)=e'_i=x_ie_i+\sum_{j\neq i} y^{(i)}_j e_j +l_i$ with $x,y^{(i)}_j\in K$ and $l_i\in \rad \Lambda$.
$e_i \Lambda\simeq e'\Lambda$ implies that there is $ \lambda\in e'\Lambda e_i$ such that
$e'\Lambda=e'\lambda e_i \Lambda$. Then there is $\lambda'$ such that
 $e'=e'\lambda e_i \lambda'$. This shows  $y^{(i)}_j=0$, $x_i=1$ and $l_i\in \Lambda e_i \Lambda\cap \rad \Lambda$.
 
 We let $w=(i=i_0\stackrel{\alpha_1}{\to} i_1\stackrel{\alpha_2}{\to}\cdots \stackrel{\alpha_m}{\to} i_m=j)\in G^i_j(\Lambda)$ with
 $w\in \rad^{\ell} \Lambda\setminus \rad^{\ell+1} \Lambda$.
  We show that $w$ is also in $G^i_j(\Gamma)$. By Lemma\;\ref{lemmaforeqoncd1}, it is sufficient to prove
$\varphi(w)\in G^{e'_i}_{e'_j}=e'_i\Lambda e'_j\setminus \rad^{\ell+1} \Lambda$. 

Let $\lambda_0,\lambda'_0,\lambda_1,\lambda'_1,\dots,\lambda_m,\lambda'_m\in \Lambda$ and consider
\[w':=\lambda_0 e_{i_0} \lambda'_0\varphi(\alpha_1)
\lambda_1 e_{i_1} \lambda'_1\varphi(\alpha_2)\cdots \lambda_{m-1} e_{i_{m-1}} \lambda'_{m-1}\varphi(\alpha_{m-1})\lambda_m e_{i_m} \lambda'_m. \] 
Since $\Lambda$ satisfies Condition\;\ref{cd}, $Q_1^{\circ}\in G$ and we can describe $e_{i_{t-1}} \lambda'_{t-1}\varphi(\alpha_t)
\lambda_t e_{i_t}= \alpha_{i_t} a_t$ for some $a_t \in e_{i_t}\Lambda e_{i_t}$.  
Now assume 
$\{\lambda_0,\lambda'_0,\lambda_1,\lambda'_1,\dots,\lambda_m,\lambda'_m\in \Lambda\}\cap \rad \Lambda\neq \emptyset$.
 Then it follows from Lemma\;\ref{radtorad} that $w'\in \rad^{\ell+1}\Lambda$. Hence we obtain
 \[\begin{array}{lll}
 \varphi(w) &=&(e_{i_0}+l_{i_0})\varphi(\alpha_1)(e_{i_1}+l_{i_1})\varphi(\alpha_2)
 \cdots (e_{i_{m-1}}+l_{i_{m-1}})\varphi(\alpha_m)(e_{i_m}+l_{i_m})\\
 &=&e_{i_0}\varphi(\alpha_1)e_{i_1}\varphi(\alpha_2) \cdots e_{i_{m-1}}\varphi(\alpha_m)e_{i_m}+r\\
                  &=& a w+r,\\ 
 \end{array}\]
 for some $r\in \rad^{\ell+1} \Lambda$ and $a\in e_i \Lambda e_i\setminus e_i \rad \Lambda e_i$.
In particular, $\varphi(w)\in \rad^{\ell}\Lambda\setminus \rad^{\ell+1} \Lambda$. 
\end{proof}
We now define an equivalent relation $\sim$ on class of basic algebras satisfying Condition\;\ref{cd}: Let $\Lambda\cong KQ/I,\ \Gamma\cong KQ'/I'$. 
\[\Lambda\sim \Gamma\Leftrightarrow (Q,I)\sim (Q',I').\]
We set 
\[\T'(\Gamma):=\{\Lambda\mid \Lambda\sim \Gamma\}.\]
Then we  have the following question.
  \begin{question}
  \label{cj} 
Does $\T(\Gamma)=\T'(\Gamma)$ hold for any $\Gamma\in \Theta$?  \end{question}

\begin{theorem}
\label{maintheorem2}
Question\;\ref{cj} holds true if one of the following statements holds.
\begin{enumerate}[{\rm (i)}]
\item $\Gamma$ is a tree quiver algebra. {\cite{AK}}
\item $\Gamma$ is a preprojective algebra of type $A$. {\cite{K2}}
\item $\Gamma$ is a Nakayama algebra. $[${\rm Section\;\ref{subsect4.2}}$]$
\item $\Gamma$ is a Brauer tree algebra. $[${\rm Section\;\ref{subsect4.3}}$]$
\item $|\Gamma|\leq 3$. $[${\rm Section\;\ref{subsect4.1},\  Section\;\ref{subsect4.4}}$]$
\end{enumerate}
\end{theorem}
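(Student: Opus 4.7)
The plan is to handle each case separately; since by Corollary \ref{maincor} one has $\T(\Gamma)\subseteq\T'(\Gamma)$ always, only the reverse inclusion requires work. Cases (i) and (ii) are already Theorem \ref{refe}, so the task is only to verify that the relation $\sim$ of Corollary \ref{maincor} matches conditions (a), (b) of Theorem \ref{refe} in those two settings. For tree quivers and preprojective algebras of type $A$, every non-zero $e_i \Lambda e_j$ is essentially generated by a unique path, so the $G$-set is determined by $Q^{\circ}$ together with the projective supports, and the two formulations are equivalent via Lemma \ref{lemmaforeqoncd1}.

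For case (v), the plan is to exhaust the small cases by hand. Section \ref{subsect4.1} handles $|\Gamma|\leq 2$: every finite connected $2$-regular lattice is realised as $\sttilt$ of an algebra in $\Theta$, so matching lattices to algebras up to $\sim$ finishes the case. Section \ref{subsect4.4} treats $|\Gamma|=3$ by a case-by-case analysis modelled on the reconstruction example worked at the end of Section 3, where the poset data alone determine $\Lambda\in\Theta$ up to $\sim$.

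Cases (iii) and (iv) are the main content. For (iii), I would first show that $\Lambda\sim \Gamma$ with $\Gamma$ Nakayama forces the Loewy lengths $\ell\ell(e_i\Lambda)$ to agree with those of $\Gamma$: by Lemma \ref{projsupport} the support sizes $\#\supp(e_i\Lambda)$ are preserved, and for $\Lambda$ satisfying Condition \ref{cd} with $Q^{\circ}$ a linear or cyclic quiver one has $\#\supp(e_i\Lambda)=\ell\ell(e_i\Lambda)$. Then \cite[Theorem 3.11]{A} (together with its linear counterpart) gives a poset isomorphism $\sttilt \Lambda\simeq \sttilt \Gamma$, since the support $\tau$-tilting poset of a Nakayama-shape algebra depends only on $(Q^{\circ},(\ell\ell(e_i\Lambda))_i)$. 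The strategy for (iv) is analogous: using Condition \ref{cd} together with the preserved supports, $\Lambda\sim \Gamma$ forces $\Lambda$ to be attached to the same underlying Brauer tree as $\Gamma$ (possibly with a different multiplicity function), and \cite[Proposition 4.7]{AAC} asserts that $\sttilt$ of a generalized Brauer tree algebra is independent of the multiplicity.

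The main obstacle will be the identification step in (iii) and (iv): showing that $\Lambda\in \T'(\Gamma)$ really is Nakayama-like (resp.\ Brauer-tree-like) rather than just an algebra sharing the loop-free quiver with $\Gamma$. The extra loops that $\Lambda$ may carry, and relations invisible on $Q^{\circ}$, are the delicate point. Here Condition \ref{cd} and the $G$-set, closed under subpaths by Lemma \ref{takingsubpath}, control the behaviour of $\Lambda$ along the non-loop arrows tightly enough to force the required structure; once this rigidity is in place, applying the invariance theorems \cite[Theorem 3.11]{A} and \cite[Proposition 4.7]{AAC} is mechanical.
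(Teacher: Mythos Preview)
Your proposal has a genuine gap in cases (iii) and (iv). You want to apply \cite[Theorem~3.11]{A} and \cite[Proposition~4.7]{AAC} to an arbitrary $\Lambda\in\T'(\Gamma)$, but those results are stated and proved only for actual Nakayama algebras and generalized Brauer tree algebras. An algebra $\Lambda\in\T'(\Gamma)$ may carry loops and relations that destroy uniseriality (or the Brauer-tree projective structure) entirely; in particular your claim that $\#\supp(e_i\Lambda)=\ell\ell(e_i\Lambda)$ already fails once loops are present, and ``Nakayama-shape'' is not a notion to which \cite{A} applies. The final paragraph acknowledges this obstacle but does not resolve it: saying that Condition~\ref{cd} and the $G$-set ``control $\Lambda$ along the non-loop arrows'' is a hope, not an argument, and nothing in the paper up to that point gives such control over the full module category of $\Lambda$. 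Note also that the paper lists \cite[Theorem~3.11]{A} and \cite[Proposition~4.7]{AAC} as \emph{consequences} of Theorem~\ref{maintheorem2}, so invoking them as inputs would be circular.

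The paper takes a completely different route, uniform across (iii), (iv), (v) and the re-proof of (ii). It introduces the quotient $\overline{\Lambda}:=\Lambda/J$ with $J=\Lambda(\sum_i e_i\rad\Lambda\,e_i)\Lambda$, which kills all loops and forces $\dim_K e_i\overline{\Lambda}e_j\leq 1$. Two ingredients then finish every case. First, Lemma~\ref{minimumfactor}: when $\#G^i_j\leq 1$ for all $i,j$, one has $\overline{\Lambda}\cong\overline{\Gamma}$ for every $\Gamma\in\T'(\Lambda)$; this is elementary. Second, and this is the substantial part, Theorem~\ref{reductiontominimal}: if $\overline{\Lambda}$ satisfies Assumption~\ref{assumption} (every indecomposable two-term presilting complex has a representative $[U'\stackrel{f}{\to}U]$ whose incidence quiver $Q^f$ is a tree), then $-\otimes_\Lambda\overline{\Lambda}$ induces a poset isomorphism $\sttilt\Lambda\simeq\sttilt\overline{\Lambda}$. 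Combining these gives Corollary~\ref{application}: $\T(\Lambda)=\T'(\Lambda)$ whenever $\overline{\Lambda}$ satisfies Assumption~\ref{assumption} and $\#G^i_j\leq 1$. Each of Sections~\ref{subsect4.2}--\ref{subsect4.4} then merely checks Assumption~\ref{assumption} for the relevant $\overline{\Lambda}$ by exhibiting the shape of indecomposable presilting complexes (via the known classifications for Nakayama, special biserial, and preprojective type~$A$ algebras, and by exhaustive listing in the $3$-point case). The hard analysis is concentrated in the proof of Theorem~\ref{reductiontominimal}, specifically Lemma~\ref{teqlemma}, which compares $\Hom_{\Kb}(T,S[1])$ before and after tensoring with $\overline{\Lambda}$ using the tree hypothesis on $Q^\zeta$.
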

\section{Reduction to mimimal factor algebras in $\T'(\Lambda)$ and its applications.}
Assume that $\Lambda=KQ/I$ satisfies Condition\;\ref{cd}. Let $J:=\Lambda (\sum_{i\in Q_0} e_i \rad \Lambda e_i )\Lambda$
be a two-sided ideal of $\Lambda$ and $\overline{\Lambda}:=\Lambda/J$. 
For an element $\lambda\in \Lambda$, we set $\bar{\lambda}:=\lambda+J\in \overline{\Lambda}$.
We note that $\overline{\Lambda}$ also satisfies Condition\;\ref{cd}.
\begin{lemma}
\label{gpreserve}\begin{enumerate}[{\rm (1)}]
\item Let $g\in e_i \Lambda e_j\neq 0$. Then 
\[\bar{g}\neq 0\Leftrightarrow g \Lambda e_j=e_i \Lambda e_j= e_i \Lambda g.\]
\item Let $\epsilon\in e_i J e_j$ and $l\in \rad^r(e_j \Lambda e_j)$.
Then for any $g\in e_i \Lambda e_j\setminus e_i J e_j$, there exists $l'\in \rad^{r+1} (e_j\Lambda e_j)$
such that 
\[\epsilon l=g l'.\]
\end{enumerate}
\end{lemma}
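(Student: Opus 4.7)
The strategy is to isolate a single factorization identity that yields both parts at once. \textbf{Key claim:} under the hypothesis $g\Lambda e_j = e_i\Lambda e_j = e_i\Lambda g$, every $\epsilon\in e_i J e_j$ can be written as $\epsilon = g\delta$ for some $\delta\in \rad(e_j\Lambda e_j)$. To establish this, write $\epsilon = \sum_s a_s m_s b_s$ with $m_s\in e_{k_s}\rad\Lambda e_{k_s}$, and pass to $a_s\in e_i\Lambda e_{k_s}$, $b_s\in e_{k_s}\Lambda e_j$ by multiplying through by $e_i,e_j$. For each nonzero summand, Condition\;\ref{cd} applied to $(k_s,j)$ gives $u_s\in G^{k_s}_j$, so $e_{k_s}\Lambda e_j = u_s\cdot e_j\Lambda e_j$ and we may write $b_s=u_s\beta_s$ and $m_s u_s=u_s m_s'$ for some $\beta_s,m_s'\in e_j\Lambda e_j$; Lemma\;\ref{radtorad} then forces $m_s'\in \rad(e_j\Lambda e_j)$. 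The hypothesis provides $a_s u_s=g\gamma_s$ for some $\gamma_s\in e_j\Lambda e_j$, so $a_s m_s b_s=g(\gamma_s m_s'\beta_s)$ with $\gamma_s m_s'\beta_s\in \rad(e_j\Lambda e_j)$, and summing gives the claim.

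For part (1) direction $(\Leftarrow)$, apply the key claim with $\epsilon=g$ to get $g=g\delta$ with $\delta\in \rad(e_j\Lambda e_j)$; since $e_j\Lambda e_j$ is local with radical $e_j\rad\Lambda e_j$, the element $e_j-\delta$ is invertible, forcing $g=0$ and contradicting $e_i\Lambda e_j\neq 0$. For direction $(\Rightarrow)$, use Condition\;\ref{cd} at $(i,j)$ to pick $w\in G^i_j$ and write $g=w\lambda=\lambda'w$ with $\lambda\in e_j\Lambda e_j$, $\lambda'\in e_i\Lambda e_i$: if $\lambda\in \rad(e_j\Lambda e_j)$ then $\lambda\in J$ and $g=w\lambda\in J$, contradicting $\bar g\neq 0$, so $\lambda$ (and hence by Lemma\;\ref{radtorad} also $\lambda'$) is invertible, and the equalities $g\Lambda e_j=w\Lambda e_j=e_i\Lambda e_j$ and $e_i\Lambda g=e_i\Lambda w=e_i\Lambda e_j$ follow.

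For part (2), $g\notin e_i J e_j$ means $\bar g\neq 0$, so part (1) supplies $g\Lambda e_j=e_i\Lambda e_j=e_i\Lambda g$; applying the key claim to $\epsilon$ yields $\epsilon=g\delta$ with $\delta\in \rad(e_j\Lambda e_j)$, whence $\epsilon l=g(\delta l)$ and $\delta l\in \rad(e_j\Lambda e_j)\cdot \rad^r(e_j\Lambda e_j)\subset \rad^{r+1}(e_j\Lambda e_j)$, so $l':=\delta l$ works. The main obstacle I expect is resisting the temptation to compare radical degrees of the individual summands $a_s m_s b_s$ with that of $g$ directly, which can fail under Condition\;\ref{cd} alone because minimal path lengths need not add nicely through an intermediate vertex; the remedy is to use Lemma\;\ref{radtorad} to transport the radical content of each $m_s$ across $u_s$ into the local ring $e_j\Lambda e_j$, where it is captured cleanly by $m_s'\in\rad(e_j\Lambda e_j)$.
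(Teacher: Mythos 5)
Your proof is correct and rests on exactly the same mechanism as the paper's: decompose an element of $e_iJe_j$ as $\sum_s a_sm_sb_s$ through intermediate vertices, use Condition~\ref{cd} to choose $u_s\in G^{k_s}_j$, and use Lemma~\ref{radtorad} to transport each radical factor $m_s$ across $u_s$ into $\rad(e_j\Lambda e_j)$, together with the identical argument for the forward direction of (1) via $w\in G^i_j$. The only (harmless) organizational difference is that you package the computation as a factorization $\epsilon=g\delta$, $\delta\in\rad(e_j\Lambda e_j)$, valid for every $\epsilon\in e_iJe_j$ and reuse it for part (2), whereas the paper runs it only for $\epsilon=g$ and then gets part (2) more cheaply by writing $\epsilon=gu=vg$ and invoking the already-proved equivalence of (1) to force $u\in\rad(e_j\Lambda e_j)$.
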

\begin{proof}
We show (1). First we assume that $\bar{g}\neq 0$. Let $w\in G^i_j$. Then there are $l\in e_i \Lambda e_i$ and $l'\in e_j \Lambda e_j$
such that
\[g=lw=wl'.\]
Since $g\not\in J$, we have $l\not\in e_i \rad \Lambda e_i$ and $l'\not\in e_j \rad \Lambda e_j$.
This shows that $l$ (resp. $l'$) is invertible in $e_i \Lambda e_i$ (resp. $e_j \Lambda e_j$).
In particular, we have
\[g \Lambda e_j=e_i \Lambda e_j= e_i \Lambda g.\]
Next we assume $g \Lambda e_j=e_i \Lambda e_j= e_i \Lambda g.$
Suppose that $\bar{g}=0$. Then by definition of $J$, there are $m\in \Z_{\geq 1}$ and $(i_t,\lambda_t,\lambda'_t,l_t)$ $(t=1,\dots,m)$
with $i_t\in Q_0$, $\lambda_t\in e_i\Lambda e_{i_t}$, $\lambda'_t\in e_{i_t}\Lambda e_j$ and 
$l_t\in e_{i_t}\rad\Lambda e_{i_t}$ such that    
 \[g=\sum_t \lambda_{t} l_t \lambda'_t.\]
We may assume that $\lambda_tl_t \lambda'_t\neq 0$ and take $g'_t\in G^{i_t}_j$. Then we have 
$\lambda'_t=u'_t g'_t$ for some $u'_t\in e_{i_t}\Lambda e_{i_t}$. By using Lemma\;\ref{radtorad}, there is
$l'_t\in e_j\rad{\Lambda} e_j$ such that 
\[l_t\lambda'_t=l_tu'_tg'_t=g'_t l'_t.\]
Since $\lambda_tg'_t\in e_i \Lambda e_j$, there exists $u_t\in e_j\Lambda e_j$ such that
$\lambda_tg'_t=g u_t$. Thus we have
\[\lambda_tl_t \lambda'_t=\lambda_tg'_tl'_t=gu_tl'_t.\]
Therefore $g=gl'$ holds for some $l'\in e_j\rad \Lambda e_j$. In particular, we have $g=0$ which leads to a contradiction.

Next we prove (2). By (1), there are $u\in e_j \Lambda e_j $ and $v\in e_i \Lambda e_i$ such that
$\epsilon=gu=vg$. If $u\not\in e_j \rad \Lambda e_j$, then we have 
 $v\not\in e_i \rad \Lambda e_i$ by Lemma\;\ref{radtorad}. In particular, we obtain
 \[\epsilon \Lambda e_j=e_i \Lambda e_j=e_i \Lambda \epsilon.\]
Hence (1) implies that $\epsilon\not\in J$ which leads to a contradiction. 
Therefore $u\in e_j \rad \Lambda e_j$ and $l'=ul\in  \rad^{r+1}(e_j\Lambda e_j)$ satisfies
\[\epsilon l=gl'.\]
This finishes a proof.
\end{proof}
We have the following commutative diagram:
\[
\begin{xy}
(0,0)*[o]+{KQ}="A",(30,0)*[o]+{KQ/L}="B",
(0,-20)*[o]+{\Lambda}="D",(30,-20)*[o]+{(KQ/L)/\widetilde{I}}="E",
(0,-40)*[o]+{\overline{\Lambda}}="F",(30,-40)*[o]+{(KQ/L)/\widetilde{I'}}="G",
\ar @{->>} "A";"B"
\ar @{->>} "A";"D"
\ar @{->>} "B";"E"
\ar @{->>} "D";"F"
\ar @{->>} "D";"E"
\ar @{->>} "E";"G"
\ar @{=} "F";"G"
\end{xy}
\] where $L$ is an ideal of $KQ$ generated by all loops in $Q$, $\widetilde{I}=(I+L)/L$ and 
$\widetilde{I'}=(I+L+C)/L$ with $C= KQ( \sum_{i\in Q_0} e_i\rad^2 KQ e_i)KQ$.
Since $I$ is an admissible ideal of $KQ$ and $C\subset \rad^2 KQ$, there is $m\in \Z_{\geq 2}$ such that
\[\rad^m KQ/L\subset \widetilde{I'}\subset \rad^2 kQ/L.\]
We note that $KQ/L=KQ^{\circ}$. Then Lemma\;\ref{gpreserve} (1) gives us that $\overline{\Lambda}$ is in $\T'(\Lambda)$.

Let $e_i \overline{\Lambda} e_j\ni \bar{g}\neq 0$.
Since $e_i\overline{\Lambda}e_i=Ke_i$ holds for each $i\in Q_0$, we have  
\[e_i\overline{\Lambda} e_j=K\bar{g}.\]
In particular, $\dim_K e_i \Lambda e_j\leq 1$ for any $i,j\in Q_0$ and
each proper factor algebra of $\overline{\Lambda}$
is not in $\T'(\Lambda)$. Moreover, the following lemma holds.

\begin{lemma}
\label{minimumfactor} Assume $\#G_j^i(Q,I)\leq 1$ holds for each $(i,j)\in Q_0\times Q_0$.
Then we have 
\[\overline{\Lambda}\cong \overline{\Gamma}\]
 for any $\Gamma\in \T'(\Lambda)$.
\end{lemma}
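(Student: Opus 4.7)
The plan is to invoke the presentations $\overline{\Lambda}\cong KQ^{\circ}/\widetilde{I}_{\Lambda}$ and $\overline{\Gamma}\cong KQ^{\circ}/\widetilde{I}_{\Gamma}$ coming from the commutative diagram preceding the lemma (after identifying the underlying quivers), and then to show that the two kernel ideals coincide inside $KQ^{\circ}$. Writing $\Gamma=KQ'/I'$, the hypothesis $\Gamma\in\T'(\Lambda)$ supplies a quiver isomorphism $\sigma\colon Q^{\circ}\to(Q')^{\circ}$ satisfying (a) and (b) of Corollary\;\ref{maincor}, so I will identify $(Q')^{\circ}$ with $Q^{\circ}$ via $\sigma$, which in particular gives $G(\Gamma)=G(\Lambda)$ as subsets of paths in $Q^{\circ}$. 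Decomposing any $\xi\in KQ^{\circ}$ as $\sum_{i,j}e_i\xi e_j$, it then suffices to verify, for each ordered pair $(i,j)$ and each $\xi=e_i\xi e_j=\sum_{p\colon i\to j}c_p p$, that $\xi$ maps to $0$ in $\overline{\Lambda}$ if and only if it maps to $0$ in $\overline{\Gamma}$.

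Two cases arise according to whether $G^i_j(\Lambda)$ is empty. If it is, Condition\;\ref{cd} forces $e_i\Lambda e_j=0$, so $e_i\overline{\Lambda}e_j=0$; and since $G^i_j(\Gamma)=G^i_j(\Lambda)=\emptyset$, the same reasoning gives $e_i\overline{\Gamma}e_j=0$, making both images trivially zero. Otherwise the hypothesis $\#G^i_j\leq 1$ together with $G(\Lambda)=G(\Gamma)$ forces $G^i_j(\Lambda)=G^i_j(\Gamma)=\{g_{ij}\}$, and the remarks before the lemma combined with Lemma\;\ref{gpreserve}(1) show that $\bar{g}_{ij}$ spans each of the one-dimensional spaces $e_i\overline{\Lambda}e_j$ and $e_i\overline{\Gamma}e_j$. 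The key claim to verify is then that every path $p\colon i\to j$ in $Q^{\circ}$ with $p\neq g_{ij}$ satisfies $\bar p=0$ in both $\overline{\Lambda}$ and $\overline{\Gamma}$: if $p$ contains a cycle (necessarily of length $\geq 2$, since $Q^{\circ}$ has no loops) then $p$ lies in $C$ and hence in each kernel; while if $p$ is acyclic and $\bar p\neq 0$ in $\overline{\Lambda}$, Lemma\;\ref{gpreserve}(1) forces $p\Lambda e_j=e_i\Lambda e_j=e_i\Lambda p$, so $p\in G^i_j(\Lambda)=\{g_{ij}\}$, contradicting $p\neq g_{ij}$; the identical argument works on the $\Gamma$-side. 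Once this is in hand, the images of $\xi$ in $\overline{\Lambda}$ and $\overline{\Gamma}$ both equal $c_{g_{ij}}\bar{g}_{ij}$, and coincidence of the kernels (and hence $\overline{\Lambda}\cong\overline{\Gamma}$) follows.

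The pivotal step—and the one at which the hypothesis $\#G^i_j\leq 1$ is indispensable—is the acyclic subcase of the key claim: without uniqueness of the generator of $G^i_j$, a second acyclic path in $G^i_j$ could contribute a possibly different nonzero scalar multiple of $\bar{g}_{ij}$ on the two sides, and the structure constants of $\overline{\Lambda}$ and $\overline{\Gamma}$ might then legitimately differ even though $G(\Lambda)=G(\Gamma)$. The whole argument therefore comes down to converting the combinatorial hypothesis $\#G^i_j\leq 1$ into the statement that within each of $\overline{\Lambda}$ and $\overline{\Gamma}$ exactly one acyclic path from $i$ to $j$ survives with a nonzero image, after which equality of the defining ideals is immediate.
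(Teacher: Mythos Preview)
Your proof is correct and follows essentially the same idea as the paper's: only paths in $G$ survive in $\overline{\Lambda}$ (respectively $\overline{\Gamma}$), and since $G(\Lambda)=G(\Gamma)$ with $\#G^i_j\leq 1$, both quotients are determined by the same combinatorial data. The paper packages this slightly differently by introducing the explicit intermediate algebra $KQ^{\circ}/J'$, where $J'$ is generated by all paths not in $G$, and then showing via a dimension count ($\dim_K KQ^{\circ}/J'\leq \#B=\dim_K\overline{\Lambda}$) that the natural surjection $KQ^{\circ}/J'\to\overline{\Lambda}$ is an isomorphism, and likewise for $\overline{\Gamma}$; your argument instead compares the two kernel ideals inside $KQ^{\circ}$ directly, but the content is the same.
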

\begin{proof}
Let $\Lambda=KQ/I$, $\Gamma=KQ'/I'$ and $\sigma:Q^{\circ}\stackrel{\sim}{\to}(Q')^{\circ}$
satisfying (a), (b) of Corollary\;\ref{maincor}. We may assume that $Q^{\circ}=(Q')^{\circ}$, $\sigma={\rm id}$ and $G:=G(\Lambda)=G(\Gamma)$. 
Let \[B:=\{(i,j)\in Q_0\times Q_0\mid e_i \Lambda e_j\neq 0\}=\{(i,j)\in Q'_0\times Q'_0\mid e_i \Gamma e_j\neq 0\}.\]

We denote by $J'$ the ideal of $KQ^{\circ}$ generated by all paths of $Q^{\circ}$ not in $G$. Then we have an algebra homomorphism
\[\varsigma:KQ^{\circ}/J'\to \overline{\Lambda}\]
given by $\{\text{paths in $Q^{\circ}$}\}\ni w\mapsto \overline{w}$.
Then it is easy to check that $\varsigma$ is surjective and $\dim_K KQ^{\circ}/J'\leq \#B=\dim_K \overline{\Lambda}$.
In particular,  $\varsigma$ is an isomorphism. The same argument gives us  $KQ^{\circ}/J'\cong \overline{\Gamma}$.
\end{proof} 
Let $f:U' \to U$ be a morphism where $U,U'$ are projective 
modules of $\Lambda$ with $\add U\cap \add U'=\{0\}$. We also let 
$\mathbf{u}:=(U_1,U_2,\dots,U_{\ell})$ and $\mathbf{u'}:=(U'_{-1},U'_{-2},\dots,U'_{-\ell})$ such that
\[U=U_1\oplus U_2\oplus\cdots \oplus U_{\ell} ,\ U'=U'_{-1}\oplus U'_{-2}\oplus U'_{-\ell'}.\]
 Then we define a quiver $Q^{f}=Q^{(f,\mathbf{u},\mathbf{u}')}$ as follows:
\begin{itemize}
\item We set 
 $Q^f_0=\{-\ell',\dots,-1,1,\dots,\ell\}.$
\item Draw an arrow from $-t'$ to $t$ if the composition
$U'_{-t'}\inj U'\stackrel{f}{\to} U\surj U_t$ is not $0$.
\end{itemize}

 \begin{assumption}
\label{assumption}
Let $\Lambda$ be a $\tau$-tilting finite algebra satisfying Condition\;\ref{cd}. 
Assume that for any $T\in \tpsilt \Lambda$ there is
 $f:U'\to U$ with $\add U'\cap \add U=\{0\}$ such that
$T\simeq [U'\stackrel{f}{\to} U]$ and $Q^f$ is a tree.
\end{assumption}
\begin{theorem}
\label{reductiontominimal} Let $\Lambda$ be a basic algebra satisfying Condition\;\ref{cd}.
\begin{enumerate}[{\rm (1)}]
\item If $\Lambda$ satisfies  Assumption\;\ref{assumption}, then $\overline{\Lambda}$
also satisfies  Assumption\;\ref{assumption}.
\item Assume that $\overline{\Lambda}$ satisfies Assumption\;\ref{assumption}.
Then the tensor functor $-\otimes_{\Lambda}\overline{\Lambda}$ induces a poset isomorphism
\[\sttilt \Lambda\stackrel{\sim}{\to} \sttilt \overline{\Lambda}.\]
\end{enumerate}
\end{theorem}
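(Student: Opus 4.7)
The plan is to prove (2) first and then deduce (1) from it. For (2) I work on the silting side via Theorem\;\ref{bijection} and reduce to showing that the tensor functor $-\otimes_{\Lambda}\overline{\Lambda}\colon\Kb(\proj\Lambda)\to\Kb(\proj\overline{\Lambda})$ induces a poset isomorphism $\tsilt\Lambda\simeq\tsilt\overline{\Lambda}$. The central technical fact is that the projection $\pi\colon\Lambda\twoheadrightarrow\overline{\Lambda}$ gives a surjection $\Hom_{\Lambda}(U',U)\twoheadrightarrow\Hom_{\overline{\Lambda}}(\bar U',\bar U)$ for projective modules $U,U'$, which descends, after modding out by the homotopy-trivial pieces of the form $fg+hf$, to a natural surjection
\[\Hom_{\Kb(\Lambda)}(T,T'[1])\twoheadrightarrow\Hom_{\Kb(\overline{\Lambda})}(\bar T,\bar T'[1])\]
for two-term complexes $T,T'$. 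This immediately gives that tensoring preserves presilting-ness and the forward implication $T\geq T'\Rightarrow \bar T\geq \bar T'$. Combined with the observation that the $g$-vector is preserved under the canonical identification $K_0(\proj\Lambda)=K_0(\proj\overline{\Lambda})=\Z^{Q_0}$, Theorem\;\ref{gvector} then yields injectivity of $-\otimes_{\Lambda}\overline{\Lambda}$ on $\tpsilt$.

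For surjectivity, given $\bar T\in\tpsilt\overline{\Lambda}$ I choose by Assumption\;\ref{assumption} a presentation $\bar T\cong[\bar U'\xrightarrow{\bar f}\bar U]$ with $\add\bar U\cap\add\bar U'=\{0\}$ and $Q^{\bar f}$ a tree. By Lemma\;\ref{gpreserve}\,(1) each nonzero component of $\bar f$ lifts to an element $g\in e_{i}\Lambda e_{j}$ satisfying $g\Lambda e_j=e_i\Lambda e_j=e_i\Lambda g$; taking the zero lift elsewhere produces $T=[U'\xrightarrow{f}U]$ with $T\otimes_{\Lambda}\overline{\Lambda}\cong\bar T$. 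It remains to check $T\in\tpsilt\Lambda$. Given $\lambda\colon U'\to U$, I use that $\bar T$ is presilting to write $\bar\lambda=\bar f\bar g+\bar h\bar f$ modulo homotopy, lift $\bar g,\bar h$ arbitrarily to $g,h$, and am reduced to absorbing the residue $\lambda-fg-hf$, whose components all lie in $J$, into the form $fg''+h''f$. This is the crux of the proof and will be accomplished componentwise by iterating Lemma\;\ref{gpreserve}\,(2), which trades a piece $\epsilon\cdot l$ with $\epsilon\in e_iJe_j$ and $l\in\rad^{r}$ for $g\cdot l'$ with $l'\in\rad^{r+1}$; the recursion terminates because $\rad\Lambda$ is nilpotent. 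Passing from $\tpsilt$ to $\tsilt$ is automatic since silting is detected by the $g$-vectors spanning $\Z^{Q_0}$, and the reverse order implication $\bar T\geq\bar T'\Rightarrow T\geq T'$ then follows from the bijection on $\tsilt$ together with Theorem\;\ref{basicprop}, since both posets are connected $|\Lambda|$-regular lattices and the bijection must respect covering relations (i.e.\ mutations).

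Part (1) follows quickly from (2). Given $\bar T\in\tpsilt\overline{\Lambda}$, apply (2) to obtain the unique $T\in\tpsilt\Lambda$ with $T\otimes_{\Lambda}\overline{\Lambda}\cong\bar T$; by Assumption\;\ref{assumption} on $\Lambda$, choose a presentation $T\cong[U'\xrightarrow{f}U]$ with $Q^f$ a tree. Then $\bar T\cong[\bar U'\xrightarrow{\bar f}\bar U]$ is a valid presentation of $\bar T$, and by Lemma\;\ref{gpreserve}\,(1) the quiver $Q^{\bar f}$ is obtained from $Q^f$ by deleting exactly those arrows whose component lies in $J$. Being a subquiver of the acyclic $Q^f$, it is again a tree in the forest sense used in Assumption\;\ref{assumption}; note one must be careful that enough surviving decomposition is still available, but this is guaranteed by collapsing summands in the same connected component of $Q^{\bar f}$ into their images.

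The main obstacle is the absorption step in the proof of (2). The tree shape of $Q^{\bar f}$ is essential: it guarantees that the recursive reduction of residues of the form $\epsilon\cdot l$ to $g\cdot l'$ with $l'$ of strictly higher radical degree does not chase a cycle of components, so the induction on radical depth closes via nilpotency of $\rad\Lambda$. Without the tree hypothesis a residual piece could in principle be pushed around a loop of components indefinitely, and this is precisely why Assumption\;\ref{assumption} is built into the statement.
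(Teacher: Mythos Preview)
Your deduction of (1) from (2) is circular. The hypothesis of (2) is that $\overline{\Lambda}$ satisfies Assumption\;\ref{assumption}; this is precisely the conclusion of (1). When you write ``apply (2) to obtain the unique $T\in\tpsilt\Lambda$ with $T\otimes_{\Lambda}\overline{\Lambda}\cong\bar T$'' you are invoking the surjectivity part of (2), whose proof in your own outline uses Assumption\;\ref{assumption} on $\overline{\Lambda}$ to obtain a tree presentation of $\bar T$ to lift. So at the moment you need it, no such tree presentation is available.

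The paper proceeds in the opposite order. It first proves (1) directly: assuming $\Lambda$ satisfies Assumption\;\ref{assumption}, Lemma\;\ref{teqlemma} (which does the absorption you describe, but formulated as a self-contained technical lemma) is used to show that for indecomposable $T\in\tpsilt\Lambda$ the reduction $\overline{T}$ stays indecomposable and presilting, and that $\Hom$-vanishing is equivalent on both sides (Corollary\;\ref{indecomposability}). Hence the image $\overline{\tsilt\Lambda}$ is a finite strongly full subposet of $\tsilt\overline{\Lambda}$, so by Theorem\;\ref{basicprop}\,(4) it equals all of $\tsilt\overline{\Lambda}$. This gives the bijection and, in particular, forces every presilting object of $\overline{\Lambda}$ to be of the form $\overline{T}$; the tree presentation for $\overline{T}$ is then obtained from that of $T$ (after first replacing $\zeta$ by $\widetilde\zeta$ via Lemma\;\ref{teqlemma}\,(2) so that $\Omega=\emptyset$ and no arrows are lost under reduction). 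Only then does the paper prove (2), by lifting along the tree presentations of $\overline{\Lambda}$-presilting objects (Lemma\;\ref{lemmaforreduction}) and again appealing to Theorem\;\ref{basicprop}\,(4). Your absorption sketch and your use of $g$-vectors for injectivity are in the right spirit and match the paper's Lemma\;\ref{teqlemma}; the missing ingredient is the push-forward argument that breaks the circularity.
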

By combining Lemma\;\ref{minimumfactor} and Theorem\;\ref{reductiontominimal}, we have the following corollary. 
\begin{corollary}
\label{application}
Let $\Lambda\cong KQ/I$ be a basic algebra satisfying Condition\;\ref{cd} and $\#G_j^i(Q,I)\leq 1$ for any $i,j\in Q_0$.
If either $\Lambda$ or $\overline{\Lambda}$ satisfies Assumption\;\ref{assumption}, then 
we have $\T(\Lambda)=\T'(\Lambda)$.  
\end{corollary}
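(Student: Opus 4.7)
The plan is to funnel both sides of the equality through the ``minimal'' factor $\overline{\Lambda}$, using Theorem~\ref{reductiontominimal} to identify $\sttilt\Lambda$ with $\sttilt\overline{\Lambda}$ and Lemma~\ref{minimumfactor} to identify $\overline{\Gamma}$ with $\overline{\Lambda}$ for any $\Gamma \in \T'(\Lambda)$. First, by Theorem~\ref{reductiontominimal}(1), I may assume at the outset that $\overline{\Lambda}$ itself satisfies Assumption~\ref{assumption}; in particular $\overline{\Lambda}$ is $\tau$-tilting finite and satisfies Condition~\ref{cd}. Applying Theorem~\ref{reductiontominimal}(2) to $\Lambda$ then yields a poset isomorphism $\sttilt\Lambda \simeq \sttilt\overline{\Lambda}$, so $\Lambda$ is itself $\tau$-tilting finite and $\sttilt\Lambda$ is a finite lattice.

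For the inclusion $\T(\Lambda) \subset \T'(\Lambda)$, I would first verify $\Lambda \in \Theta$. Condition~\ref{cd} is given by hypothesis. Condition~\ref{cd2} holds (after reducing to connected components) via the trivial partition $\mu = \{Q\}$: $\mathbf{Q}^{\mu}$ is a single vertex, trivially a tree quiver, and the lattice requirement is vacuous since there are no pairs $a\neq b$. Hence Corollary~\ref{maincor} applies to $\Lambda$ and gives $\T(\Lambda) \subset \T'(\Lambda)$ directly.

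For the reverse inclusion $\T'(\Lambda) \subset \T(\Lambda)$, let $\Gamma \in \T'(\Lambda)$ and let $\sigma : Q^{\circ} \to (Q')^{\circ}$ be the quiver isomorphism witnessing $\Gamma \sim \Lambda$. By the definition of $\sim$, the map $\sigma$ carries $\supp(e_i \Lambda)$ to $\supp(e_{\sigma(i)}\Gamma)$ and $G(\Lambda)$ bijectively onto $G(\Gamma)$; in particular $e_i \Lambda e_j \neq 0 \iff e_{\sigma(i)} \Gamma e_{\sigma(j)} \neq 0$ and $G^{\sigma(i)}_{\sigma(j)}(\Gamma) \neq \emptyset$ whenever the latter is nonzero, so $\Gamma$ inherits Condition~\ref{cd} from $\Lambda$, together with $\#G^i_j(\Gamma) \leq 1$ for all pairs. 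Lemma~\ref{minimumfactor} then gives $\overline{\Gamma} \cong \overline{\Lambda}$, so $\overline{\Gamma}$ also satisfies Assumption~\ref{assumption}. Applying Theorem~\ref{reductiontominimal}(2) to $\Gamma$ therefore yields $\sttilt\Gamma \simeq \sttilt\overline{\Gamma} \cong \sttilt\overline{\Lambda} \simeq \sttilt\Lambda$, showing $\Gamma \in \T(\Lambda)$.

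The bulk of the work has been absorbed into Lemma~\ref{minimumfactor} and Theorem~\ref{reductiontominimal}, so the corollary itself is essentially bookkeeping. The only delicate point in this plan is checking that the prerequisites of these two results transfer correctly across the $\sim$-relation, namely that $\Gamma$ inherits Condition~\ref{cd} and the uniqueness bound $\#G^i_j \leq 1$; everything else, including the lattice property needed invisibly inside Corollary~\ref{maincor}, follows once $\overline{\Lambda}$ is known to satisfy Assumption~\ref{assumption}.
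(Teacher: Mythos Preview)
Your proof is correct and follows essentially the same route as the paper: reduce via Theorem~\ref{reductiontominimal}(1) to the case where $\overline{\Lambda}$ satisfies Assumption~\ref{assumption}, use Theorem~\ref{reductiontominimal}(2) to get $\sttilt\Lambda\simeq\sttilt\overline{\Lambda}$ and hence $\Lambda\in\Theta$, apply Corollary~\ref{maincor} for one inclusion, and for the other use Lemma~\ref{minimumfactor} to identify $\overline{\Gamma}\cong\overline{\Lambda}$ and then Theorem~\ref{reductiontominimal}(2) again. Your extra explanation of why $\Lambda\in\Theta$ (via the trivial partition) and your verification that $\Gamma$ inherits $\#G^i_j\leq 1$ are harmless additions; note that Lemma~\ref{minimumfactor} only needs the bound on $\Lambda$, so the latter check is not actually required.
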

\begin{proof}
By Theorem\;\ref{reductiontominimal} (1), we may assume that $\overline{\Lambda}$ satisfies Assumption\;\ref{assumption}.
Then it follows from Theorem\;\ref{reductiontominimal} (2) that
\[\sttilt \Lambda\simeq \sttilt \overline{\Lambda}.\]
In particular, $\Lambda$ is a $\tau$-tilting finite algebra and in $\Theta$.
Thus we obtain $\T(\Lambda)\subset\T'(\Lambda)$ from Corollary\;\ref{maincor}.

Conversely, we let $\Gamma\in \T'(\Lambda)$. Then $\Gamma$ satisfies Condition\;\ref{cd}.
It follows from  Lemma\;\ref{minimumfactor} that $\overline{\Gamma}$ satisfies Assumption\;\ref{assumption}.  Then Theorem\;\ref{reductiontominimal} (2) implies 
\[\sttilt \Gamma\simeq \sttilt \overline{\Gamma}\simeq \sttilt \overline{\Lambda}\simeq \sttilt \Lambda.\]
This shows $\T'(\Lambda)\subset \T(\Lambda)$. 
\end{proof}
\subsection{A proof of Theorem\;\ref{reductiontominimal}}
In this subsection, we give a proof of Theorem\;\ref{reductiontominimal}. Let $\Lambda=KQ/I\in \Theta$. 
We regard an morphism from $e_i \Lambda$ to $e_j\Lambda$ as an element of $\Lambda$ by natural
isomorphism $\Hom_{\Lambda}(e_i\Lambda,e_j\Lambda )\simeq e_j \Lambda e_i$.
For two projective modules $U$ and $V$, we define subspace $\widetilde{J}(U,V)$ of $\Hom_{\Lambda}(U,V)$ as follows:
$\varphi\in \widetilde{J}(U,V)$ if and only if for any split monomorphism $\iota:e_i \Lambda\inj U$ and split epimorphism $\pi:V\surj e_j \Lambda$,
the composition map $(e_i \Lambda\stackrel{\iota}{\inj} U\stackrel{\varphi}{\to} V\stackrel{\pi}{\surj} e_j \Lambda)\in e_j\Lambda e_i $ is in $J$.
If $U$ and $V$ are indecomposable with  $f:e_i\Lambda\stackrel{\sim}{\to} U $ and $g:V\stackrel{\sim}{\to} e_j\Lambda $,
then $\varphi\in \widetilde{J}(U,V)$ if and only if
\[ (e_i\Lambda\stackrel{f}{\simeq} U \stackrel{\varphi}{\to} V \stackrel{g}{\simeq} e_j \Lambda)\in J.\]
In this case, we simply denoted by $\varphi\in J$. 
Then for indecomposable decompositions
\[U=U_1\oplus\cdots \oplus U_{\ell}\text{ and }V=V_1\oplus\cdots \oplus V_m,\]
it is easy to verify that $\varphi\in \widetilde{J}(U,V)$ if and only if
 \[ ( U_p \inj U\stackrel{\varphi}{\to} V\surj V_q)\in J\text{ for any $p,q$}.\]
 
 %
 \begin{lemma}
 \label{kertensor}
 Let $U,V\in \proj \Lambda$ and $\varphi\in \Hom_{\Lambda}(U,V)$. Then $\varphi\in \widetilde{J}(U,V)$
 if and only if $\varphi\otimes_{\Lambda} \overline{\Lambda}=0$. 
 \end{lemma}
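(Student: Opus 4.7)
The plan is to reduce to the case of indecomposable projectives via the matrix decomposition of $\varphi$, and then identify both conditions with the membership $\lambda \in J$ for the associated element $\lambda \in e_j\Lambda e_i$.

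First I would fix indecomposable decompositions $U = U_1 \oplus \cdots \oplus U_\ell$ and $V = V_1 \oplus \cdots \oplus V_m$, with associated split monomorphisms $\iota_p : U_p \hookrightarrow U$ and split epimorphisms $\pi_q : V \twoheadrightarrow V_q$. Write $\varphi_{pq} = \pi_q \varphi \iota_p : U_p \to V_q$. Since $-\otimes_\Lambda \overline{\Lambda}$ is additive, it preserves these decompositions, so $\varphi \otimes_\Lambda \overline{\Lambda} = 0$ if and only if $\varphi_{pq} \otimes_\Lambda \overline{\Lambda} = 0$ for every $p,q$. By the remark immediately preceding the lemma, $\varphi \in \widetilde{J}(U,V)$ holds if and only if every such composition $\varphi_{pq}$, viewed through fixed isomorphisms $U_p \cong e_{i_p}\Lambda$ and $V_q \cong e_{j_q}\Lambda$, lies in $J$. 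Hence it suffices to prove the lemma when $U = e_i\Lambda$ and $V = e_j\Lambda$ are themselves indecomposable.

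Next, I would identify $\Hom_\Lambda(e_i\Lambda, e_j\Lambda) \cong e_j\Lambda e_i$ via $\varphi \mapsto \varphi(e_i) =: \lambda$, so $\varphi$ is left multiplication by $\lambda$. Using the canonical isomorphism $e_i\Lambda \otimes_\Lambda \overline{\Lambda} \cong e_i\overline{\Lambda}$ (and similarly for $j$), the tensored map $\varphi \otimes_\Lambda \overline{\Lambda} : e_i\overline{\Lambda} \to e_j\overline{\Lambda}$ is left multiplication by $\bar{\lambda} \in e_j\overline{\Lambda}e_i$. Therefore $\varphi \otimes_\Lambda \overline{\Lambda} = 0$ if and only if $\bar{\lambda} \cdot e_i = 0$ in $e_j\overline{\Lambda}e_i$, which is equivalent to $\bar{\lambda} = 0$, i.e.\ $\lambda \in J$.

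Finally, the indecomposable case of the definition of $\widetilde{J}$ directly says that $\varphi \in \widetilde{J}(e_i\Lambda, e_j\Lambda)$ if and only if $\lambda \in J$. Combining this with the previous paragraph yields the equivalence in the indecomposable case, and combining with the first paragraph yields the general statement. There is no real obstacle here: the content of the lemma is essentially the compatibility of the tensor functor $-\otimes_\Lambda \overline{\Lambda}$ with the quotient $\Lambda \twoheadrightarrow \overline{\Lambda} = \Lambda/J$, together with the bookkeeping that $\widetilde{J}(U,V)$ was defined precisely as the pullback of $J$ along the matrix description of $\Hom_\Lambda(U,V)$; the only point to be careful about is keeping track of the identifications when $U$ or $V$ has repeated indecomposable summands, but this is handled uniformly by the matrix reduction in the first step.
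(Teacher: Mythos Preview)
Your proof is correct and follows essentially the same approach as the paper: both reduce to the indecomposable case via the matrix decomposition of $\varphi$, then identify the condition with membership in $J$ under the isomorphism $\Hom_\Lambda(e_i\Lambda,e_j\Lambda)\cong e_j\Lambda e_i$. Your write-up is slightly more explicit about why the additive functor $-\otimes_\Lambda\overline{\Lambda}$ respects the matrix decomposition, but otherwise the arguments coincide.
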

 \begin{proof}
 It is sufficient to show the assertion for the case that $U$ and $V$ are indecomposable.
 We take isomorphisms $f:e_i \Lambda\stackrel{\sim}{\to} U$, $g: V \stackrel{\sim}{\to} e_j \Lambda$
 and denote by $\phi$ the composition map 
 $(e_i \Lambda \stackrel{f}{\to} U\stackrel{\varphi}{\to} V\stackrel{g}{\to} e_j \Lambda)\in e_j \Lambda e_i$.

  First we assume $\varphi\in \widetilde{J}(U,V)$. Then we have that $\phi$
 is in $J$. This implies $\phi\otimes_{\Lambda}\overline{\Lambda}=0$ which leads to $\varphi\otimes_{\Lambda}\overline{\Lambda}=0$.
 
 Next we assume $\varphi\otimes_{\Lambda} \overline{\Lambda}=0$.
 It is clear that $\phi\otimes_{\Lambda}\overline{\Lambda}$ is also $0$.
 This shows $\phi\in J$.  
   \end{proof}

The following lemma is a key to proving Theorem\;\ref{reductiontominimal}.   
\begin{lemma}
\label{teqlemma}
Let $U=U_1 \oplus\cdots\oplus U_{\ell}$, $U'=U'_{-1} \oplus\cdots\oplus U'_{-\ell'}$,
$V=V_1 \oplus\cdots\oplus V_{m}$ and $V'=V'_{-1} \oplus\cdots\oplus V'_{-m'}$
 with $\add U\cap \add U'=\{0\}=\add V\cap \add V'$. Suppose that
 $T=[U'\stackrel{\zeta}{\to} U]$ and $S=[V'\stackrel{\eta}{\to} V]$  are indecomposable two-term objects in $\Kb(\proj \Lambda)$
such that $Q^{\zeta}$ and $Q^{\eta}$ are tree quivers. 
We denote by $\zeta_{q'}^q$ the composition map $(U'_{-q'}\inj U'\stackrel{\zeta}{\to} U \surj U_q)$ and
$\Omega=\Omega(\zeta):=\{(q,q')\mid \zeta_{q'}^q\in J\setminus\{0\}\}$.
\begin{enumerate}[{\rm (1)}]
\item Assume that $\Omega=\emptyset$. For each $(p,q')$ such that $\Hom_{\Lambda}(U'_{-q'},V_{p})\neq 0$, 
we take $g_{q'}^p\in \Hom_{\Lambda}(U'_{-q'},V_{p})\setminus J$ and
denote by $\varphi^{(p,q')}$ the composition map
\[U'\surj U'_{-q'}\stackrel{g_{q'}^p}{\to} V_p \inj V.\]
   Then the following conditions are equivalent.
   \begin{enumerate}[{\rm (i)}]
   \item $\Hom_{\Kb(\proj \Lambda)}(T,S[1])=0$.
   \item For any $p,q'$, there are $h\in \Hom_{\Lambda}(U,V)$ and $h'\in \Hom_{\Lambda}(U',V')$ such that
  \[\varphi^{(p,q')}-(h\circ \zeta+\eta\circ h')\in \widetilde{J}(U',V).\]
 \end{enumerate} 
\item If $T$ is presilting, then we have an isomorphism
\[T\simeq [U'\stackrel{\widetilde{\zeta }}{\to} U],\]
where $\widetilde{\zeta}=\zeta-\sum_{(q,q')\in \Omega}(U'\surj U'_{-q'}\stackrel{\zeta_{q'}^q}{\to} U_q \inj U))$.
\item Assume that $\Hom_{\Kb(\proj \Lambda)}(T,S[1])=0$ and $\Omega=\emptyset$. 
If $\varphi\in \widetilde{J}(U',V)$, then we can choose $h\in \widetilde{J}(U,V)$ and $h'\in \widetilde{J}(U',V')$
such that
\[\varphi=h\circ\zeta+\eta\circ h'.\]
\end{enumerate}
\end{lemma}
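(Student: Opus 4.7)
The central reformulation uses Lemma~\ref{kertensor} to identify $\widetilde{J}(A,B)$ with the kernel of $-\otimes_\Lambda \overline{\Lambda}$ on $\Hom$-spaces. Under this, condition (ii) of part~(1) says precisely that $\Hom_{\Kb(\proj\overline{\Lambda})}(\overline{T},\overline{S}[1])=0$, so part~(1) asserts that this vanishing descends from $\Lambda$ to $\overline{\Lambda}$. The fine control between $\Lambda$ and $\overline{\Lambda}$ is supplied by Lemma~\ref{gpreserve}, especially part~(2), which describes how arbitrary elements of $J$ factor through the standard generators in $G$.

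For part~(1), the direction (i)$\Rightarrow$(ii) is immediate from functoriality of $-\otimes_\Lambda\overline{\Lambda}$. For (ii)$\Rightarrow$(i), take any $\varphi\in\Hom_\Lambda(U',V)$. Since $e_i\overline{\Lambda} e_j$ is one-dimensional, each component of $\varphi$ is a scalar multiple of $g_{q'}^p$ plus an element of $J$, giving $\varphi=\sum c_{p,q'}\varphi^{(p,q')}+\psi_0$ with $\psi_0\in\widetilde{J}(U',V)$. Hypothesis~(ii) realizes the first summand as $h\circ\zeta+\eta\circ h'$ modulo $\widetilde{J}$; the residual $\psi_1\in\widetilde{J}(U',V)$ is then, by Lemma~\ref{gpreserve}(2), expressible as $\sum l_{p,q'}\varphi^{(p,q')}$ with $l_{p,q'}\in\rad\End(V_p)$. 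Re-applying (ii) and post-composing with $l_{p,q'}$ produces a new residual one step deeper in the radical filtration, and since $\rad\Lambda$ is nilpotent the iteration terminates.

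For part~(2), both $T$ and $\widetilde{T}:=[U'\stackrel{\widetilde{\zeta}}{\to}U]$ have the same $g$-vector, so by Theorem~\ref{gvector} it suffices to verify that $\widetilde{T}$ is presilting---equivalently, to build an explicit chain isomorphism $T\simeq\widetilde{T}$ in $\Kb(\proj\Lambda)$. I would induct on $|\Omega|$: fixing a single bad index $(q,q')\in\Omega$, apply the presilting property of $T$ to $\iota_q\zeta_{q'}^q\pi_{-q'}$ to obtain $\alpha,\beta$ with $\alpha\zeta+\zeta\beta=\iota_q\zeta_{q'}^q\pi_{-q'}$. Using Lemma~\ref{gpreserve}(2) to factor $\zeta_{q'}^q=g_{q'}^q\cdot l$ with $l\in\rad$, promote $(\alpha,\beta)$ to a unipotent chain automorphism of $T$ that conjugates $\zeta$ into $\zeta-\iota_q\zeta_{q'}^q\pi_{-q'}$; the tree hypothesis on $Q^\zeta$ ensures that these local modifications compose consistently as $(q,q')$ varies over $\Omega$.

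For part~(3), the statement is equivalent to surjectivity of the map on relations
\[
\{(a,b)\in\Hom_\Lambda(U,V)\oplus\Hom_\Lambda(U',V')\mid a\zeta+\eta b=0\}\twoheadrightarrow \{(\bar a,\bar b)\mid \bar a\,\overline{\zeta}+\overline{\eta}\,\bar b=0\}
\]
induced by $-\otimes_\Lambda\overline{\Lambda}$: given $\varphi\in\widetilde{J}$, hypothesis~(i) provides $h_0,h'_0$ with $\varphi=h_0\zeta+\eta h'_0$, so $(\overline{h_0},\overline{h'_0})$ lies in the right-hand kernel, and lifting it to a genuine relation $(h_1,h'_1)$ allows us to replace $h_0,h'_0$ by $h_0-h_1,h'_0-h'_1\in\widetilde{J}$. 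The lift relies on $\Omega(\zeta)=\emptyset$---every ``edge'' of $\overline{\zeta}$ is a nonzero scalar multiple of its standard generator---together with the tree structures of $Q^\zeta$ and $Q^\eta$, which allow us to propagate the lift vertex by vertex from the leaves inward using Lemma~\ref{gpreserve}. The principal obstacle is precisely this propagation: one must check that at each inductive step along the trees, the relation to be lifted decomposes into pieces whose lifts reassemble into an honest $\Lambda$-relation, with no obstruction arising from the interaction of $Q^\zeta$ and $Q^\eta$ inside the intermediate space $\Hom(U',V)$.
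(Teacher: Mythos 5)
Your high-level framing is correct: the equivalence in (1) is a comparison of $\Hom$-vanishing over $\Lambda$ and $\overline{\Lambda}$ mediated by Lemma~\ref{kertensor} and Lemma~\ref{gpreserve}, (i)$\Rightarrow$(ii) is immediate, and (2) reduces to showing $[U'\stackrel{\widetilde{\zeta}}{\to}U]$ is presilting and quoting Theorem~\ref{gvector}. But the heart of the lemma is (ii)$\Rightarrow$(i) in part (1), and there your iteration has a genuine gap. Write the residual as $\psi_1=\sum_{p,q'}L_{p,q'}\circ\varphi^{(p,q')}$, where $L_{p,q'}$ is the endomorphism of $V$ induced by some $l_{p,q'}\in\rad\End_{\Lambda}(V_p)$, and substitute $\varphi^{(p,q')}=h\zeta+\eta h'+\psi$ with $\psi\in\widetilde{J}(U',V)$. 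The term $L_{p,q'}h\zeta$ is of the allowed form and $L_{p,q'}\psi$ is one radical step deeper, but the term $L_{p,q'}\eta h'$ is neither: it is not of the form $\eta h''$ unless you can commute $L_{p,q'}$ past $\eta$, and its components need not lie any deeper in the radical filtration than those of $\psi_1$ (for instance when a component of $\eta h'$ is a generator $g$ times a unit). So the residual does not strictly decrease and the iteration need not terminate; the mirrored difficulty ($h\zeta l'$ not of the form $H\zeta$) appears if you instead factor $\psi_1$ on the source side and pre-compose. This commutation problem is exactly what the tree hypothesis on $Q^{\zeta}$ is for, and it is the actual content of the paper's proof: starting from the relevant source vertex one propagates along the tree $Q^{\zeta}$ a compatible family of radical elements $l_{t_q},l'_{t'_{q'}}$ with $l_{t_q}(\pi_{t_q}\zeta\iota'_{t'_{q'}})=(\pi_{t_q}\zeta\iota'_{t'_{q'}})l'_{t'_{q'}}$ (using Lemma~\ref{gpreserve}(1) and $\Omega=\emptyset$), which converts $h_0\zeta$ pre-composed with a radical endomorphism of one summand of $U'$ into $(\sum_q h_0\iota_{t_q}l_{t_q}\pi_{t_q})\zeta$ at the cost of corrections supported at the other source vertices; these corrections, together with the $\widetilde{J}$-term, are absorbed by a downward induction on radical depth whose base case is killed by Lemma~\ref{gpreserve}(2). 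Your sketch invokes the tree structure only in (2) and (3) and supplies no such mechanism for (1).

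On the remaining parts: for (2) the paper does not build a chain isomorphism; it verifies that $\varphi^{(t',t)}-(h\widetilde{\zeta}+\widetilde{\zeta}h')=h\epsilon+\epsilon h'\in\widetilde{J}(U',U)$ and feeds this into part (1) (legitimate because $\Omega(\widetilde{\zeta})=\emptyset$) to conclude $[U'\stackrel{\widetilde{\zeta}}{\to}U]$ is presilting, then applies Theorem~\ref{gvector}. Your proposed unipotent conjugation of $\zeta$ into $\widetilde{\zeta}$ is both unnecessary and unsubstantiated. For (3) the paper obtains the refinement for free from the construction in (1): when the input morphism lies in $\widetilde{J}(U',V)$, i.e.\ its radical depth is positive, the $h$ and $h'$ produced by the induction automatically land in $\widetilde{J}(U,V)$ and $\widetilde{J}(U',V')$. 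Your reformulation of (3) as lifting a relation from $\overline{\Lambda}$ is reasonable, but, as you acknowledge, the propagation step you defer is precisely the missing argument.
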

\begin{proof}We may assume 
\[U_q=e_{t_q} \Lambda,\ U'_{-q'}=e_{t'_{q'}} \Lambda,\ V_p=e_{s_p} \Lambda\text{ and } V'_{-p'}=e_{s'_{p'}} \Lambda.\]
We distinguish $t_i$ and $t_j$ (resp. $t'_i$ and $t'_j$, $s_i$ and $s_j$, $s'_i$ and $s'_j$)
even if $t_i=t_j$ (resp. $t'_i=t'_j$, $s_i=s_j$, $s'_i=s'_j$) as a vertex of $Q$. Hence
we may assume that $Q^{\zeta}_0=\{t_1,\dots,t_{\ell},t'_1\dots,t'_{\ell'}\}$.
Then we rewrite
\[g_{t'_{q'}}^{s_p}:=g_{q'}^{p},\ \varphi^{(s_p,t'_{q'})}:=\varphi^{(p,q')}.\]

We show the assertion (1) (and (3)). It is immediate that (i) implies (ii). Therefore, we assume (ii) and prove that (i) holds. 
\begin{claim}
\label{claimforreduction}
Let $t'\in \{t'_1,\dots,t'_{\ell'}\}$ and $s\in \{s_1,\dots,s_{m}\}$.
If $\delta: e_{t'}\Lambda\to e_{s}\Lambda$, then there are $h: U\to V$
 and $h': U'\to V'$ 
such that
\[(U'\surj e_{t'}\Lambda \stackrel{\delta}{\to} e_s \Lambda\inj V)
=h\zeta+\eta h'.\]
\end{claim}
\begin{pfclaim}
We denote by $\pi_{t}$ (resp. $\pi'_{t'}$) the canonical surjection
 $U\surj e_t \Lambda$
(resp. $U'\surj e_{t'} \Lambda$) and $\iota_t$ (resp. $\iota'_{t'}$) the canonical 
injection $e_t \Lambda \inj U$ (resp. $e_{t'}\Lambda\inj U'$). 

 Let $r_i:={\rm max}\{r' \mid  \rad^{r'}(e_i \Lambda e_i)\neq 0\}$ and $r:={\rm max} \{r_i\mid i\in Q_0\}$.
By Lemma\;\ref{gpreserve}, each element of $\Hom_{\Lambda}(e_{t'} \Lambda,e_s \Lambda)=e_s \Lambda e_{t'}$
has a form $g_{t'}^{s}l'$ with $l'\in  e_{t'} \Lambda e_{t'}$. Moreover, $g_{t'}^{s}l'\in J$ if and only if
 $l'\in   \rad(e_{t'} \Lambda e_{t'})$.
Hence it is sufficient to show that for any $(s,t',r',l')$ satisfying 
 $e_s \Lambda e_{t'}\neq 0$, $r'\leq r$ and $l'\in  \rad^{r'}(e_{t'} \Lambda e_{t'})\setminus  \rad^{r'+1}(e_{t'} \Lambda e_{t'})$,
there are $h\in \Hom_{\Lambda}(U,V)$ and $h'\in \Hom_{\Lambda}(U',V')$ 
such that
\[(U'\surj e_{t'}\Lambda\stackrel{gl'}{\to} e_s \Lambda\inj V)=h\zeta+\eta h',\]
where $g=g_{t'}^{s}$.  

We use an induction on $r-r'$.
First of all, we take $l'_{t'_{q'}}\in \rad^{r'}(e_{t'_{q'}} \Lambda e_{t'_{q'}})$ $(q'=1,2,\dots,\ell')$
 and $l_{t_q}\in \rad^{r'}(e_{t_q} \Lambda e_{t_q})$ $(q=1,2,\dots,\ell)$ as follows:
\begin{enumerate}[\rm (i)] 
\item $l'_{t'}=l'$. 
\item If $\zeta_{q'}^q=\pi_{t_q} \zeta \iota'_{t'_{q'}}\neq 0$, $l'_{t'_{q'}}$ is given and $l_{t_q}$ is not given, 
then we let $l_{t_q}$ such that 
\[ l_{t_q} (\pi_{t_q} \zeta \iota'_{t'_{q'}})=(\pi_{t_q} \zeta \iota'_{t'_{q'}}) l'_{t'_{q'}}
\text{ (see Lemma\;\ref{gpreserve}\;(1) and note that $\Omega=\emptyset$)}.\]
\item If $\zeta_{q'}^q=\pi_{t_q} \zeta \iota'_{t'_{q'}}\neq 0$, $l_{t_q}$ is given and $l'_{t'_{q'}}$ is not given, 
then we let $l'_{t'_{q'}}$ such that 
\[ l_{t_q} (\pi_{t_q} \zeta \iota'_{t'_{q'}})=(\pi_{t_q} \zeta \iota'_{t'_{q'}}) l'_{t'_{q'}}
\text{ (see Lemma\;\ref{gpreserve}\;(1) and note that $\Omega=\emptyset$)}.\]
\item If there is no walk from $t'$ to $t'_{q'}$ in $Q^{\zeta}$, then we let $l_{t'_{q'}}=0$.
 \item If there is no walk from $t'$ to $t_q$ in $Q^{\zeta}$, then we let $l'_{t_q}=0$.
\end{enumerate} 
(Actually, the cases (iv) and (v) do not occur because $T$ is indecomposable.)
The reason why we can take $l_{t'_{q'}}$ $(q'=1,2,\dots,\ell')$ and $l'_{t_q}$ $(q=1,2,\dots,\ell)$ as above is
that $Q^{\zeta}$ is tree. 
By hypothesis and Lemma\;\ref{kertensor}, there are $h_0:U\to V$ and $h_1:U' \to V'$ such that
\[\epsilon:=\varphi^{(s,t')}-(h_0\zeta+\eta h_1)\in \widetilde{J}.\]

We first consider the case $r-r'=0$. By Lemma\;\ref{gpreserve} (2), 
$\epsilon (\iota'_{t'_{q'}} l'_{t'_{q'}} \pi'_{t'_{q'}})=0$ for any $q'\in \{1,\dots, \ell'\}$.
 Hence we have
\[\begin{array}{lll}
(U'\surj e_{t'}\Lambda\stackrel{gl'}{\to} e_s \Lambda\inj V)  
&=& (U'\surj e_{t'}\Lambda\stackrel{l'}{\to} e_{t'} \Lambda\inj U' \surj e_{t'}\Lambda\stackrel{g}{\to} e_s \Lambda\inj V)\\\\
&=& \displaystyle{ (h_0\zeta+\eta h_1+\epsilon)\iota'_{t'} l'_{t'}\pi'_{t'} }\\\\
       &=& \displaystyle{h_0 \zeta  \iota'_{t'} l'_{t'}\pi'_{t'} 
       + \eta h_1 \iota'_{t'} l'_{t'}\pi'_{t'} }\\\\
       &=& \displaystyle{ \sum_q h_0 \iota_{t_q}(\pi_{t_q} \zeta  \iota'_{t'}) l'_{t'}\pi'_{t'} 
       + \eta h_1 \iota'_{t'} l'_{t'}\pi'_{t'} }\\\\
       &=&\displaystyle{ \sum_q h_0 \iota_{t_q} l_{t_q}(\pi_{t_q} \zeta  \iota'_{t'})\pi'_{t'}  
       + \eta h_1 \iota'_{t'} l'_{t'}\pi'_{t'}}\\\\
       &=&\displaystyle{ \sum_q h_0 \iota_{t_q} l_{t_q}\pi_{t_q} \zeta
         - \sum_q\sum_{\substack{ q'\\t'_{q'}\neq t'}} 
         h_0 \iota_{t_q} l_{t_q} (\pi_{t_q} \zeta \iota'_{t'_{q'}})\pi'_{t'_{q'}}
         + \eta h_1 \iota'_{t'} l'_{t'}\pi'_{t'}}\\
       &=&\displaystyle{ \sum_q h_0 \iota_{t_q} l_{t_q}\pi_{t_q} \zeta
        - \sum_q\sum_{\substack{ q'\\t'_{q'}\neq t'}} 
         h_0 \iota_{t_q}  (\pi_{t_q} \zeta \iota'_{t'_{q'}})l'_{t'_{q'}}\pi'_{t'_{q'}}
         + \eta h_1 \iota'_{t'} l'_{t'}\pi'_{t'}}\\
       &=&\displaystyle{ \sum_q h_0 \iota_{t_q} l_{t_q}\pi_{t_q} \zeta
        - \sum_{\substack{ q'\\t'_{q'}\neq t'}} 
         h_0 \zeta \iota'_{t'_{q'}}l'_{t'_{q'}}\pi'_{t'_{q'}}
         + \eta h_1 \iota'_{t'} l'_{t'}\pi'_{t'}}\\
       &=&\displaystyle{ \sum_q h_0 \iota_{t_q} l_{t_q}\pi_{t_q} \zeta
        - \sum_{\substack{ q'\\t'_{q'}\neq t'}} 
         (\varphi^{(t',s)}-\eta h_1-\epsilon)
          \iota'_{t'_{q'}}l'_{t'_{q'}}\pi'_{t'_{q'}}+ \eta h_1 \iota'_{t'} l'_{t'}\pi'_{t'}}\\
       &=&\displaystyle{ \sum_q h_0 \iota_{t_q} l_{t_q}\pi_{t_q} \zeta
        + \sum_{\substack{ q'\\t'_{q'}\neq t'}} 
         \eta h_1\iota'_{t'_{q'}}l'_{t'_{q'}}\pi'_{t'_{q'}}
         + \eta h_1 \iota'_{t'} l'_{t'}\pi'_{t'}}.\\
\end{array}\]
Therefore, $h=\underset{q}\sum h_0 \iota_{t_q} l_{t_q}\pi_{t_q}$ and 
$h'=\displaystyle{\sum_{\substack{ q'\\t'_{q'}\neq t'}} 
          h_1\iota'_{t'_{q'}}l'_{t'_{q'}}\pi'_{t'_{q'}}+h_1 \iota'_{t'} l'_{t'}\pi'_{t'}}$
          satisfy
 \[(U'\surj e_{t'}\Lambda\stackrel{gl'}{\to} e_s \Lambda\inj V)=h\circ \zeta+\eta\circ h'.\]
Moreover, if $r'>0$, then $h\in \widetilde{J}(U,V)$ and $h'\in \widetilde{J}(U',V')$.         

We assume that the assertion holds for the case $r-r'<N$ and consider the case $r-r'=N$.
By Lemma\;\ref{gpreserve} (2) and the hypothesis of induction, we have that 
$\epsilon (\iota_{t'_{q'}} l'_{t'_{q'}} \pi_{t'_{q'}})=h_{t'_{q'}}\zeta +\eta h'_{t'_{q'}}$ 
for some $h_{t'_{q'}}\in \widetilde{J}(U, V)$
 and $h'_{t'_{q'}}\in \widetilde{J}(U',V')$.
Hence, we have
\[\begin{array}{lll}
(U'\surj e_{t'}\Lambda\stackrel{gl'}{\to} e_s \Lambda\inj V)  &=&
 \displaystyle{ (h_0\zeta+\eta h_1+\epsilon)\iota'_{t'} l'_{t'}\pi'_{t'} }\\\\
       &=& \displaystyle{h_0 \zeta  \iota'_{t'} l'_{t'}\pi'_{t'} 
       + \eta h_1 \iota'_{t'} l'_{t'}\pi'_{t'} }+h_{t'}\zeta +\eta h'_{t'}.\\
   \end{array}  
 \]
On the other hand, we have   
 \[\begin{array}{lll} 
   \displaystyle{h_0 \zeta  \iota'_{t'} l'_{t'}\pi'_{t'}}    
   &=& \displaystyle{ \sum_q h_0 \iota_{t_q}(\pi_{t_q} \zeta  \iota'_{t'}) l'_{t'}\pi'_{t'}}\\\\
   &=&\displaystyle{ \sum_q h_0 \iota_{t_q} l_{t_q}(\pi_{t_q} \zeta  \iota'_{t'})\pi'_{t'}}\\\\ 
   &=&\displaystyle{ \sum_q h_0 \iota_{t_q} l_{t_q}\pi_{t_q} \zeta
         - \sum_q\sum_{\substack{ q'\\t'_{q'}\neq t'}} 
         h_0 \iota_{t_q} l_{t_q} (\pi_{t_q} \zeta \iota'_{t'_{q'}})\pi'_{t'_{q'}}}\\
       &=&\displaystyle{ \sum_q h_0 \iota_{t_q} l_{t_q}\pi_{t_q} \zeta
        - \sum_q\sum_{\substack{ q'\\t'_{q'}\neq t'}} 
         h_0 \iota_{t_q}  (\pi_{t_q} \zeta \iota'_{t'_{q'}})l'_{t'_{q'}}\pi'_{t'_{q'}}}\\
       &=&\displaystyle{ \sum_q h_0 \iota_{t_q} l_{t_q}\pi_{t_q} \zeta
        - \sum_{\substack{ q'\\t'_{q'}\neq t'}} 
         h_0 \zeta \iota'_{t'_{q'}}l'_{t'_{q'}}\pi'_{t'_{q'}}}\\
       &=&\displaystyle{ \sum_q h_0 \iota_{t_q} l_{t_q}\pi_{t_q} \zeta
        - \sum_{\substack{ q'\\t'_{q'}\neq t'}} 
         (\varphi^{(t',s)}-\eta h_1-\epsilon)
          \iota'_{t'_{q'}}l'_{t'_{q'}}\pi'_{t'_{q'}}}\\
       &=&\displaystyle{ \sum_q h_0 \iota_{t_q} l_{t_q}\pi_{t_q} \zeta
        + \sum_{\substack{ q'\\t'_{q'}\neq t'}} 
         (\eta h_1\iota'_{t'_{q'}}l'_{t'_{q'}}\pi'_{t'_{q'}}+h_{t'_{q'}}\zeta +\eta h'_{t'_{q'}})}.\\
\end{array}\]
Hence, $h=h_{t'}+\underset{q}{\sum} h_0 \iota_{t_q} l_{t_q}\pi_{t_q}$ and 
$h'= h_1 \iota'_{t'} l'_{t'}\pi'_{t'} + \underset{q'}{\sum} h'_{t'_{q'}}+
\displaystyle{\sum_{\substack{ q'\\t'_{q'}\neq t'}} 
          h_1\iota'_{t'_{q'}}l'_{t'_{q'}}\pi'_{t'_{q'}}}$
          satisfy
 \[(U'\surj e_{t'}\Lambda\stackrel{gl'}{\to} e_s \Lambda\inj V)=h\circ \zeta+\eta\circ h'.\]

Furthermore, these constructions of $h$ and $h'$ show the assertion (3). 
\end{pfclaim}
Then the assertion (1) follows from the previous claim.


We show (2).  
For each pair $(t',t)=(t'_{q'},t_q)$ such that $G_{t'}^{t}\neq \emptyset$, we define 
\[\varphi^{(t',t)}:=(U'\surj e_{t'}\Lambda \stackrel{g_{t'}^{t}}{\to} e_t \Lambda \inj U),\]
where $g_{t'}^{t}$ is taken from $e_t \Lambda e_{t'}\setminus e_t J e_{t'}$.

Since $T\in \tpsilt \Lambda$, there are $h:U\to U$ and $h':U'\to U'$ such that
\[\varphi^{(t',t)}=h\circ \zeta+\zeta \circ h'=h\circ \widetilde{\zeta}+\widetilde{\zeta}\circ h'+h\circ \epsilon+\epsilon\circ h',\]
where $\epsilon:=\sum_{(q,q')\in \Omega}(U'\surj U'_{-q'}\stackrel{\zeta_{q'}^q}{\to} U_q \inj U))\in \widetilde{J}(U',U)$.
Hence, we have
\[\varphi^{(t',t)}-(h\circ\widetilde{\zeta}+\widetilde{\zeta}\circ h')\in \widetilde{J}(U',U).\]
By using (1), we obtain that $T'=[U'\stackrel{\widetilde{\zeta}}{\to}U]$ is presilting.
Then the assertion follows from Theorem\;\ref{gvector}.
\end{proof}

We consider a full functor 
\[-\otimes_{\Lambda}\overline{\Lambda} :\proj \Lambda \to \proj \overline{\Lambda}\]
and denote it by $\overline{(-)}$. 
\begin{corollary}
\label{indecomposability}
Let $\Lambda$ be as in Assumption\;\ref{assumption}.
\begin{enumerate}[{\rm (1)}]
\item If $T=[T_{-1}\stackrel{\zeta}{\to} T_0]$ be an indecomposable two-term presilting object of $\Kb(\proj \Lambda)$, then
 $\overline{T}:=[\overline{T}_{-1}\stackrel{\overline{\zeta}}{\to} \overline{T}_0]$ is indecomposable two-term presilting object of $\Kb(\proj \overline{\Lambda})$.
\item  
We have a poset isomorphism
\[\overline{(-)}:\tsilt \Lambda \stackrel{\sim}{\to} \tsilt \overline{\Lambda}.\]
Moreover, $\overline{\Lambda}$ also satisfies Assumption\;\ref{assumption}.
\end{enumerate}
\end{corollary}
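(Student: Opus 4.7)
The strategy is to transfer structural information back and forth through the functor $\overline{(-)}$ by combining its fullness with the equational characterizations of Lemma \ref{teqlemma}. I will repeatedly use the canonical identification $K_0(\proj \Lambda) \cong K_0(\proj \overline\Lambda)$ sending $[e_i \Lambda] \mapsto [e_i \overline\Lambda]$, under which the $g$-vector of a two-term complex is preserved by $\overline{(-)}$, so that Theorem \ref{gvector} can be transferred to $\overline\Lambda$.

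For part (1), let $T = [U' \stackrel{\zeta}{\to} U]$ be indecomposable presilting with $Q^\zeta$ a tree, as supplied by Assumption \ref{assumption}. I will first normalize using Lemma \ref{teqlemma}(2), replacing $\zeta$ by $\widetilde\zeta$ so that $\Omega(\zeta)=\emptyset$; indecomposability of $T$ forces the pruned quiver to remain connected. The presilting property of $\overline T$ is then routine: lift any $\bar\varphi:\overline{U'}\to\overline U$ through fullness to some $\varphi$, factor $\varphi = h\zeta + \zeta h'$ via the presilting property of $T$, and descend modulo $J$. Indecomposability of $\overline T$ is the more delicate step, and I will handle it by showing that the induced ring map $\End_{\Kb(\proj\Lambda)}(T)\to\End_{\Kb(\proj\overline\Lambda)}(\overline T)$ is surjective. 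Given any chain endomorphism $(\bar e_0,\bar e_{-1})$ of $\overline T$, componentwise lifts $(e_0,e_{-1})$ fail to commute with $\zeta$ only by $e_0\zeta-\zeta e_{-1}\in \widetilde J(U',U)$ (Lemma \ref{kertensor}); Lemma \ref{teqlemma}(3), applied with $S=T$ and $\Omega(\zeta)=\emptyset$, produces $h\in\widetilde J(U,U)$ and $h'\in\widetilde J(U',U')$ with $e_0\zeta-\zeta e_{-1} = h\zeta+\zeta h'$, so $(e_0-h,\, e_{-1}+h')$ is a chain endomorphism of $T$ lifting $(\bar e_0,\bar e_{-1})$. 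Since $\End_{\Kb}(T)$ is local and $\overline T\neq 0$ (because $\add U\cap \add U'=\{0\}$ is preserved and the identity is not null-homotopic), the quotient $\End_{\Kb}(\overline T)$ is local, giving indecomposability.

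For part (2), I will show the assignment $T=\bigoplus_{i=1}^n T_i \mapsto \overline T=\bigoplus_{i=1}^n \overline{T_i}$ sends $\tsilt\Lambda$ into $\tsilt\overline\Lambda$: each $\overline{T_i}$ is indecomposable by (1); they are pairwise non-isomorphic because $g^{\overline{T_i}}=g^{T_i}$ stay distinct under the $K_0$-identification (Theorem \ref{gvector}); and $\Hom_{\Kb(\proj\overline\Lambda)}(\overline T,\overline T[1])=0$ follows by applying the lift-factor-descend procedure to each pair of summands. Since $|\overline T|=n=|\overline\Lambda|$, this yields a silting. The map preserves $\leq$ by the same lifting trick applied to $\Hom(T,T'[1])=0$, and is injective via $g$-vectors. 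For surjectivity I will use that $\overline\Lambda$ is $\tau$-tilting finite (Lemma \ref{tautiltingfiniteness}), so its Hasse quiver is finite, $n$-regular, and connected by Theorem \ref{basicprop}(4); for each basic almost complete silting $X$ of $\Lambda$, Theorem \ref{basicprop}(1) gives exactly two completions $T,T'$, and the same theorem applied to $\overline\Lambda$ tells us the almost complete silting $\overline X$ has exactly two completions, which by the injectivity must be $\overline T$ and $\overline{T'}$. Thus Hasse arrows are preserved bijectively, and propagating from $\overline{(\Lambda)}=\overline\Lambda$ through the connected Hasse quiver of $\tsilt\overline\Lambda$ forces the image to be everything. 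A $\leq$-preserving bijection between two $n$-regular Hasse quivers is then automatically a poset isomorphism.

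Finally, to verify Assumption \ref{assumption} for $\overline\Lambda$, let $S\in\tpsilt\overline\Lambda$; extend to a silting $S\oplus S^c\in\tsilt\overline\Lambda$ (using Theorem \ref{basicprop}), apply surjectivity to obtain $T\in\tsilt\Lambda$ with $\overline T = S\oplus S^c$, and match indecomposable summands via (1) to decompose $T=T_S\oplus T_S^c$ with $\overline{T_S}=S$. Assumption \ref{assumption} on $\Lambda$ supplies a representative $[U'\stackrel{f}{\to} U]$ of $T_S$ with $Q^f$ a tree; after the normalization of Lemma \ref{teqlemma}(2) we may take $\Omega(f)=\emptyset$, so by Lemma \ref{gpreserve}(1) every remaining nonzero component of $f$ stays nonzero modulo $J$, whence the representative $[\overline{U'}\stackrel{\bar f}{\to}\overline U]$ of $S$ satisfies $Q^{\bar f}=Q^f$ and is a tree. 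The main obstacle is the indecomposability step in (1), where the delicate $\widetilde J$-controlled lifting provided by Lemma \ref{teqlemma}(3) is essential; the combinatorial surjectivity in (2) is the next most involved point but becomes routine once bijective Hasse arrow preservation is established.
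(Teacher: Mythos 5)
Your proposal is correct and follows essentially the same route as the paper: normalizing via Lemma\;\ref{teqlemma}\;(2), lifting chain maps through the full functor $\overline{(-)}$ and correcting them with Lemma\;\ref{kertensor} and Lemma\;\ref{teqlemma}\;(3) to get both the presilting equivalence and the locality of $\End_{\Kb(\proj\overline{\Lambda})}(\overline{T})$, then using $g$-vectors for injectivity and the $n$-regularity plus connectedness of the Hasse quiver (Theorem\;\ref{basicprop}) for surjectivity. Your packaging of the indecomposability step as surjectivity of the endomorphism-ring map (so that $\End(\overline{T})$ is a quotient of a local ring) is a mild and clean rephrasing of the paper's contradiction argument, and your fleshing-out of the ``Moreover'' clause matches what the paper leaves implicit.
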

\begin{proof}
Let $T=[T_{-1}\stackrel{\zeta}{\to} T_0]\in \ind(\tpsilt \Lambda)$.
We first show that $\overline{T}$ is indecomposable. By Lemma\;\ref{teqlemma}\;(2),
we may assume that $\Omega(\zeta)=\emptyset$. 

Suppose that $\overline{T}$ is not indecomposable.
Then $\End_{\Kb(\proj \overline{\Lambda})} (\overline{T})$ is not local.
Thus we can take $\phi=(\phi_{-1},\phi_0)\in \End_{\Kb(\proj \Lambda)}(\overline{T})
\setminus \rad (\End_{\Kb(\proj \Lambda)}(\overline{T}))$ 
 such that $\phi$ is not an isomorphism. Let $\varphi_0:T_0\to T_0$ and $\varphi_{-1}:T_{-1}\to T_{-1}$
 such that $\overline{\varphi}_0=\phi_0$ and $\overline{\varphi}_{-1}=\phi_{-1}$.
Now consider $\epsilon:=\varphi_0\circ \zeta-\zeta\circ \varphi_{-1}$.
By Lemma\;\ref{kertensor}, we have $\epsilon\in \widetilde{J}(T_{-1},T_0)$. Since $T\in \tpsilt \Lambda$, Lemma\;\ref{teqlemma}\;(3) implies that
there are $h_0\in \widetilde{J}(T_0,T_0)$ and $h_{-1}\in \widetilde{J}(T_{-1},T_{-1})$
such that $\epsilon=-h_0\circ\zeta+\zeta\circ h_{-1}$.
In particular, we obtain 
\[(\varphi_0+h_0)\circ\zeta-\zeta\circ(\varphi_{-1}+h_{-1})=0.\] 
It follows from Lemma\;\ref{kertensor} that $\overline{\varphi_0+h_0}=\phi_0$ and $\overline{\varphi_{-1}+h_{-1}}=\phi_{-1}$.
Thus there exists $\varphi\in \End_{\Kb(\proj \Lambda)}(T)$ such that 
$\overline{\varphi}=\phi$. Since $\phi$ is not in the radical, $\varphi$ is also not in the radical.
By indecomposability of $T$, we have that $\varphi$ is an isomorphism. 
This implies that $\phi$ is an isomorphism which leads to a contradiction.

Let $T$ and $T'$ be in $\ind(\tpsilt \Lambda)$. Since the tensor functor 
$-\otimes_{\Lambda}\overline{\Lambda}:\proj \Lambda\to \proj \overline{\Lambda}$ is full,
it follows from Lemma\;\ref{teqlemma}\;(1) and (2) that
\[\Hom_{\Kb(\proj \Lambda)}(T,T'[1])=0\Leftrightarrow \Hom_{\Kb(\proj \overline{\Lambda})}(\overline{T},\overline{T'}[1])=0.\]
In particular, we obtain (1). By Theorem\;\ref{gvector}, $\overline{(-)}$ induces a poset isomorphism 
\[\tsilt \Lambda \simeq \P,\]
where $\P:=\overline{\tsilt \Lambda}$ be the image of $\tsilt \Lambda$ under $\overline{(-)}$.
Therefore, for $X,Y\in \tsilt \Lambda$, we have
\[\begin{array}{ccl}
\overline{X}\to \overline{Y}\text{ in $\H(\P)$}& \Leftrightarrow & X\to Y\text{ in $\H(\tsilt \Lambda)$}\\
                                               & \Leftrightarrow & X>Y \text{ and}\ |\add X\cap \add Y|=|\Lambda|-1\\
                                               & \Leftrightarrow & \overline{X}>\overline{Y} \text{ and}\ |\add \overline{X}\cap \add \overline{Y}|=|\overline{\Lambda}|-1\\
                                               & \Leftrightarrow & \overline{X}\to \overline{Y}\text{ in $\H(\tsilt \overline{\Lambda})$},
\end{array}\]
where, $|\chi|$ denotes the cardinality of $\ind \chi$.
In particular, $\P\simeq \tsilt \Lambda$ is a strongly full subposet of $\tsilt \overline{\Lambda}$.
Then the assertion follows from Theorem\;\ref{basicprop}\;(4).
\end{proof}

In the rest of this subsection, we always assume that $\overline{\Lambda}$
satisfies Assumption\;\ref{assumption}.
For an indecomposable two-term silting object $T$ in $\Kb(\proj \overline{\Lambda})$, 
we denote by $[T]$
 the isomorphism class of $T$. 
    We fix a morphism $f_{[T]}:T^{-1}\to T^0$ with $\add T^{-1}\cap \add T^0=\{0\}$ and indecomposable decompositions
$\mathbf{u}_{[T]}$ of $T^0$, $\mathbf{u}_{[T]}'$ of $T^1$   
 such that $T_{f_{[T]}}:=[T^{-1}\stackrel{f_{[T]}}{\to} T^0]\simeq T$
   and $Q^{(f_{[T]},\mathbf{u}_{[T]},\mathbf{u}_{[T]}')}$ is a tree. 
 Then we set
\[\overline{\Phi}:=\{f_{[T]}\mid T\in \tpsilt \overline{\Lambda}\}.\] 

For each element $ \overline{\Phi}\ni f=f_{[T]}:T^{-1}\to T^0$, 
with $\mathbf{u}_{[T]}=(T^0_1,\dots,T^0_{\ell})$ and $\mathbf{u}_{[T]}'=(T^{-1}_1,\dots,T^{-1}_{\ell'})$, 
we let $\widehat{T}_1^0,\dots,\widehat{T}_\ell^1$, $\widehat{T}_1^{-1},\dots,\widehat{T}_{\ell'}^{-1}$
such that $\widehat{T}_t^0\otimes_{\Lambda} \overline{\Lambda}=T_t^0$ and $\widehat{T}_{t'}^{-1}\otimes_{\Lambda} \overline{\Lambda}=T^{-1}_{t'}$. 
Then we choose $\widehat{f}=\widehat{f}_{[T]}:\widehat{T}^{-1}=\underset{1\leq t'\leq \ell'}{\bigoplus}\widehat{T}^{-1}_{t'}\to 
\underset{1\leq t \leq \ell}{\bigoplus}\widehat{T}^0_t=\widehat{T}^0$ with $\widehat{f}\otimes_{\Lambda}\overline{\Lambda}=f$
satisfying the following implication:
\[(T^{-1}_{t'}\inj T^{-1}\stackrel{f}{\to} T^0\surj T^0_t)=0
\Rightarrow (\widehat{T}^{-1}_{t'}\inj \widehat{T}^{-1}\stackrel{\widehat{f}}{\to} \widehat{T}^0\surj \widehat{T}^0_t)=0.\]
We set
\[\Phi:=\{\widehat{f}_{[T]}\mid T\in \tpsilt \overline{\Lambda}\}.\]
Now for any $T\in \tpsilt \overline{\Lambda}$, we denote by $\widehat{T}$ the 
two-term complex in $\Kb(\proj \Lambda)$ given by $\widehat{f}_{[T]}$.  
  
\begin{lemma}
\label{lemmaforreduction}
Let $T$, $S$ be in $\ind(\tpsilt \overline{\Lambda})$. Then we have
\[\Hom_{\Kb(\proj \overline{\Lambda})}(T,S[1])=0\Leftrightarrow \Hom_{\Kb(\proj \Lambda)}(\widehat{T},\widehat{S}[1])=0.\]
\end{lemma}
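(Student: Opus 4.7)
The plan is to apply Lemma \ref{teqlemma}(1) in both directions, using Lemma \ref{kertensor} and the fullness of $-\otimes_{\Lambda}\overline{\Lambda}:\proj\Lambda\to\proj\overline{\Lambda}$ as the bridge. First I would record two preliminary observations. By the very choice of $\widehat{f}=\widehat{f}_{[T]}$, a component $\widehat{\zeta}^q_{q'}$ of $\widehat{\zeta}$ is zero exactly when the corresponding component of $\zeta=f_{[T]}$ is zero, and when it is nonzero its image in $\overline{\Lambda}$ is a nonzero element of $e_{t_q}\overline{\Lambda}e_{t'_{q'}}$; Lemma \ref{gpreserve}(1) then forces $\widehat{\zeta}^q_{q'}\notin J$, so $\Omega(\widehat{\zeta})=\emptyset$ and $Q^{\widehat{\zeta}}=Q^{\zeta}$ is a tree, and the same holds for $\widehat{\eta}$. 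Moreover, since $e_i\overline{\Lambda}e_i=Ke_i$ for every $i$, the analogue of the ideal $J$ inside $\overline{\Lambda}$ is zero, so the analogue of $\widetilde{J}$ for $\overline{\Lambda}$ vanishes on every pair of projectives; hence Lemma \ref{teqlemma}(1) applied inside $\overline{\Lambda}$ yields an honest equality rather than a congruence.

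For the direction ($\Rightarrow$), I would assume $\Hom_{\Kb(\proj\overline{\Lambda})}(T,S[1])=0$. For each pair $(p,q')$ with $\Hom_{\overline{\Lambda}}(T^{-1}_{q'},S^0_p)\neq 0$, fix a generator $g^p_{q'}$ of $e_{s_p}\overline{\Lambda}e_{t'_{q'}}$ and, using Lemma \ref{gpreserve}(1), choose a lift $\widehat{g}^p_{q'}\in e_{s_p}\Lambda e_{t'_{q'}}\setminus J$ whose reduction modulo $J$ is precisely $g^p_{q'}$. Applying Lemma \ref{teqlemma}(1) to $(T,S)$ in $\overline{\Lambda}$ produces $\bar{h},\bar{h}'$ with $\varphi^{(p,q')}=\bar{h}\circ\zeta+\eta\circ\bar{h}'$. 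Using fullness of $-\otimes_{\Lambda}\overline{\Lambda}$ I then lift $\bar{h},\bar{h}'$ to morphisms $h,h'$ in $\Lambda$; the difference $\widehat{\varphi}^{(p,q')}-(h\circ\widehat{\zeta}+\widehat{\eta}\circ h')$ tensors to zero, hence lies in $\widetilde{J}$ by Lemma \ref{kertensor}. Since $\Omega(\widehat{\zeta})=\emptyset$ and $Q^{\widehat{\zeta}}$ is a tree, Lemma \ref{teqlemma}(1) then gives $\Hom_{\Kb(\proj\Lambda)}(\widehat{T},\widehat{S}[1])=0$.

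For the direction ($\Leftarrow$), I would assume $\Hom_{\Kb(\proj\Lambda)}(\widehat{T},\widehat{S}[1])=0$ and take any $\psi\in\Hom_{\Kb(\proj\overline{\Lambda})}(T,S[1])$, represented by a morphism $\psi:T^{-1}\to S^0$. Fullness of the tensor functor provides a lift $\widehat{\psi}:\widehat{T}^{-1}\to\widehat{S}^0$; by hypothesis $\widehat{\psi}$ is null-homotopic, so $\widehat{\psi}=h\circ\widehat{\zeta}+\widehat{\eta}\circ h'$ for some $h,h'$, and reducing modulo $J$ exhibits $\psi$ as a null-homotopy in $\Kb(\proj\overline{\Lambda})$. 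The main technical point will lie in the forward direction: arranging the lifts $\widehat{g}^p_{q'}$ to reduce exactly to the chosen generators $g^p_{q'}$ (so that $\widehat{\varphi}^{(p,q')}$ really does reduce to $\varphi^{(p,q')}$), and confirming $\Omega(\widehat{\zeta})=\emptyset$ so that Lemma \ref{teqlemma}(1) is applicable in $\Lambda$; both issues are handled by the construction of $\widehat{f}_{[T]}$ together with Lemma \ref{gpreserve}(1).
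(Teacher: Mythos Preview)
Your proposal is correct and follows essentially the same route as the paper. Both arguments hinge on the same two ingredients: for the backward direction, fullness of $-\otimes_\Lambda\overline{\Lambda}$ so that a null-homotopy over $\Lambda$ reduces to one over $\overline{\Lambda}$; for the forward direction, lifting a null-homotopy from $\overline{\Lambda}$ to $\Lambda$ (via fullness) so that the discrepancy lies in $\widetilde{J}$ by Lemma~\ref{kertensor}, and then invoking the implication (ii)$\Rightarrow$(i) of Lemma~\ref{teqlemma}(1) over $\Lambda$. The only cosmetic difference is that in the forward direction you first pass through Lemma~\ref{teqlemma}(1) inside $\overline{\Lambda}$ to obtain $\bar h,\bar h'$, whereas the paper simply uses that $\Hom_{\Kb(\proj\overline{\Lambda})}(T,S[1])=0$ literally means every $\overline{g}:T^{-1}_{q'}\to S^0_p$ is null-homotopic and then lifts; since $\widetilde{J}=0$ in $\overline{\Lambda}$ these amount to the same thing. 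Your explicit verification that $\Omega(\widehat{\zeta})=\emptyset$ and $Q^{\widehat{\zeta}}=Q^{\zeta}$ is a tree is a point the paper leaves implicit in the construction of $\widehat f_{[T]}$, so it is worth keeping.
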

\begin{proof}
We suppose $\Hom_{\Kb(\proj \overline{\Lambda})}(T,S[1])=0$.
Without loss of generality, we may assume that $T=[T^{-1}\stackrel{f}{\to} T^0]$ and $S=[S^{-1}\stackrel{f'}{\to} S^0]$ with $f,f'\in 
\overline{\Phi}$ and 
$\mathbf{u}_{[T]}=(e_{t_1} \overline{\Lambda},\dots,e_{t_{\ell}} \overline{\Lambda})$,
$\mathbf{u'}_{[T]}=(e_{t'_1} \overline{\Lambda},\dots,e_{t'_{\ell'}} \overline{\Lambda})$,
$\mathbf{u}_{[S]}=(e_{s_1} \overline{\Lambda},\dots,e_{s_{m}} \overline{\Lambda})$,
$\mathbf{u'}_{[S]}=(e_{s'_1} \overline{\Lambda},\dots,e_{s'_{m'}} \overline{\Lambda})$.
We also may assume  $\widehat{T}=[\widehat{T}^{-1}\stackrel{\widehat{f}}{\to}\widehat{T}]$ and
$\widehat{S}=[\widehat{S}^{-1}\stackrel{\widehat{f'}}{\to}\widehat{S}^0]$ with
\[\begin{array}{llllclc}
\widehat{T}^0&=& e_{t_1} \Lambda &\oplus\cdots\oplus & e_{t_{\ell}}\Lambda,\\
\widehat{T}^{-1}&=&e_{t'_1} \Lambda &\oplus\cdots\oplus & e_{t'_{\ell'}}\Lambda,\\
\widehat{S}^0&=&e_{s_1} \Lambda &\oplus\cdots\oplus & e_{s_{m}}\Lambda, \\
\widehat{S}^{-1}&=&e_{s'_1} \Lambda &\oplus\cdots\oplus & e_{s'_{m'}}\Lambda.\\
\end{array}
\]

Let $g\in \Hom_{\Lambda}(\widehat{T}_{p'}^{-1},\widehat{S}_q^0)$. Since $\Hom_{\Kb(\proj \overline{\Lambda})}(T,S[1])=0$ and
$\overline{(-)}:\proj \Lambda\to \proj \overline{\Lambda}$ is full, there are 
$h_0\in \Hom_{\Lambda}(\widehat{T}^0,\widehat{S}^{0})$ and 
$h_1\in \Hom_{\Lambda}(\widehat{T}^{-1},\widehat{S}^{-1})$ such that
\[(\widehat{T}^{-1}\surj \widehat{T}_{p'}^{-1}\stackrel{g}{\to} \widehat{S}_q^0\inj \widehat{S})\otimes_{\Lambda} \overline{\Lambda}
=(h_0\widehat{f}+\widehat{f'} h_1)\otimes_{\Lambda}\overline{\Lambda}.\]
Let $\epsilon=
(\widehat{T}^{-1}\surj \widehat{T}_{p'}^{-1}\stackrel{g}{\to} \widehat{S}_q^0\inj \widehat{S})-(h_0\widehat{f}+\widehat{f'} h_1)$.
Since $\overline{\epsilon}=0$, we obtain $\epsilon\in \widetilde{J}(\widehat{T}^{-1},\widehat{S}^0)$ from Lemma\;\ref{kertensor}.
Therefore, we can apply Lemma\;\ref{teqlemma}(1) and obtain 
\[\Hom_{\Kb(\proj \Lambda)}(\widehat{T},\widehat{S}[1])=0.\] 

Assume that $\Hom_{\Kb(\proj \Lambda)}(\widehat{T},\widehat{S}[1])=0$. Since $\overline{(-)}$ is full, it is easy to check
\[\Hom_{\Kb(\proj \overline{\Lambda})}(T,S[1])=0.\]
This finishes a proof.
\end{proof}

We prove Theorem\;\ref{reductiontominimal}.
The assertion (1) follows from Corollary\;\ref{indecomposability}.
Let $\mathbb{P}:=\tsilt \Lambda\cap \add \underset{T\in \tsilt \overline{\Lambda}}{\bigoplus} \widehat{T}$.
By Lemma\;\ref{lemmaforreduction}, $\mathbb{P}$ is  isomorphic to $\tsilt \overline{\Lambda}$ and a strongly full subposet of $\tsilt \Lambda$. Since $\tsilt \overline{\Lambda}$ is a finite poset,
we have the assertion (2) from Theorem\;\ref{basicprop} and Theorem\;\ref{bijection}.

\subsection{Applications to Nakayama algebras}
\label{subsect4.2}
\begin{definition}
A module $M$ is said to be {\bf uniserial} if it has the unique composition series.
If every indecomposable projective modules and every indecomposable injective modules of $\Lambda$ are uniserial,
 then we call $\Lambda$ a {\bf Nakayama algebra}. 
\end{definition}
Nakayama algebras are characterized as follows.
\begin{theorem}[{\cite[Chapter\;V, Theorem\;3.2]{ASS}}]
$\Lambda$ is a Nakayama algebra if and only if $Q$ is either a quiver of type $A_n$ with a linear orientation or a 
cyclic quiver. 
\end{theorem}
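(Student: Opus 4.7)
The plan is to deduce the shape of $Q$ directly from the uniseriality condition on projectives and injectives, and conversely to verify that quivers of the two listed shapes force uniseriality of all indecomposable projectives and injectives. Throughout I assume $Q$ is connected (otherwise argue component-by-component, and observe that a direct product of Nakayama algebras is Nakayama only if each factor is).

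For the forward direction, suppose $\Lambda$ is Nakayama. Uniseriality of $e_i \Lambda$ means every factor $e_i \rad^k\Lambda / e_i\rad^{k+1}\Lambda$ is either zero or simple. In particular, $e_i(\rad\Lambda/\rad^2\Lambda) \cong \bigoplus_{j\in Q_0} S_j^{t_{i,j}}$ is simple or zero, where $t_{i,j}$ is the number of arrows $i\to j$ in $Q$. Hence at most one arrow starts at $i$, and it is unique. Dually, applying the same reasoning to the $\Lambda^{\mathrm{op}}$-module $\Lambda e_j$ (whose uniseriality is equivalent to uniseriality of the indecomposable injective with socle $S_j$ by the standard duality $D=\Hom_K(-,K)$), we obtain that at most one arrow ends at each vertex $j$. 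A connected finite quiver in which every vertex has at most one outgoing arrow and at most one incoming arrow is either a linearly oriented $A_n$ or a cyclic quiver.

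For the reverse direction, assume $Q$ is one of these two shapes, so every vertex has a unique arrow out (if any) and a unique arrow in (if any). Let $\alpha_i$ denote the unique arrow starting at $i$ when it exists. Then the right multiplication map $e_i \Lambda \cdot \alpha_i$ generates $e_i \rad\Lambda$ as a right $\Lambda$-module, so $e_i\rad\Lambda$ has a cyclic (hence simple) top. Iterating, every $e_i\rad^k\Lambda / e_i\rad^{k+1}\Lambda$ is simple or zero; admissibility of $I$ guarantees $\rad^N\Lambda=0$ for some $N$, so the chain terminates, and $e_i\Lambda$ is uniserial. The same argument applied to $\Lambda^{\mathrm{op}}$ (whose quiver $Q^{\mathrm{op}}$ is again a linearly oriented $A_n$ or a cyclic quiver) proves that every indecomposable projective $\Lambda^{\mathrm{op}}$-module is uniserial, equivalently that every indecomposable injective $\Lambda$-module is uniserial.

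The main technical point is the forward implication: extracting the arrow-count constraints from uniseriality, for which the key observation is that the top of $e_i\rad\Lambda$ is exactly $\bigoplus_j S_j^{t_{i,j}}$, and a uniserial module has simple top. The dual statement for injectives, needed to bound incoming arrows, is the subtlest part to articulate carefully and is naturally handled by transporting the problem to $\Lambda^{\mathrm{op}}$ via the standard duality $D$.
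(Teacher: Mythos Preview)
The paper does not provide its own proof of this statement; it merely cites \cite[Chapter~V, Theorem~3.2]{ASS}. Your argument is correct and is essentially the standard proof found in that reference: bounding the in- and out-degree of each vertex by reading off the first radical layer of the indecomposable projectives (and, dually via $D$, of the injectives), and conversely using the uniqueness of the outgoing arrow to see that $e_i\rad^k\Lambda$ is cyclic with simple top for every $k$.
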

\begin{proposition}
\label{taurigidnakayama}
 Let $\Lambda$ be a Nakayama algebra and $M\in \ind\Lambda$.
\begin{enumerate}[{\rm (1)}]
\item{\cite[Chapter\;V, Theorem\;3.5]{ASS}} There are $i\in Q_0$ and $r\in \Z_{\geq 0}$ such that
$M\simeq P_i/\rad^r P_i$. In particular, $\Lambda$ is representation-finite.
\item\cite[Proposition\;2.5]{A} Assume that $M$ is non-projective. Then $M$ is $\tau$-rigid if and only
 if its Loewy length $\ell\ell(M)$ is less than $n$.
\end{enumerate}
\end{proposition}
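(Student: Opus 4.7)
The plan is as follows. Part (1) is the classical structure theorem for Nakayama algebras: given $M\in \ind\Lambda$, take a projective cover $\pi:P_i\twoheadrightarrow M$. Since $\Lambda$ is Nakayama, $P_i$ is uniserial, so its submodules form a descending chain $P_i\supsetneq \rad P_i\supsetneq \rad^2 P_i\supsetneq\cdots$, and hence $\ker\pi=\rad^r P_i$ for some $r\geq 0$. This gives $M\cong P_i/\rad^r P_i$. Representation-finiteness is then immediate, as there are at most $\sum_{i\in Q_0}\ell\ell(P_i)$ such isomorphism classes.

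For Part (2), let $M=P_i/\rad^r P_i$ be non-projective, so $0<r<\ell\ell(P_i)$. I would first compute a minimal projective presentation: the submodule $\rad^r P_i$ is again uniserial, and its top is the simple module $S_j$ where $j$ is the target of the length-$r$ path starting at $i$ in $Q$. The projective cover of $\rad^r P_i$ is therefore $P_j$, yielding
\[
P_j \xrightarrow{\pi} P_i \longrightarrow M \longrightarrow 0.
\]
Applying the Nakayama functor $\nu$ and taking the kernel of $\nu\pi:I_j\to I_i$ produces $\tau M$; using that $I_j$ and $I_i$ are uniserial injectives over a Nakayama algebra, one shows that $\tau M$ is uniserial of length $r$, with top and socle determined explicitly by walking along the quiver $Q$.

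Next I would compute $\Hom(M,\tau M)$. Any nonzero homomorphism between uniserial modules over a Nakayama algebra is determined (up to scalar) by a matching of a quotient of the source with a submodule of the target, so $\Hom(M,\tau M)\neq 0$ precisely when the tops and socles of appropriate uniserial sections align. Translating this via the explicit description of $\tau M$, the condition reduces to the path of length $r$ wrapping around a full cycle of $Q$, i.e.\ to $r\geq n$, in the cyclic case. In the linear case ($Q$ of type $A_n$ with linear orientation), $r<\ell\ell(P_i)\leq n$ always holds; moreover $\Lambda$ is then hereditary of finite representation type, so every non-projective indecomposable is $\tau$-rigid automatically.

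The main obstacle will be the cyclic Nakayama case: correctly identifying $\tau M$ while handling indices modulo $n$, and verifying combinatorially that the composition-factor alignment giving a nonzero map $M\to\tau M$ exists precisely when $r\geq n$. Once the explicit form of $\tau M$ is in place, the equivalence $M$ is $\tau$-rigid $\Longleftrightarrow \ell\ell(M)<n$ follows by a direct index-bookkeeping argument.
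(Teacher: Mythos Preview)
The paper does not prove this proposition at all; both parts are stated as citations from the literature (\cite[Chapter V, Theorem 3.5]{ASS} and \cite[Proposition 2.5]{A}) with no accompanying argument. So there is no ``paper's proof'' to compare against---your proposal is effectively a reconstruction of the cited proofs.

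Your sketch for part (1) is the standard argument and is correct. For part (2), your plan---compute the minimal projective presentation $P_j\to P_i\to M\to 0$, identify $\tau M$ as a uniserial module of the same length via the Nakayama functor, and then analyse $\Hom(M,\tau M)$ combinatorially---is exactly the approach taken in Adachi's paper \cite{A}, and it works.

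One small correction: in the linear $A_n$ case you assert that $\Lambda$ is hereditary. This is false in general---a Nakayama algebra $KQ/I$ with $Q$ linear may well have nontrivial relations (e.g.\ $1\to 2\to 3$ with $\alpha\beta=0$). What is true, and what you need, is simply that $\ell\ell(P_i)\le n$ for every $i$, so $r<n$ is automatic for non-projective $M$; the ``if'' direction then follows from the same $\Hom(M,\tau M)$ computation you set up for the cyclic case (there is no wrap-around, hence no nonzero map). So drop the hereditary claim and just run your general argument uniformly.
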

By using Proposition\;\ref{taurigidnakayama}, each indecomposable two-term presilting object of a Nakayama algebra $\Lambda$
has one of the following forms:
\[[0\to P_i],\ [P_j\stackrel{g_j^i\cdot -}{\longrightarrow}P_i],\ [P_j\to 0],\]
where $g_j^i$ is a shortest path from $i$ to $j$ on $Q$. 
In particular, if $\ell\ell(P_i)\geq n$ holds for each $i\in Q_0$, then
we have
\[\ind(\tpsilt \Lambda)=\{[0\to P_i]\mid i\in Q_0\}\sqcup\{[P_j\stackrel{g_j^i\cdot -}{\longrightarrow}P_i]\mid i\neq j\in Q_0\}\sqcup\{[P_j\to 0]\mid j\in Q_0\}. \]
\begin{theorem}[{\cite[Theorem\;3.11]{A}}]
\label{adachireduction}

Let $\Lambda$ be a Nakayama algebra.
Assume that $\ell\ell(P_i)\geq n$ holds for any $i\in Q_0$. Then we have a poset isomorphism
\[\sttilt \Lambda\simeq \sttilt kC/R^n,\]
where $C$ is a cyclic quiver with $C_0:=\{1,\dots,n\}$ and $R=R_n:=\rad kC$.
\end{theorem}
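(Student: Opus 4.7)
The plan is to apply the reduction Theorem~\ref{reductiontominimal}, which collapses $\sttilt\Lambda$ onto $\sttilt\overline{\Lambda}$, and then to identify the minimal factor $\overline{\Lambda}$ with $KC/R^n$. For this I need to check (i) that $\Lambda$ satisfies Condition~\ref{cd}, (ii) that $\overline{\Lambda}\cong KC/R^n$, and (iii) that $\Lambda$ (equivalently $\overline{\Lambda}$, by Theorem~\ref{reductiontominimal}(1)) satisfies Assumption~\ref{assumption}.

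For (i), let $i\neq j$ with $e_i\Lambda e_j\neq 0$ and pick a shortest nonzero path $g$ from $i$ to $j$. Using the local-cycle relation $c_i g=g c_j$ (valid in any Nakayama algebra), every element of $e_i\Lambda e_j$ can be written both as $g\lambda$ with $\lambda\in e_j\Lambda e_j$ and as $\mu g$ with $\mu\in e_i\Lambda e_i$, so $g\in G^i_j(\Lambda)$. For (ii), the ideal $J=\Lambda\bigl(\sum_i e_i(\rad\Lambda)e_i\bigr)\Lambda$ is generated by the cycles $c_i$, so in $\overline{\Lambda}$ these cycles vanish. Under the hypothesis $\ell\ell(P_i)\ge n$, every path of length strictly less than $n$ remains nonzero in $\overline{\Lambda}$, while every path of length at least $n$ factors through some $c_i$ and therefore dies. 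This gives $\overline{\Lambda}\cong KC/R^n$ with $C=Q^\circ$ the cyclic $n$-quiver.

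The technical core is (iii). By the explicit description of $\ind(\tpsilt\Lambda)$ recalled just before the theorem, every indecomposable two-term presilting object is of the form $[0\to P_i]$, $[P_j\xrightarrow{g_j^i\cdot -}P_i]$, or $[P_j\to 0]$, and its associated quiver $Q^f$ has at most two vertices joined by at most one edge; this is manifestly a tree. For a general $T\in\tpsilt\Lambda$, I decompose $T=\bigoplus_k T_k$ into indecomposables and take the block-diagonal representative $f=\bigoplus_k f_k$. The resulting $Q^f$ is a disjoint union of such trees, and this suffices because the tree-ness in the proof of Lemma~\ref{teqlemma} is invoked only to make a compatible choice of radical elements $l_{t_q}$ and $l'_{t'_{q'}}$, a procedure that extends component by component over any acyclic quiver.

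The main obstacle is guaranteeing $\add U\cap\add U'=\{0\}$ for this direct-sum representative, that is, ruling out a fixed $P_i$ from appearing simultaneously as the $U'$-part of one indecomposable summand and the $U$-part of another. The plan is to rule this out using the presilting condition $\Hom_{\Kb(\proj\Lambda)}(T_k,T_\ell[1])=0$ for distinct summands, together with the injectivity of the $g$-vector map (Theorem~\ref{gvector}): the three restricted families of indecomposable two-term presilting objects of a cyclic Nakayama algebra with $\ell\ell(P_i)\ge n$ are too rigid to permit such clashes. If any minor cancellation of common summands is still required, the resulting off-diagonal components of $f$ cannot create undirected cycles in $Q^f$, since the relevant $\Hom$-spaces between indecomposable projectives are one-dimensional modulo cycles. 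Once Assumption~\ref{assumption} is verified, Theorem~\ref{reductiontominimal}(2) yields the desired poset isomorphism $\sttilt\Lambda\simeq\sttilt\overline{\Lambda}=\sttilt KC/R^n$.
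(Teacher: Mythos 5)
Your proposal is correct and follows essentially the route the paper intends for recovering this result: verify Condition~\ref{cd} and Assumption~\ref{assumption} for the cyclic Nakayama algebra via the explicit list of indecomposable two-term presilting complexes, apply Theorem~\ref{reductiontominimal} (equivalently Corollary~\ref{application}), and identify $\overline{\Lambda}\cong KC/R^n$ under the hypothesis $\ell\ell(P_i)\geq n$. The one place you overcomplicate matters is the ``main obstacle'': for the full basic presilting complex in its minimal form, $\add U\cap\add U'=\{0\}$ is immediate from Lemma~\ref{factformpp} together with $\add P\subset\add(1-e_M)\Lambda$, so no $g$-vector or rigidity argument is needed, and the paper (like you) only ever needs the tree condition on indecomposable summands.
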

We  generalize Theorem\;\ref{adachireduction} by applying Corollary\;\ref{application}.
\begin{proposition}
\label{nakayamacase}
Let $\Lambda=kC/I$ be a Nakayama algebra. Then
 we have $\T(\Lambda)=\T'(\Lambda)$.
\end{proposition}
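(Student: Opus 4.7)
The strategy is to apply Corollary \ref{application} to $\Lambda = KC/I$. This requires checking three items: (i) $\Lambda$ satisfies Condition \ref{cd}; (ii) $\#G_j^i(C,I) \leq 1$ for all $i,j \in C_0$; (iii) either $\Lambda$ or $\overline{\Lambda}$ satisfies Assumption \ref{assumption}. Once these are in hand, the equality $\T(\Lambda) = \T'(\Lambda)$ is immediate.

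I would handle (i) and (ii) simultaneously by exploiting the uniseriality of indecomposable projectives and injectives of any Nakayama algebra, which forces $\dim_K e_i \Lambda e_j \leq 1$ for all $i,j$. Whenever this dimension is $1$, the shortest cycle-free path $g_j^i$ from $i$ to $j$ (uniquely determined, in both the linear $A_n$ and cyclic cases) is nonzero in $\Lambda$ and spans $e_i\Lambda e_j$. Uniseriality of $e_i\Lambda$ and of $\Lambda e_j$ then forces every nonzero element of $e_i\Lambda e_j$ to be an invertible right $e_j\Lambda e_j$-multiple and an invertible left $e_i\Lambda e_i$-multiple of $g_j^i$, so $g_j^i \Lambda e_j = e_i\Lambda e_j = e_i\Lambda g_j^i$. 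Hence $G_j^i(C,I) = \{g_j^i\}$, yielding both (i) and (ii).

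For (iii), I would verify Assumption \ref{assumption} for $\overline{\Lambda}$. First observe that $\overline{\Lambda}$ is again Nakayama: in the linear case $e_i\rad\Lambda e_i = 0$ so $\overline{\Lambda} = \Lambda$, while in the cyclic case with $|C_0|=n$ one has $\overline{\Lambda} \cong KC/R^n$ with $R = \rad KC$. Proposition \ref{taurigidnakayama} together with the ensuing discussion then pins down every indecomposable two-term presilting complex of $\overline{\Lambda}$ as having one of the forms $[0 \to P_i]$, $[P_j \to 0]$, or $[P_j \stackrel{g_j^i\cdot -}{\longrightarrow} P_i]$ with $i\neq j$; for each, the quiver $Q^f$ is either a single vertex or a single arrow, hence a tree. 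For arbitrary $T \in \tpsilt \overline{\Lambda}$, the direct sum of such minimal presentations of its indecomposable summands gives a presentation $f\colon U'\to U$ with $\add U' \cap \add U = \{0\}$ (by Lemma \ref{factformpp}) whose $Q^f$ is a disjoint union of trees; the walk arguments in the proof of Lemma \ref{teqlemma} apply componentwise, so Assumption \ref{assumption} is satisfied. Applying Corollary \ref{application} then yields $\T(\Lambda) = \T'(\Lambda)$, and Theorem \ref{adachireduction} is recovered as a special case. The only delicate point I anticipate is tracking the linear-versus-cyclic case split when describing $\overline{\Lambda}$, together with checking that the componentwise tree structure of $Q^f$ for decomposable presilting objects is indeed enough to run Lemma \ref{teqlemma}.
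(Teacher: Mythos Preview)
Your overall strategy matches the paper's: the proposition is stated there as an immediate application of Corollary~\ref{application}, using Proposition~\ref{taurigidnakayama} to pin down the shape of indecomposable two-term presilting complexes and hence verify Assumption~\ref{assumption}.

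There is, however, a genuine error in your verification of (i) and (ii). You claim that uniseriality of indecomposable projectives and injectives forces $\dim_K e_i\Lambda e_j \leq 1$ for all $i,j$. This is false for cyclic Nakayama algebras whenever some Loewy length exceeds $n = |C_0|$: for instance, with $n=2$ and $I = \rad^4$, the module $e_1\Lambda$ has composition factors $S_1,S_2,S_1,S_2$, so $\dim_K e_1\Lambda e_2 = 2$. Consequently your next sentence, asserting that every nonzero element of $e_i\Lambda e_j$ is an \emph{invertible} multiple of $g_j^i$, is also false. What uniseriality actually gives you is that $e_i\Lambda e_j$ is cyclic over $e_j\Lambda e_j$ (and over $e_i\Lambda e_i$) with generator $g_j^i$: every element of $e_i\Lambda e_j$ is a (not necessarily invertible) multiple of $g_j^i$ on either side. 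That is exactly the statement $g_j^i\Lambda e_j = e_i\Lambda e_j = e_i\Lambda g_j^i$, so $G_j^i = \{g_j^i\}$ and Condition~\ref{cd} plus $\#G_j^i \leq 1$ follow. The fix is just to drop the dimension claim and argue directly from cyclicity.

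Your handling of (iii) is fine and agrees with the paper. Your observation that $Q^f$ for a decomposable presilting object is a disjoint union of trees, and that the inductive argument in Lemma~\ref{teqlemma} runs componentwise, is correct; the paper is slightly informal on this point but uses the tree hypothesis only for indecomposable $T$ and $S$ in Lemma~\ref{teqlemma}, which is all that is needed downstream.
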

 \subsection{Applications to Brauer tree algebras}
 \label{subsect4.3}
 Let $\Tree$ be a tree, $\m: \Tree_0\to \Z_{>0}$ a map from the set of vertices of $\Tree$ to the set of positive
 integers and $\co_v$ a cyclic ordering of the set of edges of $\Tree$ adjacent to a vertex $v$.
 Where cyclic ordering of a finite set $E$ is defined to be a bijection $c:E\to E$ such that
 $\{c^m(e)\mid m\in \Z\}=E$ for any $e\in E$, i.e., $c\in \Sym_E$ has the form $(e_0,e_1,\dots,e_{|E|-1})$.  
 For $(e,e')\in E\times E$ with $e'=c^m(e)$ ($0\leq m\leq |E|-1$), we set $[e,e']_c:=\{c^\ell(e)\mid 0\leq \ell\leq m\}$.
 \[
{\unitlength 0.1in%
\begin{picture}( 26.8000, 11.5000)(  7.7000,-20.2000)%
\put(7.7000,-21.5000){\makebox(0,0)[lb]{$E=\{e_m\mid m\in \Z/r\Z\}$, $r=\#E$, $c(e_m)=e_{m+1}$}}%
\put(22.0000,-10.0000){\makebox(0,0)[lb]{$e_0$}}%
%
\special{pn 8}%
\special{pa 2170 970}%
\special{pa 2138 974}%
\special{pa 2106 979}%
\special{pa 2075 984}%
\special{pa 2043 990}%
\special{pa 2012 997}%
\special{pa 1981 1005}%
\special{pa 1951 1014}%
\special{pa 1920 1024}%
\special{pa 1860 1046}%
\special{pa 1850 1050}%
\special{fp}%
%
\special{pn 8}%
\special{pa 1860 1046}%
\special{pa 1850 1050}%
\special{fp}%
\special{sh 1}%
\special{pa 1850 1050}%
\special{pa 1919 1044}%
\special{pa 1900 1030}%
\special{pa 1904 1007}%
\special{pa 1850 1050}%
\special{fp}%
\put(17.5000,-11.5000){\makebox(0,0)[lb]{$e_1$}}%
%
\special{pn 8}%
\special{pa 1710 1110}%
\special{pa 1650 1134}%
\special{pa 1620 1147}%
\special{pa 1591 1161}%
\special{pa 1563 1176}%
\special{pa 1536 1192}%
\special{pa 1510 1210}%
\special{pa 1485 1230}%
\special{pa 1461 1251}%
\special{pa 1437 1273}%
\special{pa 1420 1290}%
\special{fp}%
%
\special{pn 8}%
\special{pa 1437 1273}%
\special{pa 1420 1290}%
\special{fp}%
\special{sh 1}%
\special{pa 1420 1290}%
\special{pa 1481 1257}%
\special{pa 1458 1252}%
\special{pa 1453 1229}%
\special{pa 1420 1290}%
\special{fp}%
\put(12.9000,-13.9000){\makebox(0,0)[lb]{$e_2$}}%
%
\special{pn 8}%
\special{pn 8}%
\special{pa 1310 1440}%
\special{pa 1306 1447}%
\special{fp}%
\special{pa 1285 1477}%
\special{pa 1281 1484}%
\special{fp}%
\special{pa 1261 1514}%
\special{pa 1257 1521}%
\special{fp}%
\special{pa 1238 1552}%
\special{pa 1234 1559}%
\special{fp}%
\special{pa 1215 1590}%
\special{pa 1211 1597}%
\special{fp}%
\special{pa 1195 1630}%
\special{pa 1191 1637}%
\special{fp}%
\special{pa 1176 1670}%
\special{pa 1172 1677}%
\special{fp}%
\special{pa 1157 1711}%
\special{pa 1154 1718}%
\special{fp}%
\special{pa 1140 1751}%
\special{pa 1137 1759}%
\special{fp}%
\special{pa 1123 1793}%
\special{pa 1120 1800}%
\special{fp}%
%
\special{pn 8}%
\special{pa 2360 960}%
\special{pa 2392 964}%
\special{pa 2424 969}%
\special{pa 2455 974}%
\special{pa 2487 980}%
\special{pa 2518 987}%
\special{pa 2549 995}%
\special{pa 2579 1004}%
\special{pa 2610 1014}%
\special{pa 2670 1036}%
\special{pa 2680 1040}%
\special{fp}%
%
\special{pn 8}%
\special{pa 2392 964}%
\special{pa 2360 960}%
\special{fp}%
\special{sh 1}%
\special{pa 2360 960}%
\special{pa 2424 988}%
\special{pa 2413 967}%
\special{pa 2429 948}%
\special{pa 2360 960}%
\special{fp}%
\put(29.7000,-11.4000){\makebox(0,0)[rb]{$e_{r-1}$}}%
%
\special{pn 8}%
\special{pa 2880 1150}%
\special{pa 2940 1174}%
\special{pa 2970 1187}%
\special{pa 2999 1201}%
\special{pa 3027 1216}%
\special{pa 3054 1232}%
\special{pa 3080 1250}%
\special{pa 3105 1270}%
\special{pa 3129 1291}%
\special{pa 3153 1313}%
\special{pa 3170 1330}%
\special{fp}%
%
\special{pn 8}%
\special{pa 2910 1162}%
\special{pa 2880 1150}%
\special{fp}%
\special{sh 1}%
\special{pa 2880 1150}%
\special{pa 2934 1193}%
\special{pa 2930 1170}%
\special{pa 2949 1156}%
\special{pa 2880 1150}%
\special{fp}%
\put(34.0000,-14.6000){\makebox(0,0)[rb]{$e_{r-2}$}}%
%
\special{pn 8}%
\special{pn 8}%
\special{pa 3260 1460}%
\special{pa 3264 1467}%
\special{fp}%
\special{pa 3285 1497}%
\special{pa 3289 1504}%
\special{fp}%
\special{pa 3309 1534}%
\special{pa 3313 1541}%
\special{fp}%
\special{pa 3332 1572}%
\special{pa 3336 1579}%
\special{fp}%
\special{pa 3355 1610}%
\special{pa 3359 1617}%
\special{fp}%
\special{pa 3375 1650}%
\special{pa 3379 1657}%
\special{fp}%
\special{pa 3394 1690}%
\special{pa 3398 1697}%
\special{fp}%
\special{pa 3413 1731}%
\special{pa 3416 1738}%
\special{fp}%
\special{pa 3430 1771}%
\special{pa 3433 1779}%
\special{fp}%
\special{pa 3447 1813}%
\special{pa 3450 1820}%
\special{fp}%
\end{picture}}%
\vspace{5pt}\]

  Then $(\Tree,\m,\co)$ is said to be a {\bf generalized Brauer tree}. In this subsection, we assume that
  each generalized Brauer tree $(\Tree,\m,\co)$ satisfies that $\Tree$ is connected and $\#\Tree_1\geq 2$.   
 \begin{definition}
 \label{bta} Let $(\Tree,\m,\co)$ be a generalized Brauer tree. A basic algebra $\Lambda$ is said to be a
 generalized Brauer tree algebra associated with $(\Tree,\m,\co)$ if there is an assignment $i\mapsto S_i$ from
 edges of $\Tree$ to simple $\Lambda$-modules satisfying the following conditions:
 \begin{enumerate}[{\rm (i)}]
 \item $S_i\ (i\in \Tree_1)$ gives a complete set of representatives of isomorphism classes of simple
 $\Lambda$-modules.
 \item Let $P_i$ be the projective cover of $S_i$. Then $\top P_i\simeq \soc P_i\simeq S_i$.
 \item If $u\overset{i}{-}v$ with $\co_u=(i,i^{(u)}_1,\cdots, i^{(u)}_r)$ and $\co_v=(i,i^{(v)}_1,\cdots, i^{(v)}_s)$,
  then there is a direct sum decomposition
 \[\rad P_i/\soc P_i\simeq U_i\oplus V_i\] 
 satisfying $U_i$ and $V_i$ are uniserial modules with 
 \[U_i\simeq {\scriptsize \begin{array}{c}
 S_{i^{(u)}_1}\\
 \vdots\\
 S_{i^{(u)}_r}\vspace{2pt}\\
 S_i\\
 \vdots\\
 S_{i^{(u)}_1}\\
 \vdots\\
 S_{i^{(u)}_r}\\
 \end{array},\ 
 V_i\simeq \begin{array}{c}
 S_{i^{(v)}_1}\\
 \vdots\\
 S_{i^{(v)}_s}\vspace{2pt}\\
 S_i\\
 \vdots\\
 S_{i^{(v)}_1}\\
 \vdots\\
 S_{i^{(v)}_s}\\
 \end{array}}, 
 \] 
 where $S_i$ appears $\m(u)-1$ (resp. $\m(v)-1$) times in $U_i$ (resp. $V_i$).    
 \end{enumerate} 
  \end{definition}
\begin{remark}
\label{rembta}
 Let $\Lambda=KQ/I$ be a generalized Brauer tree algebra associated with a generalized Brauer tree $(\Tree,\m,\co)$.
 We may assume that $Q_0=\Tree_1$ via the assignment $i\mapsto S_i$.
We write $i\stackrel{v}{\sim} j$ if $i$ and $j$ adjacent to $v$.   
By Definition\;\ref{bta}, we see the following statements:
\begin{enumerate}[{\rm (1)}]
\item Let $i\neq j\in Q_0$. Then there is an arrow from $i$ to $j$ if and only if
there exists a vertex $v$ of $\Tree$ such that $i\stackrel{v}{\sim}j$ and
$j=\co_v(i)$. Further more, since $\Tree$ is a tree, the number of arrows from $i$ to $j$ is at most one for each $(i,j)\in Q_0\times Q_0$. 
\item Consider $i_1, i_2, i_3\in Q_0$ such that $i_2=\co_u(i_1)$ and 
$i_3=\co_v(i_2)$ for some $u,v\in \Tree_0$. Let $\alpha$ (resp. $\beta$)
be the arrow of $Q$ corresponding to $(i_1,i_2)$ (resp. $(i_2,i_3)$).
Then $\alpha\beta=0$ if $u\neq v$. 
\item $e_i \Lambda e_j\neq 0$ if and only if there exists a vertex $v$ of $\Tree$ such that $i\stackrel{v}{\sim}j$ 
(since $\Tree$ is a tree, $v$ is unique if exists).
Moreover, for cyclic ordering $\co_v=(i=i_0, i_1, \dots, i_t=j, \dots, i_r)$, it follows from (2) that
\[e_i \Lambda e_j=\sum_{\ell\in \Z_{\geq 0}}K(\alpha_1\alpha_2\cdots\alpha_r)^\ell\alpha_1\cdots\alpha_j,\]
where $\alpha_t$ is an arrow from $i_{t-1}$ to $i_t$ corresponding to $(i_{t-1},i_t)$.
 In this case, we denote by $g^i_j$ the path $\alpha_1\cdots\alpha_t$ and obtain $G^i_j(\Lambda)=\{g^i_j\}$. 
\end{enumerate}
\end{remark}


 It is well-known that a generalized Brauer tree algebra is a special biserial algebra.
 There is a nice description of indecomposable modules.
\begin{theorem}[{\cite{WW}}] 
\label{WW}
Let $\Lambda$ be a special biserial algebra.
\begin{enumerate}[{\rm (1)}]
\item Each indecomposable $\Lambda$-module is either a string module, a band module 
or a non-uniserial projective-injective modules.
\item Let $M$ be a string module and $P^{(1)}\stackrel{f}{\to} P^{(0)}\to M\to 0$
the minimal projective presentation. Then $P^{(1)}\stackrel{f}{\to} P^{(0)}$ has following
form$:$
\[
{\unitlength 0.1in%
\begin{picture}( 16.5000, 36.7000)( 22.7000,-44.6000)%
\put(24.0000,-14.0000){\makebox(0,0)[lb]{$P_{j_0}$}}%
\put(24.0000,-23.4000){\makebox(0,0)[lb]{$P_{j_1}$}}%
\put(39.2000,-18.8000){\makebox(0,0)[lb]{$P_{i_1}$}}%
\put(23.3000,-35.8000){\makebox(0,0)[lb]{$P_{j_{m-1}}$}}%
\put(24.0000,-45.4000){\makebox(0,0)[lb]{$P_{j_m}$}}%
\put(39.2000,-40.8000){\makebox(0,0)[lb]{$P_{i_m}$}}%
\put(39.0000,-31.6000){\makebox(0,0)[lb]{$P_{i_{m-1}}$}}%
%
\special{pn 4}%
\special{sh 1}%
\special{ar 3320 2540 8 8 0  6.28318530717959E+0000}%
\special{sh 1}%
\special{ar 3320 2740 8 8 0  6.28318530717959E+0000}%
\special{sh 1}%
\special{ar 3320 2940 8 8 0  6.28318530717959E+0000}%
\put(32.2000,-16.1000){\makebox(0,0)[lb]{$f_{j_0}^{i_1}$}}%
\put(32.2000,-21.4000){\makebox(0,0)[lb]{$f_{j_1}^{i_1}$}}%
\put(22.7000,-9.2000){\makebox(0,0)[lb]{$(-1\mathrm{th})$}}%
\put(38.5000,-9.2000){\makebox(0,0)[lb]{$(0\mathrm{th})$}}%
%
\special{pn 8}%
\special{pa 2700 1380}%
\special{pa 3080 1490}%
\special{fp}%
%
\special{pn 8}%
\special{pa 3600 1660}%
\special{pa 3860 1750}%
\special{fp}%
\special{sh 1}%
\special{pa 3860 1750}%
\special{pa 3804 1709}%
\special{pa 3810 1733}%
\special{pa 3790 1747}%
\special{pa 3860 1750}%
\special{fp}%
%
\special{pn 8}%
\special{pa 2720 2260}%
\special{pa 3100 2150}%
\special{fp}%
%
\special{pn 8}%
\special{pa 3600 1990}%
\special{pa 3860 1910}%
\special{fp}%
\special{sh 1}%
\special{pa 3860 1910}%
\special{pa 3790 1910}%
\special{pa 3809 1926}%
\special{pa 3802 1949}%
\special{pa 3860 1910}%
\special{fp}%
\special{pa 3790 1700}%
\special{pa 3790 1700}%
\special{fp}%
\put(31.8000,-38.6000){\makebox(0,0)[lb]{$f_{j_{m-1}}^{i_m}$}}%
%
\special{pn 8}%
\special{pa 2700 3580}%
\special{pa 3080 3690}%
\special{fp}%
%
\special{pn 8}%
\special{pa 3600 3860}%
\special{pa 3860 3950}%
\special{fp}%
\special{sh 1}%
\special{pa 3860 3950}%
\special{pa 3804 3909}%
\special{pa 3810 3933}%
\special{pa 3790 3947}%
\special{pa 3860 3950}%
\special{fp}%
%
\special{pn 8}%
\special{pa 2720 4460}%
\special{pa 3100 4350}%
\special{fp}%
%
\special{pn 8}%
\special{pa 3600 4190}%
\special{pa 3860 4110}%
\special{fp}%
\special{sh 1}%
\special{pa 3860 4110}%
\special{pa 3790 4110}%
\special{pa 3809 4126}%
\special{pa 3802 4149}%
\special{pa 3860 4110}%
\special{fp}%
\special{pa 3790 3900}%
\special{pa 3790 3900}%
\special{fp}%
\put(31.9000,-43.9000){\makebox(0,0)[lb]{$f_{j_m}^{i_m}$}}%
%
\special{pn 8}%
\special{pa 2720 3460}%
\special{pa 3100 3350}%
\special{fp}%
%
\special{pn 8}%
\special{pa 3600 3190}%
\special{pa 3860 3110}%
\special{fp}%
\special{sh 1}%
\special{pa 3860 3110}%
\special{pa 3790 3110}%
\special{pa 3809 3126}%
\special{pa 3802 3149}%
\special{pa 3860 3110}%
\special{fp}%
\special{pa 3790 2900}%
\special{pa 3790 2900}%
\special{fp}%
\put(31.9000,-33.8000){\makebox(0,0)[lb]{$f_{j_{m-1}}^{i_{m-1}}$}}%
\end{picture}}%
\vspace{5pt}\] 
where  $0\neq f_j^i\in e_i \Lambda e_j$ and $P_{j_0}$ and $P_{j_m}$ are possibly zero.
Moreover, if $i_t\stackrel{v}{\sim} j_{t-1}$ and $i_t\stackrel{u}{\sim} j_t$, then $v\neq u$.
Also, if $i_t\stackrel{u}{\sim} j_t$ and $i_{t+1}\stackrel{v}{\sim} j_t$, then $v\neq u$.
\item Each band module is $\tau$-stable.
\end{enumerate} 
\end{theorem}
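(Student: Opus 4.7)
The plan is to follow the classical route via string and band combinatorics. First I would fix notation: a \emph{string} is a reduced walk $w=c_1^{\epsilon_1}\cdots c_m^{\epsilon_m}$ in $Q$ with $c_i\in Q_1$, $\epsilon_i\in\{\pm 1\}$, such that no substring $c_ic_{i+1}^{-1}$ or $c_i^{-1}c_{i+1}$ appears, no subword lies in $I$, and $w$ is taken up to rotation/inversion; a \emph{band} is a cyclic string that is not a proper power. The special biserial hypotheses—(a) at most two arrows start and at most two end at each vertex, (b) for each arrow $\alpha$ there is at most one $\beta$ (resp.\ $\gamma$) with $\alpha\beta\notin I$ (resp.\ $\gamma\alpha\notin I$), and (c) $I$ is generated by monomial and binomial relations—ensure that these combinatorial objects behave well, and in particular that the associated string/band modules $M(w)$ and $M(w,\lambda,n)$ are well-defined.

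For part (1), the approach is the functorial filtration method in the spirit of Gelfand--Ponomarev and Butler--Ringel: one puts filtrations on every $\Lambda$-module indexed by strings, shows that after a sequence of matrix-problem reductions these filtrations split, and identifies the pieces with string and band modules. The non-uniserial projective-injective modules appear precisely because a binomial relation $\alpha\beta-\gamma\delta\in I$ glues two uniserial halves into an indecomposable that is neither a string nor a band module; these must be peeled off separately before the reduction proceeds.

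For part (2), I would read the top of $M(w)$ off the local peaks of $w$ (vertices where both adjacent letters of $w$ are formal inverses, i.e.\ point outward), yielding $P^{(0)}=\bigoplus_t P_{i_t}$. The first syzygy $\Omega M(w)$ decomposes into smaller string modules indexed by the valleys of $w$, together with tails at the two endpoints; taking projective covers of these gives $P^{(1)}=\bigoplus_t P_{j_t}$ in the displayed alternating shape, with the boundary summands $P_{j_0}$ and $P_{j_m}$ possibly zero (precisely when $w$ already ends in a direct letter at that side). The constraint that the two bonds at each $i_t$ (respectively $j_t$) come from distinct vertices $u,v$ of the underlying Brauer tree is forced by (b): the unique continuation of any non-zero arrow in $\Lambda$ switches from one of the two uniserial directions at a vertex to the other.

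For part (3), I would compute $\tau M(w,\lambda,n)$ by applying the Nakayama functor to the minimal projective presentation obtained as in (2) from the cyclic string underlying $w$. Because $w$ is cyclic, the peaks and valleys come in matching pairs and the resulting module is again a band module for the same band (up to the identification of rotations/inversions), with the same parameter $\lambda$ and the same dimension $n$. Hence $\tau M(w,\lambda,n)\cong M(w,\lambda,n)$. The main obstacle across all three parts is the careful bookkeeping needed to handle the boundary cases—strings of length $0$ or $1$, tails at the endpoints, and the exceptional non-uniserial projective-injective indecomposables in (1)—which, although combinatorial in nature, form the bulk of the Wald--Waschbüsch argument.
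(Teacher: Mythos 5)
The paper offers no proof of this statement: it is quoted directly from Wald--Waschb\"usch \cite{WW} (cf.\ also Butler--Ringel), so there is no internal argument to compare against. Your outline --- functorial filtrations for the classification in (1), reading the minimal projective presentation off the peaks and valleys of the underlying string for (2), and applying the Nakayama functor to the cyclic presentation of a band for (3) --- is exactly the standard argument of the cited sources, and the boundary cases you flag as ``bookkeeping'' are indeed where the bulk of the original work lies.
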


Let $M$ be an indecomposable $\tau$-rigid module. Then Theorem\;\ref{WW} implies that
$M$ is a string module. 
Let $P^{(1)}\stackrel{f}{\to} P^{(2)}$ be as in Theorem\;\ref{WW} (2). 
Since $\Tree$ is a tree and $j_s\neq i_t$ for any $s,t$ (see Lemma\;\ref{factformpp}), 
it is easy to check that $j_s\neq j_{s'}$ ($s\neq s'$) and $i_t\neq i_{t'}$ ($t\neq t'$). 
In particular,  generalized Brauer Tree algebras are $\tau$-tilting finite.
Thus we can apply Corollary\;\ref{application} to generalized Brauer tree algebras.
\begin{proposition}
\label{btacase}
Let $\Lambda$ be a generalized Brauer tree algebra associated with $(\Tree,\m,\co)$. Then $\T(\Lambda)=\T'(\Lambda)$.
In particular, $\sttilt \Lambda$ does not depend on $\m$.
\end{proposition}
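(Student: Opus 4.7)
The plan is to apply Corollary \ref{application} to $\Lambda$. The first two hypotheses are immediate from Remark \ref{rembta}(3): whenever $e_i \Lambda e_j \neq 0$, the unique shortest path $g^i_j$ along the cyclic order at the common vertex witnesses $G^i_j(\Lambda) = \{g^i_j\}$, so Condition \ref{cd} holds and $\#G^i_j(\Lambda) \leq 1$ for every $i,j$.

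The substantive step is verifying Assumption \ref{assumption}. Since band modules are $\tau$-stable by Theorem \ref{WW}(3), they cannot be direct summands of $\tau$-rigid modules, so every indecomposable $\tau$-rigid $\Lambda$-module is a string module. Consequently each indecomposable two-term presilting complex of $\Lambda$ has one of the shapes $[0 \to P_i]$, $[P_j \to 0]$, or the minimal projective presentation $[P^{(-1)} \xrightarrow{f} P^{(0)}]$ of a non-projective string $\tau$-rigid module as described in Theorem \ref{WW}(2). The first two cases give a single-vertex $Q^f$; in the third, the paragraph preceding this proposition already notes that the projectives $P_{j_0}, \dots, P_{j_m}$ and $P_{i_1}, \dots, P_{i_m}$ are pairwise distinct, and Theorem \ref{WW}(2) lists precisely which compositions $P_{j_s} \hookrightarrow P^{(-1)} \xrightarrow{f} P^{(0)} \twoheadrightarrow P_{i_t}$ are nonzero (only $f^{i_t}_{j_{t-1}}$ and $f^{i_t}_{j_t}$). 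Hence $Q^f$ is a connected zigzag path on at most $2m+1$ vertices with exactly one fewer arrow, which is a tree. For a general $T \in \tpsilt \Lambda$, assembling these presentations summand-wise yields a representation whose $Q^f$ is a disjoint union of such trees.

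Corollary \ref{application} then gives $\T(\Lambda) = \T'(\Lambda)$. For the \emph{in particular} assertion, Theorem \ref{reductiontominimal}(2) provides a poset isomorphism $\sttilt \Lambda \simeq \sttilt \overline{\Lambda}$, and by construction $\overline{\Lambda} = \Lambda/J$ kills precisely the cyclic flowers $e_i \rad \Lambda e_i$ at each vertex of $\Tree$, which is where the multiplicities $\m(v)$ enter the defining relations. Thus $\overline{\Lambda}$ depends only on $(\Tree, \co)$, making $\sttilt \Lambda$ independent of $\m$. The main obstacle I foresee is the tree verification for $Q^f$ in the zigzag case; however this reduces quickly to the explicit enumeration of nonzero compositions in Theorem \ref{WW}(2) together with the distinctness of the projective summands already established.
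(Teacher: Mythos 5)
Your proposal is correct and follows essentially the same route as the paper, whose own justification is the paragraph immediately preceding the proposition: Condition \ref{cd} and $\#G^i_j\le 1$ come from Remark \ref{rembta}(3), Assumption \ref{assumption} is verified by noting that every indecomposable $\tau$-rigid module is a string module whose minimal projective presentation (Theorem \ref{WW}(2), together with the pairwise distinctness of the $P_{j_s}$ and $P_{i_t}$) yields a zigzag, hence tree-shaped, $Q^f$, and then Corollary \ref{application} applies. Your derivation of the independence from $\m$ via $\overline{\Lambda}$ is a harmless repackaging of the paper's observation that varying $\m$ does not change $(Q^{\circ},\,\supp e_i\Lambda,\,G(\Lambda))$, so the two algebras are $\sim$-equivalent.
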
 
\subsection{Applications to preprojective algebras of type $A$}
\label{subsect:preproja}
Let $\Pi$ be the preprojective algebra of type $A_n$, i.e., $\Pi$ is 
given by the following quiver and relations:
\[\begin{xy}
(0,0)*[o]+{1}="A",(14,0)*[o]+{2}="B",(30,0)*[o]+{\cdots}="C",(46,0)*[o]+{n}="D",
\ar @<3pt> "A";"B"^{\alpha_1}
\ar @<3pt> "B";"A"^{\alpha_1^*}
\ar @<3pt> "B";"C"^{\alpha_2}
\ar @<3pt> "C";"B"^{\alpha_2^*}
\ar @<3pt> "C";"D"^{\alpha_{n-1}}
\ar @<3pt> "D";"C"^{\alpha_{n-1}^*}
\end{xy}
\]
\[
\alpha_1\alpha_1^*=0,\ \alpha_i^*\alpha_i=\alpha_{i+1}\alpha_{i+1}^*\;(1\leq i \leq n-2),
\ \alpha_{n-1}^*\alpha_{n-1}=0. 
\]

Then it is known that each indecomposable two-term silting object has
the following form (see \cite[Section\;6.1]{IRRT}, \cite[Lemma\;6.7]{K2} for example):
\[\begin{xy}
(0,0)*[o]+{\text{(-1th})}="A",(30,0)*[o]+{\text{(0th)}}="B",
(0,-10)*[o]+{P_{j_0}}="C",(30,-20)*[o]+{P_{i_1}}="D",
(0,-30)*[o]+{P_{j_1}}="E",(30,-40)*[o]+{P_{i_{m-1}}}="F",
(0,-50)*[o]+{P_{j_{m-1}}}="G",(30,-60)*[o]+{P_{i_m}}="H",
(0,-70)*[o]+{P_{j_{m}}}="I",(15,-35)*[o]+{\vdots}="K"
\ar "C";"D"
\ar "E";"D"
\ar "G";"F"
\ar "G";"H"
\ar "I";"H"
\end{xy}
\] 
where $0\leq j_0< i_0 <j_1<\cdots < i_{m-1}<j_{m-1}<i_m<j_m \leq n+1$
and $P_0=0=P_{n+1}$.
Hence we can apply Corollary\;\ref{application} to preprojective algebras of type $A$
and recover \cite[Theorem\;3.3]{K2}.

\section{Finite support $\tau$-tilting posets of $2$-point algebras}
\label{subsect4.1}
Let $\Lambda$ be a $\tau$-tilting finite algebra.
Take a full subquiver $u\in \U^+_2$
 of $\H(\sttilt \Lambda)$ and $X:=\kappa^+(u)\in \trigidp \Lambda$, i.e.
$u$ has the following form 
\[
\begin{xy}
(0,0)*[o]+{T}="A", (-10,10)*[o]+{T'}="B", (10,10)*[o]+{T''}="C",
\ar "B";"A"
\ar "C";"A"
\end{xy}
\] 
and $\add X=\add \widetilde{T} \cap \add \widetilde{T}'\cap \add \widetilde{T}''$, where $\widetilde{T}$ (resp. $\widetilde{T}',\widetilde{T}'')$ is
the $\tau$-tilting pair corresponding to $T$ (resp. $T', T''$).
 
Then it follows from Corollary\;\ref{preservingsupport} that
\[[T,T' \vee T'']\simeq \sttilt_X \Lambda.\]  
On the other hand, Theorem\;\ref{reduction theorem} implies that $\sttilt_X \Lambda$ is isomorphic to some $2$-regular finite support $\tau$-tilting poset $\P$. 
It shows that any support $\tau$-tilting poset is a "union" of $2$-regular finite support $\tau$-tilting posets. Therefore to determine possible shapes of $2$-regular finite support $\tau$-tilting posets
is an interesting problem.

Let $\Lambda=KQ/I$ be a $\tau$-tilting finite algebra with $Q_0=\{1,2\}$.
Since $\sttilt \Lambda$ is connected, $2$-regular and a finite lattice, $\sttilt \Lambda$ is isomorphic to
$\P_{\ell,\ell'}$ for some $\ell,\ell'\in \Z_{\geq 1}$, where $\P_{\ell,\ell'}$ is a poset given by the following quiver:  
\[\begin{xy}
(0,0)*[o]+{s}="A", (7,7)*[o]+{y_1}="B", (18,7)*[o]+{\cdots}="C", (29,7)*[o]+{y_{\ell'}}="D",
 (7,-7)*[o]+{x_1}="E", (18,-7)*[o]+{\cdots}="F", (29,-7)*[o]+{x_{\ell}}="G", (36,0)*[o]+{t}="H",
\ar "A";"B"
\ar "B";"C"
\ar "C";"D"
\ar "D";"H"
\ar "A";"E"
\ar "E";"F"
\ar "F";"G"
\ar "G";"H"
\end{xy}\]
Conversely, each $\P_{\ell,\ell'}$ is realized as a support $\tau$-tilting poset.
\begin{proposition}
\label{2pointex}
Let $Q^{(\ell,\ell')}$ be a finite quiver with two vertices $1,2$ and 
\[Q^{(\ell,\ell')}_1=\left\{\begin{array}{ll} 
\begin{array}{lll}
 \{a_0:1\to 2\} &\cup&  \{a_i:1\to 1\mid 1\leq i\leq \ell-2\}\\
& \cup & \{b_0:2\to 1\}  \\
& \cup & \{b_{i'}:2\to 2\mid 1\leq i'\leq \ell'-2\} \\
\end{array} &  \text{if\ $\ell,\ell'\geq 2$}  \\\\
  \{b_0:2\to 1\}\cup \{b_{i'}:2\to 2\mid 1\leq i'\leq \ell'-2\} &   
  \text{if\ $\ell=1,\ell'\geq 2$} \\\\
 \{a_0:1\to 2\}\cup \{a_i:1\to 1\mid 1\leq i\leq \ell-2\} & 
  \text{if\ $\ell\geq 2,\ell'=1$}\\\\
 \emptyset & 
  \text{if\ $\ell=\ell'=1$}\\\\
 \end{array}\right.
\]
$I^{(\ell,\ell')}$ denotes an admissible ideal of $KQ^{(\ell,\ell')}$ generated by
\[a_ia_j\ (i-j\neq 1),\ b_{i'}b_{j'}\ (i'-j' \neq 1),\ a_ib_{i'},\ b_{i'}a_{i}\ (\forall i,i').\]
We set $\Lambda^{(\ell,\ell')}=KQ^{(\ell,\ell')}/I^{(\ell,\ell')}$. Then we have a poset isomorphism 
\[\sttilt \Lambda^{(\ell,\ell')}\simeq \P_{\ell,\ell'}.\]
\end{proposition}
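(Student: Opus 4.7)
The plan is to classify the indecomposable two-term presilting complexes of $\Lambda := \Lambda^{(\ell,\ell')}$ via their $g$-vectors in $K_0(\proj \Lambda) \cong \mathbb{Z}^2$, using the bijection $\mathbf{S} : \sttilt \Lambda \xrightarrow{\sim} \tsilt \Lambda$ of Theorem \ref{bijection} and the injectivity of $T \mapsto g^T$ of Theorem \ref{gvector}. Since $|\Lambda|=2$, two-term silting complexes correspond to compatible pairs of indecomposable presilting complexes, and the Hasse structure of $\tsilt \Lambda$ is read off from the cyclic arrangement of their $g$-vector rays in $\mathbb{R}^2$.

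First I would exhibit an explicit list of $\ell+\ell'+2$ indecomposable presilting complexes: the four axial complexes $[0 \to P_i]$ and $[P_i \to 0]$ for $i\in\{1,2\}$ (with $g$-vectors $(\pm 1,0)$ and $(0,\pm 1)$); for $k=1,\ldots,\ell-1$, the complex $T^{(1)}_k := [P_2^{\oplus k} \xrightarrow{f^{(1)}_k} P_1]$ where $f^{(1)}_k$ has components $a_0, a_1 a_0, \ldots, a_{k-1}\cdots a_0 \in e_1 \Lambda e_2$, with $g$-vector $(1,-k)$; and dually $T^{(2)}_k := [P_1^{\oplus k} \xrightarrow{f^{(2)}_k} P_2]$ for $k=1,\ldots,\ell'-1$, with $g$-vector $(-k,1)$. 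The cokernel of $T^{(1)}_{\ell-1}$ is exactly $X_1 = e_1 \Lambda / e_1 \Lambda e_2 \Lambda$, and intermediate $T^{(1)}_k$ yield intermediate $\tau$-rigid modules with both supports.

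To verify that each $T^{(1)}_k$ is presilting, I would compute $\Hom_{\Kb(\proj \Lambda)}(T^{(1)}_k, T^{(1)}_k[1])=0$: chain maps reduce to elements of $\Hom(P_2^{\oplus k}, P_1) = (e_1 \Lambda e_2)^k$, while null-homotopies are spanned by $h^0 \circ f^{(1)}_k + f^{(1)}_k \circ h^{-1}$ with $h^0 \in \End(P_1) = e_1 \Lambda e_1$ and $h^{-1} \in \End(P_2^{\oplus k})$. Matrix units of $\End(P_2^{\oplus k})$ permit placing each component $a_i\cdots a_0$ of $f^{(1)}_k$ in any position (filling the first $k$ columns of the Hom matrix arbitrarily), while left multiplication by paths $a_j\cdots a_{j'+1} \in e_1 \Lambda e_1$ via $h^0$ places $a_j\cdots a_0$ in position $j'$ (filling the remaining columns). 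A direct combinatorial check then shows null-homotopies exhaust $(e_1 \Lambda e_2)^k$. The dual argument handles $T^{(2)}_k$, and indecomposability of each $T^{(1)}_k$ follows because it is a minimal projective presentation with $\add P_2^{\oplus k} \cap \add P_1 = \{0\}$, per Lemma \ref{factformpp}.

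The main obstacle is the completeness argument: showing no other indecomposable presilting complex exists. By Theorem \ref{gvector} this reduces to excluding unlisted $g$-vectors. A $g$-vector $(a,b)$ with both coordinates of the same nonzero sign forces the complex to be a direct sum of (shifted) projectives, hence decomposable outside the axial cases; and a $g$-vector $(a,b)=(n,-m)$ with $n\ge 2$ (symmetrically for the other off-diagonal) is ruled out by analyzing $\Hom(P_2^m,P_1^n) = (e_1 \Lambda e_2)^{mn}$ together with Lemma \ref{factformpp} to show no such indecomposable presilting complex exists outside the claimed family. Once completeness is established, the Hasse quiver of $\sttilt \Lambda \simeq \tsilt \Lambda$ follows from silting mutation: traversing the fan clockwise from $\Lambda=\{(1,0),(0,1)\}$ to $0=\{(-1,0),(0,-1)\}$ through the fourth quadrant gives $\ell+1$ mutations, while counter-clockwise through the second quadrant gives $\ell'+1$, exactly reproducing the two branches of $\P_{\ell,\ell'}$.
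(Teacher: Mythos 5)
Your overall strategy --- classify \emph{all} indecomposable two-term presilting complexes by their $g$-vectors and then read the Hasse quiver off the resulting fan --- is different from the paper's, and it is precisely at the step you yourself flag as ``the main obstacle'' that the argument is not actually given. Ruling out an indecomposable presilting complex $[P_2^{\oplus m}\to P_1^{\oplus n}]$ with $n,m\geq 2$ is a real computation (injectivity of $T\mapsto g^T$ in Theorem~\ref{gvector} does not by itself exclude such $g$-vectors, and for general $2$-point algebras such complexes do occur), and ``analyzing $\Hom(P_2^{\oplus m},P_1^{\oplus n})$ together with Lemma~\ref{factformpp}'' is a statement of intent rather than a proof. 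A second, related gap: even granting the classification, to read the Hasse quiver off the cyclic arrangement of rays you must know which \emph{pairs} of indecomposables are compatible (i.e.\ that $T^{(1)}_k\oplus T^{(1)}_{k+1}$ is silting, and that non-adjacent pairs are not); you only verify the self-Ext vanishing $\Hom_{\Kb(\proj\Lambda)}(T^{(1)}_k,T^{(1)}_k[1])=0$, and the cross terms are exactly where the work lies.

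The paper's proof shows that the whole completeness question can be bypassed. It exhibits the same two families of complexes (your $T^{(1)}_k$ is $\mathbf{S}(X^{(k)})$ with $X^{(r)}=e_1\Lambda/(\sum_{t=0}^{r-1}a_t\cdots a_0\Lambda)$), proves by explicit null-homotopies that $\Hom_{\Kb(\proj\Lambda)}(\mathbf{S}(X^{(r)}),\mathbf{S}(X^{(r')})[1])=0$ for $r\leq r'$ --- in particular for the adjacent pairs --- and thereby produces two directed paths $\Lambda\to\cdots\to X_1\to 0$ and $\Lambda\to\cdots\to X_2\to 0$ in $\H(\sttilt\Lambda^{(\ell,\ell')})$ of lengths $\ell+1$ and $\ell'+1$. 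Since $\sttilt$ of a $2$-point algebra is $2$-regular (Theorem~\ref{basicprop}(2)), the union of these two chains is a finite connected component of the Hasse quiver, hence equals all of it by Theorem~\ref{basicprop}(4); no classification of the remaining indecomposable presilting objects is needed. If you want to salvage your approach, you should either carry out the exclusion of $g$-vectors $(n,-m)$ with $n,m\geq 2$ in detail (and verify compatibility of adjacent rays), or replace the fan argument by this connected-component argument, which only uses the objects you have already constructed.
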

\begin{proof}
We set 
\[X^{(r)}:=e_1\Lambda/(\overset{r-1}{\underset{t=0}{\Sigma}}a_t\cdots a_0 \Lambda)\ \text{and\ }Y^{(s)}:=e_2\Lambda/(\overset{s-1}{\underset{t=0}{\Sigma}}b_t\cdots b_0 \Lambda)\]
for any $r\in \{0,1,\dots, \ell-1\}$ and 
$s\in \{0,1,\dots, \ell'-1\}$. Since $e_1\Lambda e_2 \Lambda$ (resp. $e_2 \Lambda e_1 \Lambda$) 
is spanned by $\{a_t\cdots a_0\mid 0\leq t\leq r-1\}$ (resp. $\{b_t\cdots b_0\mid 0\leq t\leq s-1\}$),
we have $X^{(\ell-1)}=X_1$ and  $Y^{(\ell'-1)}=X_2$. It is also easy to check 
\[\sum_{t=0}^{r-1}a_t\cdots a_0 \Lambda=\bigoplus_{t=0}^{r-1}a_t\cdots a_0 \Lambda\ \text{and}\ 
\sum_{t=0}^{s-1}b_t\cdots b_0 \Lambda=\bigoplus_{t=0}^{s-1}b_t\cdots b_0 \Lambda.\]
In particular, $X^{(r)}\not\simeq X^{(r')}$ (resp. $Y^{(s)}\not\simeq Y^{(s')}$) if $r\neq r'$ (resp. $s\neq s'$).    
Let $f_t:P_2\to P_1$ be the left multiplication by $a_t\cdots a_0$ and $g_t:P_1\to P_2$ the left multiplication by $b_t\cdots b_0$.
Then a minimal projective presentation of $X^{(r)}$ is given by 
\[d_X^{(r)}:=(f_t)_{t=0}^{r-1}:\bigoplus_{t=0}^{r-1}P_2^{(t)}=P_2^{\oplus r}\to P_1\]
and a minimal projective presentation of $Y^{(s)}$ is given by 
\[d_Y^{(s)}:=(g_t)_{t=0}^{s-1}:\bigoplus_{t=0}^{s-1}P_1^{(t)}=P_1^{\oplus s}\to P_2.\]
Thus we have $\mathbf{S}(X^{(r)})=[P_2^{\oplus r}\stackrel{d_X^{(r)}}{\longrightarrow} P_1]$ and 
$\mathbf{S}(Y^{(s)})=[P_1^{\oplus s}\stackrel{d_Y^{(s)}}{\longrightarrow} P_2]$.
One sees that if $r\leq r'$ (resp. $s\leq s'$), then we have
\[\Hom_{\Kb(\proj \Lambda)}(\mathbf{S}(X^{(r)}), \mathbf{S}(X^{(r')})[1])=0\ (\text{resp}.\;\Hom_{\Kb(\proj \Lambda)}(\mathbf{S}(Y^{(s)}), \mathbf{S}(Y^{(s')})[1])=0).\] 
 We show  $\Hom_{\Kb(\proj \Lambda)}(\mathbf{S}(X^{(r)}), \mathbf{S}(X^{(r-1)})[1])=0$.
Denote by $f_p^{(t)}$ the composition map $f_p\circ \pi_t:\bigoplus_{t=0}^{r-1}P_2^{(t)}=P_2^{\oplus r}\to P_1$,
where $\pi_t$ is the canonical surjection $\bigoplus_{t=0}^{r-1}P_2^{(t)}\surj P_2^{(t)}$.
We regard $f_p^{(t)}$ as a morphism in $\Hom_{\Kb(\proj \Lambda)}(\mathbf{S}(X^{(r)}), \mathbf{S}(X^{(r-1)})[1])$ by the natural way.
Then it is sufficient to check that $f_p^{(t)}=0$ in $\Kb(\proj \Lambda)$ for any $p\in \{0,1,\dots,\ell-1\}$ and 
$t\in \{0,1,\dots, r-1\}$. If $p\leq r-2$, then we can easily check $f_p^{(t)}=0$.
Therefore, we may assume that $p\geq r-1$. Assume either $t\leq r-2$ or $p\geq r$ holds and let $h:P_1\to P_1$
be a left multiplication by $a_pa_{p-1}\cdots a_{t+1}$. In this case, it is easy to check that
\[f_p^{(t)}=h\circ d_X^{(r)}.\]
Hence we obtain that $f_p^{(t)}=0$ in $\Kb(\proj \Lambda)$. 
We consider the remaining case i.e., $t=r-1$ and $p=r-1$. Let $h=\mathrm{id}_{P_1}$ and 
$h'=\overset{r-2}{\underset{t=0}{\sum}}\iota_t\circ \mathrm{id}_{P_2^{(t)}} \circ \pi_t:\bigoplus_{t=0}^{r-1}P_2^{(t)}\to 
\bigoplus_{t=0}^{r-2}P_2^{(t)}$,
where $\iota_t$ be the canonical inclusion $P_2^{(t)}\inj \bigoplus_{t=0}^{r-2}P_2^{(t)}=P_2^{\oplus r-1}$.
Then we have
\[f_p^{(t)}=f_{r-1}^{(r-1)}=h\circ d_X^{(r)}-d_X^{(r-1)}\circ h'.\]
In particular, $f_{r-1}^{(r-1)}=0$ in $\Kb(\proj \Lambda)$.

Now Theorem\;\ref{bijection} implies that there is a path
\[\Lambda=X^{(0)}\oplus Y^{(0)}\to X^{(1)}\oplus X^{(0)}\to X^{(2)}\oplus X^{(1)}\to \cdots X^{(\ell-1)}\oplus X^{(\ell-2)}
\to X^{(\ell-1)}=X_1\to 0\]
in $\H(\sttilt \Lambda^{(\ell,\ell')})$.
Similarly, we obtain a path 
\[\Lambda=Y^{(0)}\oplus X^{(0)}\to Y^{(1)}\oplus Y^{(0)}\to Y^{(2)}\oplus Y^{(1)}\to \cdots Y^{(\ell'-1)}\oplus Y^{(\ell'-2)}\to Y^{(\ell'-1)}=X_2\to 0\]
in $\H(\sttilt \Lambda^{(\ell,\ell')})$. Thus $\sttilt \Lambda^{(\ell,\ell')}\simeq \P_{\ell,\ell'}$.
\end{proof}
In the case $\ell,\ell'\leq 2$, \cite[Proposition\;3.2]{AK} gives a characterization of algebras $\Lambda$ satisfying 
$\sttilt \Lambda\simeq \P_{\ell,\ell'}(\Leftrightarrow \Lambda\in \T(\Lambda^{(\ell,\ell')}))$.
Furthermore, Question\;\ref{cj} holds true if $|\Lambda|=2$.

Proposition\;\ref{2pointex} says that each connected $2$-regular finite lattice
is realized as a support $\tau$-tilting poset.
However, we have the following result.
\begin{proposition}
For each $n>2$, there exists a connected $n$-regular finite lattice
which is not realized as a support $\tau$-tilting poset.
\end{proposition}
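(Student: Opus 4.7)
The plan is to exhibit, for each $n \geq 3$, a connected finite $n$-regular lattice $\P_n$ that fails a local necessary condition for being a support $\tau$-tilting poset. The condition I would use is the following: if $\P \cong \sttilt \Lambda$ is a finite lattice and $T \in \P$ has two distinct direct predecessors $T_1, T_2 \in \dip(T)$, then applying Corollary~\ref{preservingsupport}(3) to the full subquiver $u^+ = \{T, T_1, T_2\}$ and invoking Jasso's reduction (Theorem~\ref{reduction theorem}), the interval $[T, T_1 \vee T_2]$ is isomorphic to $\sttilt \Gamma$ for some $2$-point $\tau$-tilting finite algebra $\Gamma$; by Theorem~\ref{basicprop}(2) this interval is itself $2$-regular in its own Hasse diagram. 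Note that if $\P_n$ is finite and $\P_n \cong \sttilt \Lambda$, then $\Lambda$ is automatically $\tau$-tilting finite and $\sttilt \Lambda$ is a lattice, so this obstruction does apply.

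For the base case $n = 3$, I would construct $\P_3$ as a $3$-regular connected finite lattice containing three atoms $a_1, a_2, a_3 \in \dip(0)$ with $a_3 \leq a_1 \vee a_2$; then the bottom element $0$ of the interval $[0, a_1 \vee a_2]$ has at least the three direct successors $a_1, a_2, a_3$ inside this interval, so the interval contains a vertex of degree at least $3$, contradicting $2$-regularity and ruling out realization as $\sttilt \Lambda$. A natural starting template is the pentagon $N_5 \cong \P_{2,1}$, which already has two atoms whose join is the top; $\P_3$ is built from it by adjoining a third atom together with the intermediate vertices and edges needed to restore $3$-regularity globally while preserving the property that some pair of atoms has join equal to the top (so the third atom lies below this join automatically). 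For $n > 3$, I would set $\P_n := \P_3 \times B_{n-3}$, the product with the Boolean lattice of rank $n-3$. Products of finite connected lattices are finite connected lattices, and the Hasse-degree is additive under products, so $\P_n$ is $n$-regular; the bad $2$-interval of $\P_3$ embeds into $\P_n$ as $[(T, 0), (T_1 \vee T_2, 0)] \cong [T, T_1 \vee T_2]$, retaining its failure of $2$-regularity.

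The hard part will be the explicit construction of $\P_3$: the combinatorial tension between global $3$-regularity at every vertex and the requirement that a third atom lie below the join of two others forces a careful choice of auxiliary elements and edges, and simultaneously verifying the lattice axioms, the regularity condition, and the persistence of the bad configuration requires a combinatorial case analysis rather than a conceptual step.
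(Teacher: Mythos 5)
Your necessary condition is correctly extracted from the paper: if $\sttilt\Lambda$ is finite it is a lattice, and for any $T$ with two direct predecessors $T_1,T_2$ the interval $[T,T_1\vee T_2]$ equals $\sttilt_{\kappa^+(u)}\Lambda$ by Corollary~\ref{preservingsupport}(3), hence is isomorphic to $\sttilt\Gamma$ for a $2$-point algebra by Theorem~\ref{reduction theorem} and is therefore $2$-regular by Theorem~\ref{basicprop}(2). This is a genuinely different obstruction from the one the paper uses: the paper's $3$-regular lattice $\P$ is built so that \emph{all} the bottom intervals $[0,X_i\vee X_j]$ are honest $2$-regular polygons, and the contradiction comes instead from the top--bottom duality $\sttilt_{(e_i+e_j)\Lambda}\Lambda\simeq\sttilt\Lambda/(e_i+e_j)=\sttilt_{(e_i+e_j)\Lambda^-}\Lambda$, which forces the multiset of sizes of the coatom-pair intervals $[Z_i\wedge Z_j,\Lambda]$ to match that of the atom-pair intervals $[0,X_i\vee X_j]$; in $\P$ these counts are $3$ and $2$. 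Your reduction from $n=3$ to general $n$ via the product with $\B_{n-3}$ is the same as the paper's and is fine, including the persistence of the bad interval in the product.

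The gap is that $\P_3$ is never constructed, and its existence is the entire content of the proposition, which is an existence statement. You acknowledge this yourself by deferring ``the hard part,'' but the deferral is not innocuous: the configuration you want --- a connected $3$-regular finite lattice with atoms $a_1,a_2,a_3$ and $a_3\leq a_1\vee a_2$ --- is in real tension with $3$-regularity. If some element covers both $a_1$ and $a_2$ then $a_1\vee a_2$ is that height-two element, which cannot dominate $a_3$ unless it covers all three atoms, forcing it to be $\hat 1$ and collapsing the lattice to the (non-$3$-regular) diamond $M_3$; if no element covers both, then in every small candidate a proper common upper bound of $a_1$ and $a_2$ not above $a_3$ tends to appear, and degree-counting arguments rule out the first several possible sizes. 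Whether or not such a lattice ultimately exists, you have given no evidence for it, and the fact that the paper's author carefully arranged all three bottom $2$-intervals of $\P$ to be hexagons and reached for the subtler duality obstruction is a strong hint that the naive local failure at the atoms is at best delicate to realize. To complete the proof you must either exhibit $\P_3$ explicitly and verify the lattice axioms, connectedness, $3$-regularity, and the relation $a_3\leq a_1\vee a_2$ (or a failure of $2$-regularity at some other covering pair), or else adopt the paper's comparison between the intervals $[0,X_i\vee X_j]$ and $[Z_i\wedge Z_j,\Lambda]$.
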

\begin{proof}
For two posets $(\P,\leq_{\P}),(\P',\leq_{\P'})$, we always
regard $\P\times \P'$ as a poset via the following partial order:
\[(a,a')\leq (b,b'):\Leftrightarrow a\leq_{\P} b,\ a'\leq_{\P'} b'.\] 

Let $\P$ be a poset given by the following quiver:
\[\input{kase_7th_counterex.tex}\]
We denote by $\B_m:=\underbrace{\{0<1\}\times \{0<1\}\times\cdots \{0<1\}}_{m}$.
Since $\P$ is a connected $3$-regular finite lattice and $\B_m$
is a connected $m$-regular finite lattice,
 $\P\times \B_m$ is a connected $(m+3)$-regular finite lattice.
 Hence it is sufficient to show that $\P\times \B_m$ is 
 not realized as a support $\tau$-tilting poset.
 
 Suppose that $\P\times \B_m\simeq \sttilt \Lambda$, where $\Lambda=KQ/I$.
 By Theorem\;\ref{reduction theorem}, we have
\[\sttilt_{(e_i+e_j) \Lambda} \Lambda\simeq \sttilt_{(e_i+e_j)\Lambda^-} \Lambda\]
for any $i\neq j\in Q_0$.
 From results in Subsection\;\ref{subsec:tau-rigid in poset}, we have
 \[\#\{\{i,j\}\mid i\neq j,\ \#\sttilt_{(e_i+e_j)\Lambda} \Lambda=6\}=3\neq 
 2=\#\{\{i,j\}\mid i\neq j,\ \#\sttilt_{(e_i+e_j)\Lambda^-} \Lambda=6\}.\] 
 This is a contradiction.
\end{proof}


\section{3-point algebras in $\Theta$}
  \label{subsect4.4}
Let $\Theta_3:=\{\Lambda\in \Theta\mid \Lambda \text{ is connected with } |\Lambda|=3 \}$, $\overline{\Theta}_3:=\{\overline{\Lambda}\mid \Lambda\in \Theta_3\}$.
We denote by $\Theta'$ the set of (isomorphism classes of) basic connected algebras satisfying Condition\;\ref{cd} and
define $\Theta'_3:=\{\Lambda\in \Theta'\mid |\Lambda|=3\}$, $\overline{\Theta}'_3:=\{\overline{\Lambda}\mid \Lambda\in \Theta'_3\}$.
Then Figure\;\ref{fig:one} gives a complete list of algebras in $\overline{\Theta}'_3$. 

We can directly compute support $\tau$-tilting posets of algebras listed in Figure\;\ref{fig:one}. 
Such posets are available at
authors homepage (https://sites.google.com/site/ryoichikase/papers).
In particular, we have the following proposition.
\begin{proposition}
	\label{3simplecase}
	Each algebra in Figure\;\ref{fig:one} satisfies Assumption\;\ref{assumption}. In particular, 
	$\Theta_3=\Theta'_3$ and $\T(\Lambda)=\T'(\Lambda)$ holds for any $\Lambda\in \Theta_3$.
	Furthermore, for each $\Lambda\in \Theta_3$, we have
	\[ \# \sttilt \Lambda\in \{12,14,16,18,20,22,24,26,28,32\} \]
\end{proposition}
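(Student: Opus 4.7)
The plan is to reduce Proposition~\ref{3simplecase} to a finite verification on the algebras listed in Figure~\ref{fig:one}, and then to invoke Corollary~\ref{application} and Theorem~\ref{reductiontominimal} to pull the result back to all of $\Theta_3$. First I would observe that every algebra $\overline{\Lambda}$ appearing in Figure~\ref{fig:one} is a bound quiver algebra $KQ^{\circ}/\widetilde{I'}$ with $\rad^m KQ^{\circ}\subset \widetilde{I'}\subset \rad^2 KQ^{\circ}$ and $\dim_K e_i\overline{\Lambda}e_j\le 1$ for all $i,j$; in particular each $\overline{\Lambda}$ is finite dimensional with at most three vertices, hence has finitely many indecomposable projectives and only finitely many candidate two-term complexes $[U'\to U]$ to check.

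Next I would check Assumption~\ref{assumption} for each algebra in the list. Because $\dim_K e_i\overline{\Lambda}e_j\le 1$, every morphism between projective modules is, up to scalar, a (unique) path in $G(\overline{\Lambda})$, so for each two-term presilting complex $T$ I can take the morphism $f\colon U'\to U$ to be a concrete sum of basis paths and directly read off $Q^f$. For three vertices the combinatorial possibilities for $(\add U,\add U',\text{support of }f)$ are few enough that the check reduces to verifying, case by case, that no $Q^f$ we obtain contains an undirected cycle. Whenever a putative $f$ would produce a cycle in $Q^f$, applying Lemma~\ref{teqlemma}\;(2) lets me replace $f$ by a $\widetilde{f}$ with $\Omega(\widetilde{f})=\emptyset$, cutting the cyclic edges and leaving a tree; this is precisely the input needed for Assumption~\ref{assumption}. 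The main obstacle here is that the check is long: one has to enumerate, for each algebra in Figure~\ref{fig:one}, every indecomposable two-term presilting complex (either by direct computation with $g$-vectors and Theorem~\ref{gvector}, or through the string/band description when available) and confirm tree-ness of $Q^f$. However, no conceptual difficulty arises, since each algebra has radical-square-zero-like behaviour on its non-loop part and only three vertices.

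Having established Assumption~\ref{assumption} for every $\overline{\Lambda}\in\overline{\Theta}'_3$, the first two assertions follow formally. Indeed, by Lemma~\ref{minimumfactor} every $\Lambda\in\Theta'_3$ has $\overline{\Lambda}$ in $\overline{\Theta}'_3$, so Corollary~\ref{application} yields $\T(\Lambda)=\T'(\Lambda)$ and also that $\Lambda$ is $\tau$-tilting finite and lies in $\Theta$; this gives $\Theta_3=\Theta'_3$ (the inclusion $\Theta_3\subset\Theta'_3$ being automatic). Moreover, Theorem~\ref{reductiontominimal}\;(2) gives the poset isomorphism $\sttilt\Lambda\simeq \sttilt\overline{\Lambda}$, so it suffices to compute $\#\sttilt\overline{\Lambda}$ for each $\overline{\Lambda}$ on the list.

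Finally, to obtain the list $\{12,14,16,18,20,22,24,26,28,32\}$, I would compute $\sttilt\overline{\Lambda}$ directly for each of the finitely many algebras in Figure~\ref{fig:one}. For each $\overline{\Lambda}$, I would determine the indecomposable $\tau$-rigid pairs via Theorem~\ref{bijection} and the $g$-vector injection of Theorem~\ref{gvector}, then assemble the Hasse quiver through the mutation description of Theorem~\ref{basicprop}. Since each algebra has only three simples and radical structure controlled by $G(\overline{\Lambda})$, these computations are routine (and the author points the reader to the explicit posets on the homepage). Collecting the resulting cardinalities over the entire list of algebras in Figure~\ref{fig:one} yields the stated set of values. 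The genuine difficulty of the proposition, as above, lies in the case-by-case bookkeeping rather than in any new technique.
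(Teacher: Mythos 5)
Your proposal is correct and follows essentially the same route as the paper: the paper likewise treats this as a finite, case-by-case verification of Assumption\;\ref{assumption} and of the support $\tau$-tilting posets for the algebras listed in Figure\;\ref{fig:one} (it simply asserts the computation and points to the author's homepage), after which $\Theta_3=\Theta'_3$, $\T(\Lambda)=\T'(\Lambda)$ and the list of cardinalities follow from Lemma\;\ref{minimumfactor}, Corollary\;\ref{application} and Theorem\;\ref{reductiontominimal} exactly as you describe. One small caveat: Lemma\;\ref{teqlemma}\;(2) only deletes components of $f$ that lie in $J$, and $J=0$ for the minimal algebras in Figure\;\ref{fig:one}, so it cannot be used to ``cut cyclic edges'' of $Q^f$ --- the tree-ness must simply be observed in the explicit enumeration of indecomposable two-term presilting complexes, which you do in any case commit to carrying out.
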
  
\begin{figure}[h]
\[\scalebox{0.77}{$\begin{array}{llllll}
\begin{xy}
(0,0)*[o]+{1}="A",(13,0)*[o]+{2}="B",(26,0)*[o]+{3}="C",
(-7,0)*[0]={\Lambda_1:}="E"
\ar @<2pt> "A";"B"^{\alpha}
\ar @<2pt> "B";"A"^{\alpha^*}
\ar @<2pt> "B";"C"^{\beta}
\ar @<2pt> "C";"B"^{\beta^*}
\ar @(ld,rd) @<1pt>"C";"A"^{\gamma}
\ar @(ru,lu) @<1pt> "A";"C"^{\gamma^*}
\end{xy}
&
\scalebox{0.6}{$\begin{array}{l}
                        \alpha\alpha^*+\alpha^*\alpha=0\\
                        \beta\beta^*+\beta^*\beta=0\\
                        \gamma\gamma^*+\gamma^*\gamma=0\\
                        \alpha\beta+\beta\gamma+\gamma\beta=0\\
                        \gamma^*\beta^*+\beta^*\alpha^*+\alpha^*\gamma^*=0\\
                        \end{array}
                        $}
&\ \ 
\begin{xy}
(0,0)*[o]+{1}="A",(13,0)*[o]+{2}="B",(26,0)*[o]+{3}="C",
(-7,0)*[0]={\Lambda_2:}="E"
\ar @<2pt> "A";"B"^{\alpha}
\ar @<2pt> "B";"A"^{\alpha^*}
\ar @<2pt> "B";"C"^{\beta}
\ar @<2pt> "C";"B"^{\beta^*}
\ar @(ru,lu) @<1pt> "A";"C"^{\gamma^*}
\end{xy}
&
\scalebox{0.6}{$\begin{array}{l}
                        \alpha\alpha^*+\alpha^*\alpha=0\\
                        \beta\beta^*+\beta^*\beta=0\\
                        \alpha\beta=0\\
                        \gamma^*\beta^*+\alpha^*\gamma^*=0\\
                        \end{array}
                        $}
&\ \ 
\begin{xy}
(0,0)*[o]+{1}="A",(13,0)*[o]+{2}="B",(26,0)*[o]+{3}="C",
(-7,0)*[0]={\Lambda_3:}="E"
\ar @<2pt> "A";"B"^{\alpha}
\ar @<2pt> "B";"A"^{\alpha^*}
\ar @<2pt> "B";"C"^{\beta}
\ar @<2pt> "C";"B"^{\beta^*}
\ar @(ru,lu) @<1pt> "A";"C"^{\gamma^*}
\end{xy}
&
\scalebox{0.6}{$\begin{array}{l}
                        \alpha\alpha^*+\alpha^*\alpha=0\\
                        \beta\beta^*+\beta^*\beta=0\\
                        \alpha\beta=0\\
                        \gamma^*\beta^*+\beta^*\alpha^*+\alpha^*\gamma^*=0\\
                        \end{array}
                        $}\\
\begin{xy}
(0,0)*[o]+{1}="A",(13,0)*[o]+{2}="B",(26,0)*[o]+{3}="C",
(-7,0)*[0]={\Lambda_4:}="E"
\ar @<2pt> "A";"B"^{\alpha}
\ar @<2pt> "B";"A"^{\alpha^*}
\ar @<2pt> "C";"B"^{\beta^*}
\ar @(ru,lu) @<1pt> "A";"C"^{\gamma^*}
\end{xy}
&
\scalebox{0.6}{$\begin{array}{l}
                        \alpha\alpha^*+\alpha^*\alpha=0\\
                        \gamma^*\beta^*=0\\
                        \end{array}
                        $}
& \ \ 
\begin{xy}
(0,0)*[o]+{1}="A",(13,0)*[o]+{2}="B",(26,0)*[o]+{3}="C",
(-7,0)*[0]={\Lambda_5:}="E"
\ar @<2pt> "A";"B"^{\alpha}
\ar @<2pt> "B";"A"^{\alpha^*}
\ar @<2pt> "C";"B"^{\beta^*}
\ar @(ru,lu) @<1pt> "A";"C"^{\gamma^*}
\end{xy}
&
\scalebox{0.6}{$\begin{array}{l}
                        \alpha\alpha^*+\alpha^*\alpha=0\\
                        \beta^*\alpha^*=0\\
                        \end{array}
                        $}
&\ \ 
\begin{xy}
(0,0)*[o]+{1}="A",(13,0)*[o]+{2}="B",(26,0)*[o]+{3}="C",
(-7,0)*[0]={\Lambda_6:}="E"
\ar @<2pt> "A";"B"^{\alpha}
\ar @<2pt> "B";"A"^{\alpha^*}
\ar @<2pt> "C";"B"^{\beta^*}
\ar @(ru,lu) @<1pt> "A";"C"^{\gamma^*}
\end{xy}
&
\scalebox{0.6}{$\begin{array}{l}
                        \alpha\alpha^*+\alpha^*\alpha=0\\
                        \alpha^*\gamma^*=0\\
                        \end{array}
                        $}\\
\begin{xy}
(0,0)*[o]+{1}="A",(13,0)*[o]+{2}="B",(26,0)*[o]+{3}="C",
(-7,0)*[0]={\Lambda_7:}="E"
\ar @<2pt> "A";"B"^{\alpha}
\ar @<2pt> "B";"A"^{\alpha^*}
\ar @<2pt> "C";"B"^{\beta^*}
\ar @(ru,lu) @<1pt> "A";"C"^{\gamma^*}
\end{xy}
&
\scalebox{0.6}{$\begin{array}{l}
                        \alpha\alpha^*+\alpha^*\alpha=0\\
                        \beta^*\alpha^*+\alpha^*\gamma^*=0\\
                        \end{array}
                        $}
&\ \ 
\begin{xy}
(0,0)*[o]+{1}="A",(13,0)*[o]+{2}="B",(26,0)*[o]+{3}="C",
(-7,0)*[0]={\Lambda_8:}="E"
\ar @<2pt> "A";"B"^{\alpha}
\ar @<2pt> "B";"A"^{\alpha^*}
\ar @<2pt> "B";"C"^{\beta}
\ar @(ru,lu) @<1pt> "A";"C"^{\gamma^*}
\end{xy}
&
\scalebox{0.6}{$\begin{array}{l}
                        \alpha\alpha^*+\alpha^*\alpha=0\\
                        \alpha\beta=0\\
                        \alpha^*\gamma^*=0\\
                        \end{array}
                        $}
& \ \ 
\begin{xy}
(0,0)*[o]+{1}="A",(13,0)*[o]+{2}="B",(26,0)*[o]+{3}="C",
(-7,0)*[0]={\Lambda_9:}="E"
\ar @<2pt> "A";"B"^{\alpha}
\ar @<2pt> "B";"A"^{\alpha^*}
\ar @<2pt> "C";"B"^{\beta^*}
\ar @(ld,rd) @<1pt>"C";"A"^{\gamma}
\end{xy}
&
\scalebox{0.6}{$\begin{array}{l}
                        \alpha\alpha^*+\alpha^*\alpha=0\\
                        \gamma\alpha=0\\
                        \beta^*\alpha^*=0\\
                        \end{array}
                        $}\\ 
\begin{xy}
(0,0)*[o]+{1}="A",(13,0)*[o]+{2}="B",(26,0)*[o]+{3}="C",
(-7,0)*[0]={\Lambda_{10}:}="E"
\ar @<2pt> "B";"A"^{\alpha^*}
\ar @<2pt> "C";"B"^{\beta^*}
\ar @(ru,lu) @<1pt> "A";"C"^{\gamma^*}
\end{xy}
&
\scalebox{0.6}{$\begin{array}{l}
                        \gamma^*\beta^*\alpha^*+\beta^*\alpha^*\gamma^*+\alpha^*\gamma^*\beta^*=0\\
                        \end{array}
                        $}
&\ \ 
\begin{xy}
(0,0)*[o]+{1}="A",(13,0)*[o]+{2}="B",(26,0)*[o]+{3}="C",
(-7,0)*[0]={\Lambda_{11}:}="E"
\ar @<2pt> "B";"A"^{\alpha^*}
\ar @<2pt> "C";"B"^{\beta^*}
\ar @(ru,lu) @<1pt> "A";"C"^{\gamma^*}
\end{xy}
&
\scalebox{0.6}{$\begin{array}{l}
                        \gamma^*\beta^*=0\\
                        \beta^*\alpha^*\gamma^*=0\\
                        \end{array}
                        $}
&\ \ 
\begin{xy}
(0,0)*[o]+{1}="A",(13,0)*[o]+{2}="B",(26,0)*[o]+{3}="C",
(-7,0)*[0]={\Lambda_{12}:}="E"
\ar @<2pt> "B";"A"^{\alpha^*}
\ar @<2pt> "C";"B"^{\beta^*}
\ar @(ru,lu) @<1pt> "A";"C"^{\gamma^*}
\end{xy}
&
\scalebox{0.6}{$\begin{array}{l}
                        \beta^*\alpha^*+\gamma^*\beta^*=0\\
                        \end{array}
                        $}\\
\begin{xy}
(0,0)*[o]+{1}="A",(13,0)*[o]+{2}="B",(26,0)*[o]+{3}="C",
(-7,0)*[0]={\Lambda_{13}:}="E"
\ar @<2pt> "B";"A"^{\alpha^*}
\ar @<2pt> "C";"B"^{\beta^*}
\ar @(ru,lu) @<1pt> "A";"C"^{\gamma^*}
\end{xy}
&
\scalebox{0.6}{$\begin{array}{l}
                        \gamma^*\beta^*+\beta^*\alpha^*+\alpha^*\gamma^*=0\\
                        \end{array}
                        $}
&\ \ 
\begin{xy}
(0,0)*[o]+{1}="A",(13,0)*[o]+{2}="B",(26,0)*[o]+{3}="C",
(-7,0)*[0]={\Lambda_{14}:}="E"
\ar @<2pt> "A";"B"^{\alpha}
\ar @<2pt> "B";"A"^{\alpha^*}
\ar @<2pt> "B";"C"^{\beta}
\ar @<2pt> "C";"B"^{\beta^*}
\end{xy}
&
\scalebox{0.6}{$\begin{array}{l}
                        \alpha\alpha^*+\alpha^*\alpha=0\\
                        \beta\beta^*+\beta^*\beta=0\\
                        \end{array}
                        $}
& \ \ 
\begin{xy}
(0,0)*[o]+{1}="A",(13,0)*[o]+{2}="B",(26,0)*[o]+{3}="C",
(-7,0)*[0]={\Lambda_{15}:}="E"
\ar @<2pt> "A";"B"^{\alpha}
\ar @<2pt> "B";"A"^{\alpha^*}
\ar @<2pt> "B";"C"^{\beta}
\ar @<2pt> "C";"B"^{\beta^*}
\end{xy}
&
\scalebox{0.6}{$\begin{array}{l}
                        \alpha\alpha^*+\alpha^*\alpha=0\\
                        \beta\beta^*+\beta^*\beta=0\\
                        \alpha\beta=0\\
                        \end{array}
                        $}\\
\begin{xy}
(0,0)*[o]+{1}="A",(13,0)*[o]+{2}="B",(26,0)*[o]+{3}="C",
(-7,0)*[0]={\Lambda_{16}:}="E"
\ar @<2pt> "A";"B"^{\alpha}
\ar @<2pt> "B";"A"^{\alpha^*}
\ar @<2pt> "B";"C"^{\beta}
\ar @<2pt> "C";"B"^{\beta^*}
\end{xy}
&
\scalebox{0.6}{$\begin{array}{l}
                        \alpha\alpha^*+\alpha^*\alpha=0\\
                        \beta\beta^*+\beta^*\beta=0\\
                        \alpha\beta=0\\
                        \beta^*\alpha^*=0\\
                        \end{array}
                        $}
&\ \ 
\begin{xy}
(0,0)*[o]+{1}="A",(13,0)*[o]+{2}="B",(26,0)*[o]+{3}="C",
(-7,0)*[0]={\Lambda_{17}:}="E"
\ar @<2pt> "A";"B"^{\alpha}
\ar @<2pt> "B";"A"^{\alpha^*}
\ar @<2pt> "B";"C"^{\beta}
\end{xy}
&
\scalebox{0.6}{$\begin{array}{l}
                        \alpha\alpha^*+\alpha^*\alpha=0\\
                        \end{array}
                        $}
&\ \ 
\begin{xy}
(0,0)*[o]+{1}="A",(13,0)*[o]+{2}="B",(26,0)*[o]+{3}="C",
(-7,0)*[0]={\Lambda_{18}:}="E"
\ar @<2pt> "A";"B"^{\alpha}
\ar @<2pt> "B";"A"^{\alpha^*}
\ar @<2pt> "B";"C"^{\beta}
\end{xy}
&
\scalebox{0.6}{$\begin{array}{l}
                        \alpha\alpha^*+\alpha^*\alpha=0\\
                        \alpha\beta=0\\
                        \end{array}
                        $}\\
\begin{xy}
(0,0)*[o]+{1}="A",(13,0)*[o]+{2}="B",(26,0)*[o]+{3}="C",
(-7,0)*[0]={\Lambda_{19}:}="E"
\ar @<2pt> "A";"B"^{\alpha}
\ar @<2pt> "B";"A"^{\alpha^*}
\ar @<2pt> "C";"B"^{\beta^*}
\end{xy}
&
\scalebox{0.6}{$\begin{array}{l}
                        \alpha\alpha^*+\alpha^*\alpha=0\\
                        \end{array}
                        $}
&\ \ 
\begin{xy}
(0,0)*[o]+{1}="A",(13,0)*[o]+{2}="B",(26,0)*[o]+{3}="C",
(-7,0)*[0]={\Lambda_{20}:}="E"
\ar @<2pt> "A";"B"^{\alpha}
\ar @<2pt> "B";"A"^{\alpha^*}
\ar @<2pt> "C";"B"^{\beta^*}
\end{xy}
&
\scalebox{0.6}{$\begin{array}{l}
                        \alpha\alpha^*+\alpha^*\alpha=0\\
                        \beta^*\alpha^*=0\\
                        \end{array}
                        $}
&\ \ 
\begin{xy}
(0,0)*[o]+{1}="A",(13,0)*[o]+{2}="B",(26,0)*[o]+{3}="C",
(-7,0)*[0]={\Lambda_{21}:}="E"
\ar @<2pt> "A";"B"^{\alpha}
\ar @<2pt> "B";"C"^{\beta}
\ar @(ru,lu) @<1pt> "A";"C"^{\gamma^*}
\end{xy}
&
\scalebox{0.6}{$\begin{array}{l}
                        \alpha\beta=0\\
                        \end{array}
                        $}\\ 
\begin{xy}
(0,0)*[o]+{1}="A",(13,0)*[o]+{2}="B",(26,0)*[o]+{3}="C",
(-7,0)*[0]={\Lambda_{22}:}="E"
\ar @<2pt> "A";"B"^{\alpha}
\ar @<2pt> "B";"C"^{\beta}
\end{xy}
&
&\ \ 
\begin{xy}
(0,0)*[o]+{1}="A",(13,0)*[o]+{2}="B",(26,0)*[o]+{3}="C",
(-7,0)*[0]={\Lambda_{23}:}="E"
\ar @<2pt> "A";"B"^{\alpha}
\ar @<2pt> "B";"C"^{\beta}
\end{xy}
&
\scalebox{0.6}{$\begin{array}{l}
                        \alpha\beta=0\\
                        \end{array}
                        $}
&\ \ 
\begin{xy}
(0,0)*[o]+{1}="A",(13,0)*[o]+{2}="B",(26,0)*[o]+{3}="C",
(-7,0)*[0]={\Lambda_{24}:}="E"
\ar @<2pt> "A";"B"^{\alpha}
\ar @<2pt> "C";"B"^{\beta^*}
\end{xy}
& \\
\begin{xy}
(0,0)*[o]+{1}="A",(13,0)*[o]+{2}="B",(26,0)*[o]+{3}="C",
(-7,0)*[0]={\Lambda_{25}:}="E"
\ar @<2pt> "B";"A"^{\alpha^*}
\ar @<2pt> "B";"C"^{\beta}
\end{xy}
\end{array}$}
\]
\caption{ The list of $\overline{\Theta}'_3$. }
\label{fig:one}
\end{figure}
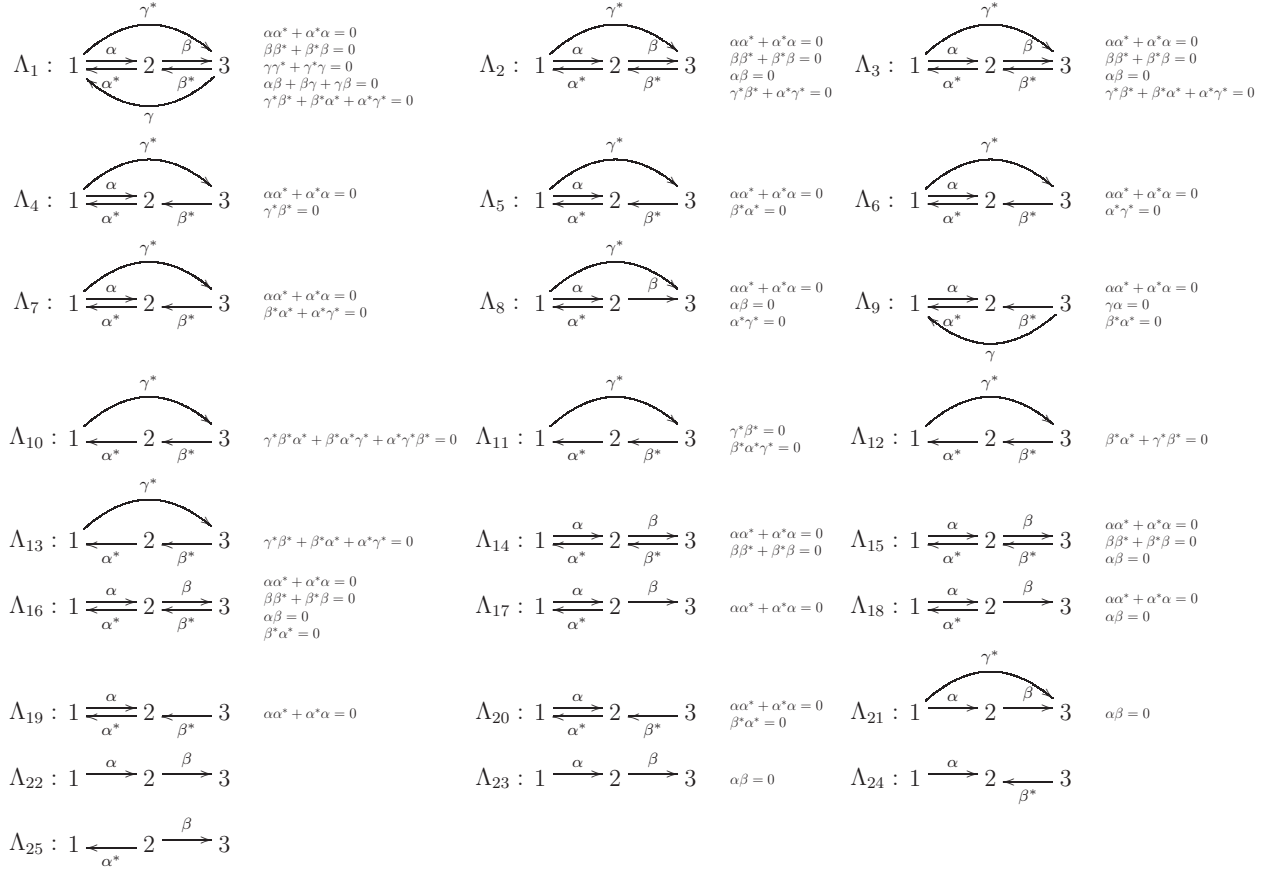


\end{document}